\newcommand{\derived}{\bfD}
\newcommand{\dual}{\vee}
\newcommand{\Auteqtriv}{\Auteq^{ K _{ 0 }-\mathsf{triv}}}
\newcommand{\btriv}{B ^{ K _{ 0 }-\mathsf{triv}}}
\newcommand{\std}{\mathsf{std}}
\newcommand{\cEstd}{\cE^{\std}}
\newcommand{\cEbar}{\underline{\cE}}
\newcommand{\cFbar}{\underline{\cF}}
\newcommand{\ec}{\mathsf{EC}}
\newcommand{\ecvb}{\mathsf{ECVB}}
\newcommand{\fec}{\mathsf{FEC}}
\newcommand{\fsec}{\mathsf{FSEC}}
\newcommand{\fecvb}{\mathsf{FECVB}}
\newcommand{\numfec}{\mathsf{numFEC}}
\newcommand{\numec}{\mathsf{numEC}}
\newcommand{\quadric}{ \ensuremath{ \bP ^{ 1 } \times \bP ^{ 1 } } }
\newcommand{\ah}{\Auteq(\derived(\hirzebruchtwo))}
\newcommand{\gen}{\operatorname{\mathsf{gen}}}
\newcommand{\genvb}{\operatorname{\mathsf{gen} \vert_{ \ecvb_{ 4 } (\cX_{0})}}}
\newcommand{\kgr}[1]{\operatorname{K _{ 0 }} \left( #1 \right)}
\newcommand{\Map}{\operatorname{\mathsf{Map}}}
\newcommand{\hirzebruchtwo}{\Sigma _{ 2 }}
\newcommand{\swap}{\operatorname{\mathsf{swap}}}
\newcommand{\reduced}{\operatorname{\mathsf{red}}}
\newcommand{\cohomology}{\cH ^{ \bullet }}
\newcommand{\ext}{\operatorname{ext}}
\newcommand{\length}{\operatorname{length}}
\title{Exceptional collections on \( \Sigma _{ 2 }\)}
\author[A. Ishii]{Akira Ishii}
\address{Graduate School of Mathematics, Nagoya University, Furocho, Chikusa-ku, Nagoya, 464-8602, Japan}
\email{akira141@math.nagoya-u.ac.jp}
\author[S. Okawa]{Shinnosuke Okawa}
\address{Department of Mathematics,
Graduate School of Science, Osaka University
\newline 1-1, Machikaneyamacho, Toyonaka, Osaka 560-0043, Japan}
\email{okawa@math.sci.osaka-u.ac.jp}
\author[H. Uehara]{Hokuto Uehara}
\address{Department of Mathematical Sciences, Graduate School of Science,
Tokyo Metropolitan University, 1-1 Minamiohsawa, Hachioji-shi, Tokyo, 192-0397, Japan}
\email{hokuto@tmu.ac.jp}
\begin{document}

\begin{abstract}
    Structure theorems for exceptional objects and exceptional collections of the bounded derived category of coherent sheaves on del Pezzo surfaces are established by Kuleshov and Orlov in~\cite{MR1286839}. In this paper we propose conjectures which generalize these results to weak del Pezzo surfaces.
    Unlike del Pezzo surfaces, an exceptional object on a weak del Pezzo surface is not necessarily a shift of a sheaf and is not determined by its class in the Grothendieck group.
    Our conjectures explain how these complications are taken care of by spherical twists, the categorification of \((-2)\)-reflections acting on the derived category.
    
    This paper is devoted to solving the conjectures for the prototypical weak del Pezzo surface \(\hirzebruchtwo\), the Hirzebruch surface of degree \( 2 \).
    Specifically, we prove the following results:
    Any exceptional object is sent to the shift of the uniquely determined exceptional vector bundle by a product of spherical twists which acts trivially on the Grothendieck group of the derived category.
    Any exceptional collection on \(\hirzebruchtwo\) is part of a full exceptional collection.
    We moreover prove that the braid group on 4 strands acts transitively on the set of exceptional collections of length \(4\) (up to shifts).
\end{abstract}

\maketitle

\tableofcontents

%
%
\section{Introduction}\label{sc:Introduction}

Semiorthogonal decomposition is among the most fundamental notions of triangulated categories. The finest ones, i.e., semiorthogonal decompositions whose components are equivalent to the bounded derived category of a point, are identified with (full) exceptional collections of the triangulated category. If there is an exceptional collection in a triangulated category, then virtually always there are infinitely many of them because of the group action explained below. Hence the classification of exceptional collections is not obvious at all.

Let \( X \) be a smooth projective variety and \(\derived ( X )\) be the bounded derived category of coherent sheaves on \( X \). Concerning the classification of the exceptional collections of \( \derived ( X )\), the prototypical results are given in \cite[Section 5]{Gorodentsev_1987} for \( X = \bP ^{ 2 } \). The similar results for \( X = \bP ^{ 1 } \times \bP ^{ 1 } \) are given in \cite{MR966985}. Based on these works, Kuleshov and Orlov established in \cite{MR1286839} the similar results for arbitrary del Pezzo surfaces (see also \cite[Corollary 4.3.2, Theorem 4.3.3, Theorem 4.6.1]{MR2108443}).
They are summarized as follows.

\begin{theorem}[\cite{MR1286839}]\label{th:Kuleshov and Orlov}
Let \( X \) be a del Pezzo surface over an algebraically closed field \( \bfk \).

\begin{enumerate}
    \item \label{it:Kul-O structure theorem for exceptional object}
    Any exceptional object
    \( \cE \in \derived ( X ) \)
    is either a vector bundle or a line bundle on a \( ( - 1 ) \)-curve, up to shifts.

    \item
    The isomorphism class of an exceptional object
    \( \cE \in \derived ( X ) \)
    is determined by its class
    \(
        [ \cE ] \in \kgr{ X }
    \),
    up to shifts by \( 2 \bZ \).\label{it:class determines exceptional object}

    \item Any exceptional collection of \(\derived ( X )\) can be extended to a full exceptional collection.\label{it:constructibility for del Pezzo}

    \item
    \pref{cj:Bondal Polishchuk} below is true for \(X\); i.e., the action \( G _{ r } \curvearrowright \ec _{ r } ( X )\) is transitive.
    \label{it:BP Conjecture is true for del Pezzo}
\end{enumerate}
\end{theorem}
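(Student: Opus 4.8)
\emph{The statement to be proved} is \pref{cj:Bondal Polishchuk} for \(X\): writing \(n=\mathrm{rk}\,\kgr{X}=2+\mathrm{rk}\,\mathrm{Pic}(X)\) for the common length of full exceptional collections of \(\derived(X)\), the relevant group \(G_n\) of mutations and shifts of the members (\(=\mathrm{Br}_n\ltimes\bZ^n\)) acts transitively on the set of full exceptional collections. The plan is to transport an arbitrary full exceptional collection, by mutations and shifts, to one fixed reference collection, in three stages, using throughout the first two parts of \pref{th:Kuleshov and Orlov}: every exceptional object is, up to shift, a vector bundle or a line bundle \(\mathcal O_C(d)\) on a \((-1)\)-curve \(C\), and is determined by its class in \(\kgr{X}\). \emph{Stage 1 (clearing the torsion).} Since the rank is additive, mutating a \((-1)\)-curve sheaf past an adjacent \((-1)\)-curve sheaf again gives a rank-zero exceptional object, hence again a shift of some \(\mathcal O_{C'}(d')\); mutating it past an adjacent exceptional bundle \(F\) gives an exceptional object of nonzero rank \(\chi(F,\mathcal O_C(d))\cdot\mathrm{rk}\,F\), hence — by part (1) — a shift of a vector bundle. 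In a full exceptional collection the classes span \(\kgr{X}\), so some member has nonzero rank; sliding each torsion member along the collection until it meets a bundle and then dissolving it strictly decreases the number of non-locally-free members, so after finitely many steps one reaches a full exceptional collection of vector bundles, and applying the \(\bZ^n\)-part of \(G_n\) one normalizes each member to an honest bundle of rank \(\ge 1\).

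\emph{Stage 2 (reducing the ranks).} For a full exceptional collection \(\underline{\cE}=(\cE_1,\dots,\cE_n)\) of vector bundles set \(r_i=\mathrm{rk}\,\cE_i\) and \(\rho(\underline{\cE})=\sum_i r_i^{\,2}\). The heart of the argument is the lemma: \emph{if \(\rho(\underline{\cE})\) is not minimal within the \(G_n\)-orbit, then some elementary mutation strictly decreases it.} One uses that exceptionality and fullness make the Euler matrix of \(([\cE_1],\dots,[\cE_n])\) upper unitriangular; that Serre duality together with the ampleness of \(-K_X\) forces, for a well-chosen consecutive pair \(\cE_i,\cE_{i+1}\), a one-sided vanishing such as \(\ext^{>0}(\cE_i,\cE_{i+1})=0\); and that then the mutated bundle has rank \(\lvert\chi(\cE_i,\cE_{i+1})\,r_i-r_{i+1}\rvert\), which for the appropriate choice of left or right mutation is smaller than \(\max(r_i,r_{i+1})\) unless the ranks are already as small as possible. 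Descending on \(\rho\), one arrives at a \(\rho\)-minimal (``canonical'') full exceptional collection.

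\emph{Stage 3 (identifying the canonical collections).} By part (2), a canonical collection is determined up to individual shifts by the tuple of classes \(([\cE_1],\dots,[\cE_n])\), which is a distinguished \(\bZ\)-basis of \((\kgr{X},\chi)\). One then shows that any two such bases are related by the group generated by the combinatorial mutations of exceptional bases and the twists by \(\mathrm{Pic}(X)\) — a Weyl-group statement attached to the root system of the del Pezzo surface — and that each of these operations is realized on \(\derived(X)\) by an element of \(G_n\): braid mutations realize the combinatorial mutations, while tensoring by a line bundle is an autoequivalence carrying a canonical collection to another one in its \(G_n\)-orbit. Combining the three stages, every full exceptional collection lies in the \(G_n\)-orbit of the reference collection, which is the assertion.

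\emph{Main obstacle.} The lemma of Stage 2 is where the argument is genuinely hard: one must control uniformly the \(\ext\)-groups between consecutive exceptional bundles and convert the resulting numerical inequalities into an actual decrease of \(\rho\), and this is exactly where the finer geometry of del Pezzo surfaces enters (finiteness and rank bounds for exceptional bundles, positivity of \(-K_X\)). The endgame of Stage 3 — matching the combinatorial Weyl/braid symmetries and checking that line-bundle twists stay within one \(G_n\)-orbit — is the second most delicate point, but is essentially formal once part (2) is available; Stage 1 is routine given the classification in part (1). An alternative to Stages 2–3 is an induction on the degree of \(X\) via a blow-down \(X\to X'\) and Orlov's blow-up semiorthogonal decomposition, reducing a full exceptional collection on \(X\) to one on \(X'\) plus the exceptional object of the exceptional divisor; this trades the rank estimates for an equally delicate ``separation'' step.
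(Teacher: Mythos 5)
First, a point of order: the paper does not prove \pref{th:Kuleshov and Orlov} at all --- it is quoted from Kuleshov and Orlov \cite{MR1286839} as background (the authors' own contributions begin with \pref{cj:main conjecture} and \pref{th:main}, which concern \emph{weak} del Pezzo surfaces, and specifically \( \hirzebruchtwo \)). So there is no internal proof to compare your attempt against; what follows assesses your sketch of part \eqref{it:BP Conjecture is true for del Pezzo} on its own terms.

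Your three-stage outline has the right overall shape (it is close in spirit to the Rudakov--Kuleshov--Orlov strategy: pass to collections of sheaves, then of bundles, then descend on ranks), but the two load-bearing steps are asserted rather than proved. In Stage 2, the claimed lemma --- that a non-\(\rho\)-minimal collection admits an elementary mutation strictly decreasing \(\rho=\sum_i r_i^2\) --- is essentially the whole theorem. A right mutation replaces \(r_i\) by \(\lvert r_i-\chi(\cE_i,\cE_{i+1})\,r_{i+1}\rvert\), and nothing in your sketch shows that some consecutive pair satisfies \(0<\chi(\cE_i,\cE_{i+1})\,r_{i+1}<2r_i\) (or the mirror inequality for a left mutation); establishing this requires the quadratic relations among the ranks and Euler numbers of a full exceptional collection (a Markov-type descent) together with the vanishing and surjectivity results for \(\Hom\) and \(\Ext\) between exceptional bundles on a del Pezzo surface --- which is precisely the hard content of \cite{MR1286839} that you are trying to reconstruct, not a consequence of Serre duality and ampleness of \(-K_X\) alone. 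Stage 3 has the same character: that the \(\rho\)-minimal collections form a single orbit under combinatorial mutations and \(\Pic(X)\)-twists, and that a line-bundle twist of a canonical collection stays in its \(G_n\)-orbit, are both nontrivial claims left unproved. There is also a small gap in Stage 1: when a torsion member \(\cO_C(d)\) meets an adjacent bundle \(F\) with \(\chi(F,\cO_C(d))=0\), the mutation merely transposes the pair and the torsion member survives; you need nondegeneracy of \(\chi\) and fullness to argue it eventually meets a bundle with nonzero pairing. In short, the proposal is a plausible roadmap that correctly locates the difficulty, but it does not resolve it.
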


The symbol \(\kgr{X}\) denotes the Grothendieck group of the triangulated category \( \derived ( X ) \).
We briefly explain \eqref{it:BP Conjecture is true for del Pezzo}.
Let \(\ec _{ r } ( X ) \) denote the set of isomorphism classes of exceptional collections of length \(r\) of \( \derived ( X ) \), where \( r = \rank \kgr{ X } \). An important fact relevant to the classification is that there is a standard action of the group
\(
    G _{ r } \coloneqq \bZ ^{ r } \rtimes \Br _{ r }
\)
on \( \ec _{ r }( X ) \) by shifts and mutations, where \(\Br _{ r }\) is the braid group on \(r\) strands (see \pref{sc:Deformation and mutation of exceptional collections} for details).

Inspired by the earlier works mentioned above, Bondal and Polishchuk gave the following conjecture.
\begin{conjecture}[{\(=\)\cite[Conjecture~2.2]{MR1230966}}]\label{cj:Bondal Polishchuk}
    Suppose that \( X \) is a smooth projective variety such that \(\derived ( X )\) admits a full exceptional collection of length \(r\). Then the action \( G _{ r } \curvearrowright \ec _{ r } ( \derived ( X ) )\) is transitive.
\end{conjecture}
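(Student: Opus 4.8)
\emph{Sketch for \(X=\hirzebruchtwo\).}
The conjecture is open in general; what follows is a plan for the Hirzebruch surface \(\hirzebruchtwo\), the case treated in this paper, so that \(r=\rank\kgr{\derived(\hirzebruchtwo)}=4\). I will use the two structure results announced in the abstract and proved in the body: every exceptional object of \(\derived(\hirzebruchtwo)\) is, up to shift, taken onto the unique exceptional vector bundle in its numerical class by a product of spherical twists which acts trivially on \(\kgr{\derived(\hirzebruchtwo)}\); and every exceptional collection extends to a full one, so that every exceptional collection of length \(4\) is already full. Following the pattern of \cite{Gorodentsev_1987} for \(\bP^2\) and \cite{MR966985} for \(\bP^1\times\bP^1\), I would first prove transitivity on the numerical level and then analyse the fiber of the forgetful map \(\ec_4(\hirzebruchtwo)\to\numec_4(\hirzebruchtwo)\).

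\emph{Numerical transitivity.} The \(G_4=\bZ^4\rtimes\Br_4\)-action on \(\ec_4(\hirzebruchtwo)\) is compatible with its action on \(\numec_4(\hirzebruchtwo)\), the length-\(4\) numerical exceptional collections in the rank-\(4\) Euler lattice of \(\hirzebruchtwo\): shifts act by \(v_i\mapsto -v_i\) and braid generators by numerical mutations. I would show this action transitive by reducing an arbitrary numerical exceptional collection to a fixed reference one --- say the classes of \((\cO,\cO(f),\cO(C),\cO(C+f))\), with \(f\) a fiber and \(C\) the \((-2)\)-section --- by induction on a nonnegative ``defect'' functional that strictly decreases under a suitably chosen mutation unless the collection already is the reference. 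This is the Hirzebruch-surface analogue of the lattice computations in the two cited papers, now for a non-symmetric rank-\(4\) Euler form.

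\emph{The fiber.} After the previous step we may assume two given exceptional collections of length \(4\) share the same (reference) numerical type \(\underline{v}\). By the structure theorem, each term of an exceptional collection of numerical type \(\underline{v}\) is obtained from the corresponding standard exceptional bundle by an element of the group \(\btriv\) of \(K_0\)-trivial autoequivalences built from spherical twists along the \((-2)\)-curve \(C\); so the two collections differ by the action of \(\btriv\). It therefore suffices to show that the \(\btriv\)-action on \(\ec_4(\hirzebruchtwo)\) is realized inside \(G_4\): concretely, for an exceptional bundle \(V\) moved by a generator \(T\) of \(\btriv\) I would produce a length-\(4\) exceptional collection in which \(V\) and \(T^{\pm1}(V)\) occupy adjacent slots, so that a braid generator swaps them, and then pass to all of \(\btriv\) by conjugation. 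Combining the two steps places any two length-\(4\) exceptional collections in one \(G_4\)-orbit, which is \pref{cj:Bondal Polishchuk} for \(\hirzebruchtwo\).

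\emph{Main obstacle.} I expect the fiber step to be the crux. On a genuine del Pezzo surface an exceptional object is determined by its class (Theorem~\ref{th:Kuleshov and Orlov}\,\eqref{it:class determines exceptional object}), so the fiber is essentially a point and Bondal--Polishchuk reduces to the numerical statement; on \(\hirzebruchtwo\) the \((-2)\)-curve destroys this rigidity, and one must prove that the spherical twists compensating for it are visible to the mutation action --- precisely the matching of \(\btriv\) with a subgroup of \(G_4\). Carrying this out calls for a careful study of the \(\operatorname{Ext}\)-quivers between exceptional bundles on \(\hirzebruchtwo\) and of how the twists transport them through exceptional collections. Proving termination of the numerical reduction for the non-symmetric rank-\(4\) form is also delicate, though it should follow the \(\bP^2\) and \(\bP^1\times\bP^1\) template.
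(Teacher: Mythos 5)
Your overall two–stage outline (first put the collection into a fixed numerical type, then analyse the fibre of $\ec _{ 4 } ( \hirzebruchtwo ) \to \numec _{ 4 } ( \hirzebruchtwo )$) does match the paper's proof of \pref{th:transitivity}, and you are right that the statement is only a conjecture in general and that the crux for $\hirzebruchtwo$ is matching $\btriv$ with the mutation action. But both of your stages have genuine gaps. For numerical transitivity the paper does not run a rank-$4$ lattice induction at all: it deforms $\hirzebruchtwo$ to $\quadric$ over $\Spec \bfk [[ t ]]$, uses that mutation commutes with this deformation (\pref{lm:mutation commutes with base change}), and imports transitivity from Kuleshov--Orlov on the del Pezzo surface $\quadric$ to get \pref{cr:transitivity at the numerical level}. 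Your proposed ``defect functional'' reduction is precisely the hard combinatorial content of the quadric case and would have to be carried out from scratch; the deformation route gets it for free, and this is the intended point of \pref{st:reduction to numerically standard collection}.

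The more serious gap is in your fibre step. Knowing that each term $\cE _{ i }$ separately is sent to the corresponding standard bundle by \emph{some} element of $\btriv$ does not produce a \emph{single} $b \in \btriv$ moving the whole collection: the correcting elements for different $i$ need not agree, and a priori applying the one for $\cE _{ 2 }$ could destroy what you arranged for $\cE _{ 1 }$. The paper's \pref{th:from numerically standard to standard} fixes the terms one at a time and crucially needs \pref{cr:decrease ell while preserving the vector bundle} (the twist $T _{ c ( \cE ) }$ that shortens $\cE$ sends any exceptional bundle $\cB$ with $( \cB, \cE )$ exceptional to $\cB$ or $\cB ( C )$), plus the observation that once $\cE _{ 1 }, \cE _{ 2 }$ are standard the remaining pair lives in ${} ^{ \perp } \langle \cEstd _{ 1 }, \cEstd _{ 2 } \rangle \simeq \derived ( \bP ^{ 1 } )$, where class determines object. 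Finally, your mechanism for realizing $\btriv$ inside $G _{ 4 }$ --- placing $V$ and $T ^{ \pm 1 } ( V )$ in adjacent slots so that ``a braid generator swaps them'' --- is not how mutation acts: $\sigma _{ i }$ sends $( \cE _{ i }, \cE _{ i + 1 } )$ to $( \cE _{ i + 1 }, R _{ \cE _{ i + 1 } } \cE _{ i } )$, and $T ( V )$ is not a mutation of $V$ through $T ( V )$. The correct device (\pref{lm:T0(OX) as mutations} and \pref{st:replace b with mutations}) is that $T _{ 0 } \cO _{ \hirzebruchtwo }$ is simultaneously $R _{ \cO _{ \hirzebruchtwo } } ( \cO _{ \hirzebruchtwo } ( - C ) )$ and $L _{ \cO _{ \hirzebruchtwo } ( f ) } \cO _{ \hirzebruchtwo } ( C + 2 f )$, which lets one write $T _{ 0 } ( \cEstd )$ and $T _{ - 1 } ( \cEstd )$ as explicit braid words applied to $\cEstd$; since $T _{ 0 }, T _{ - 1 }$ generate $B$ (\pref{th:generator of the group B}) and mutations commute with autoequivalences (\pref{lm:actions of Br and Auteq(f) commute}), this suffices. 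Without these ingredients your sketch does not close.
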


In dimensions greater than \(2\), the classification of exceptional collections is widely open. Even for \( \bP ^{ 3 }\), only partial results seem to be known (\cite{1995alg.geom..7014P}) and \pref{cj:Bondal Polishchuk} is still open.

The aim of this paper is to investigate the generalization of \pref{th:Kuleshov and Orlov} to \emph{weak del Pezzo surfaces}; i.e., smooth projective surfaces \(X\) whose anti-canonical bundle \(\omega _{ X } ^{ - 1 } \) is nef and big.
As it turns out, the generalization is not straightforward at all.

A weak del Pezzo surface \(X\) which is not a del Pezzo surface admits at least one (and only finitely many) \((-2)\)-curve(s); i.e., a smooth curve \( C \subset X \) with \( C ^{ 2 } = - 2 \) and \( C \simeq \bP ^{ 1 }\). Line bundles on \(C\), as objects of \( \derived ( X )\), are \emph{\(2\)-spherical objects} and hence yield non-trivial autoequivalences of \( \derived ( X )\) called \emph{spherical twists} due to Seidel and Thomas \cite{Seidel-Thomas}. For a \(2\)-spherical object \(\alpha \in \derived ( X )\), the corresponding spherical twist
\( T _{ \alpha }\) acts as a reflection on \(\kgr{X}\) whereas it always satisfies \(T _{ \alpha } ^{ 2 } \not \simeq \id _{ \derived ( X )}\). For example, the exceptional object
\( T _{ \cO _{ C } } ^{ 2 } ( \cO _{ X } ) \) has the same class as \( \cO _{ X } \) in \(\kgr{ X }\) but is not isomorphic to \( \cO _{ X } \). In fact, by direct computation one can confirm the following.
\begin{align}
    \cH ^{ i } \left( T _{ \cO _{ C } } ^{ 2 } ( \cO _{ X } ) \right)
    \simeq
    \begin{cases}
        \cO _{ X } & i = 0\\
        \cO _{ C } & i = 1, 2\\
        0 & \text{otherwise}
    \end{cases}
\end{align}
Moreover, by applying \( T _{ \cO _{ C } } ^{ 2 } \) repeatedly, we obtain a collection of infinitely many exceptional objects of unbounded cohomological amplitudes which share the same class in \( \kgr{ X }\).
Hence \pref{th:Kuleshov and Orlov} \eqref{it:Kul-O structure theorem for exceptional object} \eqref{it:class determines exceptional object} are not true for \( X \) at all.

\pref{cj:main conjecture}, which is the main conjecture of this paper, generalizes \pref{th:Kuleshov and Orlov} to weak del Pezzo surfaces while taking into account all the complications mentioned in the previous paragraph. In a word, it asserts that the failure of \pref{th:Kuleshov and Orlov} \eqref{it:Kul-O structure theorem for exceptional object} \eqref{it:class determines exceptional object} on weak del Pezzo surfaces is remedied by spherical twists and that \pref{th:Kuleshov and Orlov} \eqref{it:constructibility for del Pezzo} \eqref{it:BP Conjecture is true for del Pezzo} should hold for weak del Pezzo surfaces too. The main theorem of this paper is that \pref{cj:main conjecture} is true for \(\hirzebruchtwo = \bP _{ \bP ^{ 1 } } ( \cO \oplus \cO ( 2 ) )\); namely, the Hirzebruch surface of degree \( 2 \), a paradigm of weak del Pezzo surfaces. In fact we formulated \pref{cj:main conjecture} by generalizing the results we obtained for \(\hirzebruchtwo\), \pref{th:Kuleshov and Orlov}, and \cite[Theorem 1.2]{MR3758518} simultaneously.

\begin{conjecture}\label{cj:main conjecture}
Let \( X \) be a weak del Pezzo surface over an algebraically closed field \( \bfk \).
\begin{enumerate}
\item \label{it:conjecturally twist equivalent to a sheaf}

For any exceptional object
\(
    \cE \in \derived ( X )
\),
there exists a sheaf \( \cF \) on \( X \) which is either an exceptional vector bundle or a line bundle on a \((-1)\)-curve,
a sequence of line bundles on \((-2)\)-curves
\(
    \cL _{ 1 }, \dots, \cL _{ n }
\),
and an integer
\(
    m \in \bZ
\)
such that
\begin{align}
    \cE \simeq
    ( T _{ \cL _{ n } } \circ \cdots \circ T _{ \cL _{ 1 } } ) ( \cF ) [ m ].
\end{align}

\item
\label{it:conjecture on objects in the same class}
For any pair of exceptional objects \( \cE, \cE ' \in \derived ( X )\) such that
\( [ \cE ] = [ \cE ' ] \in \kgr{ X } \), there are a product \( b \) of spherical twists and inverse spherical twists which acts trivially on \(\kgr{ X }\) and \( m \in 2 \bZ \) such that
\( \cE ' \simeq b ( \cE ) [ m ] \). Moreover, for each exceptional object \( \cE \in \derived ( X ) \), there is a unique exceptional vector bundle \( \cF \) such that either
\( [ \cE ] = [ \cF ] \) or \( [ \cE ] = - [ \cF ] \) holds.

\item
\label{it:conjecturally constructible}

Any exceptional collection on \( X \) can be extended to a full exceptional collection.

\item
\label{it:conjecturally transitive}

\pref{cj:Bondal Polishchuk} is true for \( X \).
\end{enumerate}
\end{conjecture}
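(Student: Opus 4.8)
The surface \(\hirzebruchtwo\) is minimal, so it carries no \((-1)\)-curve and exactly one \((-2)\)-curve, the negative section \(C\); the line bundles \(\cO_{C}(k)\) on \(C\) are \(2\)-spherical, and we write \(\Gamma\subseteq\Auteq(\derived(\hirzebruchtwo))\) for the subgroup generated by the spherical twists \(T_{\cO_{C}(k)}\) together with tensoring by line bundles on \(\hirzebruchtwo\). We prove \eqref{it:conjecturally twist equivalent to a sheaf} by induction on a complexity invariant of an exceptional object \(\cE\) — essentially the width of its cohomological amplitude together with the total length of the torsion subsheaves of the \(\cH^{i}(\cE)\) set-theoretically supported on \(C\). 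If \(\cE\) is not already a shift of a sheaf, one inspects its extreme nonzero cohomology sheaves: the Euler-form constraints of exceptionality, together with the classification of rigid sheaves on \(\hirzebruchtwo\), force \(\cH^{\min}(\cE)\) or \(\cH^{\max}(\cE)\) to admit some \(\cO_{C}(k)\) (possibly an iterated self-extension of it) as a sub- or quotient sheaf, and then a suitable power of \(T_{\cO_{C}(k)}^{\pm 1}\) strictly lowers the invariant. The base case is a shifted exceptional sheaf, which on \(\hirzebruchtwo\) is automatically an exceptional vector bundle — there being no \((-1)\)-curve to support an exceptional line bundle and no exceptional torsion sheaf — and one reabsorbs the inverse twists using the relations among the \(T_{\cO_{C}(k)}\) to get the form in \eqref{it:conjecturally twist equivalent to a sheaf}.

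\noindent\textbf{Classification of exceptional bundles and item \eqref{it:conjecture on objects in the same class}.}
The classification of exceptional vector bundles, together with the fact that such a bundle is determined by its class in \(\kgr{\hirzebruchtwo}\), we obtain by degeneration: \(\hirzebruchtwo\) is the central fibre \(\cX_{0}\) of a flat one-parameter family \(\cX\) whose other fibres are isomorphic to \(\quadric\), and along \(\cX\) the Grothendieck group and the Euler form form a constant local system identifying \(\kgr{\hirzebruchtwo}\) with \(\kgr{\quadric}\). Since \(\operatorname{Ext}^{2}(\cF,\cF)=0\) for an exceptional bundle \(\cF\), such bundles deform over \(\cX\); conversely every exceptional bundle on \(\quadric\) degenerates to one on \(\hirzebruchtwo\) (the limit stays locally free because it is again exceptional, hence a bundle by the structure theorem just proved). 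So the classification and rigidification-by-class on the del Pezzo surface \(\quadric\) (\pref{th:Kuleshov and Orlov}, \cite{MR966985}) transport to \(\hirzebruchtwo\). For \eqref{it:conjecture on objects in the same class} one applies \eqref{it:conjecturally twist equivalent to a sheaf} to write two exceptional objects with equal class as elements of \(\Gamma\) applied to shifted exceptional bundles; since \(\Gamma\) acts on \(\kgr{\hirzebruchtwo}\) through the reflections in the classes \([\cO_{C}(k)]\) and line bundle twists, tracking classes and invoking the bundle uniqueness reduces to even shifts of a single bundle, and the remaining element of \(\Gamma\) lies in the kernel of \(\Gamma\to\operatorname{Aut}(\kgr{\hirzebruchtwo})\), which is generated by \(K_{0}\)-trivial twists such as \(T_{\cO_{C}}^{2}\). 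The "moreover" clause follows because the numerical classes of exceptional objects of \(\derived(\hirzebruchtwo)\) are exactly \(\pm\) the classes of exceptional vector bundles (a vector bundle has positive rank, so no two exceptional bundles have opposite classes).

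\noindent\textbf{Items \eqref{it:conjecturally constructible} and \eqref{it:conjecturally transitive}.}
Because \(\rank\kgr{\hirzebruchtwo}=4\) and \(\derived(\hirzebruchtwo)\) admits a full exceptional collection of length \(4\), every exceptional collection of length \(4\) is full, so \(\ec_{4}(\hirzebruchtwo)=\fec_{4}(\hirzebruchtwo)\) and \eqref{it:conjecturally transitive} concerns the \(G_{4}=\bZ^{4}\rtimes\Br_{4}\)-action on it, while \eqref{it:conjecturally constructible} amounts to extending collections of length \(\le 3\). For both we first use \eqref{it:conjecturally twist equivalent to a sheaf}–\eqref{it:conjecture on objects in the same class} to show that every full exceptional collection lies in the \(G_{4}\)-orbit of one consisting of vector bundles: the crucial point is that whenever \(\cO_{C}(k)\) occurs, up to shift, as a term of some mutation of the collection, the twist \(T_{\cO_{C}(k)}\) acts on the collection as an explicit composite of mutations, so \(\Gamma\) does not enlarge \(G_{4}\)-orbits of full exceptional collections. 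It then remains to treat collections of vector bundles, which we do by generizing along \(\cX\): the restriction \(\genvb\) of the generization map sends full exceptional collections of vector bundles on \(\hirzebruchtwo\) to ones on \(\quadric\), is \(G_{4}\)-equivariant, and is surjective on \(G_{4}\)-orbits, so transitivity of \(G_{4}\) on \(\fec_{4}(\quadric)\) (\cite{MR966985}) descends, giving \eqref{it:conjecturally transitive}; the same reduction, now using constructibility on \(\quadric\), gives \eqref{it:conjecturally constructible}. Alternatively, \eqref{it:conjecturally constructible} reduces to a purely combinatorial statement about numerical exceptional collections in the lattice \(\kgr{\hirzebruchtwo}\), which one checks directly.

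\noindent\textbf{Main obstacle.}
The core difficulty is controlling the interaction between spherical twists and mutations — the engine of the inductive step in \eqref{it:conjecturally twist equivalent to a sheaf} and of the reduction in \eqref{it:conjecturally constructible}–\eqref{it:conjecturally transitive}. A priori the group generated by mutations of exceptional collections and by the spherical twists \(T_{\cO_{C}(k)}\) is much larger than \(\Br_{4}\); proving that it does not enlarge \(G_{4}\)-orbits, and that the complexity invariant of an arbitrary exceptional object genuinely decreases under the relevant twist rather than merely redistributing weight among cohomology sheaves, requires an essentially complete description of the low-complexity exceptional objects of \(\derived(\hirzebruchtwo)\) and of the exact triangles expressing \(T_{\cO_{C}(k)}(\cF)\) through mutations of \(\cF\). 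This is where the bulk of the computational work lies.
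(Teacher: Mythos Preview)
Your outline captures the broad architecture of the paper's proof, but it misses the point the paper itself flags as the main obstacle and contains a factual slip in the base case. First, the base case: an exceptional \emph{sheaf} on \(\hirzebruchtwo\) is not automatically a vector bundle. It can have nontrivial torsion part \(\tors E\simeq\cO_{C}(a)^{\oplus d}\) while \(E\) itself is not torsion (so your ``no exceptional torsion sheaf'' observation does not suffice); one more twist \(T_{a}\) is needed to reach a bundle (this is \cite[Theorem~1.4]{MR3431636}). More importantly, your complexity-reduction step presupposes that the cohomology sheaves \(\cH^{i}(\cE)\) for \(i\neq i_{0}\) are vector bundles on \(C\) (so that their rigid summands are \(\cO_{C}(a)\)'s and the relevant twist visibly lowers the invariant). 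A priori one only knows their \emph{reduced} support is \(C\); proving that the \emph{schematic} support is \(C\) and not \(nC\) is the genuine technical heart of the argument (\pref{pr:chohomology sheaves are O_C modules} in the paper), and it does not follow from Euler-form bookkeeping --- the paper needs a delicate length inequality over the \(A_{1}\)-singularity together with a carefully constructed \(R\)-module structure on the \(E_{2}\)-page. Without this, the ``suitable power of \(T_{\cO_{C}(k)}^{\pm1}\) strictly lowers the invariant'' step has no traction.

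For \eqref{it:conjecturally constructible} and \eqref{it:conjecturally transitive} your reduction-to-bundles idea is right in spirit but the mechanism you propose is off. The object \(\cO_{C}(k)\) is spherical, not exceptional, so it never occurs as a term of a mutation of an exceptional collection; the paper instead proves the nontrivial fact that the specific twist \(T_{c(\cE)}\) which lowers \(\ell(\cE_{N})\) also happens to send each earlier \emph{bundle} \(\cB\) in the collection to \(\cB\) or \(\cB(C)\) (Lemmas~5.1--5.2, \pref{cr:decrease ell while preserving the vector bundle}), which is what makes the one-by-one reduction to \(\ecvb_{N}\) work. For constructibility the paper then appeals to Kuleshov's result on a blowup (\pref{th:Constructibility for bundle collections}) rather than to degeneration; your degeneration route for \eqref{it:conjecturally constructible} has a gap, since after extending on \(\quadric\) and pulling the extension back along \(\gen\) the first \(N\) terms are only guaranteed to share \(K_{0}\)-classes with the original \(F_{i}\), and need not be bundles (cf.\ the paper's \pref{eg:the counter example} and the remark that \(\genvb\) is not surjective for \(N\ge2\)). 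Finally, for transitivity the paper does essentially what you sketch --- reduce to numerically standard via \(\quadric\), then realise the residual \(b\in B\) as mutations --- but the last step is done by an explicit check for the two generators \(T_{0},T_{-1}\) acting on \(\cEstd\), not by the general principle you invoke.
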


As it is more or less visible,
\eqref{it:conjecturally twist equivalent to a sheaf},
\eqref{it:conjecture on objects in the same class},
\eqref{it:conjecturally constructible}, and
\eqref{it:conjecturally transitive} of \pref{cj:main conjecture} generalizes
\eqref{it:Kul-O structure theorem for exceptional object},
\eqref{it:class determines exceptional object},
\eqref{it:constructibility for del Pezzo}, and
\eqref{it:BP Conjecture is true for del Pezzo} of \pref{th:Kuleshov and Orlov}, respectively.

To avoid repetition, let us simply state the main result of this paper as follows.
\begin{theorem}[MAIN THEOREM]\label{th:main}
    \pref{cj:main conjecture} is true for \( X = \hirzebruchtwo \).    
\end{theorem}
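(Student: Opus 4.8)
The plan is to reduce everything to a detailed study of the derived category of $\hirzebruchtwo$ via its unique $(-2)$-curve $C$ (the class of the negative section) together with the exceptional collection machinery, and to make the structure theorems effective by controlling the combinatorics of exceptional objects through the Euler form on $\kgr{\hirzebruchtwo} \cong \bZ^{4}$.

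\medskip

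\noindent\textbf{Step 1: Classification of exceptional vector bundles and spherical objects.} First I would establish that the indecomposable exceptional vector bundles on $\hirzebruchtwo$ are classified, up to twist by a line bundle, by a short explicit list, and that the rigid spherical sheaves are exactly the line bundles on the $(-2)$-curve $C$; the relevant invariants are controlled by the discriminant/slope formalism for sheaves on a ruled surface together with the fact that $\hirzebruchtwo$ has Picard rank $2$ and a single $(-2)$-class. This gives a normal form for the ``target'' objects $\cF$ in \pref{cj:main conjecture}\eqref{it:conjecturally twist equivalent to a sheaf}. I expect this to be routine but it must be done carefully: one needs that an exceptional \emph{sheaf} on $\hirzebruchtwo$ is automatically locally free (there are no $(-1)$-curves, so the ``line bundle on a $(-1)$-curve'' alternative is vacuous), which follows from a Serre-duality / Ext-vanishing argument plus purity.

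\medskip

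\noindent\textbf{Step 2: From an arbitrary exceptional object to a sheaf via spherical twists (part \eqref{it:conjecturally twist equivalent to a sheaf}).} Given an exceptional object $\cE$, I would argue by induction on the cohomological amplitude $\length(\cE)$ of $\cE$. If $\cE$ is already (a shift of) a sheaf, invoke Step 1. Otherwise, look at the top and bottom cohomology sheaves $\cH^{\min}(\cE)$, $\cH^{\max}(\cE)$; using that $\cE$ is exceptional, one derives strong constraints on these sheaves — in particular a nonzero map to or from a line bundle $\cL$ on $C$ must exist (this is the analogue of the ``$\cO_C$ appears in the cohomology'' phenomenon highlighted in the introduction). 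Applying the spherical twist $T_{\cL}$ (or its inverse) along $C$ then strictly decreases $\length$, while preserving exceptionality and acting on $\kgr{}$ by a $(-2)$-reflection. Iterating lands on a shifted sheaf, hence by Step 1 on the desired normal form. \textbf{This is the main obstacle}: making precise which $\cL$ on $C$ to twist along, and proving the amplitude genuinely drops rather than cycling, requires a delicate spectral-sequence / truncation analysis of $T_{\cL}(\cE)$ and a well-chosen numerical potential (e.g. a weighted sum of ranks of cohomology sheaves) that is monotone under the correct twist. The case analysis of how $\cL$ sits relative to $\cH^{\bullet}(\cE)$ is where the real work lies.

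\medskip

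\noindent\textbf{Step 3: Numerical rigidity (part \eqref{it:conjecture on objects in the same class}).} With Step 2 in hand, two exceptional objects with the same $K_{0}$-class are both twist-equivalent to shifted exceptional vector bundles; since the twists appearing are $(-2)$-reflections, comparing classes in $\kgr{}$ forces the two bundles to have classes differing by an element of the $(-2)$-reflection sublattice fixing the relevant data, and a finite computation in the lattice $(\bZ^{4}, \chi)$ shows the bundles agree up to the explicit $K_{0}$-trivial subgroup generated by the $T_{\cO_{C}(k)}^{2}$'s. Uniqueness of the exceptional vector bundle in a given class (up to sign) is again a lattice computation: the exceptional classes form a single $W$-orbit and the stabilizer statement is checked directly. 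For parts \eqref{it:conjecturally constructible} and \eqref{it:conjecturally transitive} I would proceed as in Kuleshov--Orlov \cite{MR1286839}: reduce a length-$\le 4$ exceptional collection, via mutations and the twists from Step 2, to one consisting of line bundles / the ``standard'' bundles; then any such collection is shown to be full by a direct check (e.g. generation of $\derived(\hirzebruchtwo)$), giving \eqref{it:conjecturally constructible}. For \eqref{it:conjecturally transitive}, knowing that every length-$4$ exceptional collection is mutation-and-twist equivalent to a standard one, and that the twists along $C$ are themselves realized by braid-group elements acting on the collection (this is the categorified $(-2)$-reflection), one reduces transitivity of $\Br_{4} \ltimes \bZ^{4}$ on $\ec_{4}(\hirzebruchtwo)$ to transitivity on the set of exceptional \emph{bundle} collections, which is handled by the known del-Pezzo-type arguments adapted to $\hirzebruchtwo$ (equivalently, by passing to a del Pezzo degeneration / the quadric $\quadric$). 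The bookkeeping of how spherical twists intertwine with mutations — i.e.\ that $T_{\cL}$ acting on an object in a collection is induced by a sequence of mutations — is the second nontrivial point, and I would isolate it as a separate lemma.
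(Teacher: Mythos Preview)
Your overall architecture matches the paper's --- reduce an arbitrary exceptional object to a vector bundle by spherical twists along $C$, then leverage the known theory for bundle collections and the deformation to $\quadric$ --- but there is a genuine error and several missing ideas.

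The claim in Step~1 that an exceptional \emph{sheaf} on $\hirzebruchtwo$ is automatically locally free is false. The universal extension of an exceptional vector bundle $\cF$ by copies of $\cO_{C}(a)$ (for suitable $a$) is an exceptional sheaf whose torsion part is a nonzero direct sum of $\cO_{C}(a)$'s; such objects appear throughout the paper (see e.g.\ \pref{lm:exceptional sheaf is a direct summand} and \cite[Theorem~1.4]{MR3431636}). Consequently your induction in Step~2, which terminates once $\cE$ is a shift of a sheaf and then ``invokes Step~1'', does not finish: one still has to twist a non-locally-free exceptional sheaf down to a bundle. The paper handles this by replacing cohomological amplitude with the finer invariant $\ell(\cE) = \sum_{i} \length_{\cO_{\hirzebruchtwo,\gamma}} \tors \cH^{i}(\cE)_{\gamma}$, which is positive exactly when $\cE$ is not a shifted bundle and which the correct twist strictly decreases even in the sheaf case.

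You also miss the step the authors flag as the main obstacle: proving that the \emph{schematic} (not just reduced) support of $\tors \cohomology(\cE)$ is $C$ (\pref{pr:chohomology sheaves are O_C modules}). Without this, one cannot conclude that the torsion pieces are direct sums of $\cO_{C}(a)$'s, and the ``choose the right $\cL$ and twist'' step has no traction. The argument requires putting an $R$-module structure (for $R$ the local ring of the $A_{1}$-singularity) on the $E_{2}$-page of the spectral sequence and a delicate length comparison; this is not visible from your spectral-sequence/truncation sketch. Finally, for part~\eqref{it:conjecturally constructible} you need the nontrivial fact (\pref{cr:decrease ell while preserving the vector bundle}) that the twist $T_{c(\cE_{N})}$ which decreases $\ell(\cE_{N})$ also keeps each earlier bundle $\cE_{i}$ a bundle --- the integer $c$ is determined by $\cE_{N}$ alone, and showing it is compatible with all $\cE_{i}$ is a separate computation you have not addressed.
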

More specifically, for \( X = \hirzebruchtwo \)
\begin{itemize}
    \item 
    \pref{cj:main conjecture}
    \eqref{it:conjecturally twist equivalent to a sheaf} is solved affirmatively in \pref{sc:Twisting exceptional objects down to exceptional vector bundles} as \pref{th:exceptional objects are equivalent to vector bundles}.
    
    \item
    \pref{cj:main conjecture}
    \eqref{it:conjecture on objects in the same class} is solved affirmatively in \pref{sc:Exceptional objects sharing the same class} as \pref{cr:exceptional objects in the same numerical class}.
    
    \item
    \pref{cj:main conjecture}
    \eqref{it:conjecturally constructible} is solved affirmatively in \pref{sc:Constructibility of exceptional collections} as \pref{cr:constructibility}.
    
    \item
    \pref{cj:main conjecture}
    \eqref{it:conjecturally transitive} is solved affirmatively in \pref{sc:Braid group acts transitively on the set of full exceptional collections} as \pref{th:transitivity}.
\end{itemize}

\begin{remark}
    We give some comments on the preceding works which are related to \pref{cj:main conjecture} and \pref{th:main}, and one important consequence of \pref{cj:main conjecture} \pref{it:conjecturally transitive} on the fullness of exceptional collections of maximal length.
\begin{itemize}
    \item
    This paper is a continuation of \cite{MR3431636} by the 2nd and the 3rd authors and almost completely supersedes it.
    \pref{cj:main conjecture} \eqref{it:conjecturally twist equivalent to a sheaf} for \(\hirzebruchtwo\) is stated as \cite[Conjecture 1.3]{MR3431636}, and is solved for exceptional \emph{sheaves} in \cite[Theorem~1.4]{MR3431636}.

    \item
    \pref{cj:main conjecture} \eqref{it:conjecturally twist equivalent to a sheaf} for torsion exceptional sheaves is partially solved in \cite[Theorem 1.2]{MR3758518}. A weaker version of \pref{cj:main conjecture} \eqref{it:conjecturally twist equivalent to a sheaf} is stated as \cite[Conjecture 1.1]{MR3758518}.
    
    \item
    Exceptional objects and exceptional collections of vector bundles on weak del Pezzo surfaces is systematically studied in \cite{MR1604186}. In fact we use some results of this work in this paper.
    
    \item It follows from the definition of mutations that the triangulated subcategory generated by an exceptional collection is invariant under mutation. In particular, the fullness of an exceptional collection is preserved by mutations. On the other hand, any \( X \) as in \pref{cj:main conjecture} admits a full exceptional collection. Hence \pref{cj:main conjecture} \pref{it:conjecturally transitive} would imply that any exceptional collection of length equal to
\(
    \rank \kgr{ X } = \rank \Pic ( X ) + 2
\)
of \( \derived ( X ) \) is full, though it would also follow from \pref{cj:main conjecture} \pref{it:conjecturally constructible}.
\end{itemize}
\end{remark}

\subsection{Summary of each section and structure of the paper}

\pref{sc:Preliminaries} is a preliminary section. We recall the rudiments of mutations and the group \( B \) of autoequivalences generated by spherical twists from \cite{Ishii-Uehara_ADC}. Among others, we prove in \pref{cr:btriv is generated by squares} that \( \btriv \), the subgroup of \( B \) acting trivially on \( \kgr{ \hirzebruchtwo } \), is generated by squares of spherical twists by line bundles on \( C \).

From \pref{sc:Twisting exceptional objects down to exceptional vector bundles} till the end of the paper, we restrict ourselves to the proof of \pref{th:main} and in particular discuss the case of \( \hirzebruchtwo \) only.

\pref{sc:Twisting exceptional objects down to exceptional vector bundles} is the main component of the paper and devoted to the proof of \pref{th:exceptional objects are equivalent to vector bundles}. Namely, in this section, we prove that for each exceptional object \( \cE \in \derived ( \hirzebruchtwo ) \) there is a sequence of integers
\(
    a _{ 1 }, \dots, a _{ n }
\)
such that
\(
    \left( T _{ a _{ n } } \circ \cdots \circ T _{ a _{ 1 } } \right) ( \cE )
\)
is isomorphic to a shift of a vector bundle. In accordance with the steps of the proof, \pref{sc:Twisting exceptional objects down to exceptional vector bundles} is divided into six subsections.

In \pref{sc:First properties} we prove some properties of the cohomology sheaf \( \cohomology ( \cE ) = \bigoplus _{ i \in \bZ } \cH ^{ i } ( \cE ) \).
Among others we show that there is the index \( i _{ 0 } \in \bZ \) such that
\(
    \Supp \cH ^{ i _{ 0 } } ( \cE ) = \hirzebruchtwo
\)
and for any \( i \ne i _{ 0 } \) the cohomology sheaf
\(
    \cH ^{ i } ( \cE )
\), if not \(0\), is a pure sheaf whose \emph{reduced} support is \( C \).
Then in \pref{sc:Properties of the schematic support}, we prove that actually the \emph{schematic} support of
\(
    \cH ^{ i } ( \cE )
\)
for \( i \ne i _{ 0 } \) is \( C \). This is the most technical part of the paper, and actually this had been the main obstacle for the whole work. Fortunately one can use the result of this subsection as a black box to read the rest of the paper.

In \pref{sc:More on the structure} we prove that there is a decomposition \( \cH ^{ i _{ 0 } } ( \cE ) \simeq T \oplus E \), where \( E = E ( \cE ) \) is an exceptional sheaf and \( T \) is a torsion sheaf. We moreover show that if \( \tors E \ne 0 \), then \( T \) is a direct sum of copies of \( \cO _{ C } ( a ) \) for some \( a \in \bZ \) and that \( \tors \cohomology ( \cE ) \) is a direct sum of copies of \( \cO _{ C } ( a ) \) and \( \cO _{ C } ( a + 1 ) \). This integer \( a = a ( \cE ) \) plays a central role throughout the paper.

In \pref{sc:Derived dual of exceptional objects} we investigate the relationship between the cohomology sheaves of \( \cE \) and those of \( \cE ^{ \vee } \), the derived dual of \( \cE \). We in particular show in \pref{cr:E(cE vee) has non-trivial torsion} that if \( \cE \) is not isomorphic to a shift of a vector bundle and \( \tors E ( \cE ) = 0 \), then \( \tors E ( \cE ^{ \vee } ) \ne 0 \). In this paper we mainly discuss the case \( \tors E ( \cE ) \ne 0 \), and by this result we can settle the case where \( \tors E ( \cE ) = 0 \) by passing to \( \cE ^{ \vee } \).

In \pref{sc:Length of the torsion part} we introduce the notion of the length of the ``torsion part'' of an object at the generic point \( \gamma \) of \( C \). Formally speaking, for \( \cE \) it is defined as \( \ell ( \cE ) = \sum _{ i \in \bZ } \length _{ \cO _{ \hirzebruchtwo, \gamma } } \tors \cH ^{ i } ( \cE ) _{ \gamma } \). It follows that \( \cE \) is isomorphic to a shift of a vector bundle if and only if \( \ell ( \cE ) = 0 \), and hence it suffices to show that \( \ell ( T _{ c } ( \cE ) ) < \ell ( \cE ) \) for some \( c = c ( \cE ) \in \bZ \) if \( \ell ( \cE ) > 0 \). This is exactly what we achieve in \pref{sc:Proof of Theorem 3.1}. We show in \pref{th:decrease the length by appropriate twist} that
\(
    c = a ( \cE )
\)
works if \( \tors E ( \cE ) \ne 0 \) and otherwise
\(
    c = - a ( \cE ^{ \vee } ) - 3
\)
does.

Spherical objects on the minimal resolution of type \(A\) singularity is classified in \cite{Ishii-Uehara_ADC}. More specifically, the proof of \pref{th:exceptional objects are equivalent to vector bundles} is an adaptation of the proof of \cite[Proposition 5.1]{Ishii-Uehara_ADC}. It is, however, much more involved than that of \cite[Proposition 5.1]{Ishii-Uehara_ADC}. This is due to the fact that the support of an exceptional object on \(\hirzebruchtwo\) is never concentrated in the \((-2)\)-curve \(C\). On the contrary the reduced support of a spherical object on \(\hirzebruchtwo\) is concentrated in \(C\), and it immediately implies that the \emph{schematic} supports of the cohomology sheaves of the spherical object coincide with \(C\).

\pref{sc:Exceptional objects sharing the same class} is devoted to the proof of \pref{th:exceptional object is K0-trivial equivalent to vector bundle}. It is almost immediately obtained by combining \pref{th:exceptional objects are equivalent to vector bundles} with a small trick on squares of spherical twists (\pref{pr:TaTb as product of O (m C ) and squares of twists}) and the fact that an exceptional \emph{vector bundle} is uniquely determined by its class in \(\kgr{\hirzebruchtwo}\) (\pref{lm:exceptional vb is determined by K0}).

\pref{sc:Constructibility of exceptional collections} is devoted to the proof of \pref{cr:constructibility}. Take an exceptional collection \( \cEbar \). We first show in \pref{th:twisting exceptional collection to vector bundles} that there is a product of spherical twists \( b \) such that
\(
    b ( \cEbar )
\)
consists of vector bundles up to shifts. This is achieved in a one-by-one manner. The key is that if \( ( \cB, \cE )\) is an exceptional pair such that \( \cB \) is a vector bundle and \( \ell ( \cE ) > 0 \), then, surprisingly enough, \( T _{ c ( \cE ) } (\cB) \) remains to be a vector bundle (though it may not be isomorphic to \( \cB \)). Recall that \( c ( \cE ) \in \bZ \) depends only on \( \cE \) and that \( T _{ c( \cE ) } \) strictly decreases the length of \( \cE \).

\pref{cr:constructibility} is known for exceptional collections of vector bundles by a slight generalization \pref{th:Constructibility for bundle collections} of a result by Kuleshov in \cite{MR1604186}. Applying it to
\(
    b ( \cEbar )
\),
we immediately obtain the proof of \pref{cr:constructibility} for \( \cEbar \).

\pref{sc:Braid group acts transitively on the set of full exceptional collections} is devoted to the proof of \pref{th:transitivity}. We use the deformation of \(\hirzebruchtwo\) to \(\quadric\). The difficulty is that there are infinitely many exceptional objects on \( \hirzebruchtwo \) which deform to the same exceptional object on \( \quadric \) (non-uniqueness of the specialization, which is translated into the \emph{non-separatedness} of the moduli space of semiorthogonal decompositions introduced in \cite{2020arXiv200203303B}). Actually, two exceptional objects have the same deformation to \( \quadric \) if and only if they have the same class in \( \kgr{ \hirzebruchtwo }\) (up to shifts by \( 2 \bZ \)).

In \pref{st:reduction to numerically standard collection} of the proof, we use the fact that the corresponding result is already known by \pref{th:Kuleshov and Orlov} \eqref{it:BP Conjecture is true for del Pezzo} for the del Pezzo surface \(\quadric\). Since deformation of exceptional collections commutes with mutations, the result for \( \quadric \) immediately implies that for any exceptional collection of length \(4\) on \(\hirzebruchtwo\) there is a sequence of mutations which brings it to a collection which is numerically equivalent to the standard collection \(\cEstd\) (i.e., having the same classes as \( \cEstd \) in \( \kgr{ \hirzebruchtwo } \).  See \eqref{eq:standard collection} for the definition of \(\cEstd\)).

It remains to show that an exceptional collection \( \cEbar \) on \( \hirzebruchtwo \) which is numerically equivalent to \( \cEstd \) can be sent to \( \cEstd \) by mutations. In \pref{st:from numerically standard to standard}, as an intermediate step, we find \( b \in B \) such that \( \cEbar = b ( \cEstd ) \). We construct such \( b \) again in one-by-one manner.

In \pref{st:replace b with mutations} we prove that \( b \) can be replaced by a sequence of mutations. Thanks to the fact that mutations commute with autoequivalences, it is enough to show the assertion only for \( b \in \{ T _{ 0 }, T _{ - 1 } \} \). Recall that \( T _{ 0 }, T _{ - 1 } \) generate \( B \). At this point the problem is concrete enough to be settled by hand.

\subsection{Some words on future directions}

Though \pref{cj:main conjecture} is stated for arbitrary weak del Pezzo surfaces, in this paper we restrict ourselves to the study of the case of \( \hirzebruchtwo \). This is partly because the proof is rather involved already in this case. We nevertheless think that the case of \( \hirzebruchtwo \) should serve as a paradigm for the further investigations of \pref{cj:main conjecture}.

The proof of \pref{th:main} is based on the classification of rigid sheaves on the \( ( - 2 ) \)-curve; i.e., the fundamental cycle of the minimal resolution of the \( A _{ 1 } \)-singularity. So far such classification is achieved only for the minimal resolution of type \(A\) singularities by \cite{Ishii-Uehara_ADC}. If one wants to push the strategy of this paper, it seems inevitable to establish the similar classification for the minimal resolution of type \( D \) and type \(E\) singularities. See \cite{MR3983387} for results in this direction.

A weak del Pezzo surface over an algebraically closed field \(\bfk\) is isomorphic to either
\(
    \quadric, \hirzebruchtwo
\),
or a blowup of \( \bP ^{ 2 } \) in at most eight points in almost general positions (see, say, \cite[Theorem 8.1.15, Corollary 8.1.24]{MR2964027}). Hence our strategy based on the deformation to del Pezzo surfaces, in principle, is applicable to all weak del Pezzo surfaces.

Structure theorems for exceptional collections on \( \bP ^{ 2 }\) play an important role in showing the contractibility of (the main component of) the space of Bridgeland stability conditions \( \Stab ( \bP ^{ 2 } )\) in \cite{MR3703470}. Our results should be similarly useful for studying \( \Stab ( \hirzebruchtwo ) \). More specifically, they should be very closely related to the relationship between \( \Stab ( \hirzebruchtwo )\) and \( \Stab ( \quadric )\); see \pref{rm:stability conditions} for details.

\subsection{Notation and convention}

We work over an algebraically closed field \( \bfk \), unless otherwise stated. To ease notation, we will write
\(
    \ext ^{ i } = \dim _{ \bfk } \Ext ^{ i },
    h ^{ i } = \dim _{ \bfk } H ^{ i },
    e ^{ p, q } _{ 2 } = \dim _{ \bfk } E _{ 2 } ^{ p , q },
\)
and so on. Below is a list of frequently used symbols.

\begin{tabularx}{\linewidth}{rX}
    \( \Sigma _{ d } \) & the Hirzebruch surface of degree \(d\) \eqref{eq:Hirzebruch}\\
    \( C \) & the \((-2)\)-curve of \( \hirzebruchtwo\)\\
    \(f\) & the divisor class of a fiber of the morphism \( \hirzebruchtwo \to \bP ^{ 1 }\) \eqref{eq:intersection form}\\
    \( \ast ^{ \vee }\) & the derived dual of \( \ast \in \derived ( \hirzebruchtwo )\) \eqref{eq:derived dual}\\
    \( T _{ a } \ ( \text{resp. } T ' _{ a }) \in \Auteq ( \hirzebruchtwo ) \) & the (inverse) spherical twist by \( \cO _{ C } ( a ) \) \eqref{eq:Ta and T'a}\\
    \(B < \Auteq ( \hirzebruchtwo )\) & the group of autoequivalences generated by spherical twists (\pref{df:subgroup of spherical twists})\\
    \( \ec _{ N }, \ecvb _{ N }, \fec, \fecvb \) & various sets of exceptional collections (\pref{df:sets of exceptional collections})\\
    \( \Br _{ N }\) (resp. \( G _{ N } \)) & the braid group on \( N \) strands \eqref{eq:braid group} (resp. the extension of \( \Br _{ N } \) by \( \bZ ^{ N } \) \eqref{equation:GN})\\
    \(\gen\) & the generalization map for exceptional collections from the central fiber to the generic fiber \eqref{eq:generalization map}\\
    \(\numec _{ N },\numfec\) & various sets of numerical exceptional collections (\pref{df:numerical exceptional collection})\\
    \(\cEstd \in \fecvb ( \hirzebruchtwo )\) & the standard full exceptional collection of \(\derived ( \hirzebruchtwo )\) (\pref{df:the standard exceptional collection on Sigma2})\\
    \( \cEstd _{ \xi } \in \fecvb ( \cX _{ \gen } )\) & the standard full exceptional collection of \( \derived ( \cX _{ \gen } )\) \eqref{eq:standard collection generic}\\
    \( \cohomology ( \cE ) \ (\text{resp. } \cH ^{ i } ( \cE ))\) & the total (resp. \(i\)-th) cohomology of \( \cE \in \derived ( X ) \) with respect to the standard t-structure \eqref{eq:cohomology object}\\
    \( i _{ 0 } = i _{ 0 } ( \cE ) \) & the unique index such that \( \Supp \cH ^{ i _{ 0 } } ( \cE ) = \Sigma _{ 2 } \) (\pref{df:i0})\\
    \( ( R, \frakm )\) & the complete local ring of the \( A _{ 1 } \)-singularity (\pref{nt:A1 singularity})\\
    \( ( 0 : I ) _{ M } \subseteq M \) & the maximal submodule of \( M \) annihilated by the ideal \( I \subseteq R \) \eqref{eq:annihilator}\\
    \( \cI _{ C } \subset \cO _{ \hirzebruchtwo } \) & the ideal sheaf of \( C \subset \hirzebruchtwo \) \eqref{equation:cIC}\\
    \( D ( \ast ) \) & the dual \( \bfk \)-vector space of \( \ast \) \eqref{equation:k-dual}\\
    \( \cH ^{ i _{ 0 } } ( \cE ) \simeq E ( \cE ) \oplus T ( \cE ) \) & the canonical decomposition into an exceptional sheaf and a torsion sheaf (\pref{lm:exceptional sheaf is a direct summand})\\
    \( \cT ( \cE ), \ \cF ( \cE ) \) & the torsion (resp. the torsion free) part of \( \cH ^{ i _{ 0 } } ( \cE ) \) (\pref{df:E and cF})\\
    \( a, s, t \in \bZ \) & integers specified by the irreducible decomposition of \( \cT \) \eqref{eq:decomposition of cT} (see also \pref{lm:exceptional sheaf is a direct summand})\\
    \( \ell ( \ast )\) & ``length of the torsion part of \(\ast\)'' at the generic point \( \gamma \) of \(C\) (\pref{df:length})\\
    \( b, r, s \in \bZ \) & integers specified by the irreducible decomposition of an exceptional vector bundle restricted to \(C\) (\pref{lm:exceptional vector bundle restricted to C})\\
\end{tabularx}

%
%
\subsection*{Acknowledgements}
During the preparation of this paper, A.I. was partially supported by
JSPS Grants-in-Aid for Scientific Research
(19K03444).
S.O. was partially supported by
JSPS Grants-in-Aid for Scientific Research
(16H05994,
16H02141,
16H06337,
18H01120,
20H01797,
20H01794).
H.U. was partially supported by
JSPS Grants-in-Aid for Scientific Research
(18K03249).

%
%

\section{Preliminaries}
\label{sc:Preliminaries}
%
%
\subsection{The Hirzebruch surfaces}

The Hirzebruch surface of degree
\(
    d \in \bZ _{ \ge 0}
\)
is the ruled surface
\begin{align}\label{eq:Hirzebruch}
    p \colon \Sigma _{ d } \coloneqq
    \bP _{ \bP ^{ 1 } }
    \left( \cO _{ \bP ^{ 1 } } \oplus \cO _{ \bP ^{ 1 } } ( d ) \right)
    \to
    \bP ^{ 1 }.
\end{align}

The history of Hirzebruch surfaces goes back, at least, to the first paper by Hirzebruch \cite{MR45384}. An explicit isotrivial degeneration of \(\Sigma _{ d }\) to \( \Sigma _{ d ' } \) for \( d > d ' \) and \( d - d ' \in 2 \bZ \) is constructed in \cite[p.86~Example]{MR153033}.
As a special case, there exists a smooth projective morphism (defined over \(\Spec \bZ\))
\begin{align}\label{eq:degeneration of Sigma0 to Sigma2}
    \cX \to \bA ^{ 1 } _{ t }
\end{align}
such that the fiber over
\(
    t = 0
\)
is isomorphic to
\(
    \hirzebruchtwo
\)
and the restriction of the family over the open subscheme
\(
    \bG _{ m } \hookrightarrow \bA ^{ 1 }
\)
is isomorphic to the trivial family
\(
    \left(\quadric\right) \times \bG _{ m } \stackrel{\pr _{ 2 }}{\to} \bG _{ m }
\).

In this paper we investigate the bounded derived category of coherent sheaves on
\(
    \hirzebruchtwo
\),
which is the most basic example of weak del Pezzo surfaces. We let
\(
    C \subset \hirzebruchtwo
\)
denote the unique negative curve, and
\(
    f
\)
the (linear equivalence class of) the fiber of \( p \).
Recall that
\begin{align}
    \Pic \hirzebruchtwo = \bZ C \oplus \bZ f,
\end{align}
where
\begin{align}\label{eq:intersection form}
    C ^{ 2 } = - 2,
    f ^{ 2 } = 0,
    C . f = 1.
\end{align}
The anti-canonical bundle is given by
\begin{align}
    - K _{ \hirzebruchtwo } = 2 C + 4 f.
\end{align}

%
%
\subsection{Derived category, spherical twist, and the autoequivalence group of \( \hirzebruchtwo\)}

\begin{definition}
For a quasi-compact scheme \( Y \), we let
\( \Perf Y \) denote the perfect derived category of \( Y \) with the standard structure of a triangulated category. When \( Y \) is equipped with a morphism to \( \Spec \bfk \), we think of
\( \Perf Y \)
as a triangulated \( \bfk \)-linear category. It comes with the natural symmetric monoidal structure given by the tensor product over
\(
    \cO _{ Y }
\),
but we do not take it into account unless otherwise stated.

When \( Y \) is a smooth and projective variety over a field \( \bfk \), we identify
\( \Perf Y \) with the bounded derived category \( \derived ( Y ) \) of coherent sheaves on
\( Y \).

The following equivalence of tensor triangulated categories
\begin{align}\label{eq:derived dual}
    {} ^{ \vee } \colon
    \left( \Perf Y \right) ^{ \op } \simto \Perf Y;\quad
    \cE \mapsto \cE ^{ \vee } \coloneqq \bR\cHom _{ Y } ( \cE, \cO _{ Y } )
\end{align}
will be called the derived dual.
\end{definition}

One can easily verify that there exits a canonical natural isomorphism
\begin{align}\label{eq:double dual}
    \id \stackrel{\sim}{\Rightarrow} {} ^{ \vee \vee }.
\end{align}

\begin{definition}
For smooth projective varieties
\(
    X, Y
\)
over \( \Spec \bfk \)
and an object
\(
    K \in \derived ( X \times _{ \bfk } Y )
\),
the integral transform by the kernel \( K \) will be denoted and defined as follows.
\begin{align}
    \Phi _{ K }
    \coloneqq
    \Phi _{ K } ^{ X \to Y }
    \colon
    \derived ( X ) \to \derived ( Y );
    \quad
    E \mapsto \bR p _{ Y \ast } \left( p _{ X } ^{ \ast } E
    \otimes _{ X \times _{ \bfk } Y } ^{ \bL } K \right)
\end{align}
\end{definition}

Let
\(
    X
\)
be a smooth projective variety over a field \( \bfk \).
Recall that an object
\(
    \alpha \in \derived ( X )
\)
is \emph{spherical} if
\(
    \alpha \otimes _{ \cO _{ X } } \omega _{ X }
    \simeq
    \alpha
\)
and
\(
    \RHom _{ X } ( \alpha, \alpha )
    \simeq
    \bfk \oplus \bfk [ - \dim X ]
\).
The \emph{spherical twist} by \( \alpha \)
is the endofunctor
\begin{align}
    T _{ \alpha }
    \coloneqq
    \Phi _{ K _{ \alpha } }
\end{align}
defined by the kernel
\(
    K _{ \alpha }
    \coloneqq
    \cone \left( \alpha ^{ \dual } \boxtimes \alpha
    \xrightarrow{ \ev } \cO _{ \Delta _{ X } } \right)
\).

Consider the exchange automorphism
\begin{align}
    \swap \colon X \times X \to X \times X; \, (x, y) \mapsto (y,x).
\end{align}
Recall that
\begin{align}
    \left( \swap^* K _{ \alpha } \right) ^{ \dual } \otimes p _{ 1 } ^{ \ast } \omega _{ X } [ \dim X ]
\end{align}
is called the right adjoint kernel of \(K _{ \alpha }\) and it enjoys the following adjoint property (see, say, \cite[Definition 5.7]{MR2244106}).
\begin{align}
    T _{ \alpha } = \Phi _{ K _{ \alpha } } \dashv
    \Phi _{ \left( \swap^* K _{ \alpha } \right) ^{ \dual } \otimes p _{ 1 } ^{ \ast } \omega _{ X } [ \dim X ] } \eqqcolon T ' _{ \alpha }
\end{align}
It follows that
\(
    T _{ \alpha }
\)
is an autoequivalence, so that
\(
    T ' _{ \alpha } \simeq T _{ \alpha } ^{ - 1 }
\).
Note that there exists the obvious isomorphism
\begin{align}
    \swap ^{ \ast } K _{ \alpha } \simeq K _{ \alpha ^{ \vee } },
\end{align}
so that
\begin{align}\label{eq:a description of T'}
    T ' _{ \alpha ^{ \vee } } \simeq \Phi _{ K _{ \alpha } ^{ \dual } \otimes p _{ 1 } ^{ \ast } \omega _{ X } [ \dim X ]}.
\end{align}

A typical example of a spherical object on \( \hirzebruchtwo \) is
\(
    \cO _{ C } ( a )
\)
for
\(
    a \in \bZ
\).
The corresponding (inverse) spherical twist will be denoted as follows, for short.
\begin{align}\label{eq:Ta and T'a}
    T _{ a } \coloneqq
    T _{ \cO _{ C } ( a ) },\quad
    T ' _{ a } \coloneqq
    T _{ \cO _{ C } ( a ) } ^{ - 1}
\end{align}

By definition, for each spherical object
\(
    \alpha
\)
and any object
\(
    E \in \derived ( X )
\),
there are standard triangles as follows.
To ease notation, we simply let \( \otimes _{ \cO _{ X } }\) denote the \emph{derived} tensor product.
The morphisms \( \varepsilon \) and \( \eta \) are the evaluation and the coevaluation maps respectively, both of which are obtained from the standard adjoint pair of functors
\(
    - \otimes _{ \cO _{ X } } \ast
    \dashv
    \RHom _{ X } ( \ast, - )
\).

\begin{align}
    \RHom _{ X } \left( \alpha, E \right) \otimes _{ \bfk } \alpha
    \xrightarrow{\varepsilon}
    E
    \to
    T _{ \alpha } ( E )
    \xrightarrow{ + 1 }\label{eq:triangle of spherical twist}\\
    T ' _{ \alpha } ( E )
    \to
    E
    \xrightarrow{ \eta }
    \RHom _{ X } \left( E, \alpha \right) ^{ \vee } \otimes _{ \bfk } \alpha
    \xrightarrow{ + 1 }\label{eq:triangle of inverse spherical twist}
\end{align}

\begin{lemma}[{\cite[Lemma~4.14]{Ishii-Uehara_ADC}}]\label{lm:conjugation of spherical twist}
For each
\(
    \Phi \in \Auteq ( X )
\), there is an isomorphism of autoequivalences as follows.
\begin{align}
    \Phi \circ T _{ \alpha } \circ \Phi ^{ - 1 }
    \simeq
    T _{ \Phi ( \alpha ) }
\end{align}
In particular, for any
\(
    a, m \in \bZ
\)
it holds that
\begin{align}\label{eq:exchanging Ta and O(C)}
    \left( \cO _{ \hirzebruchtwo } ( m C ) \otimes _{ \cO _{ \hirzebruchtwo } } - \right)
    \circ
    T _{ a }
    \simeq
    T _{ a - 2 m }
    \circ
    \left( \cO _{ \hirzebruchtwo } ( m C ) \otimes _{ \cO _{ \hirzebruchtwo } } - \right).
\end{align}
\end{lemma}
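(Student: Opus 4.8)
The plan is to read off the conjugation formula directly from the functorial triangle \eqref{eq:triangle of spherical twist} which characterises the spherical twist, and then to extract the stated special case by a one-line computation on \( C \). First one has to make sure that \( \Phi ( \alpha ) \) is again a spherical object, so that \( T _{ \Phi ( \alpha ) } \) is defined. This is standard: any autoequivalence \( \Phi \) of \( \derived ( X ) \) for \( X \) smooth projective commutes with the Serre functor \( S _{ X } = ( - \otimes _{ \cO _{ X } } \omega _{ X } ) [ \dim X ] \) up to natural isomorphism, so \( \Phi ( \alpha ) \otimes _{ \cO _{ X } } \omega _{ X } \simeq \Phi ( \alpha \otimes _{ \cO _{ X } } \omega _{ X } ) \simeq \Phi ( \alpha ) \); and \( \RHom _{ X } ( \Phi ( \alpha ), \Phi ( \alpha ) ) \simeq \RHom _{ X } ( \alpha, \alpha ) \simeq \bfk \oplus \bfk [ - \dim X ] \) since \( \Phi \) is fully faithful.

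Next I would apply \( \Phi \) to the triangle \eqref{eq:triangle of spherical twist} for an arbitrary \( E \). Being a \( \bfk \)-linear exact functor, \( \Phi \) commutes with \( V \otimes _{ \bfk } ( - ) \) for any complex of \( \bfk \)-vector spaces \( V \), and full faithfulness provides a natural isomorphism \( \RHom _{ X } ( \alpha, E ) \simeq \RHom _{ X } ( \Phi ( \alpha ), \Phi ( E ) ) \). The evaluation map \( \varepsilon \) is the counit of the standard adjunction \( ( - ) \otimes _{ \bfk } \RHom _{ X } ( \alpha, - ) \dashv \RHom _{ X } ( \alpha, - ) \) wait, more precisely it is the morphism obtained from the identity of \( \RHom _{ X } ( \alpha, - ) \) by adjunction; since \( \Phi \) is an equivalence it carries this adjunction to the corresponding one for \( \Phi ( \alpha ) \), so \( \Phi ( \varepsilon ) \) is identified with the evaluation map for the pair \( ( \Phi ( \alpha ), \Phi ( E ) ) \). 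Therefore \( \Phi \) sends \eqref{eq:triangle of spherical twist} to a distinguished triangle whose first two terms and first arrow coincide with those defining \( T _{ \Phi ( \alpha ) } ( \Phi ( E ) ) \), whence \( \Phi ( T _{ \alpha } ( E ) ) \simeq T _{ \Phi ( \alpha ) } ( \Phi ( E ) ) \).

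To upgrade this pointwise isomorphism to an isomorphism of functors \( \Phi \circ T _{ \alpha } \simeq T _{ \Phi ( \alpha ) } \circ \Phi \) (equivalently \( \Phi \circ T _{ \alpha } \circ \Phi ^{ - 1 } \simeq T _{ \Phi ( \alpha ) } \)), I would pass to Fourier--Mukai kernels, using that every autoequivalence of \( \derived ( X ) \) for \( X \) smooth projective is of Fourier--Mukai type. Writing \( \Phi = \Phi _{ P } \) and \( T _{ \alpha } = \Phi _{ K _{ \alpha } } \), the conjugate corresponds to the convolution \( P \star K _{ \alpha } \star P ^{ - 1 } \), and the argument above performed on kernels -- using \( P \star \cO _{ \Delta _{ X } } \star P ^{ - 1 } \simeq \cO _{ \Delta _{ X } } \), the identification of \( P \star ( \alpha ^{ \vee } \boxtimes \alpha ) \star P ^{ - 1 } \) with \( \Phi ( \alpha ) ^{ \vee } \boxtimes \Phi ( \alpha ) \), and the cone description of \( K _{ \alpha } \) -- identifies this convolution with \( K _{ \Phi ( \alpha ) } \). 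The \textbf{main obstacle} is exactly this functoriality/naturality bookkeeping: checking that the evaluation morphism is respected by \( \Phi \) and that the cone can be formed functorially (equivalently, carrying out the kernel-level computation carefully); everything else is formal.

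Finally, for the displayed special case take \( \Phi = \Phi _{ m } \coloneqq \cO _{ \hirzebruchtwo } ( m C ) \otimes _{ \cO _{ \hirzebruchtwo } } ( - ) \), an autoequivalence with inverse \( \Phi _{ - m } \). Since \( C \simeq \bP ^{ 1 } \) and \( \deg ( \cO _{ \hirzebruchtwo } ( m C ) | _{ C } ) = ( m C ) . C = m C ^{ 2 } = - 2 m \) by \eqref{eq:intersection form}, we get \( \cO _{ \hirzebruchtwo } ( m C ) | _{ C } \simeq \cO _{ C } ( - 2 m ) \), hence \( \Phi _{ m } ( \cO _{ C } ( a ) ) \simeq \cO _{ C } ( a - 2 m ) \). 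Applying the conjugation formula with \( \alpha = \cO _{ C } ( a ) \) yields \( \Phi _{ m } \circ T _{ a } \circ \Phi _{ m } ^{ - 1 } \simeq T _{ \Phi _{ m } ( \cO _{ C } ( a ) ) } = T _{ a - 2 m } \), which is precisely \eqref{eq:exchanging Ta and O(C)}.
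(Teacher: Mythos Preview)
Your proposal is correct and follows essentially the same approach as the paper: both use the defining triangle~\eqref{eq:triangle of spherical twist} to verify the isomorphism objectwise, and both invoke Fourier--Mukai representability of autoequivalences (Orlov's theorem) to promote this to an isomorphism of functors. The paper is terser---it simply asserts that a Fourier--Mukai autoequivalence agreeing with the identity on all objects must be the identity, and leaves the triangle check to the reader---whereas you spell out the sphericity of \( \Phi(\alpha) \), the compatibility of \( \Phi \) with the evaluation map, and the intersection-number computation \( \cO_{\hirzebruchtwo}(mC)|_C \simeq \cO_C(-2m) \) behind the special case, none of which the paper makes explicit.
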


\begin{proof}
    Since \( X \) is a smooth projective variety over \( \bfk \), any autoequivalence \( \Psi \) of
    \(
        \derived ( X )
    \)
    is a Fourier-Mukai transform; i.e., it is isomorphic to the integral transform by an appropriate kernel by~\cite[Theorem~2.2]{Orlov_EDCKS}. It then follows that \( \Psi \) is isomorphic to \( \id _{ \derived ( X ) } \) if and only if \( \Psi ( E ) \simeq E \) holds for any \( E \in \derived ( X ) \). With this in mind, one can confirm the assertion by using~\eqref{eq:triangle of spherical twist}.
\end{proof}

The operation of taking duals is related to (inverse) spherical twists nicely.

\begin{lemma}\label{lm:spherical twist and dual}
\begin{enumerate}
\item
For any spherical object    
\(
    \alpha \in \derived ( X )
\),
there exists a natural isomorphism of functors
\begin{align}\label{eq:dual and inverse}
    \left( T _{ \alpha } - \right) ^{ \vee }
    \simeq
    T ' _{ \alpha ^{ \vee } } \left( - ^{ \vee } \right).
\end{align}

\item
When
\(
    X = \hirzebruchtwo
\),
for any \( a \in \bZ \), there exists a natural isomorphism of functors
\begin{align}\label{eq:dual and spherical twists}
    \left( T _{ a } - \right) ^{ \vee }
    \simeq
    T ' _{ - 2 - a } \left( - ^{ \vee } \right).
\end{align}
\end{enumerate}
\end{lemma}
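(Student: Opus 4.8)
The plan is to reduce the statement to an identity about Fourier--Mukai kernels and then invoke Grothendieck--Verdier duality. Since \(X\) is smooth and projective, \(\derived(X)=\Perf X\), every object is perfect, and the derived dual \((-)^{\vee}\) is compatible with pullbacks and derived tensor products; relative duality is then available without finiteness caveats. I will write \(T_\alpha=\Phi_{K_\alpha}\) and, matching the conventions of the excerpt, let \(p_1\) (resp.\ \(p_2\)) denote the projection \(X\times X\to X\) onto the source (resp.\ target) factor, so that \(\Phi_K(E)=\bR p_{2\ast}\bigl(p_1^{\ast}E\otimes K\bigr)\) for \(K\in\derived(X\times X)\).

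The heart of part~(1) is the general formula
\[
\bigl(\Phi_K(E)\bigr)^{\vee}\ \simeq\ \Phi_{K^{\vee}\otimes p_1^{\ast}\omega_X[\dim X]}\bigl(E^{\vee}\bigr),
\]
natural in \(E\). To prove it I would apply Grothendieck--Verdier duality for the proper morphism \(p_2\) to \(\bR\cHom_X\bigl(\bR p_{2\ast}(p_1^{\ast}E\otimes K),\cO_X\bigr)\), using \(p_2^{!}\cO_X\simeq p_1^{\ast}\omega_X[\dim X]\); then, by perfectness, split \(p_1^{\ast}E\) out of the resulting internal Hom on \(X\times X\), which turns it into \(p_1^{\ast}(E^{\vee})\otimes K^{\vee}\otimes p_1^{\ast}\omega_X[\dim X]\); and finally regroup. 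Specializing to \(K=K_\alpha\), the kernel on the right becomes \(K_\alpha^{\vee}\otimes p_1^{\ast}\omega_X[\dim X]\), which by~\eqref{eq:a description of T'} is precisely the kernel representing \(T'_{\alpha^{\vee}}\); this gives~\eqref{eq:dual and inverse}. As a cross-check one may instead dualize the triangle~\eqref{eq:triangle of spherical twist} and compare with~\eqref{eq:triangle of inverse spherical twist} applied to \(\alpha^{\vee}\), using that \((-)^{\vee}\) is a triangulated anti-equivalence, but the kernel computation is what delivers naturality since \(\cone\) is not functorial.

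For part~(2), with \(X=\hirzebruchtwo\) and \(\alpha=\cO_C(a)\), it remains to identify \(\alpha^{\vee}\). Grothendieck duality for the closed immersion \(i\colon C\hookrightarrow\hirzebruchtwo\) gives \(\cO_C(a)^{\vee}\simeq i_{\ast}\bR\cHom_C\bigl(\cO_C(a),i^{!}\cO_{\hirzebruchtwo}\bigr)\); since \(C\) is an effective divisor, \(i^{!}\cO_{\hirzebruchtwo}\simeq\bigl(\omega_C\otimes i^{\ast}\omega_{\hirzebruchtwo}^{-1}\bigr)[-1]=\bigl(\cO_{\hirzebruchtwo}(C)|_C\bigr)[-1]\), the line bundle of degree \(C^2=-2\) on \(C\simeq\bP^1\) placed in cohomological degree \(1\). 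Hence \(\cO_C(a)^{\vee}\simeq\cO_C(-a-2)[-1]\). Because \(K_\beta=\cone(\beta^{\vee}\boxtimes\beta\to\cO_{\Delta})\) is insensitive to shifts of \(\beta\), we get \(T'_{\cO_C(a)^{\vee}}=T'_{\cO_C(-a-2)[-1]}=T'_{-a-2}=T'_{-2-a}\) in the notation~\eqref{eq:Ta and T'a}, and~\eqref{eq:dual and spherical twists} follows from~\eqref{eq:dual and inverse}.

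I do not anticipate a genuine obstacle: the lemma is essentially formal once one is willing to work with kernels. The only point requiring care is bookkeeping — checking that the chain of canonical isomorphisms in the kernel computation (relative duality, the splitting of \(p_1^{\ast}E\) out of the internal Hom, compatibility of \((-)^{\vee}\) with \(p_1^{\ast}\) and \(\otimes\)) is natural in \(E\), so that the conclusion is an isomorphism of \emph{functors} rather than merely a pointwise one. On a smooth projective variety all of these are standard properties of perfect complexes.
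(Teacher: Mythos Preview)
Your proposal is correct and follows essentially the same route as the paper: both dualize the Fourier--Mukai transform via Grothendieck--Verdier duality for \(p_2\), arrive at the kernel \(K_\alpha^{\vee}\otimes p_1^{\ast}\omega_X[\dim X]\), and invoke~\eqref{eq:a description of T'}; for part~(2) both compute \(\cO_C(a)^{\vee}\simeq\cO_C(-2-a)[-1]\) and use shift-invariance of the twist. The only cosmetic difference is that the paper writes out the duality step by first twisting source and target by \(\omega_X[\dim X]\) and then citing \cite[Chapter~VII, Corollary~4.3~b)]{MR0222093}, whereas you phrase it directly via \(p_2^{!}\cO_X\simeq p_1^{\ast}\omega_X[\dim X]\).
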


\begin{proof}
There is a natural isomorphism
\begin{align}
    \left( T_\alpha - \right)^\vee
    =
    \left( \Phi_{K_\alpha} ( - ) \right)^\vee
    &
    =
    \cRHom _{ X }
        \left(
            \bR p_{2\ast}(p_1^{\ast} - \otimes^{\bL}_{X \times _k X} K_\alpha), \cO _{ X }
        \right)\\
    &
    \simeq
    \cRHom _{ X }
        \left(
            \bR p_{2\ast} \left( - \boxtimes \omega _{ X } \otimes^{\bL}_{X \times _k X} K_\alpha [ \dim X ] \right),
            \omega _{ X } [ \dim X ]            
        \right)\\
    &
    \stackrel{ \ast }{\simeq}
    \bR p_{2\ast} \cRHom _{ X \times _{ \bfk } X }
        \left(
            - \boxtimes \omega _{ X } \otimes^{\bL}_{X \times _k X} K_\alpha [ \dim X ],
            \omega _{ X } \boxtimes \omega _{ X } [ 2 \dim X ]
        \right)\\
    &
    \simeq
    \bR p _{ 2 \ast }
    \left(
        p _{ 1 } ^{ \ast } - ^{ \vee } \otimes _{ X \times _{ \bfk } X } \left( K _{ \alpha } ^{ \vee } \otimes p _{ 1 } ^{ \ast } \omega _{ X } [ \dim X ] \right)
    \right)\\
	& \simeq \Phi_{K_\alpha^\vee \otimes p_1 ^\ast \omega _{ \hirzebruchtwo }  [ \dim X ] } ( - ^{ \vee } ),
\end{align}
where the isomorphism \(\stackrel{\ast}{\simeq}\) follows from
\cite[Chapter VII, Corollary 4.3 b)]{MR0222093}.

Comparing this with
\eqref{eq:a description of T'}, we obtain \eqref{eq:dual and inverse}.

The second item follows from \eqref{eq:dual and inverse} and the isomorphism
\(
    \cO _{ C } ( a ) ^{ \vee } [ 1 ]
    \simeq
    \cO _{ C } ( - 2 - a )
\).
\end{proof}

\begin{lemma}[{\cite[Lemma~4.15~(i)~(2)]{Ishii-Uehara_ADC}}]
There is an isomorphism of autoequivalences
\begin{align}\label{eq:Ta Ta+1 = O(C)}
    T _{ a } \circ T _{ a + 1 }
    \simeq
    \cO _{ \hirzebruchtwo } ( C ) \otimes _{ \cO _{ \hirzebruchtwo } } -,
\end{align}
which implies
\begin{align}\label{eq:square of inverse twists as square of twists and O(-2C)}
    \begin{aligned}
    T _{ a } ' \simeq T _{ a + 1 } \circ \cO _{ \hirzebruchtwo } ( - C ) \otimes _{ \cO _{ \hirzebruchtwo } } -,\\
    { T _{ a } ' } ^{ 2 } \simeq T _{ a + 1 } \circ T _{ a + 3 } \circ
    \cO _{ \hirzebruchtwo } ( - 2 C ) \otimes _{ \cO _{ \hirzebruchtwo } } -.
    \end{aligned}
\end{align}
\end{lemma}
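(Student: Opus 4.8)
\emph{Proof plan.}

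First, \eqref{eq:square of inverse twists as square of twists and O(-2C)} is a formal consequence of \eqref{eq:Ta Ta+1 = O(C)}, so I would dispose of it immediately. Abbreviating $\cO_{\hirzebruchtwo}(C)\otimes_{\cO_{\hirzebruchtwo}}-$ by $L$, the relation $T_a\circ T_{a+1}\simeq L$ gives $T_a'\simeq T_a^{-1}\simeq T_{a+1}\circ L^{-1}=T_{a+1}\circ\bigl(\cO_{\hirzebruchtwo}(-C)\otimes-\bigr)$; squaring this and using \eqref{eq:exchanging Ta and O(C)} with $m=-1$ to rewrite $\bigl(\cO_{\hirzebruchtwo}(-C)\otimes-\bigr)\circ T_{a+1}\simeq T_{a+3}\circ\bigl(\cO_{\hirzebruchtwo}(-C)\otimes-\bigr)$ then yields $(T_a')^2\simeq T_{a+1}\circ T_{a+3}\circ\bigl(\cO_{\hirzebruchtwo}(-2C)\otimes-\bigr)$. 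So the content is \eqref{eq:Ta Ta+1 = O(C)}.

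For \eqref{eq:Ta Ta+1 = O(C)}, both sides are autoequivalences, hence Fourier--Mukai transforms, so by the criterion used in the proof of \pref{lm:conjugation of spherical twist} it is enough to prove $T_a\bigl(T_{a+1}(E)\bigr)\simeq\cO_{\hirzebruchtwo}(C)\otimes E$ for every $E\in\derived(\hirzebruchtwo)$ -- equivalently, that the Fourier--Mukai kernel of $T_a\circ T_{a+1}$ is $\Delta_{\ast}\cO_{\hirzebruchtwo}(C)$. The one genuine computation is
\[
    T_a\bigl(\cO_C(a+1)\bigr)\simeq\cO_C(a-1)[1].
\]
To obtain this, a local-to-global spectral sequence together with $\omega_{\hirzebruchtwo}|_{C}\simeq\cO_C$ and $N_{C/\hirzebruchtwo}\simeq\cO_C(-2)$ shows $\RHom_{\hirzebruchtwo}\bigl(\cO_C(a),\cO_C(a+1)\bigr)\simeq\bfk^{2}$ concentrated in degree $0$, so the triangle \eqref{eq:triangle of spherical twist} for $T_a$ reads $\cO_C(a)^{\oplus 2}\xrightarrow{\ \mathrm{ev}\ }\cO_C(a+1)\to T_a(\cO_C(a+1))\xrightarrow{+1}$. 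Under $\Hom_{\hirzebruchtwo}(\cO_C(a),\cO_C(a+1))\simeq H^{0}(\cO_{\bP^{1}}(1))$ the map $\mathrm{ev}$ is the $\cO_C(a)$-twist of the surjection $\cO_{\bP^{1}}^{\oplus 2}\to\cO_{\bP^{1}}(1)$ from the Euler sequence on $C\simeq\bP^{1}$, whose kernel is $\cO_{\bP^{1}}(-1)$; hence $\mathrm{ev}$ is surjective with kernel $\cO_C(a-1)$ and $\cone(\mathrm{ev})\simeq\cO_C(a-1)[1]$. Combined with the standard $T_{a+1}(\cO_C(a+1))\simeq\cO_C(a+1)[-1]$ this already gives $T_aT_{a+1}(\cO_C(a+1))\simeq\cO_C(a-1)\simeq\cO_{\hirzebruchtwo}(C)\otimes\cO_C(a+1)$, confirming \eqref{eq:Ta Ta+1 = O(C)} on this object.

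To finish, I would convolve the defining triangle $\cO_C(a+1)^{\vee}\boxtimes\cO_C(a+1)\to\cO_{\Delta}\to K_{a+1}\xrightarrow{+1}$ with $K_a$. Since the convolution of $\cO_{\Delta}$ with $K_a$ is $K_a$, and the convolution of the rank-one kernel $\cO_C(a+1)^{\vee}\boxtimes\cO_C(a+1)$ with $K_a$ is $\cO_C(a+1)^{\vee}\boxtimes T_a\bigl(\cO_C(a+1)\bigr)=\cO_C(a+1)^{\vee}\boxtimes\cO_C(a-1)[1]$ by the computation above, this exhibits the kernel of $T_a\circ T_{a+1}$ as $\cone(f)$ for a map $f\colon\cO_C(a+1)^{\vee}\boxtimes\cO_C(a-1)[1]\to K_a$. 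Substituting $K_a=\cone\bigl(\cO_C(a)^{\vee}\boxtimes\cO_C(a)\xrightarrow{\mathrm{ev}_a}\cO_{\Delta}\bigr)$ and applying the octahedral axiom to $f$ together with the connecting map $K_a\to\bigl(\cO_C(a)^{\vee}\boxtimes\cO_C(a)\bigr)[1]$ reduces the claim to computing the composite $\cO_C(a+1)^{\vee}\boxtimes\cO_C(a-1)\to\cO_C(a)^{\vee}\boxtimes\cO_C(a)$ -- it is assembled from the two Euler sequences on $C$ -- and identifying the resulting cone with $\Delta_{\ast}\cO_{\hirzebruchtwo}(C)$. This identification is essentially forced: the two functors induce the same map on $\kgr{\hirzebruchtwo}$ and both restrict to the identity on $\hirzebruchtwo\setminus C$, so only the gluing data remains. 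I expect this octahedral computation -- keeping track of all the connecting maps rather than merely matching $K_0$-classes or values on simple objects -- to be the main obstacle. Alternatively the argument can be run object by object, composing the triangles \eqref{eq:triangle of spherical twist} for $T_{a+1}$ and then $T_a$, using $\cO_{\hirzebruchtwo}(-C)\otimes\cO_C(m)\simeq\cO_C(m+2)$ and the triangle $\cO_{\hirzebruchtwo}(-C)\otimes E\to E\to\cO_C\otimes E\xrightarrow{+1}$; this replaces the kernel bookkeeping with a diagram chase involving $\RHom_{\hirzebruchtwo}(\cO_C(m),E)$ and the derived restriction of $E$ to $C$.
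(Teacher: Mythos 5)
The paper offers no proof of this lemma at all: it is imported verbatim from \cite[Lemma~4.15~(i)~(2)]{Ishii-Uehara_ADC}, so there is no internal argument to compare against. Judged on its own terms, your plan is sound in its reductions but has a genuine gap at the decisive step. What you do carry out is correct: the derivation of \eqref{eq:square of inverse twists as square of twists and O(-2C)} from \eqref{eq:Ta Ta+1 = O(C)} via \eqref{eq:exchanging Ta and O(C)} is fine, as is the computation \( T_a(\cO_C(a+1))\simeq\cO_C(a-1)[1] \) and hence the verification of \eqref{eq:Ta Ta+1 = O(C)} on the single object \( \cO_C(a+1) \), and the convolution formula \( K_a\ast(A\boxtimes B)\simeq A\boxtimes T_a(B) \) is correctly applied.

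The gap is the identification of \( K_a\ast K_{a+1} \) with \( \Delta_{\ast}\cO_{\hirzebruchtwo}(C) \). You reduce it, via the octahedron, to a connecting-map computation and then assert the answer is ``essentially forced'' because both functors induce the same map on \( \kgr{\hirzebruchtwo} \) and restrict to the identity on \( \hirzebruchtwo\setminus C \). That criterion is invalid, and this very paper is organized around counterexamples to it: \( T_a^2 \) acts trivially on \( \kgr{\hirzebruchtwo} \) and is the identity away from \( C \), yet \( T_a^2\not\simeq\id \) --- this is the whole reason \( \btriv \) is nontrivial. So the ``gluing data'' you defer is precisely the content of the lemma. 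Concretely, two things remain to be proved: (i) that the composite map, which after unwinding the shifts is a section of \( \cO(1)\boxtimes\cO(1) \) on \( C\times C \) lying in a \(4\)-dimensional space of candidates, is a scalar multiple of the section cutting out the diagonal \( \Delta_C \), so that its cone is \( \Delta_{\ast}\cO_C(-2) \) up to shift; and (ii) that the extension of \( \Delta_{\ast}\cO_C(-2) \) by \( \cO_{\Delta} \) produced by the octahedron is the nonsplit one coming from \( 0\to\cO_{\Delta}\to\Delta_{\ast}\cO_{\hirzebruchtwo}(C)\to\Delta_{\ast}\cO_C(-2)\to0 \), rather than the trivial one. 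Neither is automatic, and (ii) is exactly the kind of connecting-map bookkeeping that distinguishes \( T_a^2 \) from the identity. Until these are done, your argument only shows that \( T_aT_{a+1} \) and \( \cO_{\hirzebruchtwo}(C)\otimes- \) agree on \( K_0 \) and on a few test objects, which is strictly weaker than the claim.
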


\begin{definition}
For a triangulated category \( \derived \),
we define the group
\(
    \Auteqtriv ( \derived )
\)
of \( K _{ 0 } \)-trivial autoequivalences by the following exact sequence.

\begin{align}
    1
    \to
    \Auteqtriv ( \derived )
    \to
    \Auteq  ( \derived )
    \to
    \Aut \left( \kgr{\derived} \right)
\end{align}
\end{definition}

The following well-known lemma, which follows from \eqref{eq:triangle of spherical twist} and \eqref{eq:triangle of inverse spherical twist}, asserts that spherical twists are categorification of root reflections. 
\begin{lemma}\label{lm:square of spherical twists act trivially on K0}
For any spherical object \( \alpha \in \derived (\hirzebruchtwo) \),
it holds that
\begin{align}
    T _{ \alpha} ^{ 2 },
    {T ' _{ \alpha}} ^{ 2 }
    \in
    \Auteqtriv ( \derived (\hirzebruchtwo) ).
\end{align}
\end{lemma}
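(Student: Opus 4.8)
The plan is to read off the action of $T_{\alpha}$ on $\kgr{\derived(\hirzebruchtwo)}$ directly from the defining triangle~\eqref{eq:triangle of spherical twist}, to observe that, since $\alpha$ is $2$-spherical on a surface, this induced action is a reflection and hence has order dividing $2$, and to conclude that $T_{\alpha}^{2}$ acts trivially on the Grothendieck group; the statement for $(T'_{\alpha})^{2}$ then follows formally from $T'_{\alpha}\simeq T_{\alpha}^{-1}$.

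In detail: since $\hirzebruchtwo$ is smooth and projective, the Euler form $\chi(-,-)$ descends to a bilinear pairing on $\kgr{\derived(\hirzebruchtwo)}$, and for a bounded complex $V^{\bullet}$ of finite-dimensional $\bfk$-vector spaces one has $[\,V^{\bullet}\otimes_{\bfk}\alpha\,]=\chi(V^{\bullet})\,[\alpha]$ in $\kgr{\derived(\hirzebruchtwo)}$. Applying $\kgr{-}$ to~\eqref{eq:triangle of spherical twist} therefore yields, for every $E\in\derived(\hirzebruchtwo)$,
\[
    [\,T_{\alpha}(E)\,]=[E]-\chi(\alpha,E)\,[\alpha];
\]
that is, $T_{\alpha}$ induces on $\kgr{\derived(\hirzebruchtwo)}$ the map $r_{\alpha}\colon x\mapsto x-\chi(\alpha,x)\,[\alpha]$. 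Sphericity gives $\RHom_{\hirzebruchtwo}(\alpha,\alpha)\simeq\bfk\oplus\bfk[-2]$, hence $\chi(\alpha,\alpha)=2$, and therefore $\chi\bigl(\alpha,r_{\alpha}(x)\bigr)=\chi(\alpha,x)-\chi(\alpha,x)\,\chi(\alpha,\alpha)=-\chi(\alpha,x)$, which shows $r_{\alpha}^{2}=\id$. Since $[\,T_{\alpha}^{2}(E)\,]=r_{\alpha}^{2}\bigl([E]\bigr)=[E]$ for all $E$, the autoequivalence $T_{\alpha}^{2}$ induces the identity on $\kgr{\derived(\hirzebruchtwo)}$ and thus lies in $\Auteqtriv(\derived(\hirzebruchtwo))$.

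For the inverse twist, recall from the paragraph preceding the lemma that $T'_{\alpha}\simeq T_{\alpha}^{-1}$, so the automorphism of $\kgr{\derived(\hirzebruchtwo)}$ induced by $T'_{\alpha}$ is $r_{\alpha}^{-1}=r_{\alpha}$, whence $(T'_{\alpha})^{2}$ induces $r_{\alpha}^{2}=\id$ as well. (Alternatively, one can run the same computation starting from~\eqref{eq:triangle of inverse spherical twist}, obtaining $[\,T'_{\alpha}(E)\,]=[E]-\chi(E,\alpha)\,[\alpha]$ and the identical conclusion.) There is no serious obstacle here; the only points needing a little care are that the Euler pairing is well defined on $\kgr{\derived(\hirzebruchtwo)}$, which uses smoothness and properness of $\hirzebruchtwo$, and that the sphericity hypothesis is used precisely to force $\chi(\alpha,\alpha)=2$, which is exactly what makes $r_{\alpha}$ an involution rather than a merely unipotent transformation.
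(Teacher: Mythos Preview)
Your proof is correct and follows exactly the approach the paper indicates: the paper states that the lemma ``follows from \eqref{eq:triangle of spherical twist} and \eqref{eq:triangle of inverse spherical twist}'' and remarks that spherical twists categorify root reflections, which is precisely the computation you carry out.
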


\begin{proposition}\label{pr:TaTb as product of O (m C ) and squares of twists}
For any
\(
    a, b \in \bZ
\),
there exists a sequence
\(
    a _{ 1 }, \dots, a _{ \ell } \in \bZ
\)
and
\(
    m \in \bZ
\)
such that
\begin{align}
    T _{ a } T _{ b }
    =
    \cO _{ \hirzebruchtwo } ( m C ) \otimes _{ \cO _{ \hirzebruchtwo } } -
    \circ T _{ a _{ \ell } } ^{ \pm 2 } \cdots T _{ a _{ 1 } } ^{ \pm 2 }.
\end{align}
\end{proposition}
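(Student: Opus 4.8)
The plan is to single out a convenient class of autoequivalences and check that it is closed under the operations we need. Call an autoequivalence \emph{admissible} if it is of the form
\(
    \cO_{\hirzebruchtwo}(mC)\otimes_{\cO_{\hirzebruchtwo}}(-)\circ T_{a_\ell}^{\pm2}\circ\cdots\circ T_{a_1}^{\pm2}
\)
for some \( m,a_1,\dots,a_\ell\in\bZ \); the statement to be proved is exactly that \( T_aT_b \) is admissible for all \( a,b\in\bZ \). The relation \eqref{eq:exchanging Ta and O(C)} lets one slide \( \cO_{\hirzebruchtwo}(mC)\otimes_{\cO_{\hirzebruchtwo}}(-) \) past any \( T_c^{\pm2} \), at the cost of shifting the index \( c \) by \( \mp2m \). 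Three immediate consequences: (i) it is irrelevant whether the line bundle twist is written on the far left or the far right; (ii) a composition of admissible autoequivalences is admissible (slide the middle line bundle twist out and merge the two words); (iii) the inverse of an admissible autoequivalence is admissible (reverse the word, negate the exponents — which remain \( \pm2 \) — and move the line bundle twist back to the left). Finally, \eqref{eq:Ta Ta+1 = O(C)} gives \( \cO_{\hirzebruchtwo}(C)\otimes_{\cO_{\hirzebruchtwo}}(-)=T_aT_{a+1} \) for every \( a \), so \( \cO_{\hirzebruchtwo}(C)\otimes_{\cO_{\hirzebruchtwo}}(-) \) is itself admissible.

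I would then prove the statement for \( b\ge a \) by induction on \( d:=b-a\ge0 \). The cases \( d=0 \) (so \( T_aT_b=T_a^2 \)) and \( d=1 \) (so \( T_aT_b=\cO_{\hirzebruchtwo}(C)\otimes_{\cO_{\hirzebruchtwo}}(-) \) by \eqref{eq:Ta Ta+1 = O(C)}) are clear. For \( d\ge2 \), inserting \( T_{a+1}^{-1}T_{a+1} \) and using \eqref{eq:Ta Ta+1 = O(C)} gives
\begin{align}
    T_aT_{a+d}
    &=
    \bigl(T_aT_{a+1}\bigr)\circ T_{a+1}^{-2}\circ\bigl(T_{a+1}T_{a+d}\bigr)\\
    &=
    \bigl(\cO_{\hirzebruchtwo}(C)\otimes_{\cO_{\hirzebruchtwo}}(-)\bigr)\circ T_{a+1}^{-2}\circ\bigl(T_{a+1}T_{a+d}\bigr),
\end{align}
and the remaining factor \( T_{a+1}T_{a+d} \) has strictly smaller difference \( d-1 \), hence is admissible by the inductive hypothesis; the right-hand side is then a composition of admissible autoequivalences and so is admissible. (One could instead obtain the \( d=2 \) case directly from \eqref{eq:square of inverse twists as square of twists and O(-2C)}, which yields \( T_aT_{a+2}=(T'_{a-1})^2\circ\cO_{\hirzebruchtwo}(2C)\otimes_{\cO_{\hirzebruchtwo}}(-) \), but this is not needed.)

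For \( b<a \), I would reduce to the case just treated. From \( T_{b-1}T_b=\cO_{\hirzebruchtwo}(C)\otimes_{\cO_{\hirzebruchtwo}}(-) \) (a case of \eqref{eq:Ta Ta+1 = O(C)}) we get \( T_b=T_{b-1}^{-1}\circ\cO_{\hirzebruchtwo}(C)\otimes_{\cO_{\hirzebruchtwo}}(-) \), so that
\begin{align}
    T_aT_b
    =
    T_a^2\circ\bigl(T_{b-1}T_a\bigr)^{-1}\circ\bigl(\cO_{\hirzebruchtwo}(C)\otimes_{\cO_{\hirzebruchtwo}}(-)\bigr);
\end{align}
here \( T_{b-1}T_a \) has difference \( a-(b-1)=a-b+1\ge2 \), hence is admissible by the previous paragraph, and therefore so is its inverse and the whole composition. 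I expect this last step to be the only genuine point of the proof: a naive ``mirror'' induction that peels a factor off the right of \( T_aT_b \) with \( b<a \) merely produces expressions conjugate to the original one (conjugating by \( \cO_{\hirzebruchtwo}(C)\otimes_{\cO_{\hirzebruchtwo}}(-) \) shifts both indices by the same amount), so no word ever gets shorter; the way out is to trade the ``wrongly ordered'' pair \( T_aT_b \) for the inverse of the ``correctly ordered'' pair \( T_{b-1}T_a \), at the expense of an extra \( T_a^2 \) and a line bundle twist.
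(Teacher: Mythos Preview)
Your proof is correct and follows essentially the same approach as the paper: both reduce to showing that \( T_aT_b \) lies in the subgroup generated by \( \cO_{\hirzebruchtwo}(C)\otimes_{\cO_{\hirzebruchtwo}}(-) \) and the squares \( T_c^2 \), treat the case \( a\le b \) by the identical induction on \( b-a \) via \( T_aT_b=(T_aT_{a+1})\,T_{a+1}^{-2}\,(T_{a+1}T_b) \), and reduce \( a>b \) to the previous case by inverting a correctly ordered pair. The only cosmetic difference is the identity used in that last reduction: the paper writes \( T_aT_b=T_a^2(T_bT_a)^{-1}T_b^2 \), which is slightly more symmetric and avoids the extra line bundle twist appearing in your \( T_aT_b=T_a^2(T_{b-1}T_a)^{-1}\circ\bigl(\cO_{\hirzebruchtwo}(C)\otimes_{\cO_{\hirzebruchtwo}}(-)\bigr) \).
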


\begin{proof}
By \eqref{eq:exchanging Ta and O(C)}, we may and will reduce the proof to that of the following claim.

\begin{claim}
\( T _{ a } T _{ b } \) is contained in the subgroup of \( \Auteq ( \derived ( \hirzebruchtwo ) ) \)
generated by
\(
    \cO _{ \hirzebruchtwo } ( C ) \otimes _{ \cO _{ \hirzebruchtwo } } -
\)
and
\(
    T _{ a } ^{ 2 }
\)
for all
\(
    a \in \bZ
\).
\end{claim}
In the rest, we prove this claim. Consider first the case
\(
    a \le b
\).
We induct on
\(
    b - a \ge 0
\).
If it is \( 0 \), we have nothing to show.
In general, we have
\begin{align}
    T _{ a } T _{ b }
    \simeq
    \left( T _{ a } T _{ a + 1 } \right) \left( { T ' _{ a + 1 } } ^{ 2 }
\right)    \left( T _{ a + 1 } T _{ b } \right).    
\end{align}
By \eqref{eq:Ta Ta+1 = O(C)}, the first term of the right hand side is isomorphic to
\( \cO _{ \hirzebruchtwo } ( C ) \otimes _{ \cO _{ \hirzebruchtwo } } - \).
By the induction hypothesis, the 3rd term is also contained in the subgroup mentioned in the claim.
Hence we are done with this case.

Now suppose that
\(
    a > b
\).
Then
\begin{align}
    T _{ a } T _{ b }
    \simeq
    T _{ a } ^{ 2 } \left( T _{ b } T _{ a } \right) ' T _{ b } ^{ 2 },
\end{align}
and we know that the middle term is contained in the subgroup as shown in the previous paragraph.
\end{proof}

\begin{definition}\label{df:subgroup of spherical twists}
    The subgroup of
    \(\ah\)
    generated by spherical twists will be denoted by
    \(
        B
    \).
\end{definition}

\begin{corollary}\label{cr:btriv is generated by squares}
    Fix any \( a _{ 0 } \in \bZ \). Then 
    elements of \( B \) are exhausted by those of the following form, where \( a _{ 1 }, \dots, a _{ n }, m \in \bZ \).
    \begin{align}
        \left( \cO _{ \hirzebruchtwo } ( m C ) \otimes _{ \cO _{ \hirzebruchtwo } } - \right)
        \circ
        \left(
            T _{ a _{ n } } ^{ \pm 2 }
            \circ
            \cdots
            \circ
            T _{ a _{ 1 } } ^{ \pm 2 }            
        \right)\label{eq:even}\\
        \left( \cO _{ \hirzebruchtwo } ( m C ) \otimes _{ \cO _{ \hirzebruchtwo } } - \right)
        \circ
        T _{ a _{ 0 } }
        \circ
        \left(
            T _{ a _{ n } } ^{ \pm 2 }
            \circ
            \cdots
            \circ
            T _{ a _{ 1 } } ^{ \pm 2 }            
        \right)\label{eq:odd}
    \end{align}

    Moreover, the normal subgroup
    \begin{align}
        \btriv \coloneqq B \cap \Auteqtriv ( \derived ( X ) ) \triangleleft B
    \end{align} is generated by
    \(
        \{ T _{ a } ^{ 2 } \mid a \in \bZ \} \subset \btriv
    \).
\end{corollary}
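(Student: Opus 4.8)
The plan is to prove both assertions simultaneously: first compute the image of $B$ in $\Aut(\kgr{\hirzebruchtwo})$, then identify the kernel with $\langle T_a^2\mid a\in\bZ\rangle$. Write $L\coloneqq\cO_{\hirzebruchtwo}(C)\otimes_{\cO_{\hirzebruchtwo}}-$ and let $\pi\colon\Auteq(\derived(\hirzebruchtwo))\to\Aut(\kgr{\hirzebruchtwo})$ be the natural homomorphism. Recall that $B$ is generated by the spherical twists $T_a$, $a\in\bZ$; since $T_{a-1}T_a=L$ by \eqref{eq:Ta Ta+1 = O(C)}, $B$ is equally generated by $L$ and the single twist $T_{a_0}$, and \eqref{eq:exchanging Ta and O(C)} gives the commutation rule $L^mT_aL^{-m}=T_{a-2m}$.

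\emph{Image of $B$.} By the standard triangle \eqref{eq:triangle of spherical twist}, $\pi(T_a)$ acts on $\kgr{\hirzebruchtwo}$ by $v\mapsto v-\chi(\cO_C(a),v)[\cO_C(a)]$, which is a nontrivial involution (it squares to the identity by \pref{lm:square of spherical twists act trivially on K0}, while $[\cO_C(a)]\neq0$ and $\chi(\cO_C(a),\cO_C(a))=2$). From $\pi(T_{a_0})\pi(T_{a_0+1})=\pi(L)$ we get $\pi(T_{a_0+1})=\pi(T_{a_0})\pi(L)$, and since $\pi(T_{a_0+1})^2=1$ this forces $\pi(T_{a_0})\pi(L)\pi(T_{a_0})=\pi(L)^{-1}$; feeding this into $\pi(T_{a-1})\pi(T_a)=\pi(L)$ yields $\pi(T_a)=\pi(L)^{a_0-a}\pi(T_{a_0})$ for all $a$. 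Hence $\{\pi(L)^m\mid m\in\bZ\}\cup\{\pi(L)^m\pi(T_{a_0})\mid m\in\bZ\}$ is a subgroup containing the generators $\pi(L),\pi(T_{a_0})$ of $\pi(B)$, so it equals $\pi(B)$. Moreover $\pi(L)$ has infinite order, since $[\cO_{\hirzebruchtwo}(mC)]\neq[\cO_{\hirzebruchtwo}]$ for $m\neq0$ (their difference has first Chern class $mC$); in particular $\pi(T_{a_0})\notin\langle\pi(L)\rangle$, so no element $\pi(L)^m\pi(T_{a_0})$ is trivial.

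\emph{The kernel.} The inclusion $\langle T_a^2\mid a\in\bZ\rangle\subseteq\btriv$ is \pref{lm:square of spherical twists act trivially on K0}. For the reverse, take $b\in\btriv$. Writing $b$ as a word in the $T_a^{\pm1}$, replacing each $T_a^{-1}$ by $T_{a+1}\circ(\cO_{\hirzebruchtwo}(-C)\otimes-)$ via \eqref{eq:square of inverse twists as square of twists and O(-2C)}, and moving all the factors $\cO_{\hirzebruchtwo}(\pm C)\otimes-$ to the left with \eqref{eq:exchanging Ta and O(C)}, we get $b=L^m\circ T_{d_1}\circ\cdots\circ T_{d_k}$. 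Applying $\pi$ and the previous paragraph, $\pi(b)=\pi(L)^{m'}\pi(T_{a_0})^k$ for some $m'$; as $\pi(b)=1$ and no $\pi(L)^j\pi(T_{a_0})$ is trivial, $k$ is even. Grouping the $T_{d_i}$ into consecutive pairs, applying \pref{pr:TaTb as product of O (m C ) and squares of twists} to each pair, and again moving the resulting line-bundle twists to the left (which only shifts the indices of the squares), we arrive at $b=(\cO_{\hirzebruchtwo}(NC)\otimes-)\circ w$ with $w$ a product of the $T_a^{\pm2}$. Since $w\in\btriv$, also $\cO_{\hirzebruchtwo}(NC)\otimes-$ lies in $\btriv$, i.e.\ $\pi(L)^N=1$, whence $N=0$ and $b=w\in\langle T_a^2\mid a\in\bZ\rangle$. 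Thus $\btriv=\langle T_a^2\mid a\in\bZ\rangle$, which is normal in $B$ because it is $B\cap\Auteqtriv(\derived(\hirzebruchtwo))$.

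\emph{The two forms, and the main obstacle.} Let $b\in B$. By the description of $\pi(B)$, either $\pi(b)=\pi(L)^m$ or $\pi(b)=\pi(L)^m\pi(T_{a_0})$ for some $m\in\bZ$. In the first case $(\cO_{\hirzebruchtwo}(mC)\otimes-)^{-1}\circ b$ lies in $B\cap\ker\pi=\btriv=\langle T_a^2\mid a\in\bZ\rangle$, so $b$ has the form \eqref{eq:even}; in the second case $T_{a_0}^{-1}\circ(\cO_{\hirzebruchtwo}(mC)\otimes-)^{-1}\circ b$ has trivial $\pi$-image and thus lies in $\langle T_a^2\mid a\in\bZ\rangle$, so $b$ has the form \eqref{eq:odd}; conversely every expression of the shape \eqref{eq:even} or \eqref{eq:odd} visibly lies in $B$. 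The main obstacle is the kernel computation: a priori $\btriv$ could be strictly larger than $\langle T_a^2\mid a\in\bZ\rangle$, and the failure of $L$ to commute with $T_{a_0}$ blocks any hands-on normalization of a word in the twists. The decisive input is \pref{pr:TaTb as product of O (m C ) and squares of twists}, which reduces an arbitrary element to $\cO_{\hirzebruchtwo}(NC)\otimes-$ composed with a product of squares of twists; the integer $N$ is then pinned down by the infinitude of the order of $\pi(L)$, while the identification of $\pi(B)$ is exactly what forbids a leftover reflection factor (the parity statement on $k$).
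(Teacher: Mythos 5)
Your proposal is correct and follows essentially the same route as the paper: both arguments rest on normalizing a word in the twists via \eqref{eq:square of inverse twists as square of twists and O(-2C)}, \eqref{eq:exchanging Ta and O(C)} and \pref{pr:TaTb as product of O (m C ) and squares of twists}, and then pinning down \( m \) and excluding the form \eqref{eq:odd} from \( \btriv \) by the nontriviality of the actions of \( \cO _{ \hirzebruchtwo } ( m C ) \otimes - \) and \( T _{ a _{ 0 } } \) on \( \kgr{ \hirzebruchtwo } \). The only cosmetic difference is in that last exclusion step: you argue abstractly that the image of \( B \) in \( \Aut ( \kgr{ \hirzebruchtwo } ) \) is infinite dihedral, so a nontrivial involution times a power of an infinite-order element is never the identity, whereas the paper simply evaluates such an element on the two classes \( [ \cO _{ \hirzebruchtwo } ( ( a _{ 0 } + 1 ) f ) ] \) and \( [ \cO _{ \hirzebruchtwo } ( ( a _{ 0 } + 2 ) f ) ] \).
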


\begin{proof}
    The first assertion immediately follows from \eqref{eq:square of inverse twists as square of twists and O(-2C)}, \eqref{eq:exchanging Ta and O(C)}, and \pref{pr:TaTb as product of O (m C ) and squares of twists}.

    For the second assertion, take \( b \in \btriv \). If \(b\) is as in \eqref{eq:even}, then obviously \( m = 0 \). Also one can verify that \(b\) is never like \eqref{eq:odd}, say, by using that it must preserve the classes
    \(
        [ \cO _{ \hirzebruchtwo } ( ( a _{ 0 } + 1 ) f ) ]
    \)
    and
    \(
        [ \cO _{ \hirzebruchtwo } ( ( a _{ 0 } + 2 ) f ) ]
    \).
\end{proof}

\pref{lm:conjugation of spherical twist} implies that
\(
    B
\)
is a normal subgroup of \(\ah\). As we explain next, sets of generators of \( B \) are well understood.

\begin{theorem}\label{th:generator of the group B}
For any
\(
    a \in \bZ
\),
the group
\(
    B
\)
is generated by the two spherical twists
\(
    T _{ a }, T _{ a + 1 }
\).
\end{theorem}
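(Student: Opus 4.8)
The plan is to show that the subgroup $B' \subseteq B$ generated by $T_a$ and $T_{a+1}$ (for a single fixed $a$) already contains $T_b$ for every $b \in \bZ$; since $B$ is by definition generated by all the $T_b$ together with the twists $T'_b = T_b^{-1}$ — and these latter are automatically in any subgroup containing all $T_b$ — this gives $B' = B$. Throughout I will use freely the relation \eqref{eq:Ta Ta+1 = O(C)}, namely $T_c \circ T_{c+1} \simeq \cO_{\hirzebruchtwo}(C) \otimes -$, which holds for \emph{every} $c \in \bZ$, and the commutation relation \eqref{eq:exchanging Ta and O(C)} between tensoring by $\cO(mC)$ and the $T_c$'s.

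First I would record that $B'$ contains the autoequivalence $L \coloneqq \cO_{\hirzebruchtwo}(C) \otimes -$: indeed by \eqref{eq:Ta Ta+1 = O(C)} applied with $c = a$ we have $L \simeq T_a \circ T_{a+1} \in B'$, and hence $L^{-1} = \cO_{\hirzebruchtwo}(-C)\otimes - \in B'$ as well. Next, conjugating $T_a$ and $T_{a+1}$ by powers of $L$ and using \eqref{eq:exchanging Ta and O(C)} in the form $L \circ T_c \simeq T_{c-2} \circ L$, i.e. $L^m \circ T_c \circ L^{-m} \simeq T_{c-2m}$, shows that $B'$ contains $T_{a-2m}$ and $T_{a+1-2m}$ for all $m \in \bZ$ — that is, $B'$ contains $T_c$ for every integer $c$, since as $m$ ranges over $\bZ$ the indices $a - 2m$ exhaust one residue class mod $2$ and $a+1-2m$ exhaust the other. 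This is the whole content: once $T_c \in B'$ for all $c$, and $T_c' = T_c^{-1} \in B'$ automatically, we conclude $B \subseteq B'$, while $B' \subseteq B$ is trivial.

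There is essentially no serious obstacle here; the argument is a short group-theoretic manipulation built entirely on the two displayed relations \eqref{eq:Ta Ta+1 = O(C)} and \eqref{eq:exchanging Ta and O(C)}. The one point deserving a sentence of care is why conjugation by $L^m$ sends $T_c$ to $T_{c-2m}$ exactly: this follows from \pref{lm:conjugation of spherical twist}, since $L^m$ is an autoequivalence with $L^m(\cO_C(c)) \simeq \cO_C(c) \otimes \cO_C(mC)|_C \simeq \cO_C(c - 2m)$ using $C^2 = -2$ from \eqref{eq:intersection form}; alternatively one cites \eqref{eq:exchanging Ta and O(C)} directly. A final remark: the statement is uniform in $a$ because the generating relation $T_a T_{a+1} \simeq L$ is available for every $a$, so no choice of basepoint is privileged.
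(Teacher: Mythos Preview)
Your group-theoretic manipulation is correct and slightly more direct than the paper's: you show in one stroke, via conjugation by powers of $L = \cO_{\hirzebruchtwo}(C)\otimes-$, that $\langle T_a, T_{a+1}\rangle$ contains every $T_b$, whereas the paper instead observes from $T_{a-1}T_a \simeq L \simeq T_aT_{a+1}$ that $\langle T_{a-1},T_a\rangle = \langle T_a,T_{a+1}\rangle$ and inducts down to the base case $a=-2$, which is then cited from \cite[Lemma~4.6]{Ishii-Uehara_ADC}. Both routes rest on the same relation \eqref{eq:Ta Ta+1 = O(C)}.

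There is, however, a genuine gap in your argument. You write ``since $B$ is by definition generated by all the $T_b$'', but this is not the definition: \pref{df:subgroup of spherical twists} defines $B$ as the subgroup generated by \emph{all} spherical twists $T_\alpha$, where $\alpha$ ranges over arbitrary spherical objects of $\derived(\hirzebruchtwo)$, not just the line bundles $\cO_C(b)$. Your argument establishes $\langle T_a,T_{a+1}\rangle = \langle T_b : b\in\bZ\rangle$, but to conclude this equals $B$ you still need to know that every $T_\alpha$ lies in $\langle T_b : b\in\bZ\rangle$. This is a non-trivial fact, resting on the classification of spherical objects from \cite{Ishii-Uehara_ADC}; the paper does not reprove it either, but obtains it by citing \cite[Lemma~4.6]{Ishii-Uehara_ADC} for the single case $a=-2$. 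You should either invoke that citation as well, or make explicit that you are only proving $\langle T_a,T_{a+1}\rangle = \langle T_b : b\in\bZ\rangle$ and appealing to \cite{Ishii-Uehara_ADC} for the rest.
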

\begin{proof}
The case
\(
    a = - 2
\)
is explicitly mentioned in \cite[Lemma 4.6]{Ishii-Uehara_ADC}.
The general case inductively follows from this by the isomorphism of functors
\begin{align}
    T _{ a - 1 } T _{ a }
    \simeq
    \cO _{ \hirzebruchtwo } ( C ) \otimes _{ \cO _{ \hirzebruchtwo } } -
    \simeq
    T _{ a } T _{ a + 1 },
\end{align}
which is \eqref{eq:Ta Ta+1 = O(C)}.
\end{proof}

%
%
\subsection{Deformation and mutation of exceptional collections}
\label{sc:Deformation and mutation of exceptional collections}

Let
\begin{align}\label{eq:f cX -> B, general}
    f \colon \cX \to B
\end{align}
be a smooth projective morphism of Noetherian schemes with a closed point
\(
    0 \in B
\), and let
\begin{align}
    X _{ 0 } = \cX \times _{ f, B } \{ 0 \}
\end{align}
be the central fiber. Note that the properness and smoothness of \( f \) implies it is a perfect morphism, in the sense that the derived pushforward
\(
    f _{ \ast }
\)
respects perfect complexes (\cite[Proposition 2.1]{MR2346195}).

\begin{definition}
\begin{enumerate}        
\item
An object
\(
    \cE \in \Perf ( \cX )
\)
is
\( f \)-exceptional if
\(
    \bR f _{ \ast } \bR \cHom ( \cE, \cE )
\)
is a line bundle on \( B \).

\item
A collection of \( f \)-exceptional objects
\(
    \cE _{ 1 }, \dots, \cE _{ N } \in \Perf \cX
\)
is an
\(
    f
\)-exceptional collection if
\(
    \bR f _{ \ast } \bR \cHom ( \cE _{ j }, \cE _{ i } ) = 0
\)
for any
\(
    ( i, j )
\)
with
\(
    1 \le i < j \le N
\).

\item
An \( f \)-exceptional collection as above is said to be strong if moreover
\(
    \bR f _{ \ast } \bR \cHom ( \cE _{ j }, \cE _{ i } )
\)
is isomorphic to a locally free sheaf (regarded as a complex concentrated in degree \( 0 \)) for any
\(
    ( i, j )
\).

\item
An \( f \)-exceptional collection as above is said to be full if the minimal \( B \)-linear (i.e., closed under
\(
    - \otimes f ^{ \ast } \cF
\)
by any
\(
    \cF \in \Perf B
\))
triangulated subcategory which contains all of the objects in the collection is equivalent to
\(
    \Perf \cX
\).
\end{enumerate}
\end{definition}

\begin{definition}\label{df:sets of exceptional collections}
We say that two \(f\)-exceptional collections
\(
    \cE _{ 1 }, \dots, \cE _{ N }
\)
and
\(
    \cE ' _{ 1 }, \dots, \cE ' _{ N }
\)
are \emph{isomorphic} if
\(
    \cE _{ i } \simeq \cE ' _{ i }
\)
for each \( i = 1, \dots, N \).
We will let \( \ec _{ N } ( f ) \) (or \( \ec _{ N } ( \cX ) \), if \( f \) is obvious from the context) denote the set of isomorphism classes of \( f \)-exceptional collections of length \( N \). The set of isomorphism classes of \( f \)-exceptional collections of length
\(
    N
\)
consisting entirely of locally free sheaves will be denoted by
\(
    \ecvb _{ N } ( f )
\)
(or \( \ecvb ( \cX ) \)), which comes with the obvious injection
\(
    \ecvb _{ N } ( f ) \hookrightarrow \ec _{ N } ( f )
\).
Similarly, the set of equivalence classes of full \( f \)-exceptional collections will be denoted by either
\(
    \fec ( f )
\)
or
\(
    \fec ( \cX )
\),
and the set of isomorphism classes of full \( f \)-exceptional collections consisting entirely of locally free sheaves will be denoted by \( \fecvb ( f ) \) (or \( \fecvb ( \cX ) \)), which comes with the obvious injection
\(
    \fecvb ( f ) \hookrightarrow \fec ( f )
\).
\end{definition}

\begin{lemma}\label{lm:f-exceptional object and semiorthogonal decomposition}
\begin{enumerate}
\item
Let
\(
    \cE \in \Perf \cX
\)
be an \( f \)-exceptional object.
Then the functor
\begin{align}
    \Phi _{ \cE } \colon \Perf B \to \Perf \cX;\quad
    F \mapsto f ^{ \ast } F \otimes \cE
\end{align}
is fully faithful and admits a right adjoint as follows.
\begin{align}\label{eq:adjoint pair induced from an f-exceptional object}
    \Phi _{ \cE } \dashv \phi _{ \cE } ^{ R } \coloneqq f _{ \ast } \left( - \otimes \cE ^{ \vee } \right)
\end{align}

\item\label{it:f-exceptional collection induces f-linear semiorthogonal decomposition}
Let
\(
    \left( \cE _{ 1 }, \dots, \cE _{ N } \right) \in \ec _{ N } ( \cX )
\)
be an \( f \)-exceptional collection. Then the smallest \( B \)-linear triangulated subcategory of
\(
    \Perf \cX
\)
which contains
\(
    \cE _{ 1 }, \dots, \cE _{ N }
\)
admits a \( B \)-linear semiorthogonal decomposition
\(
    \langle
    \Phi _{ \cE _{ 1 } } ( \Perf B ),
    \dots
    \Phi _{ \cE _{ N } } ( \Perf B )
    \rangle
\).
\end{enumerate}
\end{lemma}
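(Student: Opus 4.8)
The plan is to derive both items from three standard tools: the adjunction $f^{\ast}\dashv\bR f_{\ast}$; the adjunction $(-\otimes\cE)\dashv(-\otimes\cE^{\vee})$, which holds because any $\cE\in\Perf\cX$ is dualizable with $\bR\cHom(\cE,-)\simeq-\otimes\cE^{\vee}$; and the projection formula, supplemented by flat base change to the fibres of $f$ when one needs to argue pointwise on $B$.

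For item (1), first observe that $\Phi_{\cE}=(-\otimes\cE)\circ f^{\ast}$, so composing the right adjoints of the two factors yields precisely the claimed right adjoint $\phi_{\cE}^{R}=\bR f_{\ast}(-\otimes\cE^{\vee})$ (which lands in $\Perf B$ because $f$ is a perfect morphism). Since a functor admitting a right adjoint is fully faithful exactly when the unit $\mathrm{id}_{\Perf B}\Rightarrow\phi_{\cE}^{R}\circ\Phi_{\cE}$ is invertible, the task reduces to analysing that unit. Using the projection formula one identifies $\phi_{\cE}^{R}\Phi_{\cE}(F)\simeq F\otimes\bR f_{\ast}\bR\cHom(\cE,\cE)$, under which the unit becomes $F$ tensored with the algebra unit $\cO_{B}\to\bR f_{\ast}\bR\cHom(\cE,\cE)$. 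I would then check this last map is an isomorphism: by flat base change its fibre over a point $b\in B$ is the unit $\bfk\to\RHom_{X_{b}}(\cE|_{X_{b}},\cE|_{X_{b}})$ sending $1$ to the identity, and since $\bR f_{\ast}\bR\cHom(\cE,\cE)$ is by hypothesis a line bundle this target is one-dimensional in degree $0$, so the fibrewise map is an isomorphism; Nakayama's lemma then forces $\cO_{B}\to\bR f_{\ast}\bR\cHom(\cE,\cE)$ to be surjective, and a surjection of line bundles on a Noetherian scheme is an isomorphism.

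For item (2), let $\cT$ denote the smallest $B$-linear triangulated subcategory of $\Perf\cX$ containing $\cE_{1},\dots,\cE_{N}$. Each $\Phi_{\cE_{i}}$ is $B$-linear, i.e.\ $\Phi_{\cE_{i}}(F\otimes H)\simeq\Phi_{\cE_{i}}(F)\otimes f^{\ast}H$, and $\Perf B$ is generated over itself by $\cO_{B}$; as $\Phi_{\cE_{i}}(\cO_{B})=\cE_{i}$, the essential image $\Phi_{\cE_{i}}(\Perf B)$ is the smallest $B$-linear triangulated subcategory containing $\cE_{i}$. Hence $\langle\Phi_{\cE_{1}}(\Perf B),\dots,\Phi_{\cE_{N}}(\Perf B)\rangle$ and $\cT$ are both the $B$-linear triangulated hull of $\{\cE_{1},\dots,\cE_{N}\}$, so they coincide. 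The remaining point is semiorthogonality: for $1\le i<j\le N$ and $F,G\in\Perf B$, chasing the two adjunctions and the projection formula gives
\begin{align*}
\RHom_{\cX}\!\left(\Phi_{\cE_{j}}(F),\Phi_{\cE_{i}}(G)\right)
&\simeq\RHom_{\cX}\!\left(f^{\ast}F,\ f^{\ast}G\otimes\cE_{i}\otimes\cE_{j}^{\vee}\right)\\
&\simeq\RHom_{B}\!\left(F,\ G\otimes\bR f_{\ast}\bR\cHom(\cE_{j},\cE_{i})\right),
\end{align*}
and the hypothesis $\bR f_{\ast}\bR\cHom(\cE_{j},\cE_{i})=0$ makes the right-hand side vanish in every degree. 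This exhibits the desired $B$-linear semiorthogonal decomposition of $\cT$.

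I expect the only genuinely delicate step to be the upgrade, inside item (1), of ``$\bR f_{\ast}\bR\cHom(\cE,\cE)$ is a line bundle'' to ``the algebra unit $\cO_{B}\to\bR f_{\ast}\bR\cHom(\cE,\cE)$ is an isomorphism''; everything else is a formal exercise in transporting functors across adjunctions and the projection formula.
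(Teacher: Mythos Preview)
Your argument is correct and is the standard way to prove this lemma; the paper itself states the result without proof, treating it as well known. The one point you flagged as delicate---that the algebra unit $\cO_{B}\to\bR f_{\ast}\bR\cHom(\cE,\cE)$ is an isomorphism---is handled correctly by your fibrewise check plus Nakayama (note that the fibre $\RHom_{X_{b}}(\cE|_{X_{b}},\cE|_{X_{b}})\simeq\bfk$ forces $\cE|_{X_{b}}\neq 0$, so $1\mapsto\id$ is indeed nonzero).
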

We say that a semiorthogonal decomposition
\(
    \Perf \cX = \langle \cA _{ 1 }, \dots, \cA _{ N } \rangle
\)
is \emph{\( B \)-linear} if
\(
    \cA _{ i } \otimes f ^{ \ast } b \subseteq \cA _{ i }
\)
holds for any \( i = 1, \dots, N \) and
\(
    b \in \Perf B
\)
 (see \cite[Section 2.3]{MR2801403}).

We will freely use the following very useful base change theorem from \cite[Corollary~2.1.4]{Bonsdorff_FTHB}.
See also \cite[Section~2.4]{MR2238172} and \cite[\href{https://stacks.math.columbia.edu/tag/08IB}{Tag 08IB}]{stacks-project} for treatise from different points of view.

\begin{lemma}\label{lm:base change}
Consider the following Cartesian square of finite dimensional noetherian schemes, where \( f, g \) are perfect.
\begin{equation}
    \begin{tikzcd}
    Y \arrow[swap]{d}{ g } \arrow{r}{ \psi } & \cX \arrow{d}{f}\\
    C \arrow[swap]{r}{\varphi} & B
    \end{tikzcd}
\end{equation}
Then the standard natural transformation of functors
\begin{align}\label{eq:base change isomorphism}
    \varphi ^{ \ast } \circ f _{ \ast } \Rightarrow g _{ \ast } \circ \psi ^{ \ast }
    \colon
    \Perf \cX \to \Perf C
\end{align}
is an isomorphism if either
\(
    f
\)
or
\(
    \varphi
\)
is flat.
\end{lemma}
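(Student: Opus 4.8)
The plan is to reduce the lemma to the base change theorem for \emph{Tor-independent} Cartesian squares, which is both the cleanest form to invoke and the one that makes transparent why the hypothesis ``\(f\) or \(\varphi\) flat'' is exactly what is needed. The crucial observation is that \emph{either} flatness assumption forces the square to be Tor-independent over \(B\): if \(\varphi\) is flat this is immediate, while if \(f\) is flat then for any points \(x \in \cX\) and \(c \in C\) over a common point \(b \in B\) the local ring \(\cO_{\cX,x}\) is flat over \(\cO_{B,b}\), so that \(\cO_{\cX,x} \otimes^{\bL}_{\cO_{B,b}} \cO_{C,c}\) is concentrated in degree \(0\) and the square is Tor-independent in this case too. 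Since \(f\) is quasi-compact and quasi-separated, the general Tor-independent base change theorem --- for instance \cite[Tag~08IB]{stacks-project}, see also \cite[Section~2.4]{MR2238172} --- then shows that \eqref{eq:base change isomorphism} is an isomorphism on the whole quasi-coherent derived category of \(\cX\), hence in particular on \(\Perf\cX\).

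It then only remains to check that the source and target of \eqref{eq:base change isomorphism} indeed take values in \(\Perf C\), so that the statement is meaningful as phrased: \(\psi^{\ast}\cE\) is the derived pullback of a perfect complex, hence perfect, for any morphism \(\psi\); \(g_{\ast}\) preserves perfect complexes since \(g\) is a perfect morphism; and \(f_{\ast}\cE \in \Perf B\) since \(f\) is a perfect morphism, so that \(\varphi^{\ast} f_{\ast} \cE\) is again the derived pullback of a perfect complex.

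If one prefers an argument not appealing to the general base change formalism, I would instead reduce to the affine case \(B = \Spec A\), \(C = \Spec A'\) --- the assertion being local on \(B\) and then on \(C\) --- in which \eqref{eq:base change isomorphism} is the canonical comparison map \(A' \otimes^{\bL}_{A} \bR\Gamma(\cX,\cE) \to \bR\Gamma(Y, \psi^{\ast}\cE)\), and then carry out a \v{C}ech computation: represent \(\cE\) by a bounded complex of locally free sheaves, choose a finite affine open cover \(\{U_{i}\}\) of \(\cX\) with each \(\cO_{\cX}(U_{i})\) flat over \(A\) (possible because \(f\) is flat, in the second case; in the first case one uses instead that \(A \to A'\) is flat), note that the terms of the resulting \v{C}ech double complex are then flat \(A\)-modules, so that \(A' \otimes^{\bL}_{A} (-)\) is computed by \(A' \otimes_{A} (-)\) on it, and identify the outcome with the \v{C}ech complex computing \(\bR\Gamma(Y, \psi^{\ast}\cE)\). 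In either approach I expect the only real subtlety to be the bookkeeping required to see that the \emph{canonical} map \eqref{eq:base change isomorphism} is the one that is invertible --- rather than merely producing some abstract isomorphism between the two objects --- and it is precisely this that the Tor-independence formalism handles painlessly, which is why I would organise the proof around \cite[Tag~08IB]{stacks-project}.
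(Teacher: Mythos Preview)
The paper does not prove this lemma; it simply states it as a citation of \cite[Corollary~2.1.4]{Bonsdorff_FTHB}, with \cite[Section~2.4]{MR2238172} and \cite[Tag~08IB]{stacks-project} mentioned as alternative references. Your proposal is correct and in fact invokes exactly the latter two references, so you are doing nothing more than unpacking what the paper leaves to the literature; the reduction to Tor-independence is the standard route and is precisely what those sources do.
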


\begin{corollary}\label{cr:base change of exceptional collections}
    Consider a morphism of schemes as in \eqref{eq:f cX -> B, general} and a (strong) \( f \)-exceptional collection
    \(
        \left( \cE _{ 1 }, \dots, \cE _{ N } \right) \in \ec _{ N } ( \cX )
    \). Take any morphism \( \varphi \colon C \to B \), where \( C \) is a finite dimensional noetherian scheme, and consider the Cartesian diagram as in \pref{lm:base change}. Then 
    \begin{align}
        \left( \psi ^{ \ast } \cE _{ 1 }, \dots, \psi ^{ \ast } \cE _{ N } \right)
    \end{align}
    is a (strong) \( g \)-exceptional collection on \( Y \).
\end{corollary}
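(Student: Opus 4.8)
The plan is to reduce the statement to the base change theorem \pref{lm:base change}. I would first note that $g$, being a base change of the smooth projective (hence perfect) morphism $f$, is itself smooth, projective and perfect; in particular $\bR g_\ast$ preserves perfect complexes, so that the notion of a (strong) $g$-exceptional collection on $Y$ makes sense. Since $\cE_i,\cE_j\in\Perf\cX$, we have the standard identification $\bR\cHom_\cX(\cE_j,\cE_i)\simeq\cE_j^\vee\otimes^{\bL}_{\cO_\cX}\cE_i$, and because $\bL\psi^\ast$ is symmetric monoidal and compatible with the duality \eqref{eq:derived dual} on perfect complexes, this yields a natural isomorphism
\begin{align}
    \bL\psi^\ast\,\bR\cHom_\cX(\cE_j,\cE_i)\ \simeq\ \bR\cHom_Y\!\left(\bL\psi^\ast\cE_j,\ \bL\psi^\ast\cE_i\right);
\end{align}
in particular each $\bL\psi^\ast\cE_i$ is again perfect.

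Next I would apply \pref{lm:base change} to the given Cartesian square. Since $f$ is smooth it is flat, so the base change transformation $\bL\varphi^\ast\circ\bR f_\ast\Rightarrow\bR g_\ast\circ\bL\psi^\ast$ is an isomorphism on $\Perf\cX$, with no hypothesis needed on $\varphi$. Combining it with the previous display gives
\begin{align}
    \bR g_\ast\,\bR\cHom_Y\!\left(\bL\psi^\ast\cE_j,\bL\psi^\ast\cE_i\right)\ \simeq\ \bR g_\ast\,\bL\psi^\ast\,\bR\cHom_\cX(\cE_j,\cE_i)\ \simeq\ \bL\varphi^\ast\,\bR f_\ast\,\bR\cHom_\cX(\cE_j,\cE_i).
\end{align}
The three required properties now follow by inspecting the right-hand side. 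For $i=j$, the hypothesis gives that $\bR f_\ast\bR\cHom_\cX(\cE_i,\cE_i)$ is a line bundle on $B$, and its pullback along $\varphi$ (underived, equivalently derived) is a line bundle on $C$; hence $\bL\psi^\ast\cE_i$ is $g$-exceptional. For $1\le i<j\le N$ we have $\bR f_\ast\bR\cHom_\cX(\cE_j,\cE_i)=0$, so its pullback vanishes and semiorthogonality holds. Finally, in the strong case $\bR f_\ast\bR\cHom_\cX(\cE_j,\cE_i)$ is a locally free sheaf in degree $0$; being flat it is unaffected by $\bL\varphi^\ast$, so its pullback is again a locally free sheaf in degree $0$, which is exactly strongness of $(\bL\psi^\ast\cE_1,\dots,\bL\psi^\ast\cE_N)$.

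I expect no genuine obstacle here: the argument is entirely formal once \pref{lm:base change} is available. The two points deserving a word of care are (i) that $\bL\psi^\ast$ really commutes with $\bR\cHom$, which is precisely where perfectness of the $\cE_i$ enters (via rewriting $\bR\cHom$ as a tensor product with a dual), and (ii) that $g$ is perfect --- equivalently smooth and projective, being a base change of $f$ --- so that the conclusion is meaningful at all; both are standard and I would dispatch each in one line.
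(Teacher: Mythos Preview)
Your proof is correct and follows essentially the same approach as the paper: both arguments hinge on the isomorphism $\bR g_\ast\bR\cHom_Y(\psi^\ast\cE_j,\psi^\ast\cE_i)\simeq \varphi^\ast\bR f_\ast\bR\cHom_\cX(\cE_j,\cE_i)$, obtained by combining the compatibility of pullback with $\bR\cHom$ on perfect complexes and the base change theorem \pref{lm:base change} (using flatness of $f$). The paper compresses this into a single displayed line, whereas you spell out more carefully why $\bL\psi^\ast$ commutes with $\bR\cHom$ and why $g$ is perfect, but the substance is identical.
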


\begin{proof}
    As \( f \) is flat, the assertion immediately follows from the following computation.
    All functors are derived.
    \begin{align}
        g _{ \ast } \cHom _{ Y } \left( \psi ^{ \ast } \cE _{ i }, \psi ^{ \ast } \cE _{ j } \right)
        \simeq
        g _{ \ast } \psi ^{ \ast } \cHom _{ \cX } \left( \cE _{ i }, \cE _{ j } \right)
        \stackrel{\text{\pref{lm:base change}}}{\simeq}
        \varphi ^{ \ast } f _{ \ast } \cHom _{ \cX } \left( \cE _{ i }, \cE _{ j } \right)
    \end{align}
\end{proof}

\begin{lemma}\label{lm:exceptional collection deforms}
Suppose that
\(
    B = \Spec R
\)
for a complete local Noetherian ring
\(
    ( R, \frakm, \bfk )
\).
Then the natural restriction maps
\begin{align}
    \ec _{ N } ( f )
    \to
    \ec _{ N } ( X _{ 0 } ),\\
    \ecvb _{ N } ( f )
    \to
    \ecvb _{ N } ( X _{ 0 } )
\end{align}
obtained in \pref{cr:base change of exceptional collections} are bijections for any
\( N \).
\end{lemma}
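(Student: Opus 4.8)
The plan is to combine the deformation theory of exceptional objects, which are rigid and unobstructed, with Grothendieck's existence theorem. Write $R _{ n } = R / \frakm ^{ n + 1 }$ and $\cX _{ n } = \cX \times _{ B } \Spec R _{ n }$, so $\cX _{ 0 } = X _{ 0 }$, and let $\psi _{ n } \colon \cX _{ n } \hookrightarrow \cX$ denote the closed immersion; the map under consideration sends $( \cE _{ 1 }, \dots, \cE _{ N } )$ to $( \psi _{ 0 } ^{ \ast } \cE _{ 1 }, \dots, \psi _{ 0 } ^{ \ast } \cE _{ N } )$ and lands in the stated target by \pref{cr:base change of exceptional collections}. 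Since an isomorphism of exceptional collections is an isomorphism of each member, both injectivity and surjectivity reduce to statements about a single exceptional object, together with a short verification that the lifts form an exceptional collection.

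For injectivity I would show that an $f$-exceptional object $\cE \in \Perf \cX$ is determined up to isomorphism by $\cE _{ 0 } \coloneqq \psi _{ 0 } ^{ \ast } \cE$. Given $f$-exceptional $\cE, \cF$ and a fixed isomorphism $\iota \colon \cE _{ 0 } \simeq \cF _{ 0 }$, set $M \coloneqq \bR f _{ \ast } \bR \cHom _{ \cX } ( \cE, \cF ) \in \Perf B$. By \pref{lm:base change} one has $M \otimes ^{ \bL } _{ R } \bfk \simeq \RHom _{ X _{ 0 } } ( \cE _{ 0 }, \cF _{ 0 } ) \simeq \bfk$ in degree $0$, so comparison with a minimal complex of free $R$-modules representing $M$ (Nakayama) forces $M \simeq R$ in degree $0$, and in particular $\Hom _{ \cX } ( \cE, \cF ) \simeq R$. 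A generator $\phi$ maps to a generator of $\Hom _{ X _{ 0 } } ( \cE _{ 0 }, \cF _{ 0 } ) = \bfk \cdot \iota$, so $\psi _{ 0 } ^{ \ast } \phi$ is an isomorphism and hence $\psi _{ 0 } ^{ \ast } \cone ( \phi ) \simeq 0$. Since $\cX$ is proper over the local ring $R$, the support of any nonzero cohomology sheaf of $\cone ( \phi )$ meets the closed fibre $X _{ 0 }$, so applying Nakayama to the topmost such sheaf gives $\cone ( \phi ) \simeq 0$; that is, $\phi$ is an isomorphism.

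For surjectivity, take an exceptional object $\cE _{ 0 } \in \derived ( X _{ 0 } )$, so that $\RHom _{ X _{ 0 } } ( \cE _{ 0 }, \cE _{ 0 } ) \simeq \bfk$ and in particular $\Ext ^{ 1 } _{ X _{ 0 } } ( \cE _{ 0 }, \cE _{ 0 } ) = \Ext ^{ 2 } _{ X _{ 0 } } ( \cE _{ 0 }, \cE _{ 0 } ) = 0$. For the square-zero surjection $R _{ n + 1 } \twoheadrightarrow R _{ n }$ with kernel the $\bfk$-vector space $J = \frakm ^{ n + 1 } / \frakm ^{ n + 2 }$, the deformation theory of objects of the derived category says the obstruction to lifting an object of $\Perf \cX _{ n }$ restricting to $\cE _{ 0 }$ lies in $\Ext ^{ 2 } _{ X _{ 0 } } ( \cE _{ 0 }, \cE _{ 0 } \otimes _{ \bfk } J ) = 0$, and that the isomorphism classes of lifts, once nonempty, form a torsor under $\Ext ^{ 1 } _{ X _{ 0 } } ( \cE _{ 0 }, \cE _{ 0 } \otimes _{ \bfk } J ) = 0$. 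Hence $\cE _{ 0 }$ lifts to an object $\cE _{ n } \in \Perf \cX _{ n }$, unique up to isomorphism, for every $n$; these assemble into a formal object which algebraizes, by Grothendieck's existence theorem applied to the projective morphism $f$ over the complete ring $R$, to an object $\cE \in \Perf \cX$ with $\psi _{ 0 } ^{ \ast } \cE \simeq \cE _{ 0 }$. Given a whole collection $( \cE _{ 0, 1 }, \dots, \cE _{ 0, N } ) \in \ec _{ N } ( X _{ 0 } )$, one lifts each member in this way; for $i < j$ the perfect complex $\bR f _{ \ast } \bR \cHom _{ \cX } ( \cE _{ j }, \cE _{ i } )$ has vanishing derived reduction modulo $\frakm$ and hence vanishes by Nakayama, so $( \cE _{ 1 }, \dots, \cE _{ N } ) \in \ec _{ N } ( f )$ restricts to the given collection. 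If moreover every $\cE _{ 0, i }$ is locally free, then $\cE _{ i } \in \Perf \cX$ has derived restriction to $X _{ 0 }$ a vector bundle concentrated in degree $0$; representing $\cE _{ i }$ locally by a finite complex of free modules and invoking Nakayama shows that $\cE _{ i }$ is itself a vector bundle, so $\ecvb _{ N } ( f ) \to \ecvb _{ N } ( X _{ 0 } )$ is surjective as well.

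The substantive inputs are the rigidity and unobstructedness of exceptional objects --- the vanishings $\Ext ^{ 1 } = \Ext ^{ 2 } = 0$ --- and Grothendieck's existence theorem. I expect the main obstacle to be the passage from the compatible formal system $( \cE _{ n } ) _{ n }$ to an algebraic object and the verification that this object stays ($f$-)exceptional, and a vector bundle in the $\ecvb$ case: this forces one to work consistently with perfect complexes rather than honest sheaves, and to appeal repeatedly to Nakayama over the complete local ring $R$, which need not be equicharacteristic.
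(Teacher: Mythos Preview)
Your proposal is correct and follows essentially the same approach as the paper: deformation theory of objects (the vanishings $\Ext^1=\Ext^2=0$) to get unique formal lifts, algebraization over the complete local base, and base change plus Nakayama to verify that the lift is $f$-exceptional and that semiorthogonality persists.

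Two small differences are worth noting. First, the paper treats injectivity only implicitly, via the uniqueness of infinitesimal lifts, whereas you give a direct global argument: compute $\bR f_{\ast}\bR\cHom_{\cX}(\cE,\cF)\simeq R$ and show a generator is an isomorphism by Nakayama on its cone. Your argument is cleaner and avoids reassembling uniqueness from the infinitesimal pieces. Second, the paper's written proof does not address the $\ecvb$ case at all; your last paragraph, reducing locally to a finite free complex and applying Nakayama to conclude the lift is a vector bundle, fills that gap and is the standard way to do it.
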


\begin{proof}
Let us first show that any exceptional object
\(
    \cE \in \Perf X _{ 0 }
\)
deforms to an object \( \cE _{ R } \in \Perf \cX \).
By definition of exceptional object, we know that
\(
    \Ext ^{ 1 } _{ X _{ 0 } } ( \cE, \cE )
    =
    \Ext ^{ 2 } _{ X _{ 0 } } ( \cE, \cE )
    =
    0
\).
By the deformation theory of objects (see, say, \cite[Corollary 3.4]{MR2578562}), for each
\(
    n \ge 1
\)
one finds the unique lift in
\(
    \Perf ( \cX \otimes _{ R } R / \frakm ^{ n + 1 } )
\)
of
\(
    E
\).
Then it algebrizes uniquely to an actual object
\(
    \cE _{ R } \in \Perf ( \cX )
\)
by~\cite[Proposition 3.6.1]{MR2177199}.

We next show that
\(
    \cE _{ R }
\)
is an \( f \)-exceptional object. This is equivalent to the assertion
\(
    \cC = 0
\),
where
\(
    \cC \in \Perf B
\)
is defined as the cone of the following standard morphism.
\begin{align}
    \cO _{ B } \to \bR f _{ \ast } \bR \cHom _{ \cX } ( \cE _{ R }, \cE _{ R } )
\end{align}
Consider the following Cartesian diagram.
\begin{equation}
    \begin{tikzcd}
    X _{ 0 } \arrow{r}{\iota} \arrow[swap]{d}{f _{ 0 }}
    \arrow[dr, phantom, "\lrcorner", very near start]
    & \cX \arrow{d}{f}\\
    \Spec \bfk \arrow[swap]{r}{0} & \Spec R = B
    \end{tikzcd}
\end{equation}

By \pref{lm:base change} it follows that
\begin{align}
    \bL 0 ^{ \ast } \bR f _{ \ast } \cRHom _{ \cX } ( \cE _{ R }, \cE _{ R } )
    \simeq
    \RHom _{ X _{ 0 } } ( \cE, \cE )
    =
    \bfk \id _{ \cE } [ 0 ],
\end{align}
so that
\(
    \bL 0 ^{ \ast } \cC = 0
\).
By Nakayama's lemma, this implies that \( \cC = 0 \).
The semiorthogonality of the collection
\(
    \cE _{ 1, R }, \dots, \cE _{ N, R }
\)
is shown by similar arguments.
\end{proof}

\begin{remark}\label{rm:deformation of full collection is full}
One can similarly show that the deformation of a strong collection is also strong.
This follows from the fact that if
\(
    P \in \Perf B
\)
satisfies
\(
    0 ^{ \ast } P \simeq \bfk ^{ \oplus r } [0]
\)
for some \( r \ge 0\), then
\(
    P \simeq R ^{ \oplus r } [ 0 ]
\).
Also, it follows from \pref{lm:f-exceptional object and semiorthogonal decomposition} \pref{it:f-exceptional collection induces f-linear semiorthogonal decomposition} that the deformation of a full exceptional collection is again full.
\end{remark}

\begin{definition}
Let \( f \) be a morphism as in \eqref{eq:f cX -> B, general}.
An (\( f \)-)exceptional pair is an (\(f\)-)exceptional collection of length 2.
For an \( f \)-exceptional pair
\(
    \cE, \cF
\),
the left mutation
\(
    L _{ \cE } \cF
\)
of \( \cF \) through \( \cE \) and the right mutation
\(
    R _{ \cF } \cE
\)
of \( \cE \) through \( \cF \) are defined by the following distinguished triangles.
\begin{align}
     f ^{ \ast } \bR f _{ \ast } \cRHom _{ \cX } ( \cE, \cF ) \otimes _{ \cO _{ \cX } } \cE
     \xrightarrow{ \varepsilon }
     \cF
     \to
     L _{ \cE } \cF,\\
     R _{ \cF } \cE
     \to
     \cE
     \xrightarrow{ \eta }
     f ^{ \ast } \bR f _{ \ast } \cRHom _{ \cX } ( \cE, \cF ) ^{ \vee } \otimes _{ \cO _{ \cX } } \cF
\end{align}
\end{definition}

\begin{remark}
The definition of mutations given above differs by shifts from the one in \cite[Section 2]{Bondal_RAACS}, but is slightly simpler in that for an orthogonal exceptional pair, the mutations just exchange the two objects without any shift.
\end{remark}

By the base change theorem \pref{lm:base change}, one can easily verify that mutations commute with base change.
\begin{lemma}\label{lm:mutation commutes with base change}
    Under the notation and the assumptions of \pref{lm:base change}, suppose that \( f \) is flat and hence the natural transformation \eqref{eq:base change isomorphism} is an isomorphism.
    For any \( f \)-exceptional pair
    \(
        \left( \cE, \cF \right)
    \), it follows that
    \(
        \left( \psi ^{ \ast } \cE, \psi ^{ \ast }\cF \right)
    \)
    is an \( g \)-exceptional pair and the following isomorphisms hold.
\begin{align}
    L _{\psi ^{\ast}\cE} \left(\psi ^{\ast}\cF\right) \simeq \psi ^{\ast}( L _{\cE } \cF )\\
    R _{\psi ^{\ast}\cF} \left(\psi ^{\ast}\cE \right) \simeq \psi ^{\ast}( R _{\cF } \cE )
\end{align}
\end{lemma}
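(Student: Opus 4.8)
The plan is to apply the exact functor $\psi^{\ast}$ to the distinguished triangles that define the mutations and then to match the resulting triangles with the ones defining the mutations of the pair $(\psi^{\ast}\cE,\psi^{\ast}\cF)$. That the latter is a $g$-exceptional pair is \pref{cr:base change of exceptional collections}, so $L_{\psi^{\ast}\cE}(\psi^{\ast}\cF)$ and $R_{\psi^{\ast}\cF}(\psi^{\ast}\cE)$ are defined. Applying $\psi^{\ast}$ to
\[
f^{\ast}\bR f_{\ast}\cRHom_{\cX}(\cE,\cF)\otimes_{\cO_{\cX}}\cE\xrightarrow{\varepsilon}\cF\to L_{\cE}\cF\xrightarrow{+1}
\]
gives a distinguished triangle with middle term $\psi^{\ast}\cF$ and third term $\psi^{\ast}L_{\cE}\cF$. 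Since the cone of a morphism is determined up to isomorphism by the morphism, it is enough to identify the first term together with $\psi^{\ast}\varepsilon$ with the corresponding data attached to $(\psi^{\ast}\cE,\psi^{\ast}\cF)$; the right mutation is then handled by the same argument applied to the defining triangle of $R_{\cF}\cE$, using the coevaluation $\eta$ in place of $\varepsilon$ (there the perfectness of $\bR f_{\ast}\cRHom_{\cX}(\cE,\cF)$, which holds because $f$ is perfect, lets one commute $\varphi^{\ast}$ past the derived dual).

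For the first term I combine four canonical isomorphisms: $\psi^{\ast}$ is monoidal and hence commutes with $\otimes$; the square is commutative, whence $\psi^{\ast}f^{\ast}\simeq g^{\ast}\varphi^{\ast}$; the base change natural transformation $\varphi^{\ast}\bR f_{\ast}\Rightarrow\bR g_{\ast}\psi^{\ast}$ is an isomorphism by \pref{lm:base change}, as $f$ is flat; and $\psi^{\ast}\cRHom_{\cX}(\cE,\cF)\simeq\cRHom_{Y}(\psi^{\ast}\cE,\psi^{\ast}\cF)$ because $\cE$ is perfect, so that $\cRHom_{\cX}(\cE,\cF)\simeq\cE^{\vee}\otimes_{\cO_{\cX}}\cF$ and $\psi^{\ast}$ commutes with the derived dual of a perfect complex (this is exactly the identification already used in the proof of \pref{cr:base change of exceptional collections}). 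Chaining these yields a canonical isomorphism
\[
\psi^{\ast}\bigl(f^{\ast}\bR f_{\ast}\cRHom_{\cX}(\cE,\cF)\otimes_{\cO_{\cX}}\cE\bigr)\simeq g^{\ast}\bR g_{\ast}\cRHom_{Y}(\psi^{\ast}\cE,\psi^{\ast}\cF)\otimes_{\cO_{Y}}\psi^{\ast}\cE,
\]
whose right-hand side is exactly the first term of the triangle defining $L_{\psi^{\ast}\cE}(\psi^{\ast}\cF)$.

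It remains to check that $\psi^{\ast}\varepsilon$ is carried to the evaluation morphism of $(\psi^{\ast}\cE,\psi^{\ast}\cF)$ under this identification. By \pref{lm:f-exceptional object and semiorthogonal decomposition}, $\varepsilon$ is the counit of the adjunction $\Phi_{\cE}\dashv\phi_{\cE}^{R}$ at $\cF$; equivalently it is the composite of the canonical evaluation $\cRHom_{\cX}(\cE,\cF)\otimes_{\cO_{\cX}}\cE\to\cF$ with the morphism obtained by tensoring $\cE$ with the counit $f^{\ast}\bR f_{\ast}\Rightarrow\id$ of the adjunction $f^{\ast}\dashv\bR f_{\ast}$ evaluated at $\cRHom_{\cX}(\cE,\cF)$. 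The evaluation of a dualizable object is natural for monoidal functors, so the first factor commutes with $\psi^{\ast}$; and the base change isomorphism of \pref{lm:base change}, being the mate of an identity, intertwines the counits $f^{\ast}\bR f_{\ast}\Rightarrow\id$ and $g^{\ast}\bR g_{\ast}\Rightarrow\id$, so the second factor commutes with $\psi^{\ast}$ as well. This Beck--Chevalley-type compatibility is the only point that requires genuine care; everything else is formal bookkeeping. Granting it, $\psi^{\ast}\varepsilon$ is the evaluation morphism of $(\psi^{\ast}\cE,\psi^{\ast}\cF)$, hence $\psi^{\ast}L_{\cE}\cF\simeq L_{\psi^{\ast}\cE}(\psi^{\ast}\cF)$, and the second isomorphism follows verbatim from the dual argument applied to the coevaluation $\eta$.
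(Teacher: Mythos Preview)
Your proof is correct and is precisely the verification the paper has in mind: the lemma is stated there without proof, prefaced only by the remark that one can easily verify it using the base change theorem \pref{lm:base change}. You have simply carried out that verification in full, including the compatibility of the (co)evaluation with $\psi^{\ast}$, which is indeed the only point requiring any care.
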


Next, let us recall a group action on \( \ec _{ N } ( f )\) from \cite[Proposition 2.1]{MR1230966}.
Let
\(\Br _{N}\)
be the braid group on \( N \) strands, which admits the following famous presentation by generators and relations.
\begin{align}\label{eq:braid group}
    \Br_{N} = \langle \sigma_{1},\dots,\sigma_{N-1} \mid\,
    &
    \sigma _{ i } \sigma _{ i + 1 } \sigma _{ i } = \sigma _{ i + 1 } \sigma _{ i } \sigma _{ i + 1 }
    \quad
    i = 1, \dots, N - 1\\
    &
    \sigma _{ i } \sigma _{ j } = \sigma _{ j } \sigma _{ i }
    \quad
    | i - j | \ge 2
    \rangle
\end{align}
Consider the action
\begin{align}\label{eq:the braid group action}
    \Br_{N}\curvearrowright \ec _{ N } ( f )
\end{align}
given by
\begin{align}
    \sigma_{i} \colon \cE_{i}, \cE_{i+1} \mapsto \cE_{i+1}, R_{\cE_{i+1}}\cE_{i},
\end{align}
so that
\begin{align}
    \sigma_{i}^{-1} \colon \cE_{i}, \cE_{i+1} \mapsto L_{\cE_{i}}\cE_{i+1}, \cE_{i}.
\end{align}

On the other hand, through the standard surjective homomorphism
\(
    \Br _{ N } \twoheadrightarrow \frakS _{ N };
    \quad \sigma _{ i } \mapsto ( i, i + 1 )
\)
to the symmetric group of degree \( N \), the group
\(
    \Br _{ N }
\)
acts naturally on the abelian group
\(
    \bZ ^{ N }
    =
    \Map ( \left\{ 1, \dots, N \right\}, \bZ )
\)
from the left. Let
\begin{align}\label{equation:GN}
    G _{ N } \coloneqq \bZ ^{ N } \rtimes \Br _{ N }
\end{align}
be the semi-direct product corresponding to the action.

One can verify that this, together with the action
\(
    \bZ ^{ N } \curvearrowright \ec _{ N }
\),
where
\(
    [ a _{ 1 }, \dots, a _{ N } ] ^{ T } \in \bZ ^{ N }
\)
sends a collection
\(
    \left( \cE _{ 1 }, \dots, \cE _{ N } \right) \in \ec _{ N }
\)
to
\(
    \left( \cE _{ 1 } [ a _{ 1 } ], \dots, \cE _{ N } [ a _{ N } ] \right)
\),
extends to an action
\(
    G _{ N } \curvearrowright \ec _{ N } ( f )
\).
In particular, one has the following induced action.
\begin{align}\label{eq:induced action of the braid group}
    \Br _{ N } \curvearrowright \ec _{ N } ( f ) / \bZ ^{ N }
\end{align}
When \( \Perf \cX \) admits a full \(f\)-exceptional collection of length \( r \),
then one similarly obtains the action
\(
    G _{ r } \curvearrowright \fec ( \cX )
\).

\begin{remark}\label{rm:transitivity conjecture}
\cite[Conjecture 2.2]{MR1230966} asserts that this action should be transitive.
Note that this conjecture is equivalent to the transitivity of the action
\eqref{eq:induced action of the braid group}. \pref{th:transitivity} below is nothing but the  affirmative answer to
\cite[Conjecture 2.2]{MR1230966} for
\(
    \cX = \hirzebruchtwo
\).
\end{remark}

Let \( X \) be a smooth projective variety over the field \( \bfk \).
The autoequivalences of \( \derived ( X )\) and the notion of exceptional collections are nicely compatible as we explain next.

\begin{lemma}
If
\(
    \cE _{ 1 }, \dots, \cE _{ N } \in \derived ( X )
\)
is an exceptional collection and \( \Phi \in \Auteq ( \derived ( X ) )\), then so is
\(
    \Phi ( \cE _{ 1 } ), \dots, \Phi ( \cE _{ N } ) \in \derived ( X )
\).
In particular, there is the natural action
\begin{align}\label{eq:auteq(f) acting on fec(f)}
    \Auteq ( \derived ( X ) ) \curvearrowright \ec _{ N } ( X ).
\end{align}
\end{lemma}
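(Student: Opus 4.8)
The statement is formal, so the plan is simply to unwind definitions and use that an autoequivalence preserves the graded \(\RHom\)-structure. The one input is that every \(\Phi \in \Auteq(\derived(X))\) is an exact \(\bfk\)-linear equivalence (in fact a Fourier--Mukai transform by~\cite[Theorem~2.2]{Orlov_EDCKS}, as already recalled in the proof of~\pref{lm:conjugation of spherical twist}, though only \(\bfk\)-linear exactness is needed here). Consequently, for all \(\cE, \cF \in \derived(X)\) and all \(i \in \bZ\) it induces \(\bfk\)-linear isomorphisms
\begin{align}
    \Ext ^{ i } _{ X } ( \cE, \cF ) \xrightarrow{\ \sim\ } \Ext ^{ i } _{ X } ( \Phi ( \cE ), \Phi ( \cF ) )
\end{align}
compatible with composition; equivalently, \(\RHom _{ X }( \cE, \cF ) \simeq \RHom _{ X }( \Phi ( \cE ), \Phi ( \cF ) )\) as graded \(\bfk\)-vector spaces.

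\textbf{Key steps.} First I would apply the displayed isomorphism with \(\cF = \cE = \cE _{ i }\) to see that each \(\Phi ( \cE _{ i } )\) is again exceptional, since \(\RHom _{ X }( \Phi ( \cE _{ i } ), \Phi ( \cE _{ i } ) ) \simeq \RHom _{ X }( \cE _{ i }, \cE _{ i } ) \simeq \bfk\). Next, for \(1 \le i < j \le N\), taking \(\cE = \cE _{ j }\) and \(\cF = \cE _{ i }\) gives \(\RHom _{ X }( \Phi ( \cE _{ j } ), \Phi ( \cE _{ i } ) ) \simeq \RHom _{ X }( \cE _{ j }, \cE _{ i } ) = 0\), which is exactly the semiorthogonality needed for \(( \Phi ( \cE _{ 1 } ), \dots, \Phi ( \cE _{ N } ) )\) to be an exceptional collection; this proves the first assertion. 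For the ``in particular'' part, I would observe that \(\Phi\), being an equivalence, carries isomorphic objects to isomorphic objects, so \(( \cE _{ 1 }, \dots, \cE _{ N } ) \mapsto ( \Phi ( \cE _{ 1 } ), \dots, \Phi ( \cE _{ N } ) )\) descends to a well-defined map \(\ec _{ N }( X ) \to \ec _{ N }( X )\), with inverse induced by a quasi-inverse of \(\Phi\); finally, the canonical isomorphisms \(( \Phi \circ \Psi )( \cE _{ i } ) \simeq \Phi ( \Psi ( \cE _{ i } ) )\) together with \(\id ( \cE _{ i } ) = \cE _{ i }\) show that these bijections assemble into a left action of \(\Auteq ( \derived ( X ) )\) on \(\ec _{ N }( X )\), as claimed.

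\textbf{Main obstacle.} There is no substantive difficulty; the only point to state with care is that an autoequivalence preserves the entire graded \(\Ext\)-structure and not merely \(\Hom\) in degree \(0\), which is automatic because any triangulated functor commutes with the shift functor. I would therefore keep the argument to a few lines.
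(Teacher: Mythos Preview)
Your proposal is correct and spells out exactly the routine verification the paper has in mind; in fact the paper states this lemma without proof, treating it as obvious, so your few-line argument is already more detailed than what the authors provide.
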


The following lemma is easy to verify and plays an important role in this paper.
\begin{lemma}\label{lm:actions of Br and Auteq(f) commute}
The actions \eqref{eq:the braid group action} and \eqref{eq:auteq(f) acting on fec(f)} commute.
\end{lemma}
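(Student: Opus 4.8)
The plan is to reduce the statement to a single elementary mutation and then push the defining triangle of that mutation through $\Phi$. Since $\Br_N$ is generated by $\sigma_1,\dots,\sigma_{N-1}$ and the action \eqref{eq:auteq(f) acting on fec(f)} is applied to a collection termwise, it suffices to check that $\Phi$ commutes with each generator $\sigma_i$. As $\sigma_i$ alters only the consecutive pair, sending $(\cE_i,\cE_{i+1})$ to $(\cE_{i+1},R_{\cE_{i+1}}\cE_i)$ and fixing the other entries, the lemma reduces to the claim that for every exceptional pair $(\cE,\cF)$ in $\derived(X)$ and every $\Phi\in\Auteq(\derived(X))$ there is an isomorphism $\Phi(R_\cF\cE)\simeq R_{\Phi\cF}(\Phi\cE)$. (Left mutations are then not needed for the lemma, but would be handled identically.)

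To prove the claim, recall that here the base is $\Spec\bfk$, so $\bR f_\ast\cRHom_\cX(\cE,\cF)$ is just $\RHom_X(\cE,\cF)$, a bounded complex of finite-dimensional $\bfk$-vector spaces because $\cE$ and $\cF$ are exceptional. The object $R_\cF\cE$ sits in the distinguished triangle
\begin{align}
    R_\cF\cE \to \cE \xrightarrow{\eta} \RHom_X(\cE,\cF)^\vee\otimes_\bfk\cF \xrightarrow{+1}.
\end{align}
I would apply $\Phi$ to this triangle; since $\Phi$ is a $\bfk$-linear triangulated functor (indeed a Fourier--Mukai transform, by \cite[Theorem~2.2]{Orlov_EDCKS}), the result is again a distinguished triangle, with vertices $\Phi(R_\cF\cE)$, $\Phi\cE$ and $\Phi\bigl(\RHom_X(\cE,\cF)^\vee\otimes_\bfk\cF\bigr)$.

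It then remains to identify this third vertex together with the morphism out of $\Phi\cE$. For a perfect complex $V$ of $\bfk$-vector spaces and any $A\in\derived(X)$ there is a natural isomorphism $\Phi(V\otimes_\bfk A)\simeq V\otimes_\bfk\Phi(A)$: over the field $\bfk$ the complex $V$ is a finite direct sum of shifts of finite-dimensional vector spaces, and $\Phi$ preserves finite direct sums and shifts. Taking $V=\RHom_X(\cE,\cF)^\vee$ and $A=\cF$, and combining with the canonical identification $\RHom_X(\cE,\cF)\simeq\RHom_X(\Phi\cE,\Phi\cF)$ coming from the equivalence $\Phi$, one gets
\begin{align}
    \Phi\bigl(\RHom_X(\cE,\cF)^\vee\otimes_\bfk\cF\bigr)\simeq\RHom_X(\Phi\cE,\Phi\cF)^\vee\otimes_\bfk\Phi\cF.
\end{align}
Under these identifications $\Phi\eta$ becomes the coevaluation morphism of the pair $(\Phi\cE,\Phi\cF)$: by construction $\eta$ is the morphism classifying the identity morphism of $\RHom_X(\cE,\cF)$ under the standard adjunction, and $\Phi$ carries that adjunction for $(\cE,\cF)$ to the corresponding one for $(\Phi\cE,\Phi\cF)$. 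Hence the image under $\Phi$ of the triangle above is, up to isomorphism, the defining triangle of $R_{\Phi\cF}(\Phi\cE)$, which proves the claim; the same reasoning shows more generally that \eqref{eq:auteq(f) acting on fec(f)} commutes with the whole action of $G_N$.

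The only part of this that is not routine bookkeeping with $\bfk$-linear triangulated functors is the compatibility of $\Phi$ with the adjunction isomorphism used to characterize $\eta$ --- equivalently, the functoriality of the mutation triangle with respect to the equivalence $\Phi$, and not merely with respect to isomorphisms of exceptional pairs. I would isolate this as a short preliminary observation about how an autoequivalence intertwines the canonical adjunctions that define the evaluation and coevaluation maps; with that in hand, the two displayed isomorphisms above finish the proof.
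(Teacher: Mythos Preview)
Your proof is correct and is exactly the natural verification the paper has in mind; the paper itself gives no argument beyond declaring the lemma ``easy to verify,'' so there is nothing to compare against. Your reduction to a single generator and the check that $\Phi(R_\cF\cE)\simeq R_{\Phi\cF}(\Phi\cE)$ via the defining triangle and naturality of the coevaluation is the standard (and essentially only) route.
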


%
%
\subsection{Results obtained via deformation of \( \hirzebruchtwo \) to \( \quadric \)}

Consider the (isotrivial) degeneration \eqref{eq:degeneration of Sigma0 to Sigma2} over an algebraically closed field \( \bfk \).
Consider the discrete valuation ring
\(
    R = \left( \bfk [[t]], ( t ), \bfk \right)
\)
and take the base change by
\( B = \Spec R \to \bA ^{ 1 } _{ t } \)
of the family. We write
\begin{align}\label{eq:formal degenerating family}
    f \colon \cX = \cX \times _{ \bA ^{ 1 } } B \to B
\end{align}
by abuse of notation.
The central fiber \( \cX _{ 0 } \) of \( f \) is isomorphic to \( \hirzebruchtwo \). Also, let
\begin{align}
    \xi \colon \Kbar \coloneqq \overline{\bfk ((t))} \to B
\end{align}
be the geometric generic point of \( B \).
The isotriviality of the family \eqref{eq:degeneration of Sigma0 to Sigma2} outside the origin implies that the geometric generic fiber
\(
    \cX _{ \xi } \to \Spec \Kbar
\)
is isomorphic to
\( \quadric \)
over \(\Kbar\). Throughout this section, we freely use the symbols introduced in this paragraph.

Since the generic fiber \( \quadric \) is a del Pezzo surface, the properties of exceptional collections on it is very well known by \cite{MR1286839}. We list the known properties.

\begin{theorem}\label{th:properties of exceptional collections of Sigma0}
\begin{enumerate}
\item\label{it:fec=fecvb}
Any exceptional object on \(\quadric\) is isomorphic to a shift of an exceptional vector bundle, so that the natural map
\(
    \ec _{ N } ( \quadric ) \hookrightarrow
    \ecvb _{ N } ( \quadric )
\)
is a bijection.

\item\label{it:transitivity for Sigma0}
\(
    \ec _{ 4 } ( \quadric )
    =
    \fec ( \quadric )
\),
and the action
\(
    G _{ 4 } \curvearrowright \fec ( \quadric )
\)
is transitive.

\item\label{it:Constructibility for Sigma0}
Any exceptional collection on \(\quadric\) can be extended to a full exceptional collection.
\end{enumerate}
\end{theorem}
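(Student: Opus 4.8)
The plan is to obtain all three items as direct consequences of \pref{th:Kuleshov and Orlov} applied to $X = \quadric$, viewed as a del Pezzo surface (of degree $8$) over the algebraically closed field $\Kbar$; no work beyond that theorem is needed. The only geometric input specific to $\quadric$ is the elementary fact that it contains no $(-1)$-curve: every divisor class on $\quadric$ has even self-intersection, so there is no smooth rational curve of self-intersection $-1$.

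For \eqref{it:fec=fecvb}: by \pref{th:Kuleshov and Orlov} \eqref{it:Kul-O structure theorem for exceptional object}, every exceptional object of $\derived(\quadric)$ is, up to shift, either a vector bundle or a line bundle on a $(-1)$-curve; as the latter curves do not exist on $\quadric$, every exceptional object is a shift of an exceptional vector bundle. Consequently an exceptional collection on $\quadric$ becomes, after shifting its terms appropriately, a collection of exceptional vector bundles, which gives the identification of $\ec_N(\quadric)$ with $\ecvb_N(\quadric)$ asserted in \eqref{it:fec=fecvb}.

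For \eqref{it:transitivity for Sigma0}: since $\rank \kgr{\quadric} = \rank \Pic(\quadric) + 2 = 4$, any full exceptional collection of $\derived(\quadric)$ has length exactly $4$. Therefore an exceptional collection of length $4$, which by \eqref{it:Constructibility for Sigma0} (equivalently \pref{th:Kuleshov and Orlov} \eqref{it:constructibility for del Pezzo}) extends to a full one, necessarily of length $4$, must already be full; hence $\ec_4(\quadric) = \fec(\quadric)$. The transitivity of $G_4 \curvearrowright \fec(\quadric)$ is then precisely the statement that \pref{cj:Bondal Polishchuk} holds for $\quadric$, which is \pref{th:Kuleshov and Orlov} \eqref{it:BP Conjecture is true for del Pezzo}. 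Finally, item \eqref{it:Constructibility for Sigma0} is nothing but \pref{th:Kuleshov and Orlov} \eqref{it:constructibility for del Pezzo} specialized to $\quadric$.

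In short, the whole statement is a transcription of the Kuleshov--Orlov theorem into the present notation, so there is no genuine obstacle here (the substance having been supplied by \cite{MR1286839}); the only points worth verifying are the two elementary facts used above, namely that $\quadric$ carries no $(-1)$-curve and that $\rank \kgr{\quadric} = 4$, the latter ensuring that ``length $4$'' and ``full'' coincide for exceptional collections on $\quadric$.
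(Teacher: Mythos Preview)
Your proposal is correct and matches the paper's approach: the paper does not give an independent proof of this theorem at all, but simply records it as a direct consequence of \cite{MR1286839} (i.e., \pref{th:Kuleshov and Orlov}) applied to the del Pezzo surface \(\quadric\). Your added explanations---that \(\quadric\) carries no \((-1)\)-curves and that \(\rank \kgr{\quadric}=4\) forces ``length~4'' to coincide with ``full''---are exactly the two small checks needed to translate the general Kuleshov--Orlov statement into the form asserted here.
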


The non-triviality of the group
\( \Auteqtriv ( \derived ( \hirzebruchtwo ) ) \) implies that an exceptional object on
\(
    \hirzebruchtwo
\)
is not uniquely determined by its class in \( K _{ 0 } \), even modulo shifts by
\(
    \bZ [ 2 ]
\). However, if one considers only exceptional vector bundles, then it is the case:

\begin{lemma}[{\(=\)a weaker version of \cite[Lemma 3.5]{MR3431636}}]\label{lm:Lemma 3.5 of [OU]}
Let
\(
    \cE, \cE '
\)
be exceptional vector bundles on
\( \hirzebruchtwo \)
such that
\(
    [ \cE ] = [ \cE ' ] \in \kgr{ \hirzebruchtwo }
\).
Then
\(
    \cE \simeq \cE '
\).
\end{lemma}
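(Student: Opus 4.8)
The plan is to reduce the statement to a numerical computation on the Hirzebruch surface by exploiting that an exceptional vector bundle is rigid and semistable (with respect to a suitable polarization), so that its isomorphism class is pinned down by its Chern character once the slope is fixed. First I would record the basic numerical invariants: writing $[\cE] = [\cE']$ in $\kgr{\hirzebruchtwo}$ forces $\rank \cE = \rank \cE' =: r$, $c_1(\cE) = c_1(\cE') =: D \in \Pic \hirzebruchtwo$, and $\chi(\cE \otimes \cF) = \chi(\cE' \otimes \cF)$ for every $\cF$, in particular the two bundles have the same Hilbert polynomial with respect to any ample class $H$. Since $\cE$ is exceptional, $\ext^1_{\hirzebruchtwo}(\cE,\cE) = \ext^2_{\hirzebruchtwo}(\cE,\cE) = 0$ and $\hom_{\hirzebruchtwo}(\cE,\cE) = 1$, so by Serre duality and the Riemann--Roch/Hirzebruch formula $\chi(\cE,\cE) = 1$; the same holds for $\cE'$, and the mixed Euler characteristic $\chi(\cE,\cE')$ is computable purely from $[\cE]=[\cE']$, giving $\chi(\cE,\cE') = 1$ as well.

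Next I would invoke the known stability properties of exceptional bundles on weak del Pezzo surfaces (this is exactly the sort of input provided by Kuleshov's work \cite{MR1604186}, which the paper says it uses): for a suitable polarization $H$ close to the anticanonical class, an exceptional vector bundle on $\hirzebruchtwo$ is $\mu_H$-stable, or at least $H$-Gieseker-semistable with a well-controlled Jordan--Hölder filtration whose factors are exceptional and rigid. Granting stability of both $\cE$ and $\cE'$ with respect to the same $H$, the nonvanishing $\hom_{\hirzebruchtwo}(\cE,\cE') \neq 0$ — which follows because $\chi(\cE,\cE') = 1 > 0$ while $\ext^2_{\hirzebruchtwo}(\cE,\cE') = \hom_{\hirzebruchtwo}(\cE',\cE\otimes\omega_{\hirzebruchtwo})^\vee = 0$ by stability and the fact that tensoring by $\omega_{\hirzebruchtwo}$ strictly decreases the slope (here $-K_{\hirzebruchtwo} = 2C+4f$ is big and $H$-positive) — produces a nonzero map $\varphi \colon \cE \to \cE'$ between semistable sheaves of the same slope and the same Hilbert polynomial. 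Such a map between semistable sheaves of equal reduced Hilbert polynomial, when it is nonzero, must be an isomorphism: its image has slope $\ge$ that of $\cE$ (quotient of $\cE$) and $\le$ that of $\cE'$ (subsheaf of $\cE'$), hence equal, and a comparison of Hilbert polynomials forces $\varphi$ to be injective and surjective. Therefore $\cE \simeq \cE'$.

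The main obstacle is the stability input: one needs that exceptional vector bundles on $\hirzebruchtwo$ are (semi)stable with respect to a common polarization, and if only a weak form (Gieseker-semistability with exceptional rigid JH-factors) is available, one must argue that two such bundles with the same class in $\kgr{\hirzebruchtwo}$ have the same JH-factors and hence agree — here rigidity of the factors and the fact that $\Ext^1$ between distinct exceptional bundles appearing in a Jordan--Hölder filtration is controlled would let one rebuild $\cE$ and $\cE'$ from the same data. An alternative route that sidesteps stability entirely, and which I would keep in reserve, is to run an induction on the rank using mutations: if $\cE,\cE'$ both sit in a full exceptional collection, one mutates to reduce to line bundles, for which the claim is the trivial statement that $[\cO(D)] = [\cO(D')]$ in $\kgr{\hirzebruchtwo}$ implies $D = D'$; the bookkeeping needed to make the mutation argument match $[\cE]=[\cE']$ is more delicate but uses only the already-established formalism of mutations in Section \ref{sc:Deformation and mutation of exceptional collections}. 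Either way, the numerical skeleton of the argument — equal rank, equal $c_1$, equal Euler pairings, and $\hom(\cE,\cE') \neq 0$ — is the same; only the final "nonzero map is iso" step changes.
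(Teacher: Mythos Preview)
The paper does not supply its own proof of this lemma: it is stated with the attribution ``\(=\) a weaker version of \cite[Lemma 3.5]{MR3431636}'' and no argument is given in the present paper. So there is no in-text proof to compare your proposal against; the comparison is with the cited result of Okawa--Uehara.

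Your approach is the standard one for statements of this type (it is essentially how the analogous fact is proved for del Pezzo surfaces in \cite{MR1286839}): compute \(\chi(\cE,\cE')=1\) from \([\cE]=[\cE']\), kill \(\ext^2\) via Serre duality and a slope inequality, deduce \(\hom(\cE,\cE')>0\), and conclude that a nonzero morphism between \(\mu_H\)-stable bundles of the same slope and rank is an isomorphism. Once the stability input is in place the argument is clean and correct; your computation that \((-K_{\hirzebruchtwo})\cdot H>0\) for any ample \(H\) is right, so the slope strictly drops under \(\otimes\,\omega_{\hirzebruchtwo}\).

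The one point to be careful about is the stability input itself, which you correctly flag as the main obstacle. You invoke \cite{MR1604186} for \(\mu_H\)-stability of exceptional bundles on weak del Pezzo surfaces; note however that later in this very paper (proof of Theorem~\ref{th:Constructibility for bundle collections}) the authors remark that \(\hirzebruchtwo\) is \emph{not} directly covered by the class of surfaces treated in \cite[Theorem~3.1.8.2]{MR1604186}, and they route through a blowup to apply Kuleshov's results. So you should check precisely which statement in \cite{MR1604186} gives stability on \(\hirzebruchtwo\), or supply the short direct argument (e.g.\ restrict a destabilizing subsheaf to a general anticanonical curve and use that an exceptional bundle restricts to a rigid, hence semistable, bundle on an elliptic curve). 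Your fallback via mutations is workable but, as you say, the bookkeeping is heavier than the stability route.
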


The degeneration \eqref{eq:formal degenerating family} allows one to compare various invariants of
\( \hirzebruchtwo \) to those of \( \quadric \).
Recall that
\(
    \cX _{ 0 }
    \simeq
    \hirzebruchtwo
\).

\begin{definition}\label{df:deformation of exceptional collection}
For an exceptional object
\(
    \cE \in \derived ( \cX _{ 0 } )
\),
let
\(
    \cE _{ R }, \gen ( \cE )
\)
denote the unique deformation of
\(\cE\) to \( \cX \) and its restriction to the geometric generic fiber
\(
    \cX _{ \xi }
\), respectively.
For an exceptional collection
\(\cEbar\) on the central fiber, we will similarly write
\(
    \cEbar _{ R }, \gen ( \cEbar )
\)
to mean its (unique) deformation to \( \cX \) and its restriction to \( \cX _{ \xi } \), respectively.
\end{definition}

Let
\begin{align}\label{eq:generalization map}
    \gen \colon \ec _{ 4 } ( \cX _{ 0 } ) \to \fec ( \cX _{ \xi } )
\end{align}
be the map which sends (an isomorphism class of) an exceptional collection
\(
    \cEbar
\)
of length 4 on \( \hirzebruchtwo \) to
\(
    \gen ( \cEbar ) \in \fec ( \cX _{ \xi } )
\),
which is obtained by restricting the deformation of \( \cEbar \) to the \( f \)-exceptional collection, whose existence and uniqueness is guaranteed by \pref{lm:exceptional collection deforms}, to the geometric generic fiber (see \pref{cr:base change of exceptional collections}).
See \pref{cr:gen is surjective} below for the surjectivity of \( \gen \).

One similarly defines the map
\(
    \genvb \colon \ecvb _{ 4 } ( \cX _{ 0 } ) \to \fecvb ( \cX _{ \xi } )
\),
to obtain the following diagram.
\begin{equation}\label{fg:comparison of ec4 and ecvb4}
\begin{tikzcd}
    \ec _{ 4 } ( \cX _{ 0 } ) \arrow[r, "\gen", twoheadrightarrow] & \fec ( \cX _{ \xi } ) \\
    \ecvb _{ 4 } ( \cX _{ 0 } ) \arrow[u, hookrightarrow] \arrow[r, swap,"\genvb"] & \fecvb ( \cX _{ \xi } )
    \arrow[u, hookrightarrow]
\end{tikzcd}
\end{equation}

We next compare \( \kgr { \Perf } \) of the surfaces.
Note that we have the following diagram of schemes (the labels of the arrows in the diagram will be freely used).
\begin{equation}
    \begin{tikzcd}
        \cX _{ 0 } \arrow[r, "i", hookrightarrow] \arrow[d, "f _{ 0 }" ] \arrow[dr, phantom, "\lrcorner", very near start]
        & \cX \arrow[d, "f"]
        & \cX _{ \xi } \arrow[l, "j"'] \arrow[d, "f _{ \xi }" ] \arrow[dl, phantom, "\llcorner", very near start]\\
        \Spec \bfk \arrow[r, "\iota"', hookrightarrow] & \Spec R
        & \Spec \Kbar \arrow[l, "\jbar"]
    \end{tikzcd}
\end{equation}

Applying the functor
\(
    \kgr{ \Perf ( - ) }
\),
we obtain the first two rows of
\pref{fg:Relations among the K0-groups},
which is a commutative diagram of commutative rings with units.
\begin{figure}
\begin{equation}\label{fg:Relations among the K0-groups}
    \begin{tikzcd}
        \kgr{ \cX _{ 0 } } \arrow[d, "f_{ 0 \ast }"' ]
        &
        \kgr{ \cX } \arrow[l, " \simeq ", "i ^{ \ast }"'] \arrow[r, "j ^{ \ast }", "\simeq"'] \arrow[d, "f _{ \ast }" ]
        &
        \kgr{ \cX _{ \xi } } \arrow[d, "f _{ \xi \ast }" ]\\
        \kgr{ \bfk } \arrow[d, "\simeq"' sloped]
        &
        \kgr{ R } \arrow[l, "\iota ^{ \ast }" ] \arrow[r, "\jbar ^{ \ast }"'] \arrow[d, "\simeq" sloped]
        &
        \kgr{ \Kbar } \arrow[d, "\simeq" sloped]\\
        \bZ
        &
        \bZ
        \arrow[l, " = "]
        \arrow[r, " = "']
        &
        \bZ
    \end{tikzcd}
\end{equation}
\end{figure}

The derived dual \( {}^{\vee}\) defined in \eqref{eq:derived dual} induces an automorphism of commutative rings
\begin{align}
    \kgr{ \Perf \cX }
    =
    \kgr{ \left( \Perf \cX \right) ^{ \op } }
    \xrightarrow[\simeq]{\vee}
    \kgr{ \Perf \cX };\quad
    [\cE] \mapsto [ \cE ^{ \vee } ].
\end{align}
The \emph{Euler pairing} on
\(
    \kgr{ \Perf \cX  } = \kgr{ \cX }
\)
(note that \( \cX \) is a regular scheme)
is the following bilinear pairing.
\begin{align}
    \chi _{ \cX } \colon
    \kgr{ \cX } \times \kgr{ \cX }
    \to
    \kgr{ R }; \quad
    ( v, w )
    \mapsto
    f _{ \ast } \left( v ^{ \vee } \cdot w \right)
\end{align}
We can similarly define
\begin{align}
    \chi _{ \cX _{ 0 } } \colon
    \kgr{ \cX _{ 0 } } \times \kgr{ \cX _{ 0 } }
    \to
    \kgr{ \bfk }; \quad
    ( v, w )
    \mapsto
    f _{ 0 \ast } \left( v ^{ \vee } \cdot w \right),\\
    \chi _{ \cX _{ \xi } } \colon
    \kgr{ \cX _{ \xi } } \times \kgr{ \cX _{ \xi } }
    \to
    \kgr{ \Kbar }; \quad
    ( v, w )
    \mapsto
    f _{ \xi \ast } \left( v ^{ \vee } \cdot w \right).
\end{align}

\begin{lemma}
\begin{align}
    \iota ^{ \ast } \circ \chi _{ \cX } = \chi _{ \cX _{ 0 } } \circ ( i ^{ \ast } \times i ^{ \ast } ),\\
    \jbar ^{ \ast } \circ \chi _{ \cX } = \chi _{ \cX _{ \xi } } \circ ( j ^{ \ast } \times j ^{ \ast } )
\end{align}    
\end{lemma}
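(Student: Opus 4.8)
The plan is to unwind the definition of the Euler pairing and reduce both identities to a chain of standard compatibilities. Fix \( v, w \in \kgr{\cX} \). Since \( \chi_\cX(v,w) = f_\ast(v^\vee \cdot w) \) by definition, we have \( \iota^\ast \chi_\cX(v,w) = \iota^\ast f_\ast(v^\vee \cdot w) \), whereas \( \chi_{\cX_0}(i^\ast v, i^\ast w) = f_{0\ast}\big((i^\ast v)^\vee \cdot i^\ast w\big) \). I would bridge these using three facts. First, the base change isomorphism \( \iota^\ast \circ f_\ast \Rightarrow f_{0\ast} \circ i^\ast \) furnished by \pref{lm:base change} applied to the Cartesian square with vertical arrows \( f, f_0 \) in the diagram of schemes preceding \pref{fg:Relations among the K0-groups}; its hypothesis is satisfied because \( f \) is smooth, hence flat. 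Second, the derived pullback \( i^\ast \colon \kgr{\cX} \to \kgr{\cX_0} \) is a ring homomorphism, \( i^\ast(x\cdot y) = i^\ast x \cdot i^\ast y \), which is the \( K_0 \)-level shadow of the symmetric monoidality of derived pullback. Third, the derived dual commutes with derived pullback, \( i^\ast(x^\vee) = (i^\ast x)^\vee \); concretely this is the isomorphism \( i^\ast \bR\cHom_\cX(\cE, \cO_\cX) \simeq \bR\cHom_{\cX_0}(i^\ast \cE, \cO_{\cX_0}) \) valid for perfect \( \cE \), or abstractly the statement that a monoidal functor preserves duals of dualizable objects together with the fact that every object of \( \Perf \) is dualizable.

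Granting these, the first identity follows from the chain
\[
\iota^\ast f_\ast(v^\vee \cdot w) \;=\; f_{0\ast}\, i^\ast(v^\vee \cdot w) \;=\; f_{0\ast}\big(i^\ast(v^\vee) \cdot i^\ast w\big) \;=\; f_{0\ast}\big((i^\ast v)^\vee \cdot i^\ast w\big) \;=\; \chi_{\cX_0}(i^\ast v, i^\ast w),
\]
where the three intermediate equalities use base change, the ring-homomorphism property, and the dual-pullback compatibility in that order. The second identity is proved in exactly the same way, replacing the triple \( (i, f_0, \iota) \) and the target \( \cX_0 \) by \( (j, f_\xi, \jbar) \) and \( \cX_\xi \): \pref{lm:base change} still applies to the corresponding Cartesian square since \( f \) is flat, and the ring-homomorphism and dual-pullback properties hold for \( j^\ast \) just as for \( i^\ast \).

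None of this is genuinely hard, so rather than an obstacle there are two points I would be careful to state precisely. One is that the relevant hypothesis of \pref{lm:base change} is the flatness of \( f \) — although, as it happens, \( \iota \) and \( \jbar \) are also flat, being composites of a localization of \( R \) with a field extension. The other is the third fact above, which deserves its one-line justification via perfectness rather than being left implicit. All the Grothendieck groups that appear are those of regular noetherian schemes (\( \cX \), \( \cX_0 \simeq \hirzebruchtwo \), \( \cX_\xi \simeq \quadric \), and \( \Spec R \), \( \Spec \bfk \), \( \Spec \Kbar \)), so \( \Perf \) coincides with the bounded derived category of coherent sheaves and there is no ambiguity in \( K_0 \).
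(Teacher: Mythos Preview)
Your proof is correct and is exactly what the paper would have in mind; in fact the paper states this lemma without proof, so you have supplied the standard verification via base change, monoidality of pullback, and compatibility of derived dual with pullback.

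One small factual slip in your parenthetical remark: the morphism \( \iota \colon \Spec \bfk \to \Spec R \) is the inclusion of the \emph{closed} point of the DVR \( R = \bfk[[t]] \), i.e.\ the ring map \( R \twoheadrightarrow R/(t) \). This is a quotient, not a localization, and \( \bfk \) is not flat over \( R \). So your aside that ``\( \iota \) and \( \jbar \) are also flat'' is wrong for \( \iota \) (it is correct for \( \jbar \), which factors through the generic point). This does not affect your argument, since as you correctly emphasize it is the flatness of \( f \) that makes \pref{lm:base change} applicable in both Cartesian squares; just delete or amend that parenthetical.
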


Let
\(
    \cEbar
\)
be a full exceptional collection of
\(
    \cX _{ 0 }
\),
and
\(
    \cEbar _{ R },
    \cEbar _{ \xi },
\)
be the deformation of \( \cEbar \) to \( \cX \) and the restriction of \( \cEbar _{R } \) to
\( \cX _{ \xi } \) as defined in \pref{df:deformation of exceptional collection}.
As pointed out in \pref{rm:deformation of full collection is full}, both
\(
    \cEbar _{ R }, \cEbar _{ \xi }
\)
are full exceptional collections.
\begin{lemma}
\(
    \kgr{ \cEbar },
    \kgr{ \cEbar _{ R } }.
    \kgr{ \cEbar _{ \xi } }
\)
are bases of
\(
    \kgr{ \cX _{ 0 } },
    \kgr { \cX },
    \kgr{ \cX _{ \xi }}
\),
respectively.
In particular, the horizontal maps \( i ^{ \ast }, j ^{ \ast } \) in the first row of
\eqref{fg:Relations among the K0-groups} are isomorphisms.
\end{lemma}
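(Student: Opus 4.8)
The plan is to deduce everything from additivity of $ K _{ 0 } $ under semiorthogonal decompositions, applied over the three bases $ \bfk $, $ R $, and $ \Kbar $. Over a field the statement is classical: if $ Y $ is a smooth projective variety over a field $ k $ and $ ( \cF _{ 1 }, \dots, \cF _{ N } ) $ is a full exceptional collection, then $ \derived ( Y ) = \langle \cF _{ 1 }, \dots, \cF _{ N } \rangle $ is a semiorthogonal decomposition whose components are all equivalent to $ \derived ( k ) $, so $ \kgr{ Y } = \bigoplus _{ i = 1 } ^{ N } \bZ \cdot [ \cF _{ i } ] $ is free with the asserted basis. Applying this to $ Y = \cX _{ 0 } \simeq \hirzebruchtwo $ over $ \bfk $ with $ \cEbar $, and to $ Y = \cX _{ \xi } $ over $ \Kbar $ with $ \cEbar _{ \xi } = \gen ( \cEbar ) $, shows that $ \kgr{ \cEbar } $ and $ \kgr{ \cEbar _{ \xi } } $ are $ \bZ $-bases of $ \kgr{ \cX _{ 0 } } $ and $ \kgr{ \cX _{ \xi } } $.

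For $ \cX $ over $ B = \Spec R $ I would argue in the relative setting. By \pref{rm:deformation of full collection is full} the deformation $ \cEbar _{ R } = ( \cE _{ 1, R }, \dots, \cE _{ N, R } ) $ is again a full $ f $-exceptional collection; since it is full, the smallest $ B $-linear triangulated subcategory of $ \Perf \cX $ containing its members is all of $ \Perf \cX $, so \pref{lm:f-exceptional object and semiorthogonal decomposition}~\pref{it:f-exceptional collection induces f-linear semiorthogonal decomposition} yields a $ B $-linear semiorthogonal decomposition
\begin{align}
    \Perf \cX = \langle \Phi _{ \cE _{ 1, R } } ( \Perf B ), \dots, \Phi _{ \cE _{ N, R } } ( \Perf B ) \rangle .
\end{align}
Each $ \Phi _{ \cE _{ i, R } } $ is fully faithful, so its essential image is equivalent to $ \Perf B $, and $ \kgr{ \Perf B } = \kgr{ R } \cong \bZ $ with generator $ [ \cO _{ B } ] $ because every finitely generated projective module over the local ring $ R $ is free. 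Taking Grothendieck groups and using additivity under the decomposition above, $ \kgr{ \cX } = \bigoplus _{ i = 1 } ^{ N } \kgr{ \Phi _{ \cE _{ i, R } } ( \Perf B ) } $, and the generator of the $ i $-th summand is $ [ \Phi _{ \cE _{ i, R } } ( \cO _{ B } ) ] = [ f ^{ \ast } \cO _{ B } \otimes \cE _{ i, R } ] = [ \cE _{ i, R } ] $. Hence $ \kgr{ \cEbar _{ R } } $ is a $ \bZ $-basis of $ \kgr{ \cX } $.

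Finally, by the very definition of the deformation one has $ i ^{ \ast } \cE _{ i, R } \simeq \cE _{ i } $ and $ j ^{ \ast } \cE _{ i, R } \simeq \cE _{ i, \xi } $, so the ring homomorphisms $ i ^{ \ast } $ and $ j ^{ \ast } $ in the first row of \pref{fg:Relations among the K0-groups} carry the basis $ \kgr{ \cEbar _{ R } } $ of $ \kgr{ \cX } $ bijectively onto the bases $ \kgr{ \cEbar } $ of $ \kgr{ \cX _{ 0 } } $ and $ \kgr{ \cEbar _{ \xi } } $ of $ \kgr{ \cX _{ \xi } } $; a homomorphism of free abelian groups carrying a basis onto a basis is an isomorphism. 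I do not expect a serious obstacle in this argument: the only points that need a moment's care are the identification $ \kgr{ \Perf B } \cong \bZ $ with its distinguished generator and the bookkeeping that the generator of the $ i $-th summand of the decomposition is precisely $ [ \cE _{ i, R } ] $, and both are immediate.
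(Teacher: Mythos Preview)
Your proposal is correct and follows exactly the approach the paper indicates in its one-line proof (``Immediately follows from the fact that the collections are full exceptional collections of the triangulated categories over the base of length $4$''); you have simply unpacked the mechanism, namely additivity of $K_0$ under the (relative) semiorthogonal decompositions provided by \pref{lm:f-exceptional object and semiorthogonal decomposition}, together with $\kgr{\Perf B}\cong\bZ$ for the local ring $R$. No further work is needed.
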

\begin{proof}
Immediately follows from the fact that the collections
\( \cEbar, \cEbar _{ R }, \cEbar _{ \xi } \)
are full exceptional collections of the triangulated categories over the base of length \(4\).
\end{proof}

On the other hand we obtain the bottom row of the diagram \pref{fg:Relations among the K0-groups}, where the vertical maps to the bottom row are isomorphisms of rings.
Let
\begin{align}\label{eq:generalization map for K0}
    \kgr{ \gen } \colon \kgr{ \cX _{ 0 } } \to \kgr { \cX _{ \xi } }
\end{align}
be the isomorphism of abelian groups obtained from the diagram \pref{fg:Relations among the K0-groups}.
Also, regard
\(
    \chi _{ \cX _{ 0 } },
    \chi _{ \cX _{ \xi } }
\)
as
\( \bZ \)-valued bilinear pairings by the diagram \pref{fg:Relations among the K0-groups}.
With all the preparations above, we can show the desired properties of the map
\(
    \kgr{ \gen }
\).
\begin{proposition}
The isomorphism
\( \kgr{ \gen }\)
of
\eqref{eq:generalization map for K0}
respects the pairings
\(
    \chi _{ \cX _{ 0 } },
    \chi _{ \cX _{ \xi } }
\)
on the source and the target abelian groups. Moreover, it fits in the following commutative diagram.
\[
    \begin{tikzcd}
        \ec _{ 1 } ( \cX _{ 0 } )
        \arrow[r, "\gen"]
        \arrow[d, "\kgr{}"]
        &
        \ec _{ 1 } ( \cX _{ \xi } )
        \arrow[d, "\kgr{}"]\\
         \kgr{ \cX _{ 0 } } \arrow[r, "\kgr{ \gen }"', "\simeq"]
         &
        \kgr { \cX _{ \xi } }
    \end{tikzcd}
\]
\end{proposition}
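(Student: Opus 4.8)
The plan is to deduce both assertions formally from the commutative diagram \pref{fg:Relations among the K0-groups} together with the two base-change identities $\iota ^{ \ast }\circ\chi _{ \cX } = \chi _{ \cX _{ 0 } }\circ(i ^{ \ast }\times i ^{ \ast })$ and $\jbar ^{ \ast }\circ\chi _{ \cX } = \chi _{ \cX _{ \xi } }\circ(j ^{ \ast }\times j ^{ \ast })$ recorded just above. The point is that $\kgr{\gen}$ is, by its very definition, the composite $j ^{ \ast }\circ(i ^{ \ast }) ^{ - 1 }\colon\kgr{\cX _{ 0 }}\to\kgr{\cX _{ \xi }}$, where $i ^{ \ast }$ and $j ^{ \ast }$ are the isomorphisms in the top row of that diagram.

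For compatibility with the Euler pairings, I would fix $v, w\in\kgr{\cX _{ 0 }}$ and, using that $i ^{ \ast }$ is bijective, choose lifts $\tilde v, \tilde w\in\kgr{\cX}$ of $v, w$ along $i ^{ \ast }$, so that $\kgr{\gen}(v) = j ^{ \ast }\tilde v$ and $\kgr{\gen}(w) = j ^{ \ast }\tilde w$. Applying the two identities above to the pair $(\tilde v,\tilde w)$ gives
\begin{align}
    \chi _{ \cX _{ \xi } }\bigl(\kgr{\gen}(v),\kgr{\gen}(w)\bigr) &= \jbar ^{ \ast }\bigl(\chi _{ \cX }(\tilde v,\tilde w)\bigr),\\
    \chi _{ \cX _{ 0 } }(v, w) &= \iota ^{ \ast }\bigl(\chi _{ \cX }(\tilde v,\tilde w)\bigr).
\end{align}
Since $\chi _{ \cX }(\tilde v,\tilde w)$ lies in $\kgr{R}$, and since under the isomorphisms $\kgr{R}\cong\bZ$, $\kgr{\bfk}\cong\bZ$, $\kgr{\Kbar}\cong\bZ$ in the bottom part of \pref{fg:Relations among the K0-groups} both $\iota ^{ \ast }$ and $\jbar ^{ \ast }$ become the identity of $\bZ$, the right-hand sides of the two displayed equalities represent the same integer; recalling that $\chi _{ \cX _{ 0 } }$ and $\chi _{ \cX _{ \xi } }$ are by convention regarded as $\bZ$-valued through this same diagram, this is exactly the assertion that $\kgr{\gen}$ respects the pairings.

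For the square involving $\ec _{ 1 }$, I would just unwind the definitions: for $\cE\in\ec _{ 1 }(\cX _{ 0 })$ with unique deformation $\cE _{ R }\in\Perf\cX$, the object $\cE _{ R }$ restricts to $\cE$ on the central fibre and to $\gen(\cE)$ on the geometric generic fibre, so that $i ^{ \ast }[\cE _{ R }] = [\cE]$ and $j ^{ \ast }[\cE _{ R }] = [\gen(\cE)]$ in $K _{ 0 }$. Hence $[\cE _{ R }] = (i ^{ \ast }) ^{ - 1 }[\cE]$, and therefore $\kgr{\gen}([\cE]) = j ^{ \ast }(i ^{ \ast }) ^{ - 1 }[\cE] = j ^{ \ast }[\cE _{ R }] = [\gen(\cE)]$, which is the commutativity of the square.

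I do not expect a genuine obstacle in this proposition: it is essentially a bookkeeping consequence of \pref{fg:Relations among the K0-groups}. The real content has already been invested in the input facts — that the horizontal arrows $i ^{ \ast }, j ^{ \ast }$ of the top row are isomorphisms, which rests on the fact (see \pref{rm:deformation of full collection is full}) that the deformation of a full exceptional collection remains full and hence supplies compatible bases of the three $K_{ 0 }$-groups, and that the Euler pairings are compatible with the restriction maps, which rests on the base-change theorem \pref{lm:base change}. Accordingly, the only points that deserve a line of care are the existence of the lifts $\tilde v,\tilde w$ (immediate from bijectivity of $i ^{ \ast }$) and the verification that the two $\bZ$-normalizations of the Euler forms on the central and the geometric generic fibre are precisely the ones read off from the bottom row of \pref{fg:Relations among the K0-groups}.
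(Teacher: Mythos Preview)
Your proposal is correct and is exactly the intended argument: the paper states this proposition without proof, treating it as an immediate consequence of the diagram \pref{fg:Relations among the K0-groups} and the two base-change identities, which is precisely what you unwind.
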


\begin{proposition}\label{pr:characterization of exceptional objects in the same class}
Let
\(
    \cE, \cE ' \in \derived ( \cX _{ 0 } )
\)
be exceptional objects. The following conditions are equivalent.
\begin{enumerate}
\item\label{it:same class in K0}
\(
    [ \cE  ] = [ \cE ' ] \in \kgr{ \cX _{ 0 } }
\).

\item\label{it:same deformation up to even shift}
\(
    \gen ( \cE ) \simeq \gen ( \cE ' ) [ 2 m ]
\)
for some
\(
    m \in \bZ
\).
\end{enumerate}
\end{proposition}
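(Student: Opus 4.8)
The plan is to prove the two implications separately; the reverse one is purely formal, while the forward one reduces to the structure theory of exceptional objects on del Pezzo surfaces, which is available for $\cX_\xi$. For \eqref{it:same deformation up to even shift} $\Rightarrow$ \eqref{it:same class in K0} I would simply apply $\kgr{-}$ to an isomorphism $\gen(\cE) \simeq \gen(\cE')[2m]$, obtaining $[\gen(\cE)] = (-1)^{2m}[\gen(\cE')] = [\gen(\cE')]$ in $\kgr{\cX_\xi}$; the commutative square of the proposition immediately preceding this one rewrites this as $\kgr{\gen}([\cE]) = \kgr{\gen}([\cE'])$, and since $\kgr{\gen}$ is an isomorphism we conclude $[\cE] = [\cE']$.

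For \eqref{it:same class in K0} $\Rightarrow$ \eqref{it:same deformation up to even shift} I would first record that $\gen(\cE)$ and $\gen(\cE')$ are honest exceptional objects of $\derived(\cX_\xi)$: by \pref{lm:exceptional collection deforms} the objects $\cE,\cE'$ have unique deformations $\cE_R, \cE'_R \in \Perf\cX$, which are $f$-exceptional, and by \pref{cr:base change of exceptional collections} their restrictions $\gen(\cE), \gen(\cE')$ to the geometric generic fiber $\cX_\xi$ are exceptional. Now $\cX_\xi$ is isomorphic to $\quadric$ over the algebraically closed field $\Kbar$, hence is a del Pezzo surface. The hypothesis $[\cE] = [\cE']$ together with the commutative square of the preceding proposition gives $[\gen(\cE)] = \kgr{\gen}([\cE]) = \kgr{\gen}([\cE']) = [\gen(\cE')]$ in $\kgr{\cX_\xi}$. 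Finally, by \pref{th:Kuleshov and Orlov} \eqref{it:class determines exceptional object}, applied to the del Pezzo surface $\cX_\xi$, an exceptional object of $\derived(\cX_\xi)$ is determined by its class in $\kgr{\cX_\xi}$ up to shift by $2\bZ$; hence $\gen(\cE) \simeq \gen(\cE')[2m]$ for some $m \in \bZ$, as desired.

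I do not anticipate a genuine obstacle here: all of the substance is imported — the existence and uniqueness of the deformations (\pref{lm:exceptional collection deforms}), the compatibility of $\gen$ with $\kgr{-}$ and the bijectivity of $\kgr{\gen}$ (the preceding proposition), and the Kuleshov--Orlov structure theorem. The only point that deserves a moment's care is the legitimacy of invoking \pref{th:Kuleshov and Orlov} \eqref{it:class determines exceptional object} over $\Kbar$, which is unproblematic since $\Kbar = \overline{\bfk((t))}$ is algebraically closed by construction and $\cX_\xi \simeq \quadric$ over it is a del Pezzo surface.
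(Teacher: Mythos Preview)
Your proof is correct and follows essentially the same approach as the paper's own proof: for \eqref{it:same deformation up to even shift} $\Rightarrow$ \eqref{it:same class in K0} both use the commutative square of the preceding proposition together with the bijectivity of $\kgr{\gen}$, and for \eqref{it:same class in K0} $\Rightarrow$ \eqref{it:same deformation up to even shift} both push the equality of classes through $\kgr{\gen}$ and then invoke \pref{th:Kuleshov and Orlov} \eqref{it:class determines exceptional object} on the del Pezzo surface $\cX_\xi$. Your version is simply more explicit about the intermediate justifications (exceptionality of $\gen(\cE)$, applicability of Kuleshov--Orlov over $\Kbar$), which the paper leaves implicit.
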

\begin{proof}
\pref{it:same deformation up to even shift}
\(\Rightarrow\)
\pref{it:same class in K0}
is an consequence of the commutative diagram above and that
\(
    \kgr{ \gen }
\)
is an isomorphism.

Conversely, assume \pref{it:same class in K0}.
It then follows that
\(
    [ \gen ( \cE ) ]
    =
    [ \gen ( \cE ' ) ]
    \in
    \kgr{ \cX _{ \xi } }
\), which in turn implies \pref{it:same deformation up to even shift} by \pref{th:Kuleshov and Orlov} \eqref{it:class determines exceptional object}.
\end{proof}

For exceptional vector bundles on \( \hirzebruchtwo \simeq \cX _{ 0 } \), we have the following reconstruction result.
This is an immediate corollary of \pref{lm:Lemma 3.5 of [OU]}.
\begin{lemma}\label{lm:exceptional vb is determined by K0}
For any
\(
    N = 1, \dots, 4
\),
the map
\(
    \ecvb _{ N } ( \hirzebruchtwo ) \xrightarrow{ \kgr{} } \numec _{ N } ( \hirzebruchtwo )
\)
is injective.
\end{lemma}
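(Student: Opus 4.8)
The plan is to deduce the statement directly from the reconstruction result \pref{lm:Lemma 3.5 of [OU]} by arguing term by term. Suppose that \( \cEbar = ( \cE _{ 1 }, \dots, \cE _{ N } ) \) and \( \cFbar = ( \cF _{ 1 }, \dots, \cF _{ N } ) \) are two \( N \)-term exceptional collections of vector bundles on \( \hirzebruchtwo \) which have the same image under \( \kgr{} \) in \( \numec _{ N } ( \hirzebruchtwo ) \). By the definition of this map, the latter condition means precisely that \( [ \cE _{ i } ] = [ \cF _{ i } ] \in \kgr{ \hirzebruchtwo } \) for every \( i = 1, \dots, N \).

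The key observation is that each member of such a collection is an exceptional vector bundle in the sense required by \pref{lm:Lemma 3.5 of [OU]}: being part of an exceptional collection, \( \cE _{ i } \) satisfies \( \RHom _{ \hirzebruchtwo } ( \cE _{ i }, \cE _{ i } ) = \bfk \), so it is an exceptional object, and by hypothesis it is moreover locally free; the same applies to \( \cF _{ i } \). Applying \pref{lm:Lemma 3.5 of [OU]} to the pair \( \cE _{ i }, \cF _{ i } \) — which is legitimate precisely because \( [ \cE _{ i } ] = [ \cF _{ i } ] \) — we conclude that \( \cE _{ i } \simeq \cF _{ i } \) for each \( i \). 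Since isomorphism of exceptional collections is defined termwise in \pref{df:sets of exceptional collections}, this shows that \( \cEbar \) and \( \cFbar \) represent the same class in \( \ecvb _{ N } ( \hirzebruchtwo ) \), which is the asserted injectivity.

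There is essentially no obstacle here beyond invoking the correct statement: all of the genuine content — that an exceptional vector bundle on \( \hirzebruchtwo \) is rigid enough to be pinned down by its numerical class, although general exceptional \emph{objects} are not — is already packaged inside \pref{lm:Lemma 3.5 of [OU]} (itself a weakened form of \cite[Lemma 3.5]{MR3431636}). One minor point worth recording is that the argument above works verbatim for every \( N \); the restriction to \( N \le 4 \) in the statement merely reflects the fact that \( 4 = \rank \kgr{ \hirzebruchtwo } \) bounds the length of any exceptional collection on \( \hirzebruchtwo \), so no information is lost by stating it in that range.
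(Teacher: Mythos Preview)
Your proof is correct and follows exactly the approach the paper takes: the paper states just before the lemma that it is an immediate corollary of \pref{lm:Lemma 3.5 of [OU]}, which is precisely the termwise application you spell out.
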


See \pref{df:numerical exceptional collection} for the definition of the set
\(
    \numec _{ N } ( \hirzebruchtwo )
\).

\begin{definition}\label{df:numerical exceptional collection}
Let
\(
    S
\)
be a smooth projective variety, and for simplicity let us assume that
\(
    \kgr { S }
\)
is isomorphic to the numerical Grothendieck group; i.e., the Euler pairing \( \chi \) is non-degenerate.
This in particular implies that
\(
    \kgr { S }
\)
is a free abelian group of finite rank.

An exceptional vector is an element
\(
    e \in \kgr{ S }
\)
such that
\(
    e ^{ 2 } = \chi _{ S } ( e, e ) = 1
\).

A \emph{numerical exceptional collection} on \( S \)
is a sequence of exceptional vectors
\(
    e _{ 1 }, \dots, e _{ N } \in \kgr{ S }
\)
such that
\(
    \chi _{ S } ( e _{ j }, e _{ i } )
    =
    0
\)
for any
\(
    1 \le i < j \le N
\).
A numerical exceptional collection is said to be \emph{full} if it is a basis of
\(
    \kgr{ S }
\); i.e, when
\( N = \rank \kgr{ S } \).

The set of numerical exceptional collections of length \( N \) (resp. full) on \( S \) will be denoted by
\(
    \numec _{ N } ( S )
\)
and
\(
    \numfec ( S )
\), respectively.

For a numerical exceptional collection
\(
    ( e, f )
\)
of length \(2\), which will also be called a numerical exceptional pair, its right and left mutations are the new numerical exceptional pairs defined and denoted as follows.
\begin{align}
    \left( f, R _{ f } ( e )
    \coloneqq
    e - \chi _{ S } ( e, f ) f \right)\label{eq:numerical right mutation}\\
    \left( L _{ e } ( f )
    \coloneqq
    f - \chi _{ S } ( e, f ) e
    , e    
    \right)\label{eq:numerical left mutation}
\end{align}
\end{definition}

By similar arguments for the action
\(
    G _{ N } \curvearrowright \ec _{ N }
\),
one can verify that for a surface \( S \) with
\(
    \kgr{ S } \simeq \bZ ^{ N }
\)
there is an action
\(
    G _{ N } \curvearrowright \numfec ( S )
\),
where the subgroup
\(
    \Br _{ N }
\)
acts by the mutations \eqref{eq:numerical right mutation} \eqref{eq:numerical left mutation} and \( \bZ ^{ N } \) by the change of signs (hence the action descends to the quotient
\(
    \left( \bZ / 2 \bZ \right) ^{ N } \rtimes \Br _{ N }
\)).

At last we claim that everything goes together.

\begin{proposition}\label{pr:everything goes together}
There exists the following commutative diagram of sets equipped with the action of the group
\( G _{ 4 } = \bZ ^{ 4 } \rtimes \Br _{ 4 } \).
\[
    \begin{tikzcd}
        \ec _{ 4 } ( \cX _{ 0 } ) \arrow[r, "\gen", twoheadrightarrow] \arrow[d, "\kgr{}"]
         & \fec ( \cX _{ \xi } )  \arrow[d, "\kgr{}"]\\
        \numfec ( \cX _{ 0 } ) \arrow[r, "\kgr{\gen}"', "\simeq"]
        & \numfec ( \cX _{ \xi } )
    \end{tikzcd}
\]
\end{proposition}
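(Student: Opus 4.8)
The plan is to assemble the desired commutative square from pieces that have already been established and to check compatibility with the $G_4$-action on all four corners. The four maps in the square are: the generalization map $\gen\colon \ec_4(\cX_0)\to\fec(\cX_\xi)$, which is defined (and shown to be surjective in \pref{cr:gen is surjective}); the $K_0$-class map $\ec_4(\cX_0)\to\numfec(\cX_0)$, which lands in $\numfec$ because a full exceptional collection gives a basis of $\kgr{\cX_0}$ on which the Euler pairing restricts to the appropriate (upper-triangular, unipotent) Gram matrix; the $K_0$-class map $\fec(\cX_\xi)\to\numfec(\cX_\xi)$, for the same reason; and the isomorphism $\kgr{\gen}\colon\numfec(\cX_0)\xrightarrow{\sim}\numfec(\cX_\xi)$, which is induced by the ring isomorphism $\kgr{\gen}$ of \eqref{eq:generalization map for K0} and carries $\numec_N(\cX_0)$ to $\numec_N(\cX_\xi)$ because, by the previous proposition, $\kgr{\gen}$ respects the Euler pairings $\chi_{\cX_0},\chi_{\cX_\xi}$.

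First I would verify commutativity of the square on the level of objects: for an exceptional collection $\cEbar\in\ec_4(\cX_0)$, one has $[\gen(\cE_i)]=\kgr{\gen}([\cE_i])$ for each $i$ by the commutative square in the previous proposition (the one relating $\ec_1$ to $\kgr{}$), hence $\kgr{}(\gen(\cEbar))=\kgr{\gen}(\kgr{}(\cEbar))$ as elements of $\numfec(\cX_\xi)$. This is essentially immediate from what precedes; the only thing to note is that $\kgr{}(\gen(\cEbar))$ really is a \emph{full} numerical exceptional collection, which holds because $\gen(\cEbar)$ is a full exceptional collection of $\derived(\cX_\xi)$ by \pref{rm:deformation of full collection is full} and \pref{lm:exceptional collection deforms}.

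Next I would check that all four maps are $G_4$-equivariant. For the top map $\gen$, equivariance under $\bZ^4$ (shifts) is clear, and equivariance under $\Br_4$ (mutations) follows from \pref{lm:mutation commutes with base change}: deformation to $\cX$ commutes with mutations, and restriction to the geometric generic fiber commutes with mutations, so the composite $\gen$ intertwines the braid actions on $\ec_4(\cX_0)$ and $\fec(\cX_\xi)$. For the two vertical maps, equivariance is the statement that the $K_0$-class map intertwines the geometric mutations \eqref{eq:numerical right mutation}, \eqref{eq:numerical left mutation} with the numerical ones and shifts with sign changes; this is exactly the compatibility recorded in the construction of the action $G_N\curvearrowright\numfec(S)$. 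For the bottom map $\kgr{\gen}$, equivariance follows because $\kgr{\gen}$ is a ring isomorphism compatible with the Euler pairing, so it commutes with the purely pairing-theoretic formulas defining the numerical mutations and sign changes. I do not expect any real obstacle here: every ingredient is present in the excerpt, and the work is bookkeeping, tracking a class through base change and through the defining triangles of mutations. The only mildly delicate point is making sure that one uses the \emph{flatness} hypothesis in \pref{lm:mutation commutes with base change} in both base-change steps (the map $B=\Spec R\to\bA^1$ and the closed/generic point inclusions), which is automatic since $f$ is smooth hence flat.
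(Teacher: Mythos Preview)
Your proposal is correct and is precisely the assembly of preceding lemmas that the paper intends; in fact the paper states \pref{pr:everything goes together} without an explicit proof, leaving the verification you carry out to the reader. One small remark: your parenthetical appeal to \pref{cr:gen is surjective} for the surjectivity of $\gen$ is a forward reference (that corollary itself invokes the $G_4$-equivariance of this diagram), but since you do not actually use surjectivity anywhere in your argument this is harmless---just drop the parenthetical, and note that the twoheaded arrow in the diagram is justified only afterwards.
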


Let us now introduce a particular (\(f\)-)exceptional collection of invertible sheaves, which will be called \emph{the standard collection} and serve as the base point of the sets \( \fec \) and \( \fecvb \) in this paper.

\begin{definition}\label{df:the standard exceptional collection on Sigma2}
The \emph{standard full exceptional collection} of invertible sheaves on \(\hirzebruchtwo \simeq \cX _{ 0 }\) is defined as follows.
\begin{align}\label{eq:standard collection}
    \cEstd
    \coloneqq
    \left(
        \cO _{ \hirzebruchtwo },
        \cO _{ \hirzebruchtwo } ( f ),
        \cO _{ \hirzebruchtwo } ( C + 2 f ),
        \cO _{ \hirzebruchtwo } ( C + 3 f )
    \right)
    \in
    \fecvb ( \hirzebruchtwo ).
\end{align}
By \pref{lm:exceptional collection deforms} and \pref{rm:deformation of full collection is full}, it uniquely deforms to a full \(f\)-exceptional collection of invertible sheaves. We will write it
\( \cEstd _{ R } \), and its pullback to
\( \cX _{ \xi } \) will be denoted by
\( \cEstd _{ \xi } \).
Using the deformation invariance of the intersection numbers, one can easily confirm that
\begin{align}\label{eq:standard collection generic}
    \cEstd _{ \xi }
    =
    \left(
        \cO _{ \cX _{ \xi } },
        \cO _{ \cX _{ \xi } } ( 1, 0 ),
        \cO _{ \cX _{ \xi } } ( 1, 1 ),
        \cO _{ \cX _{ \xi } } ( 2, 1 )
    \right)
\end{align}
under an isomorphism
\( \cX _{ \xi } \simeq \quadric \).
\end{definition}

\begin{corollary}\label{cr:gen is surjective}
The map
\(
    \gen \colon
    \ec _{ 4 } ( \cX _{ 0 } )
    \to
    \fec ( \cX _{ \xi } )
\)
is surjective.
\end{corollary}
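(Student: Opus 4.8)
The plan is to derive surjectivity of \(\gen\) from two facts: the \(G_{4}\)-action on \(\fec(\cX_{\xi})\) is transitive (\pref{th:properties of exceptional collections of Sigma0}~\eqref{it:transitivity for Sigma0}), and the map \(\gen\) is \(G_{4}\)-equivariant and sends the standard collection to \(\cEstd_{\xi}\). The equality \(\gen(\cEstd) = \cEstd_{\xi}\) is part of \pref{df:the standard exceptional collection on Sigma2}, and the \(G_{4}\)-equivariance of \(\gen \colon \ec_{4}(\cX_{0}) \to \fec(\cX_{\xi})\) is recorded in \pref{pr:everything goes together} as its top horizontal arrow.

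For completeness, here is why \(\gen\) is \(G_{4}\)-equivariant. The subgroup \(\bZ^{4} \subset G_{4}\) acts by shifting the individual terms of a collection, and this manifestly commutes with restriction to a fiber, so \(\gen\) is \(\bZ^{4}\)-equivariant. For the braid subgroup, fix a generator \(\sigma_{i}\) of \(\Br_{4}\) and a collection \(\cEbar \in \ec_{4}(\cX_{0})\). Since \(f\) is smooth, hence flat, \pref{lm:mutation commutes with base change} applies to the base change along \(\Spec\bfk \to \Spec R\): the restriction of \(\sigma_{i} \cdot \cEbar_{R}\) to the central fiber is \(\sigma_{i} \cdot \cEbar\). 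As \(\sigma_{i} \cdot \cEbar_{R}\) is again an \(f\)-exceptional collection, uniqueness of deformations (\pref{lm:exceptional collection deforms}) gives \((\sigma_{i} \cdot \cEbar)_{R} \simeq \sigma_{i} \cdot \cEbar_{R}\); restricting this to the geometric generic fiber and invoking \pref{lm:mutation commutes with base change} once more yields \(\gen(\sigma_{i} \cdot \cEbar) \simeq \sigma_{i} \cdot \gen(\cEbar)\).

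Granting this, the proof is immediate: given an arbitrary \(\cF \in \fec(\cX_{\xi})\), transitivity produces \(g \in G_{4}\) with \(g \cdot \cEstd_{\xi} = \cF\), and then \(g \cdot \cEstd \in \ec_{4}(\cX_{0})\) satisfies \(\gen(g \cdot \cEstd) = g \cdot \gen(\cEstd) = g \cdot \cEstd_{\xi} = \cF\). I do not expect any real obstacle here; the only mildly delicate point is the equivariance above, which is itself a formal combination of the uniqueness of deformations with the compatibility of mutations with base change.
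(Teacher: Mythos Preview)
Your proof is correct and follows essentially the same approach as the paper: both use the transitivity of the \(G_{4}\)-action on \(\fec(\cX_{\xi})\), the \(G_{4}\)-equivariance of \(\gen\), and the fact that \(\gen(\cEstd)=\cEstd_{\xi}\). The only difference is that you spell out the equivariance argument in detail (via \pref{lm:mutation commutes with base change} and \pref{lm:exceptional collection deforms}), whereas the paper simply cites it from \pref{pr:everything goes together}.
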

\begin{proof}
By \pref{th:properties of exceptional collections of Sigma0} \pref{it:transitivity for Sigma0}, we know that
\(
    \fec ( \cX _{ \xi } )
    =
    G _{ 4 } \cdot \cEstd _{ \xi }
\).
Since the diagram of \pref{pr:everything goes together} is
\( G _{ 4 } \)-equivariant and
\(
    \gen ( \cEstd )
    =
    \cEstd _{ \xi }
\),
we obtain the conclusion.
\end{proof}

\begin{corollary}\label{cr:transitivity at the numerical level}
For any
\(
    \cEbar \in \ec _{ 4 } ( \cX _{ 0 } )
\),
there exists
\(
    \sigma \in G _{ 4 }
\)
such that
\(
    \kgr{ \sigma ( \cEbar ) } = \kgr{ \cEstd }
    \in
    \numfec ( \cX _{ 0 } )
\).
\end{corollary}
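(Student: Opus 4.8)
The plan is to move the problem to the generic fiber \( \cX _{ \xi } \simeq \quadric \), where transitivity of the \( G _{ 4 } \)-action is already available from \pref{th:Kuleshov and Orlov}, and then transport the conclusion back to \( \cX _{ 0 } \simeq \hirzebruchtwo \) along the \( G _{ 4 } \)-equivariant maps assembled in \pref{pr:everything goes together}.

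Concretely, given \( \cEbar \in \ec _{ 4 } ( \cX _{ 0 } ) \), first I would form \( \gen ( \cEbar ) \in \fec ( \cX _{ \xi } ) \). By \pref{th:properties of exceptional collections of Sigma0} \pref{it:transitivity for Sigma0} the action \( G _{ 4 } \curvearrowright \fec ( \cX _{ \xi } ) \) is transitive, and by construction \( \gen ( \cEstd ) = \cEstd _{ \xi } \) (see \pref{df:the standard exceptional collection on Sigma2}); hence there exists \( \sigma \in G _{ 4 } \) with \( \sigma \cdot \gen ( \cEbar ) = \cEstd _{ \xi } = \gen ( \cEstd ) \). Since \( \gen \) is \( G _{ 4 } \)-equivariant — this uses that mutations commute with base change (\pref{lm:mutation commutes with base change}) together with the uniqueness of deformations (\pref{lm:exceptional collection deforms}), as recorded in the diagram of \pref{pr:everything goes together} — we obtain \( \gen ( \sigma ( \cEbar ) ) = \gen ( \cEstd ) \). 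Applying the class map and the commutative square of \pref{pr:everything goes together} gives \( \kgr{ \gen } ( \kgr{ \sigma ( \cEbar ) } ) = \kgr{ \gen } ( \kgr{ \cEstd } ) \) in \( \numfec ( \cX _{ \xi } ) \), and since \( \kgr{ \gen } \) is an isomorphism (in particular injective) this forces \( \kgr{ \sigma ( \cEbar ) } = \kgr{ \cEstd } \) in \( \numfec ( \cX _{ 0 } ) \), which is exactly the assertion.

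I do not expect any real obstacle: the statement is a formal consequence of the \( G _{ 4 } \)-equivariant diagram of \pref{pr:everything goes together}, the transitivity input on the del Pezzo surface \( \quadric \), and the bijectivity of \( \kgr{ \gen } \). The only point worth spelling out is that the element \( \sigma \) is taken in the full group \( G _{ 4 } = \bZ ^{ 4 } \rtimes \Br _{ 4 } \), so that shifts as well as mutations are permitted; this is harmless since all of the actions appearing in the diagram are actions of \( G _{ 4 } \), and at the numerical level the \( \bZ ^{ 4 } \)-part merely changes signs of the classes, which is compatible with the claimed equality.
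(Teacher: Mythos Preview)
Your proposal is correct and follows essentially the same route as the paper: pass to the generic fiber via \(\gen\), use transitivity on \(\quadric\) to find \(\sigma\) with \(\sigma\cdot\gen(\cEbar)=\cEstd_{\xi}=\gen(\cEstd)\), invoke \(G_4\)-equivariance of \(\gen\), and then read off the equality of classes on \(\cX_0\). The only cosmetic difference is that the paper cites \pref{pr:characterization of exceptional objects in the same class} for the last step, whereas you unwind that proposition and appeal directly to the injectivity of \(\kgr{\gen}\) in the square of \pref{pr:everything goes together}; these amount to the same argument.
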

\begin{proof}
Again by \pref{th:properties of exceptional collections of Sigma0} \pref{it:transitivity for Sigma0}, one can find an element
\(
    \sigma \in G _{ 4 }
\)
such that
\(
    \sigma \gen ( \cEbar ) = \cEstd _{ \xi }
\).
Again by the equivariance, it follows that
\(
    \gen ( \sigma ( \cEbar ) )
    =
    \cEstd _{ \xi }
    =
    \gen ( \cEstd )
\),
which means that
\(
    \kgr{ \sigma ( \cEbar )}
    =
    \kgr{ \cEstd }
\)
by \pref{pr:characterization of exceptional objects in the same class}.
\end{proof}

%
%

\section{Twisting exceptional objects down to exceptional vector bundles}
\label{sc:Twisting exceptional objects down to exceptional vector bundles}

The purpose of this section is to show the following theorem.

\begin{theorem}\label{th:exceptional objects are equivalent to vector bundles}
For any exceptional object
\(
    \cE \in \derived ( \hirzebruchtwo )
\),
there exists an exceptional vector bundle
\(
    \cF
\)
and a sequence of integers
\(
    a _{ 1 }, \dots, a _{ n }, m
\)
such that
\begin{align}\label{eq:exceptional objects are equivalent to vector bundles}
    \cE
    \simeq
    \left( T _{ a _{ n } } \circ \cdots \circ T _{ a _{ 1 } } \right) ( \cF ) [ m ].
\end{align}
\end{theorem}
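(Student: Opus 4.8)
The plan is to argue by induction on the non-negative integer
\[
    \ell ( \cE ) \coloneqq \sum _{ i \in \bZ } \length _{ \cO _{ \hirzebruchtwo, \gamma } } \tors \cH ^{ i } ( \cE ) _{ \gamma },
\]
where \( \gamma \) is the generic point of the \( ( - 2 ) \)-curve \( C \) (so that \( \cO _{ \hirzebruchtwo, \gamma } \) is a discrete valuation ring) and \( \tors \) denotes the torsion part. The first goal is to prove the equivalence: \( \ell ( \cE ) = 0 \) holds if and only if \( \cE \) is isomorphic to a shift of a vector bundle — which is then automatically exceptional. This is the base case. Granting the length-decreasing step described below, the induction runs as follows: if \( \ell ( \cE ) > 0 \), choose \( c = c ( \cE ) \in \bZ \) with \( \ell ( T _{ c } ( \cE ) ) < \ell ( \cE ) \); since \( T _{ c } \) is an autoequivalence, \( T _{ c } ( \cE ) \) is again exceptional, so by the inductive hypothesis \( T _{ c } ( \cE ) \simeq ( T _{ a _{ n } } \circ \cdots \circ T _{ a _{ 1 } } ) ( \cF ) [ m ] \) for some exceptional vector bundle \( \cF \). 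Applying \( T _{ c } ^{ - 1 } = T ' _{ c } \), rewriting \( T ' _{ c } \simeq T _{ c + 1 } \circ ( \cO _{ \hirzebruchtwo } ( - C ) \otimes _{ \cO _{ \hirzebruchtwo } } - ) \) by \eqref{eq:square of inverse twists as square of twists and O(-2C)}, and moving the twist by \( \cO _{ \hirzebruchtwo } ( - C ) \) to the right past each \( T _{ a _{ i } } \) via \eqref{eq:exchanging Ta and O(C)} until it is absorbed into \( \cF \) — note \( \cO _{ \hirzebruchtwo } ( - C ) \otimes \cF \) is again an exceptional vector bundle — yields an expression of the form \eqref{eq:exceptional objects are equivalent to vector bundles} for \( \cE \). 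Everything is thus reduced to the base case and the length-decreasing step.

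Both require structural information on \( \cohomology ( \cE ) \), which I would develop as follows. Using exceptionality, first show that there is a unique index \( i _{ 0 } \) with \( \Supp \cH ^{ i _{ 0 } } ( \cE ) = \hirzebruchtwo \), so \( \cH ^{ i _{ 0 } } ( \cE ) \) has generic rank \( 1 \), whereas for \( i \ne i _{ 0 } \) the sheaf \( \cH ^{ i } ( \cE ) \), if nonzero, is pure with reduced support \( C \). The crucial refinement — and the step I expect to be the main obstacle — is to upgrade this to the statement that the \emph{schematic} support of \( \cH ^{ i } ( \cE ) \), \( i \ne i _{ 0 } \), is the reduced curve \( C \), not an infinitesimal thickening of it. Next, split off from \( \cH ^{ i _{ 0 } } ( \cE ) \) its torsion part, \( \cH ^{ i _{ 0 } } ( \cE ) \simeq E ( \cE ) \oplus T ( \cE ) \) with \( E ( \cE ) \) an exceptional sheaf and \( T ( \cE ) \) torsion, and analyse \( \tors \cohomology ( \cE ) \): one produces a distinguished integer \( a = a ( \cE ) \in \bZ \) such that, when \( \tors E ( \cE ) \ne 0 \), \( T ( \cE ) \) is a direct sum of copies of \( \cO _{ C } ( a ) \) and \( \tors \cohomology ( \cE ) \) a direct sum of copies of \( \cO _{ C } ( a ) \) and \( \cO _{ C } ( a + 1 ) \).

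With the schematic-support statement, the base case is quick: a nonzero \( \cH ^{ i } ( \cE ) \) with \( i \ne i _{ 0 } \) is a nonzero sheaf with schematic support \( C \) and so contributes at least \( 1 \) to \( \ell ( \cE ) \); hence \( \ell ( \cE ) = 0 \) forces \( \cH ^{ i } ( \cE ) = 0 \) for \( i \ne i _{ 0 } \), forces \( T ( \cE ) = 0 \), and leaves \( \cH ^{ i _{ 0 } } ( \cE ) = E ( \cE ) \) torsion-free; and a torsion-free exceptional sheaf on \( \hirzebruchtwo \) is locally free, so \( \cE \) is a shift of a vector bundle. For the length-decreasing step I would adapt \cite[Proposition~5.1]{Ishii-Uehara_ADC}. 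Consider first \( \tors E ( \cE ) \ne 0 \) and claim \( c = a ( \cE ) \) works. Feed \( \cE \) into the standard triangle \eqref{eq:triangle of spherical twist} with \( \alpha = \cO _{ C } ( a ) \) and chase the long exact sequence of cohomology sheaves; localised at \( \gamma \) this becomes a question about the torsion of finitely generated modules over \( \cO _{ \hirzebruchtwo, \gamma } \), which one controls by passing to the local model of \( \hirzebruchtwo \) near \( C \) — the minimal resolution of \( \Spec R \), where \( ( R, \frakm ) \) is the complete local ring of the \( A _{ 1 } \)-singularity — as in \cite[Proposition~5.1]{Ishii-Uehara_ADC}: the twist removes exactly the \( \cO _{ C } ( a ) \)-part of the torsion and creates only \( \cO _{ C } ( a + 1 ) \)-contributions of strictly smaller total length, so \( \ell ( T _{ a } ( \cE ) ) < \ell ( \cE ) \). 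The extra difficulty relative to \cite[Proposition~5.1]{Ishii-Uehara_ADC}, where the object is entirely supported on the exceptional curve, is the presence of the full-support summand \( E ( \cE ) \), whose interaction with the twist must be controlled.

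It remains to treat \( \tors E ( \cE ) = 0 \) with \( \cE \) not a shift of a vector bundle, which I would reduce to the previous case by dualising: first prove \( \tors E ( \cE ^{ \vee } ) \ne 0 \), so the previous case gives \( \ell ( T _{ c ' } ( \cE ^{ \vee } ) ) < \ell ( \cE ^{ \vee } ) \) with \( c ' = a ( \cE ^{ \vee } ) \). By \eqref{eq:dual and spherical twists}, \eqref{eq:square of inverse twists as square of twists and O(-2C)} and \eqref{eq:exchanging Ta and O(C)},
\[
    \left( T _{ c ' } ( \cE ^{ \vee } ) \right) ^{ \vee }
    \simeq
    T ' _{ - 2 - c ' } ( \cE )
    \simeq
    \cO _{ \hirzebruchtwo } ( - C ) \otimes _{ \cO _{ \hirzebruchtwo } } T _{ - 3 - c ' } ( \cE ),
\]
and since \( \ell \) is invariant under \( ( - ) ^{ \vee } \) and under tensoring by line bundles, we get \( \ell ( T _{ - 3 - a ( \cE ^{ \vee } ) } ( \cE ) ) < \ell ( \cE ) \); thus \( c = - a ( \cE ^{ \vee } ) - 3 \) works. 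Iterating the inductive step proves the theorem. The genuinely hard points are the schematic-support statement and the bookkeeping in the length computation; the rest is organisation.
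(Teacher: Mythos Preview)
Your plan is essentially the paper's own proof: induct on $\ell(\cE)$, establish the structural results on $\cohomology(\cE)$ (including the schematic-support statement, which the paper likewise flags as the main technical obstacle), and prove the length-decreasing step via $T_{a(\cE)}$ when $\tors E(\cE)\ne 0$ and via dualisation (with the same formula $c = -a(\cE^{\vee}) - 3$) otherwise. Two harmless slips to fix: $\cH^{i_0}(\cE)$ need not have generic rank $1$ (only $\ge 1$), and in the length-decreasing computation the saving comes specifically from $T_a$ sending $E(\cE)$ to its torsion-free quotient $\cF(\cE)$, thereby killing $\tors E(\cE)$, rather than from removing all $\cO_C(a)$-summands of $\tors\cohomology(\cE)$ --- the other torsion pieces persist with the same total length.
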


The similar result for spherical objects is given in \cite[Proposition 1.6]{Ishii-Uehara_ADC}.
In fact, we prove \pref{th:exceptional objects are equivalent to vector bundles} by suitably modifying the proof of \cite[Proposition 1.6]{Ishii-Uehara_ADC}.

\begin{notation}\label{nt:support}
Let
\(
    X
\)
be an integral noetherian scheme.
For
\(
    \cE \in \coh X
\), we define
\begin{align}
    \Supp \cE
    \coloneqq
    \Spec
    \Image
    \left(
        \cO _{ X } \to \cEnd _{ X } ( \cE )
    \right)
    \subset X
\end{align}
and call it the \emph{schematic support} of \( \cE \).
It is universal among the closed subschemes
\(
    \iota \colon Z \hookrightarrow X
\)
which admits a coherent sheaf
\(
    \cE ' \in \coh Z
\)
such that
\(
    \iota _{ \ast } \cE ' \simeq \cE
\).
The underlying closed subset of \(X\), or equivalently the reduced closed subscheme
\(
    ( \Supp \cE ) _{\reduced } \subset X
\),
is called the \emph{reduced support} of \( \cE \).
Also we let
\(
    \tors \cE \subset \cE
\)
be the maximum torsion subsheaf of \( \cE \). For an object
\(
    \cE \in \derived ( X )
\),
we use the following notation.
\begin{align}\label{eq:cohomology object}
    \cohomology ( \cE ) \coloneqq \bigoplus _{ i \in \bZ } \cH ^{ i } ( \cE )
    \quad
    \Supp \cE \coloneqq \bigcup _{ i \in \bZ } \left( \Supp \cH ^{ i } ( \cE ) \right) _{ \reduced }
\end{align}
\end{notation}

%
%
\subsection{First properties of \( \cohomology ( \cE ) \)}
\label{sc:First properties}

As the first step toward the proof of \pref{th:exceptional objects are equivalent to vector bundles}, in this subsection we prove some basic properties of \( \cohomology ( \cE ) \).
Part of them concern
\(
    \left( \Supp \cH ^{ i } ( \cE ) \right) _{ \reduced }
\), which can be summarized as follows.

\begin{quote}
There exists the unique integer
    \(
        i _{ 0 } \in \bZ
    \)
    with the following properties.
    \begin{itemize}
    \item 
    \(
        \Supp \cH ^{ i _{ 0 } } ( \cE ) = \hirzebruchtwo
    \)
    \item
    \(
        \left(\Supp \tors \cH ^{ i _{ 0 } } ( \cE )\right) _{ \reduced } = C
    \)
    \item
    \(
        \left(\Supp \cH ^{ i } ( \cE )\right) _{ \reduced } = C
    \)
    for
    \(
        \forall i \neq i _{ 0 }
    \)
\end{itemize}
\end{quote}

Based on this, the similar statements for the \emph{schematic} supports will be proved in the next subsection; namely, we will remove \(\reduced\) from the second and the third items. In fact, this step has been the main obstacle for the project.

The similar results for spherical objects appear in \cite[Lemma 4.8]{Ishii-Uehara_ADC}, where it is rather easily shown that the schematic support of the cohomology sheaves of a spherical object coincides with \(C\). However, unfortunately, the proof of \cite[Lemma 4.8]{Ishii-Uehara_ADC} does not immediately apply to our situation. The fact \( \Supp \cE = \hirzebruchtwo \) prevents us from studying the problem locally around the curve \( C \).

\begin{lemma}\label{lm:properties of cohomology sheaves}
The cohomology sheaves of an exceptional object $\cE \in \derived ( \hirzebruchtwo )$
enjoy the following properties.
\begin{enumerate}
    \item\label{it:cohomology(E) is rigid}
    \(
        \cohomology ( \cE )
    \)
    is rigid
    (\(
        \iff
        E _{ 2 } ^{ 1, q } = 0 \ \forall q \in \bZ
    \)
    in the spectral sequence \eqref{eq:E_2 spectral sequence of Hom}).\label{it:rigid}
    
    \item\label{it:support}
    There exists a unique integer
    \(
        i _{ 0 } \in \bZ
        \)
        such that
        \begin{itemize}
            \item
            \(
                \Supp \cH ^{ i _{ 0 } } ( \cE )
                =
                \hirzebruchtwo
            \).
                
            \item
            \(
                \cH ^{ i } ( \cE )
            \)
            for
            \(
                i \ne i _{ 0 }
            \)
            and
            \(
                \tors \cH ^{ i _{ 0 } } ( \cE )
            \)
            are pure sheaves with
            \(
                \left(\Supp ( - )\right) _{\reduced} = C
            \),
            unless
            \(
                = 0
            \).
        \end{itemize}
                        
    \item \label{it:more on i0-th cohomology}
    $
        \tors(\cH^{i_0}(\cE) )
    $
    is rigid, and
    \(
        \cH^{i_0}(\cE) /\tors(\cH^{i_0}(\cE) )
    \)
    is an exceptional vector bundle.
\end{enumerate}
\end{lemma}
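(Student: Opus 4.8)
\emph{Proof plan.}
Throughout use that $\hirzebruchtwo$ is a smooth surface, so $\Ext^p_{\hirzebruchtwo}$ between coherent sheaves vanishes for $p\notin\{0,1,2\}$, together with $\RHom_{\hirzebruchtwo}(\cE,\cE)=\bfk$.

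For \eqref{it:rigid} I run the hyper-$\Ext$ spectral sequence \eqref{eq:E_2 spectral sequence of Hom}, $E_2^{p,q}=\bigoplus_i\Ext^p_{\hirzebruchtwo}(\cH^i(\cE),\cH^{i+q}(\cE))\Rightarrow\Ext^{p+q}_{\hirzebruchtwo}(\cE,\cE)$. It degenerates at $E_3$ because $E_2$ is concentrated in $0\le p\le 2$, and no $d_r$ enters or leaves the column $p=1$, so $E_2^{1,q}=E_\infty^{1,q}$ for every $q$; for $q\ne-1$ this is a subquotient of $\Ext^{1+q}_{\hirzebruchtwo}(\cE,\cE)=0$, and for $q=-1$ it vanishes because the edge map $\Hom_{\hirzebruchtwo}(\cE,\cE)\to E_2^{0,0}=\bigoplus_i\End_{\hirzebruchtwo}(\cH^i(\cE))$ carries $\id_\cE$ to $(\id_{\cH^i(\cE)})_i\ne0$, forcing $E_\infty^{0,0}=\Hom_{\hirzebruchtwo}(\cE,\cE)$ and hence $E_\infty^{1,-1}=E_\infty^{2,-2}=0$. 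Thus $\Ext^1_{\hirzebruchtwo}(\cohomology(\cE),\cohomology(\cE))=\bigoplus_qE_2^{1,q}=0$.

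For \eqref{it:support}, item \eqref{it:rigid} already gives that every $\cH^i(\cE)$ is a rigid coherent sheaf. First I exclude that $\cE$ is a torsion object: a $0$-dimensional support contradicts $\chi_{\hirzebruchtwo}(\cE,\cE)=1$, since the Euler form kills classes of skyscrapers; and $\gen(\cE)$ is a shift of an exceptional vector bundle on $\quadric$ by \pref{th:properties of exceptional collections of Sigma0} \eqref{it:fec=fecvb}, hence of positive rank, which is preserved under the flat deformation, ruling out a $1$-dimensional support. This yields an index $i_0$ with $\Supp\cH^{i_0}(\cE)=\hirzebruchtwo$. For uniqueness: if $\cH^a(\cE)$ and $\cH^b(\cE)$ both had full support with $a<b$, localising \eqref{eq:E_2 spectral sequence of Hom} at the generic point shows that $\cExt^{b-a}_{\hirzebruchtwo}(\cE,\cE)$ has positive rank; twisting by an ample $\cO_{\hirzebruchtwo}(nH)$ produces a global section, whence $\Ext^{b-a}_{\hirzebruchtwo}(\cE,\cE(nH))\ne0$, while Serre duality, the self-duality $\cRHom_{\hirzebruchtwo}(\cE,\cE)\simeq\cRHom_{\hirzebruchtwo}(\cE,\cE)^\vee$, and the positivity of $-K_{\hirzebruchtwo}$ on every irreducible curve other than $C$ combine to show that $\cRHom_{\hirzebruchtwo}(\cE,\cE)$ cannot carry cohomology of positive rank outside degree $0$ --- a contradiction. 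Finally, for $i\ne i_0$ the sheaf $\cH^i(\cE)$, and likewise $\tors\cH^{i_0}(\cE)$, is a rigid torsion sheaf on $\hirzebruchtwo$; by Kuleshov's classification of rigid sheaves on weak del Pezzo surfaces \cite{MR1604186} such a sheaf is supported on a disjoint union of $(-1)$- and $(-2)$-curves, and $C$ is the only such curve on $\hirzebruchtwo$; purity then follows because a nonzero $0$-dimensional subsheaf supported at a point of $C$ would make $\Ext^1$ nonzero.

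For \eqref{it:more on i0-th cohomology}, put $\cF=\cH^{i_0}(\cE)/\tors\cH^{i_0}(\cE)$ and feed $0\to\tors\cH^{i_0}(\cE)\to\cH^{i_0}(\cE)\to\cF\to0$ into $\Ext^\bullet$; using the rigidity of $\cH^{i_0}(\cE)$ from \eqref{it:rigid} and Serre duality on $\hirzebruchtwo$ and on $C$ (where $\omega_{\hirzebruchtwo}$ restricts trivially) one obtains that both $\tors\cH^{i_0}(\cE)$ and $\cF$ are rigid, and a rigid torsion-free sheaf on a weak del Pezzo surface is an exceptional vector bundle by \cite{MR1604186}. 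The main obstacle is item \eqref{it:support}, specifically the uniqueness of $i_0$ and the identification of the reduced support of the torsion cohomology with exactly $C$: the spherical analogue \cite[Lemma~4.8]{Ishii-Uehara_ADC} is proved by localising near the exceptional curve, which is unavailable here since $\Supp\cE=\hirzebruchtwo$, so one is forced to argue with $\cRHom_{\hirzebruchtwo}(\cE,\cE)$ globally and to feed the result into the classification of rigid sheaves on $\hirzebruchtwo$.
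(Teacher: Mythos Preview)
Your treatment of \eqref{it:rigid} is correct and matches the paper. The real problems are in \eqref{it:support} and \eqref{it:more on i0-th cohomology}.

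\textbf{Uniqueness of $i_0$.} Your argument localises $\cRHom_{\hirzebruchtwo}(\cE,\cE)$ at the generic point to see it carries positive-rank cohomology in degrees $\pm(b-a)$, and then asserts that self-duality and the nef-ness of $-K_{\hirzebruchtwo}$ forbid this. But knowing $H^\bullet(\hirzebruchtwo,\cRHom(\cE,\cE))=\bfk[0]$ does not by itself bound the ranks of the cohomology \emph{sheaves} of $\cRHom(\cE,\cE)$: in the hypercohomology spectral sequence $H^p(\cH^q)\Rightarrow H^{p+q}$, large-rank sheaves in positive and negative degrees can cancel. Your appeal to ``positivity of $-K$ on curves other than $C$'' does not close this; you would need a Kodaira-type vanishing for the individual $\cH^q$, which you do not have. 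The paper proceeds quite differently: from the convergence of \eqref{eq:E_2 spectral sequence of Hom} it extracts the numerical identities $e_2^{0,q}=e_2^{2,q-1}$ for $q\ne0$ and $e_2^{0,0}=e_2^{2,-1}+1$, then builds an injection $\phi_q\colon (E_2^{2,q})^\vee\hookrightarrow E_2^{0,-q}$ via a generic anti-canonical section, and shows $\phi_0$ fails to be surjective precisely on summands with full support. Chaining these gives $\operatorname{corank}\phi_0=1$, hence exactly one $i_0$.

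\textbf{Exceptionality of $\cF$.} Your claim that ``a rigid torsion-free sheaf on a weak del Pezzo surface is an exceptional vector bundle by \cite{MR1604186}'' is false as stated: $\cO_{\hirzebruchtwo}^{\oplus 2}$ is rigid and torsion-free but not exceptional. Rigidity gives $\ext^1(\cF,\cF)=0$, but you still need $\hom(\cF,\cF)=1$ and $\ext^2(\cF,\cF)=0$. The paper obtains this from the sharper statement $\hom(\cH^{i_0},\cH^{i_0})-\ext^2(\cH^{i_0},\cH^{i_0})=1$ (a by-product of the uniqueness argument above), which it then pushes through the filtration spectral sequence for $0\to\cT\to\cH^{i_0}\to\cF\to0$ to conclude $\hom(\cF,\cF)-\ext^2(\cF,\cF)=1$; combined with rigidity this forces $\cF$ exceptional (and locally free). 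Your sketch produces rigidity of $\cF$ and $\cT$ but not this numerical identity, so the step to exceptionality is a genuine gap. Relatedly, in \eqref{it:support} you invoke rigidity of $\tors\cH^{i_0}(\cE)$ before establishing it; the paper avoids this by using $\Aut^0_{\hirzebruchtwo}$-invariance (which holds for $\tors\cH^{i_0}$ because it is canonical in the $\Aut^0$-invariant sheaf $\cH^{i_0}$) to pin the reduced support to $C$, deferring rigidity of $\cT$ to part \eqref{it:more on i0-th cohomology}.
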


See the following definition for the notion of rigidity.

\begin{definition}
An object
\(
    \cE \in \derived ( X )
\)
is said to be \emph{rigid} if
\(
    \Ext _{ X } ^{ 1 } ( \cE, \cE ) = 0
\).
\end{definition}

We will freely use the following standard fact on rigid objects.

\begin{lemma}\label{lm:rigid objects are stable under the aut0 action}
Let
\(
    \cE \in \derived ( X )
\)
be a rigid object. Then
\(
    g \cE \simeq \cE
\)
for any
\(
    g \in \Aut ^{ 0 } _{ X / \bfk }
\).
In particular,
\(
    \Supp g \cE = \Supp \cE \subset X
\)
for any such \( g \).
\end{lemma}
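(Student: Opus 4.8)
The plan is to deduce the lemma from deformation theory together with the connectedness of $G \coloneqq \Aut ^{ 0 } _{ X / \bfk }$, which for $X$ smooth projective is a connected algebraic group of finite type over $\bfk$. The first step is to organize the collection $\{\, g \cE \mid g \in G ( \bfk ) \,\}$ into a single family over $G$: writing $\sigma \colon G \times X \to X$ for the action morphism and noting that $\sigma = p _{ X } \circ \tau$ for the $G$-automorphism $\tau \colon G \times X \to G \times X$, $( g, x ) \mapsto ( g, g x )$, the object $\sigma ^{ \ast } \cE = \tau ^{ \ast } ( \cO _{ G } \boxtimes \cE ) \in \Perf ( G \times X )$ is flat over $G$, restricts to $\cE$ over the identity $e \in G$, and restricts over an arbitrary $\bfk$-point $g$ to $g \cE$ (up to the harmless exchange $g \leftrightarrow g ^{ - 1 }$).

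The key point is that every fibre of this family is again rigid: an autoequivalence such as pullback along an automorphism preserves $\Ext ^{ 1 } ( -, - )$, so $\Ext ^{ 1 } _{ X } ( g \cE, g \cE ) \simeq \Ext ^{ 1 } _{ X } ( \cE, \cE ) = 0$. I would then invoke the deformation theory of objects in $\derived ( X )$ (for instance \cite[Corollary~3.4]{MR2578562}): for a small extension of local Artinian $\bfk$-algebras, the isomorphism classes of lifts of a fixed object form a torsor under the pertinent $\Ext ^{ 1 }$-group, hence a lift is unique up to isomorphism when that group vanishes. Applying this inductively along the maximal ideal of the complete local ring $\widehat{ \cO } _{ G, s _{ 0 } }$ at a point $s _{ 0 } \in G ( \bfk )$ shows that the restriction of $\sigma ^{ \ast } \cE$ to the formal neighbourhood of $s _{ 0 }$ is the constant deformation of $s _{ 0 } \cE$; passing from this formal statement to a Zariski-local one, either by Grothendieck existence and algebraization as in \cite[Proposition~3.6.1]{MR2177199} or, more economically, by the observation that a non-empty open subset of the connected group $G$ generates $G$ (so that it suffices to treat a neighbourhood of $e$), one concludes that the function $g \mapsto [\, g \cE \,]$ on $G ( \bfk )$ is locally constant. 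Since $G$ is connected and this function takes the value $[ \cE ]$ at $e$, it is constant, which is precisely the assertion $g \cE \simeq \cE$ for all $g$.

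The main obstacle is the passage from the infinitesimal fact ``rigidity kills first-order deformations, hence trivializes all formal deformations along $\widehat{ \cO } _{ G, s _{ 0 } }$'' to an honest statement over a Zariski neighbourhood of $s _{ 0 }$; this is routine but slightly delicate and is precisely where one uses either algebraization or the generation-by-a-neighbourhood trick above. Once the first assertion is in hand, the final sentence is immediate: the reduced and schematic supports of an object of $\derived ( X )$, and of each of its cohomology sheaves, are invariants of its isomorphism class, so $g \cE \simeq \cE$ forces $\Supp g \cE = \Supp \cE$.
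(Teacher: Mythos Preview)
The paper states this lemma as a ``standard fact'' and does not supply a proof, so there is nothing to compare your argument against line by line. Your approach---build the universal family $\sigma^{\ast}\cE$ over $G = \Aut^{0}_{X/\bfk}$, use rigidity and deformation theory of complexes to see that the family is formally constant at every point, and then invoke connectedness of $G$---is exactly the standard argument one has in mind for this assertion, and it is correct.

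One small comment on the exposition: your two suggested routes for the formal-to-algebraic passage are not really alternatives. The observation that a nonempty open subset of a connected group generates it only reduces the problem to finding a \emph{Zariski} (or \'etale) neighbourhood of $e$ on which $g\cE \simeq \cE$; it does not itself bridge the gap from the formal neighbourhood $\Spec \widehat{\cO}_{G,e}$ to an honest open. For that step you still need either algebraization of the formal isomorphism (as in Lieblich) or a direct semicontinuity argument: push $\cRHom(p_{X}^{\ast}\cE, \sigma^{\ast}\cE)$ down to $G$, use $\Ext^{1}(\cE,\cE)=0$ and base change to see that $\cH^{0}$ of this perfect complex surjects onto $\End(\cE)$ at $e$, lift $\id_{\cE}$ to a local section, and observe that the resulting morphism $p_{X}^{\ast}\cE \to \sigma^{\ast}\cE$ is an isomorphism on the open locus where its cone vanishes. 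Either way the argument goes through, and the final sentence about supports is then immediate as you say.
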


We need the following spectral sequence, which also plays the central role for the study of spherical objects in \cite{Ishii-Uehara_ADC}.

\begin{lemma}\label{lm:E_2 spectral sequence of Hom}
For any object
\(
    \cE \in \derived ( X )
\),
there exists the following spectral sequence.
\begin{align}\label{eq:E_2 spectral sequence of Hom}
    E_2^{p,q}
    =
    \bigoplus_i \Ext _{ \hirzebruchtwo } ^p(\cH^i(\cE), \cH^{i+q}(\cE)) \Rightarrow
    E ^{ p + q } = \Hom _{ \hirzebruchtwo } ^{p+q}(\cE, \cE)
\end{align}

Moreover, using the classes
\(
    e^i
    =
    e^i(\cE)
    \in
    \Ext _{ \hirzebruchtwo } ^2( \cH^i(\cE), \cH^{i-1}(\cE) )
\)
canonically determined by \( \cE \), the
\(
    d _{ 2 }
\)
maps of \eqref{eq:E_2 spectral sequence of Hom} are given by
\begin{align}\label{eq:d2 map}
    d_2 ^{ p, q } \colon
    ({\phi_i})_i
    \mapsto
    \left(
        (-1)^{ p + q } \phi _{ i - 1 } \circ e ^{ i } - e ^{ i + q } \circ \phi _{ i }
    \right) _{ i }.    
\end{align}
\end{lemma}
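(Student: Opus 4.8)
The plan is to realize \eqref{eq:E_2 spectral sequence of Hom} as the hyper-\(\Ext\) spectral sequence attached to the canonical (Postnikov) filtration of \( \cE \), and then to pin down the \( d_2 \) differential by a connecting-map computation. First I would recall the canonical truncation filtration: since \( \cE \) is bounded, the triangles
\[
    \tau_{\le n-1}\cE \to \tau_{\le n}\cE \to \cH^{n}(\cE)[-n]\xrightarrow{+1}
\]
exhibit \( \cE \) as a finite iterated extension of the shifted cohomology sheaves \( \cH^{n}(\cE)[-n] \). The class \( e^{i}\in\Ext^{2}_{\hirzebruchtwo}(\cH^{i}(\cE),\cH^{i-1}(\cE)) \) is by definition the \( k \)-invariant of this tower, i.e. the composite
\[
    \cH^{i}(\cE)[-i]\xrightarrow{\ \delta\ }\tau_{\le i-1}\cE[1]\to\cH^{i-1}(\cE)[-i+2],
\]
where \( \delta \) is the connecting morphism and the second arrow is the projection onto the top cohomology of \( \tau_{\le i-1}\cE \). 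This is the canonical datum referred to in the statement.

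Next I would construct the spectral sequence. I would compute \( \RHom_{\hirzebruchtwo}(\cE,\cE) \) by the \( \cHom \) bicomplex obtained from the bounded complex representing \( \cE \) in the first argument together with an injective resolution in the second, and take the spectral sequence of the associated double filtration whose first differential computes \( \Ext \) at the level of sheaves. Because \( \cE \) is bounded, both filtrations are finite, so the spectral sequence converges to \( \Hom^{\bullet}_{\hirzebruchtwo}(\cE,\cE) \) with no convergence issues; taking cohomology of the associated graded in the internal direction identifies the \( E_2 \)-page with \( \bigoplus_{i}\Ext^{p}(\cH^{i}(\cE),\cH^{i+q}(\cE)) \), which is exactly \eqref{eq:E_2 spectral sequence of Hom}. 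Equivalently this is the standard hyper-\(\Ext\) spectral sequence, which I would be content to invoke for the existence of the sequence and the shape of its \( E_2 \)-page.

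The substantive point is the identification of \( d_2 \), and the idea is that \( d_2 \) is assembled from the connecting maps of the two truncation filtrations, one on each copy of \( \cE \). Filtering only the second argument produces, on \( E_2 \), post-composition with the \( k \)-invariants, contributing the term \( e^{i+q}\circ\phi_i\in\Ext^{p+2}(\cH^{i}(\cE),\cH^{i+q-1}(\cE)) \); filtering only the first argument produces pre-composition with the \( k \)-invariants, contributing \( \phi_{i-1}\circ e^{i} \) in the same group. In the double-filtration spectral sequence these two contributions combine, which matches the target \( E_2^{p+2,q-1} \) of \( d_2^{p,q} \), and the relative sign \( (-1)^{p+q} \) is the Koszul sign dictated by the grading convention of the \( \Hom \) complex.

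I expect this last step to be the only delicate one. Existence of the spectral sequence and the description of the \( E_2 \)-page are formal consequences of boundedness and standard homological algebra, but extracting \( d_2 \) requires tracing a representative of \( \phi_i \) through the zig-zag that defines the second differential, checking that the two resulting terms are precisely \( \phi_{i-1}\circ e^{i} \) and \( e^{i+q}\circ\phi_i \), and verifying that the intervening sign is \( (-1)^{p+q} \). This sign-and-connecting-map bookkeeping is where all the care is needed; everything else is routine.
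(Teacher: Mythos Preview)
Your proposal is correct and follows the standard construction; the paper itself does not give an argument but simply refers to \cite[Section~4.1, Proposition~4.1]{Ishii-Uehara_ADC}, where exactly this Postnikov-filtration spectral sequence and the identification of \(d_2\) via the \(k\)-invariants \(e^i\) are worked out. So your sketch is essentially a reconstruction of the cited proof rather than a different route.
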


\begin{proof}
See \cite[Section 4.1]{Ishii-Uehara_ADC}, in particular \cite[Proposition 4.1]{Ishii-Uehara_ADC}.
\end{proof}

\begin{proof}[Proof of \pref{lm:properties of cohomology sheaves}]
Throughout the proof, we consider the spectral sequence \eqref{eq:E_2 spectral sequence of Hom} for the exceptional object \( \cE \). We prove the four items one by one.

\pref{it:rigid}
The exceptionality of \( \cE \) is translated into the following conditions.
\begin{align}\label{eq:exceptionality as conditions on spectral sequence}
    E ^{ n } =
    \begin{cases}
        \bfk \id _{ \cE } & n = 0\\
        0 & n \ne 0
    \end{cases}
\end{align}
Since \( \hirzebruchtwo \) is a smooth projective surface,
$
    E_2^{p,q} = 0
$
unless $0 \le p \le 2$.
Hence \eqref{eq:E_2 spectral sequence of Hom} is \( E _{ 3 } \)-degenerate, and moreover is 
\( E _{ 2 } \)-degenerate at
\(
    ( 1, q )
\)
for any
\(
    q \in \bZ
\).
Combined with \eqref{eq:exceptionality as conditions on spectral sequence},
this implies that
\(
    E _{ 2 } ^{ 1, q }
    =
    0
\)
for any
\(
    q \ne - 1
\). In the next paragraph we also confirm
\(
    E _{ 2 } ^{ 1, - 1 } = 0
\),
thereby concluding the rigidity of
\(
    \cohomology ( \cE )
\).

It follows from the explicit description \eqref{eq:d2 map} of \(d _{ 2 } \) maps that
\(
    0
    \neq
    \sum _{ i \in \bZ } \id _{ \cH ^{ i } ( \cE ) } \in E _{ 2 } ^{ 0, 0 }
\)
is in fact contained in
\(
    \Ker d _{ 2 } ^{ 0, 0 }
    \simeq
    E _{ 3 } ^{ 0, 0 }
    \simeq
    E _{ \infty } ^{ 0, 0 }
\),
which implies that
\(
    E _{ \infty } ^{ 0, 0 } \ne 0
\).
Combined with the isomorphism
\(
    E ^{ 0 } \simeq \bfk
\)
from \eqref{eq:exceptionality as conditions on spectral sequence} and the epimorphism
\(
    E ^{ 0 }
    \twoheadrightarrow
    E ^{ 0 } / F ^{ 1 } E ^{ 0 }
    \simeq
    E _{ \infty } ^{ 0, 0 }
\), this implies that
\(
    E _{ \infty } ^{ 0, 0 }
    \simeq
    E ^{ 0 }
    (\simeq \bfk)
\)
and hence
\(
    E _{ 2 } ^{ 1, - 1 }
    \simeq
    E _{ \infty } ^{ 1, - 1 }
    \simeq
    F ^{ 1 } E ^{ 0 } / F ^{ 2 } E ^{ 0 }
    =
    0
\).

\pref{it:support}
We further obtain the following equalities from \eqref{eq:exceptionality as conditions on spectral sequence}.
\begin{align}
    e_2^{0,q}&=e_2^{2, q-1} \quad (\text{if $q \ne 0$})\label{eq:e^{0,q} = e^{2,q-1}} \\
    e_2^{0,0}&=e_2^{2,-1} + 1 \label{eq:e^{0,0} = e^{2,-1}+1}
\end{align}
To see \eqref{eq:e^{0,q} = e^{2,q-1}} for \( q = - 1 \), note that the arguments in the previous paragraph implies
\(
    0 = E _{ \infty } ^{ 2, - 2 } \simeq E _{ 3 } ^{ 2, - 2 }
\).
Moreover, \pref{it:rigid} implies that either
\(
    \Supp \left( \tors \cH^i(\cE) \right) _{ \reduced }
    =
    \hirzebruchtwo,
    C,
    \ \text{or} \ 
    \emptyset
\).
This follows from \pref{lm:rigid objects are stable under the aut0 action} and the orbit decomposition
\(
    \hirzebruchtwo
    =
    \left( \hirzebruchtwo \setminus C \right)
    \coprod
    C
\)
for the action
\(
    \Aut _{ \hirzebruchtwo / \bfk } \curvearrowright \hirzebruchtwo
\).

Suppose for a contradiction that $\Supp \cH^i(\cE) _{ \reduced } \ne \hirzebruchtwo$ for every $i$.
Then $\Supp \cE$ coincides with $C$ and hence there is an isomorphism $\cE \otimes \omega _{ \hirzebruchtwo } \simeq \cE$,
from which we deduce
$
    \ext _{ \hirzebruchtwo } ^2(\cE, \cE)
    =
    \ext _{ \hirzebruchtwo } ^0(\cE, \cE)
$
by the Serre duality.
This contradicts \eqref{eq:exceptionality as conditions on spectral sequence}
and hence there must be at least one integer $i_0$ with
\(
    \Supp \cH^{i_0}(\cE) _{ \reduced } = \hirzebruchtwo
\).

Now consider the Serre duality
\begin{align}
    (E_2^{2,q})^\vee
    \simeq
    \bigoplus_i \Hom_{ \hirzebruchtwo }(\cH^i(\cE), \cH^{i-q}(\cE)\otimes \omega _{ \hirzebruchtwo }).
\end{align}
Fix a non-trivial morphism
$
    s \colon \omega _{ \hirzebruchtwo } \hookrightarrow \cO _{ \hirzebruchtwo }
$
such that the support of its cokernel is disjoint from the set of associated points of
\(
    \cH^{i}(\cE)
\)
for all
\(
    i \in \bZ
\).
This is possible, as the linear system
\(
    |- K _{ \hirzebruchtwo }|
\)
is base point free and there are only finitely many (schematic) points to be avoided (see \cite[p. 8]{Huybrechts-Lehn}). The property which we required for \( s \) implies that for each \( i \in \bZ \) the natural morphism
\begin{align}
    \cH ^{ i - q } ( \cE ) \otimes \omega _{ \hirzebruchtwo }
    \stackrel{\id \otimes s}{\to}
    \cH ^{ i - q } ( \cE )
\end{align}
is injective. Thus we obtain an injection of vector spaces
\[
    \phi_q \colon (E_2^{2,q})^\vee \hookrightarrow E_2^{0, -q}
\]
and hence an inequality
\begin{align}\label{eq:e_2^{2,q} <= e_2^{0, -q}}
    e_2^{2,q} \le e_2^{0, -q}
\end{align}
for any $q$.

Next we prove that
\(
    \phi _{ 0 }
\)
is not surjective on the direct summands indexed by those
\(
    i
\)
with
\(
    \Supp \cH ^{ i } ( \cE ) = \hirzebruchtwo
\).
To see this, (by slight abuse of notation) let
\(
    i _{ 0 }
\)
be one of such indices and apply the functor
\(
    \Hom ( \cH ^{ i _{ 0 } } ( \cE ), - )
\)
to the following short exact sequence.
\begin{align}
    0
    \to
    \cH ^{ i _{ 0 } } ( \cE ) \otimes \omega _{ \hirzebruchtwo }
    \xrightarrow[]{ \id \otimes s }
    \cH ^{ i _{ 0 } } ( \cE )
    \to
    \cH ^{ i _{ 0 } } ( \cE ) \vert _{ Z ( s ) }
    \to
    0.
\end{align}
The rigidity of
\(
    \cH ^{ i _{ 0 } } ( \cE )
\),
which we confirmed in \pref{it:rigid}, implies that we have the following short exact sequence.
\begin{align}
    0
    \to
    \Hom \left( \cH ^{ i _{ 0 } } ( \cE ), \cH ^{ i _{ 0 } } ( \cE ) \otimes \omega _{ \hirzebruchtwo }\right)
    \to
    \Hom \left( \cH ^{ i _{ 0 } } ( \cE ), \cH ^{ i _{ 0 } } ( \cE ) \right)
    \to
    \Hom \left( \cH ^{ i _{ 0 } } ( \cE ), \cH ^{ i _{ 0 } } ( \cE ) \vert _{ Z ( s ) } \right)
    \to
    0
\end{align}
The 3rd term is not \( 0 \) by the assumption
\(
    \Supp \cH ^{ i _{ 0 } } ( \cE ) = \hirzebruchtwo
\) and hence \( \id \otimes s \), which is identified with the direct summand of \( \phi _{ 0 }\) of interest, is not surjective.

Since we showed above that there is at least one such index \( i _{ 0 }\), we have confirmed the non-surjectivity of \( \phi _{ 0 } \) and hence the following inequality.
\begin{align}\label{eq:e_2^{2,0} < e_2^{0,0}}
    e_2^{2,0} < e _{ 2 } ^{ 0, 0 }
\end{align}

Summarizing the results so far, we obtain the following sequence of (in)equalities.
\[
    e_2^{2,-1} \stackrel{\eqref{eq:e_2^{2,q} <= e_2^{0, -q}} ( q = - 1 )}{\le} e_2^{0,1}
    \stackrel{\eqref{eq:e^{0,q} = e^{2,q-1}} ( q = 1 )}{=}
    e_2^{2,0}
    \stackrel{\eqref{eq:e_2^{2,0} < e_2^{0,0}}}{=}
    e_2^{0,0} - \rank \phi _{ 0 }
    \stackrel{\eqref{eq:e^{0,0} = e^{2,-1}+1}}{=}
    e_2^{2,-1} + 1 - \rank \phi _{ 0 }
    \le
    e _{ 2 } ^{ 2, - 1 },
\]
which implies that
\begin{align}\label{eq:the difference is 1}
    \rank \phi _{ 0 }
    =
    e _{ 2 } ^{ 0, 0 }
    -
    e _{ 2 } ^{ 2, 0 }
    =
    1.
\end{align}
This means that the number of the indices $i_0$ with
$
    \Supp \cH^{i_0} ( \cE ) = \hirzebruchtwo
$
is exactly one.

Finally, recall that rigid sheaf with one-dimensional support is pure by \cite[Corollary 2.2.3]{MR1604186}. We already confirmed the rigidity of \( \cH ^{ i } ( \cE ) \) for \( i \ne i _{ 0 }\) above, and for \( \tors \cH ^{ i _{ 0 } } ( \cE ) \) it is proven below.

\pref{it:more on i0-th cohomology} Put
$
    \cH \coloneqq \cH^{i_0}(\cE)
$,
$
    \cT \coloneqq \tors(\cH)
$
and
$
    \cF \coloneqq \cH/\cT
$.
Consider the spectral sequence
\begin{align}\label{eq:E_1 spectral sequence of Hom}
    E_1^{p,q}
    =
    \bigoplus_j \Ext _{ \hirzebruchtwo }^{p+q}(G_j, G_{j+p})
    \Rightarrow
    E ^{ p + q }
    =
    \Ext _{ \hirzebruchtwo }^{p+q}(\cH, \cH)
\end{align}
arising from the short exact sequence $0 \to G_2 \to \cH \to G_1 \to 0$,
where $G_1=\cF$ and $G_2=\cT$.
For obvious reasons we see $E_1^{p,q}=0$ unless $0 \le p+q \le 2$ and $-1 \le p \le 1$.

On the other hand, since \( \cH \) is rigid, it is stable under the action of
\(
    \Aut _{ \hirzebruchtwo / \bfk } = \Aut _{ \hirzebruchtwo / \bfk } ^{ 0 }
\)
by \pref{lm:rigid objects are stable under the aut0 action}.
Since the torsion part of a coherent sheaf is uniquely determined by the sheaf, it follows that
\(
    \cT
\)
is also stable under the same group action. Hence it follows that
\(
    \left(\Supp \cT\right) _{ \reduced } \subset C
\), so that
\begin{align}\label{eq:cT is stable under Serre functor up to shift}
    \cT \otimes \omega _{ \hirzebruchtwo } \simeq \cT.
\end{align}
Combined with the Serre duality, this implies the equality
\begin{align}\label{eq:e_1^{p,q}
    =
    e_1^{-p, 2-q}}
    e_1^{p,q}
    =
    e_1^{-p, 2-q}
\end{align}
for $p \ne 0$.

It then follows that
\(
    0
    =
    E _{ 1 } ^{ - 1, 1 }
    =
    E _{ 1 } ^{ 1, 1 }
\),
where the first equality is the consequence of the fact that $G_2$ is torsion and $G_1$ is torsion free, and the second equality is the case
\(
    ( p, q )
    =
    (1, 1 )
\)
of
\eqref{eq:e_1^{p,q}
=
e_1^{-p, 2-q}}.

Thus we have confirmed that the spectral sequence
\eqref{eq:E_1 spectral sequence of Hom}
is \( E _{ 1 } \)-degenerate at
\(
    ( 0, 1 )
\). Hence
\(
    E _{ 1 } ^{ 0, 1 }
    \simeq
    E _{ \infty } ^{ 0, 1 }
    \simeq
    E ^{ 1 }
    =
    0
\),
where the last vanishing is nothing but the rigidity of \( \cH \),
which we confirmed in \pref{it:rigid}.
Thus we have shown the rigidity of
\(
    \cT
\)
and
\(
    \cF
\).

Again in the spectral sequence \eqref{eq:E_1 spectral sequence of Hom}, the vanishing
\(
    E ^{ 1 } = 0
\)
implies
\(
    0 = E _{ \infty } ^{ 1, 0 } = E _{ 2 } ^{ 1, 0 }
\). Hence $d_1^{0,0}$ is surjective.
Also the vanishing
\(
    E ^{ 1 } = 0
\)
implies
\(
    E _{ \infty } ^{ - 1, 2 }
    =
    0
\).
Since \( 0 = E _{ 1 } ^{ 1, 1 } =  E _{ 2 } ^{ 1, 1 } \), the spectral sequence is \( E _{ 2 }\)-degenerate at \( ( - 1, 2 ) \) and hence
\(
    E _{ \infty } ^{ - 1, 2 }
    \simeq
    E _{ 2 } ^{ - 1, 2 }
\).
This implies that $d_1^{-1,2}$ is injective.
Thus we obtain
\begin{alignat}{2}
    \hom(\cH, \cH)
    &=
    e_1^{1,-1} + \ker d_1^{0,0}
    &&=
    e_1^{1,-1} + e_1^{0,0} -e_1^{1,0}\label{eq:cH1} \\
    \ext^2(\cH, \cH)
    &=
    e_1^{-1,3} + \coker d_1^{-1,2}
    &&=
    e_1^{-1,3} + e_1^{0,2} -e_1^{-1,2}.\label{eq:cH2}
\end{alignat}
Therefore substituting \eqref{eq:cH1} and \eqref{eq:cH2} into
\begin{align}\label{eq:hom-ext^2}
    \eqref{eq:the difference is 1}
    \iff
    \hom(\cH, \cH)
    =
    \ext ^2(\cH, \cH) + 1
\end{align}
and using \eqref{eq:e_1^{p,q} = e_1^{-p, 2-q}}, we obtain
\[
    1
    =
    \hom(\cH, \cH) - \ext^2(\cH, \cH)
    =
    e_1^{0,0} - e_1^{ 0, 2 }
    \stackrel{\eqref{eq:cT is stable under Serre functor up to shift}}{=}
    \hom(\cF, \cF)-\ext^2(\cF, \cF).
\]
Since $\cF$ is rigid, the same proof as in \cite[Lemma 2.2]{MR3431636} shows that $\cF$ is an exceptional vector bundle.
\end{proof}

\begin{definition}\label{df:i0}
For an exceptional object
\(
    \cE \in \derived ( \hirzebruchtwo )
\),
we will write
\(
    i _{ 0 } ( \cE ) = i _{ 0 } \in \bZ
\)
for the unique integer such that
\(
    \Supp \cH ^{ i _{ 0 } } ( \cE ) = \hirzebruchtwo
\).
\end{definition}

Let
\(
    \rank \colon \kgr{ \hirzebruchtwo } \to \bZ
\)
be the rank function. If we let
\(
    \iota \colon \Spec \bfk ( \hirzebruchtwo ) \to \hirzebruchtwo
\)
denote the embedding of the generic point, \( \rank \) is concisely defined as the composition of the map
\( \kgr{ \iota ^{ \ast } } \) and the isomorphism
\(
    \kgr{ \Spec \bfk ( \hirzebruchtwo )} \simeq \bZ
\)
which sends the class of \( \bfk ( \hirzebruchtwo )\) to \( 1 \).
As a corollary of \pref{lm:properties of cohomology sheaves}, we obtain the following

\begin{corollary}\label{cr:rank is never 0}
    The equality
    \begin{align}
        \rank \cE = ( - 1 ) ^{ i _{ 0 } } \rank \cH ^{ i _{ 0 } } ( \cE )
    \end{align}
    holds for any exceptional object \( \cE \in \derived ( \hirzebruchtwo )\). In particular, \( \rank \cE \neq 0 \).
\end{corollary}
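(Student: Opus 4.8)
The plan is to combine the additivity of the rank function with \pref{lm:properties of cohomology sheaves}. First I would write, in \( \kgr{\hirzebruchtwo} \), the identity \( [\cE] = \sum_{i \in \bZ}(-1)^i[\cH^i(\cE)] \) coming from the (finite) filtration of the bounded complex \( \cE \) by its truncations. Since \( \rank \) is, by its very definition, the composition of the homomorphism \( \kgr{\iota^\ast} \) with the identification \( \kgr{\Spec \bfk(\hirzebruchtwo)} \simeq \bZ \), it is additive, so applying it yields \( \rank \cE = \sum_{i}(-1)^i \rank \cH^i(\cE) \).

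Next I would kill all but one term. By \pref{lm:properties of cohomology sheaves} \eqref{it:support}, for every \( i \neq i_0 = i_0(\cE) \) the sheaf \( \cH^i(\cE) \) has reduced support contained in \( C \); since \( C \) is a proper closed subscheme of the integral surface \( \hirzebruchtwo \), its generic point is not the generic point of \( \hirzebruchtwo \), so the stalk of \( \cH^i(\cE) \) at the latter vanishes and \( \rank \cH^i(\cE) = 0 \). Hence the sum collapses to \( \rank \cE = (-1)^{i_0}\rank \cH^{i_0}(\cE) \), which is the asserted formula.

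It then remains to check \( \rank \cH^{i_0}(\cE) \neq 0 \). Writing \( \cH = \cH^{i_0}(\cE) \), \( \cT = \tors \cH \), \( \cF = \cH/\cT \), the same reasoning as above --- using that \( (\Supp \cT)_{\reduced} = C \) unless \( \cT = 0 \), again by \pref{lm:properties of cohomology sheaves} \eqref{it:support} --- gives \( \rank \cT = 0 \), hence \( \rank \cH = \rank \cF \). By \pref{lm:properties of cohomology sheaves} \eqref{it:more on i0-th cohomology}, \( \cF \) is an exceptional vector bundle; being exceptional it is nonzero, and a nonzero locally free sheaf on an integral variety has strictly positive rank, so \( \rank \cH^{i_0}(\cE) = \rank \cF > 0 \). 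In particular \( \rank \cE = \pm \rank \cF \neq 0 \).

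I do not expect any real difficulty here: the statement is pure bookkeeping on top of \pref{lm:properties of cohomology sheaves}. The only subtleties, both minor, are that a coherent sheaf with reduced support \( C \) has rank \( 0 \) because \( C \) avoids the generic point of \( \hirzebruchtwo \), and that an exceptional --- hence nonzero --- vector bundle on \( \hirzebruchtwo \) has positive rank.
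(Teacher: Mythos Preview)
Your proof is correct and follows exactly the approach implicit in the paper, which presents this as an immediate corollary of \pref{lm:properties of cohomology sheaves} without spelling out the details. The only thing the paper leaves unsaid is precisely the bookkeeping you have written out: additivity of rank on the alternating sum of cohomology sheaves, vanishing of rank for sheaves supported on \(C\), and positivity of rank for the exceptional vector bundle \(\cF\).
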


\subsection{Properties of the schematic support of \( \tors \cohomology ( \cE ) \)}
\label{sc:Properties of the schematic support}

The aim of this subsection is to prove \pref{pr:chohomology sheaves are O_C modules}, which asserts that the \emph{schematic} support of \( \tors \cohomology ( \cE ) \) coincides with \(C\). This is the most technical part of this paper.

\begin{remark}
    In this paper, \pref{pr:chohomology sheaves are O_C modules} will be used only as a black box.
    Hence one can first assume \pref{pr:chohomology sheaves are O_C modules} to understand the rest of the paper and then later come back to its proof.
\end{remark}

To describe the schematic support of the cohomology sheaves of $\cE$, we consider the anti-canonical morphism \(f \colon \hirzebruchtwo \to P ( 1, 1, 2 ) \) to the weighted projective plane of weight \( ( 1, 1, 2 ) \), which contracts the $(-2)$-curve $C$ to the singularity \( ( 0 : 0 : 1 ) \).

\begin{notation}\label{nt:A1 singularity}
Let  $(R, \frakm )$ denote the local ring of $P ( 1, 1, 2 )$ at the singular point. It is isomorphic to
\( \bfk [ x, y, z ] _{ ( x, y, z ) } / ( z ^{ 2 } - x y ) \), the \(A _{ 1 }\) singularity.
\end{notation}

Our first goal is to give an \(R\)-module structure on \( E _{ 2 } ^{ p, q } \) for \( ( p, q ) \neq ( 0, 0 ) \) of the spectral sequence \eqref{eq:E_2 spectral sequence of Hom} with respect to which the differentials of the spectral sequence are \(R\)-linear. The similar fact for spherical objects is used in \cite{Ishii-Uehara_ADC}, in which case the existence of such an \(R\)-module structure is trivial. In fact, since the support of a spherical object is concentrated in \(C\), one can use the pushforward along
\( f \vert _{ \Spec R } \).

\begin{remark}\label{rm:R-module structure}
    For any \( \cM \in \coh \hirzebruchtwo \) with \( \left( \Supp \cM \right) _{ \reduced } \subset C \), there is a natural homomorphism of \( \bfk \)-algebras \( R \to \End _{ \hirzebruchtwo } ( \cM ) \) which kills \( \frakm ^{ \ell } \) for some \( \ell > 0 \).
    Hence for \( i \in \bZ \) and \( \cN  \in \coh \hirzebruchtwo \), there are natural \( R \)-module structures of finite length on
    \(
        \Ext _{ \hirzebruchtwo } ^{ i } ( \cM, \cN  )
        \simeq
        \Hom _{ \derived ( \hirzebruchtwo ) } ( \cM, \cN  [ i ] )
    \)
    and
    \(
        \Ext _{ \hirzebruchtwo } ^{ i } ( \cN , \cM )
        \simeq
        \Hom _{ \derived ( \hirzebruchtwo ) } ( \cN , \cM [ i ] )
    \)
    given by precomposition and postcomposition, respectively. If \( \cN  \) is also supported in \( C \), we may use \( R \to \End _{ \hirzebruchtwo } ( \cN ) \) instead.
    However, we end up with the same \(R\)-module structures defined via
    \(
        R \to \End _{ \hirzebruchtwo } ( \cM )
    \).
\end{remark}

For $q \ne 0$, the reduced support of either $\cH^i(\cE)$ or $\cH^{i+q}(\cE)$ is contained in $C$ and therefore $E_2^{p,q}$ has a canonical $R$-module structure by \pref{rm:R-module structure}.
For \( q = 0 \) and
\(
    p \neq 0, 2
\),
\(
    E _{ 2 } ^{ p, 0 }
    =
    0
\).

For
\(
    p = 2
\),
we have
\[
    E_2^{2,0} \simeq
    \bigoplus_{i \ne i_0} \Ext _{ \hirzebruchtwo } ^2(\cH^i(\cE), \cH^i(\cE))
    \oplus \Ext _{ \hirzebruchtwo } ^2 ( \cH^{i_0}(\cE), \cH^{i_0}(\cE) ).
\]
Since the reduced support of \( \cH ^{ i } ( \cE ) \) for \( i \ne i _{ 0 } \) is contained in \( C \), all the direct summands but the last one have canonical
\(
    R
\)-module structures again by \pref{rm:R-module structure}.

Recall the following short exact sequence from the proof of \pref{lm:properties of cohomology sheaves} \pref{it:more on i0-th cohomology}.
\begin{align}
    0
    \to
    \cT \coloneqq \tors\cH^{i_0}(\cE)
    \stackrel{\iota}{\to}
    \cH^{i_0}(\cE)
    \to
    \cF \coloneqq \cH^{i_0}(\cE)/\cT
    \to
    0
\end{align}
Take \( \Hom _{ X } ( -, \cH ^{ i _{ 0 } } ( \cE ) ) \) to obtain the following exact sequence
\[
    \Ext _{ \hirzebruchtwo } ^2(\cF, \cH^{i_0}(\cE)) \to \Ext _{ \hirzebruchtwo } ^2(\cH^{i_0}(\cE), \cH^{i_0}(\cE))
    \stackrel{ \iota ^{ \ast } }{\to}
    \Ext _{ \hirzebruchtwo } ^2(\cT, \cH^{i_0}(\cE)) \to 0,
\]
where
\begin{align}\label{eq:R-mod_str}
\begin{alignedat}{3}
\Ext _{ \hirzebruchtwo } ^2(\cH^{i_0}(\cE)&, \cH^{i_0}(\cE))&  &\to &\Ext _{ \hirzebruchtwo } ^2(&\cT, \cH^{i_0}(\cE)) \\
&x  & &\mapsto & &x \circ \iota
\end{alignedat}
\end{align}
is the precomposition (think of \(x\) as a morphism
\(
    \cH ^{ i _{ 0 } } ( \cE ) \to \cH ^{ i _{ 0 } } ( \cE ) [ 2 ]
\)
in \( \derived ( \hirzebruchtwo ) \)).
The following vanishing follows from the exceptionality of \( \cF \).
\[
    \Ext _{ \hirzebruchtwo } ^2 ( \cF, \cH^{i_0}(\cE) )
    \simeq
    \Ext _{ \hirzebruchtwo } ^2 ( \cF, \cF ) = 0
\]
Hence we conclude that the map \( \iota ^{ \ast } \) is an isomorphism of \(\bfk\)-vector spaces.
Since the reduced support of \( \cT \) is contained in \( C \), the right hand side of \eqref{eq:R-mod_str} has a canonical \(R\)-module structure.

\begin{definition}\label{df:R-module structure on E _{ 2 } ^{ 2, 0 }}
We transfer the \(R\)-module structure on
\(
    \Ext _{ \hirzebruchtwo } ^2 ( \cT, \cH^{i_0}(\cE) )
\)
to
\(
    \Ext _{ \hirzebruchtwo } ^2 ( \cH^{i_0}(\cE), \cH^{i_0}(\cE) )
\)
via the isomorphism \( \iota ^{ \ast } \) of \eqref{eq:R-mod_str},
thereby giving an \(R\)-module structure on
\(
    E _{ 2 } ^{ 2, 0 }
\).
\end{definition}

\begin{lemma}\label{lem:R-module}
For $q \ne 0$, the maps
$
    d_2^{0,q} \colon E_2^{0,q} \to E_2^{2, q-1}
$
in the spectral sequence \eqref{eq:E_2 spectral sequence of Hom} are $R$-linear.
\end{lemma}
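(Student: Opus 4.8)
The plan is to argue directly from the explicit description \eqref{eq:d2 map} of the differentials, keeping careful track of the various $R$-module structures set up above. Recall from \pref{lm:properties of cohomology sheaves} that there is a unique index $i_0$ with $\Supp\cH^{i_0}(\cE)=\hirzebruchtwo$ and $\left(\Supp\cH^a(\cE)\right)_{\reduced}=C$ for every $a\ne i_0$. Consequently, by \pref{rm:R-module structure}, the group $\Ext^{\bullet}_{\hirzebruchtwo}(\cH^a(\cE),\cH^b(\cE))$ carries a canonical $R$-module structure as soon as $(a,b)\ne(i_0,i_0)$, and this structure is realized both as precomposition with the multiplication-by-$r$ endomorphism of $\cH^a(\cE)$ when $a\ne i_0$ and as postcomposition with that of $\cH^b(\cE)$ when $b\ne i_0$, the two agreeing on the overlap. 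The lone exception is $\Ext^2_{\hirzebruchtwo}(\cH^{i_0}(\cE),\cH^{i_0}(\cE))$, which occurs in $E_2^{2,0}$ and is equipped with the transported structure of \pref{df:R-module structure on E _{ 2 } ^{ 2, 0 }} via the isomorphism $\iota^{\ast}$ of \eqref{eq:R-mod_str}. The first step is then to isolate a bookkeeping principle: if $a,b,c$ are integers such that each of the pairs $(a,b)$, $(b,c)$, $(a,c)$ differs from $(i_0,i_0)$, then Yoneda composition $\Ext^{\bullet}_{\hirzebruchtwo}(\cH^b(\cE),\cH^c(\cE))\times\Ext^{\bullet}_{\hirzebruchtwo}(\cH^a(\cE),\cH^b(\cE))\to\Ext^{\bullet}_{\hirzebruchtwo}(\cH^a(\cE),\cH^c(\cE))$ satisfies $r\cdot(\psi\circ\varphi)=(r\cdot\psi)\circ\varphi=\psi\circ(r\cdot\varphi)$ for all $r\in R$. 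This follows by inserting the multiplication-by-$r$ endomorphism of whichever of $\cH^a(\cE),\cH^b(\cE),\cH^c(\cE)$ lies on $C$ and using associativity of composition together with the coincidence of pre- and post-composition from \pref{rm:R-module structure}; only a short case distinction according to which of $a,b,c$ equals $i_0$ is needed.

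Next I would handle $d_2^{0,q}$ with $q\ne 0$ and $q\ne 1$. Here the source $E_2^{0,q}$ and the target $E_2^{2,q-1}$ consist entirely of summands $\Ext^{\bullet}_{\hirzebruchtwo}(\cH^a(\cE),\cH^b(\cE))$ with $(a,b)\ne(i_0,i_0)$ — because $q\ne 0$ for the source and $q\ne 1$ for the target — so all of them carry the canonical structure. By \eqref{eq:d2 map} the $i$-th component of $d_2^{0,q}$ sends $\phi=(\phi_i)_i$ to $(-1)^q\phi_{i-1}\circ e^i-e^{i+q}\circ\phi_i$, a difference of Yoneda compositions of exactly the type just discussed: note that each of $e^i\in\Ext^2_{\hirzebruchtwo}(\cH^i(\cE),\cH^{i-1}(\cE))$ and $e^{i+q}\in\Ext^2_{\hirzebruchtwo}(\cH^{i+q}(\cE),\cH^{i+q-1}(\cE))$ lives in a group with index pair $\ne(i_0,i_0)$, since two consecutive cohomology sheaves cannot both be $\cH^{i_0}(\cE)$. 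Hence $R$-linearity is immediate from the principle above.

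Finally I would treat $d_2^{0,1}\colon E_2^{0,1}\to E_2^{2,0}$. All summands of the target other than $\Ext^2_{\hirzebruchtwo}(\cH^{i_0}(\cE),\cH^{i_0}(\cE))$ are again of canonical type and are dealt with exactly as before (there $\cH^{i_0}(\cE)$ can only occur as the middle term of a composition whose outer terms are supported on $C$, so the principle still applies). For the remaining summand, since its $R$-module structure is transported along $\iota^{\ast}\colon x\mapsto x\circ\iota$ and $\iota^{\ast}$ is an isomorphism, it suffices to show that $\iota^{\ast}$ composed with the $i_0$-component of $d_2^{0,1}$ is $R$-linear with values in $\Ext^2_{\hirzebruchtwo}(\cT,\cH^{i_0}(\cE))$. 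Postcomposing the $i_0$-component $-\phi_{i_0-1}\circ e^{i_0}-e^{i_0+1}\circ\phi_{i_0}$ with $\iota\colon\cT\hookrightarrow\cH^{i_0}(\cE)$ rewrites it as $-\phi_{i_0-1}\circ(e^{i_0}\circ\iota)-e^{i_0+1}\circ(\phi_{i_0}\circ\iota)$, in which now $e^{i_0}\circ\iota\in\Ext^2_{\hirzebruchtwo}(\cT,\cH^{i_0-1}(\cE))$ and $\phi_{i_0}\circ\iota\in\Hom_{\hirzebruchtwo}(\cT,\cH^{i_0+1}(\cE))$ involve only sheaves supported on $C$; consequently every composition in sight is of canonical type, and $R$-linearity follows by the same manipulation as before, using that on $\Ext^2_{\hirzebruchtwo}(\cT,\cH^{i_0-1}(\cE))$ and $\Hom_{\hirzebruchtwo}(\cT,\cH^{i_0+1}(\cE))$ the action of $r$ agrees with precomposition by the multiplication-by-$r$ endomorphism of $\cT$, which is built into \pref{df:R-module structure on E _{ 2 } ^{ 2, 0 }}.

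I expect the case $q=1$ to be the only delicate point, precisely because the $R$-module structure on $\Ext^2_{\hirzebruchtwo}(\cH^{i_0}(\cE),\cH^{i_0}(\cE))$ is defined only indirectly through $\iota^{\ast}$; once everything is pushed through $\iota$, however, the verification collapses to the same routine bookkeeping as in the generic case.
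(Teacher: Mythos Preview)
Your argument is correct and follows the same route as the paper: use the explicit formula \eqref{eq:d2 map}, observe that for $q\ne 0,1$ every summand carries the canonical $R$-structure so $R$-linearity is immediate, and for $q=1$ compose the $i_0$-component with $\iota$ to land in $\Ext^2_{\hirzebruchtwo}(\cT,\cH^{i_0}(\cE))$, where the same associativity/compatibility-of-pre-and-postcomposition argument applies. One cosmetic slip: what you call ``postcomposing with $\iota$'' is really applying $\iota^{\ast}$, i.e.\ \emph{pre}composing with $\iota$; your formulas are nonetheless correct.
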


\begin{proof}
For \( q \ne 0, 1 \), as already explained, the \(R\)-module structures on the source and the target of \( d _{ 2 } ^{ 0, q } \) are naturally defined and hence the \( R \)-linearity of \( d _{ 2 } ^{ 0, q } \) are rather obvious.

Let us show the \( R \)-linearity of \( d _{ 2 } ^{ 0, 1 } \).
Take an arbitrary element
\begin{align}
    (\phi_i)_i \in E_2^{0,1}
    =
    \bigoplus_i \Hom _{ \hirzebruchtwo } (\cH^i(\cE), \cH^{i+1}(\cE)).
\end{align}
It suffices to show that the maps as follows, which appear in the description of $d_2^{0,1}$ given in \pref{lm:E_2 spectral sequence of Hom}, are $R$-linear.
\begin{align}
\phi_{i_0-1} &\mapsto \phi_{i_0-1}\circ e^{i_0} \in \Ext _{ \hirzebruchtwo } ^2(\cH^{i_0}( \cE ), \cH^{i_0}( \cE )) \label{eq:circ_e} \\
\phi_{i_0} &\mapsto  e^{i_0+1}\circ \phi_{i_0} \in \Ext _{ \hirzebruchtwo } ^2(\cH^{i_0}( \cE ), \cH^{i_0}( \cE )) \label{eq:e_circ}
\end{align}
Under the isomorphism \( \iota ^{ \ast } \) of \eqref{eq:R-mod_str}, the map \eqref{eq:circ_e} is identified  with
\[
    \phi_{i_0-1}
    \mapsto
    \phi_{i_0-1} \circ e^{i_0} \circ \iota \in \Ext _{ \hirzebruchtwo } ^2(\cT, \cH^0(\cE)),
\]
and, similarly, the map \eqref{eq:e_circ} is identified with
\[
    \phi_{i_0}
    \mapsto
    e^{i_0+1}\circ \phi_{i_0} \circ \iota \in \Ext _{ \hirzebruchtwo } ^2(\cT, \cH^{i_0}(\cE)).
\]
These maps are $R$-linear, for the reason that the \(R\)-module structures given in \pref{rm:R-module structure} are by means of precompositions. As the \( R \)-module structure on
\(
    \Ext _{ \hirzebruchtwo } ^{ 2 } ( \cH ^{ i _{ 0 } } ( \cE ), \cH ^{ i _{ 0 } } ( \cE ) )
\)
is given in \pref{df:R-module structure on E _{ 2 } ^{ 2, 0 }} by transferring the \(R\)-module structure on
\(
    \Ext _{ \hirzebruchtwo } ^{ 2 } ( \cT, \cH ^{ i _{ 0 } } ( \cE ) )
\) via \( \iota ^{ \ast } \), this is exactly what we had to prove.
\end{proof}

Suppose $\cM$ is a pure coherent sheaf on $\hirzebruchtwo$ with
$
    \left(\Supp \cM\right) _{ \reduced }
    =
    C
$.
Then by \pref{rm:R-module structure},
\(
    M \coloneqq H ^{ 0 } ( \hirzebruchtwo, \cM )
    \simeq
    \Hom _{ \hirzebruchtwo } ( \cO _{ \hirzebruchtwo }, \cM )
\)
has a standard \(R\)-module structure of finite length and hence there is an integer \( \ell \) such that $\frak \frakm ^{ \ell } M = 0$.
In general, for an $R$-module $M$ and an ideal
\(
    I \subset R
\),
we let $(0:I)_M$ denote the \emph{annihilator}
\begin{align}\label{eq:annihilator}
    (0:I)_M \coloneqq \{ x \in M \mid Ix=0\} \subseteq M.
\end{align}
Geometrically speaking, this is the maximum submodule of \( M \) which is ``supported on the closed subscheme \( \Spec R / I \)''.
\begin{lemma}\label{lm:inequality}
For a pure coherent sheaf $\cM$ on $\hirzebruchtwo$ with
\(
    \left( \Supp\cM \right) _{ \reduced }
    =
    C
\), assume
\[
    n \coloneqq  \min \{ \ell \mid \frak \frakm ^{ \ell } M = 0\}>1,
\]
where \( M = H ^{ 0 } ( \hirzebruchtwo, \cM ) \in \module R \) as above.
Then the following \emph{strict} inequality holds.
\begin{align}\label{eq:the strict inequality}
    \dim_{ \bfk } \frac{ M }{ (0 : \frakm ^{ n - 1 } )_{ M } }
    <
    \dim_{ \bfk } \frakm ^{ n - 1 } M
\end{align}
\end{lemma}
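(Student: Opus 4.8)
The plan is to translate \eqref{eq:the strict inequality} into commutative algebra over $R$ and then to exploit the duality coming from the crepant contraction of $C$. First, since $\frakm^{n}M=0$ one has $\frakm^{n-1}M\subseteq(0:\frakm^{n-1})_{M}$, and both sides of \eqref{eq:the strict inequality} are dimensions of $\bfk$-vector spaces sitting inside $M$. Because $R$ is a hypersurface, hence Gorenstein, Matlis duality over the complete local ring $(R,\frakm)$ produces a finite-length $R$-module $M^{\vee}=\Hom_{\bfk}(M,\bfk)$ with $(0:I)_{M}^{\vee}\cong M^{\vee}/IM^{\vee}$ for every ideal $I\subseteq R$; taking $I=\frakm^{n-1}$ gives $\dim_{\bfk}M/(0:\frakm^{n-1})_{M}=\dim_{\bfk}\frakm^{n-1}M^{\vee}$, so \eqref{eq:the strict inequality} is equivalent to
\[
    \dim_{\bfk}\frakm^{n-1}M^{\vee}<\dim_{\bfk}\frakm^{n-1}M .
\]
Since $M$ and $M^{\vee}$ have the same length and the same $\frakm$-nilpotency index $n$, this asymmetry cannot be formal: a general finite-length $R$-module fails it (for instance $M=\bfk[x]/(x^{2})$ is Matlis self-dual and gives equality), so the purity of $\cM$ must enter precisely at this point.

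Second, I would realise $M^{\vee}$ geometrically. Write $f$ for the anticanonical morphism contracting $C$ considered above; locally around $C$ it is the minimal, hence crepant, resolution of $\Spec R$, so $\omega_{\hirzebruchtwo}$ is trivial on the formal neighbourhood of $C$ and $f^{\ast}\frakm\cdot\cO_{\hirzebruchtwo}=\cI_{C}$. Since $\cM$ is pure of dimension $1$ on the smooth surface $\hirzebruchtwo$, the vanishings $\cH^{0}(\cM^{\vee})=\cH^{2}(\cM^{\vee})=0$ hold, so the derived dual $\cM^{\vee}=\bR\cHom_{\hirzebruchtwo}(\cM,\cO_{\hirzebruchtwo})$ is concentrated in degree $1$, with $\cN\coloneqq\cH^{1}(\cM^{\vee})$ again a pure sheaf of reduced support $C$. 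A Grothendieck--Serre duality computation for $f$ (using crepancy to trivialise $f^{!}\cO_{\Spec R}$) then yields isomorphisms of $R$-modules
\[
    M^{\vee}\cong R^{1}f_{\ast}\cN,\qquad f_{\ast}\cN\cong(R^{1}f_{\ast}\cM)^{\vee},
\]
so that the two quantities in the displayed inequality are controlled by $H^{1}(\cN)$ and $H^{0}(\cM)$ together with their $\frakm$-module structures from \pref{rm:R-module structure}.

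Third, I would compute both sides through the $\cI_{C}$-adic filtrations of the pure sheaves $\cM$ and $\cN$. Each graded piece $\cI_{C}^{j}\cM/\cI_{C}^{j+1}\cM$ is an $\cO_{C}$-module; purity forces the bottom piece $\cI_{C}^{k-1}\cM$ (with $\cI_{C}^{k}\cM=0$, $\cI_{C}^{k-1}\cM\neq0$), being a subsheaf of the pure sheaf $\cM$, to be torsion-free on $C\cong\bP^{1}$, i.e. a direct sum of line bundles, and similarly for $\cN$. Via $f^{\ast}\frakm=\cI_{C}$ one relates $\frakm^{j}M$ to $H^{0}(\cI_{C}^{j}\cM)$ and, dually, $\frakm^{j}M^{\vee}$ to the cohomology of $\cI_{C}^{j}\cN$; the inequality then reduces to a numerical comparison between the ranks and degrees of these line bundles on $\bP^{1}$, in which the hypothesis $n>1$ (equivalently $\frakm M\neq0$, i.e. $\cM$ is genuinely supported on a thickening of $C$) supplies the slack needed for strictness.

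The hard part will be the reconciliation in the third step: the $\frakm$-adic filtration of the finite-dimensional space $M$ and the $\cI_{C}$-adic filtration of the sheaf $\cM$ are a priori related only by the inclusions $\frakm^{j}M\subseteq H^{0}(\cI_{C}^{j}\cM)$, which may well be strict, so one must keep track of the cokernels, equivalently of certain $H^{1}$'s on $\bP^{1}$; this is exactly where purity, rather than a mere condition on supports, is indispensable, as the example $M=\bfk[x]/(x^{2})$ shows. A parallel and possibly cleaner route to the same strict inequality is by contradiction: if \eqref{eq:the strict inequality} failed, $\frakm^{n-1}M$ would be forced to be so small that an element of $M\setminus(0:\frakm^{n-1})_{M}$ would generate a nonzero $0$-dimensional subsheaf of $\cM$, contradicting purity.
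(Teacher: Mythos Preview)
Your proposal is a strategy sketch rather than a proof. You reduce the claim to the asymmetric statement $\dim_\bfk \frakm^{n-1}M^\vee < \dim_\bfk \frakm^{n-1}M$ (correctly), observe that purity must be the source of the asymmetry, and then promise a computation via $\cI_C$-adic filtrations and Riemann--Roch on $\bP^1$ that you never carry out. You yourself flag that the ``hard part'' --- reconciling $\frakm^jM \subseteq H^0(\cI_C^j\cM)$ with the sheaf filtration and tracking the defect --- is unresolved. Nothing in what you have written identifies a concrete mechanism forcing \emph{strictness}; ``$n>1$ supplies the slack'' is an assertion, not an argument, and your closing contradiction sketch is equally vague.

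The paper's proof is considerably more direct and avoids Matlis or Grothendieck duality altogether. After a harmless reduction to the $f$-globally generated case, one checks that the module-theoretic and sheaf-theoretic nilpotency indices agree, $n = \min\{\ell \mid \cI_C^{\ell}\cM = 0\}$. Purity is used exactly once, to show that the natural surjection
\[
L \otimes_{\cO_C} E \;\longrightarrow\; \cI_C^{n-1}\cM,
\qquad L \coloneqq \cO_C\bigl(-(n-1)C\bigr),\quad E \coloneqq \cM/(0:\cI_C^{n-1})_{\cM},
\]
is an isomorphism of vector bundles on $C$. One then identifies $V \coloneqq M/(0:\frakm^{n-1})_M$ with a nonzero subspace of $H^0(C,E)$ and $\frakm^{n-1}M$ with the image of the product map
\[
\psi \colon H^0(C,L) \otimes_\bfk V \longrightarrow H^0(C, L \otimes_{\cO_C} E).
\]
Since $C \simeq \bP^1$ and $\deg L = 2(n-1) \geq 2$, one has $\dim H^0(C,L) > 1$; an elementary lemma (given independent $s,t \in H^0(L)$, the subsheaves $(\cO_C s)\otimes E$ and $(\cO_C t)\otimes E$ of $L\otimes E$ are distinct, so $\psi(\bfk s \otimes V) \subsetneq \psi(\bfk s \otimes V) + \psi(\bfk t \otimes V)$) then gives $\dim V < \dim \operatorname{Image}\psi$. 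This product-map step is the missing idea in your outline, and it is precisely where both $n>1$ and purity are consumed.

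Your Matlis-duality translation is correct and the paper records essentially the same identity elsewhere, but only to restate the already-proved inequality in the form needed for the dual module; it is not used as a route to the proof.
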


\begin{proof}
Without loss of generality, we may assume that
\(
    f ^{ \ast } f _{ \ast } \cM \stackrel{\eta}{\to} \cM
\)
is surjective; i.e., \( \cM \) is \( f \)-globally generated. In fact, if \(\eta\) is not surjective, we can replace \( \cM \) with the image \( \Image \eta \). By standard arguments on the adjoint pair
\(
    f ^{ \ast } \dashv f _{ \ast }
\),
there is a canonical isomorphism \( f _{ \ast } \Image \eta \simto f _{ \ast } \cM = M \) and
\(
    \Image \eta
\)
is automatically \( f \)-globally generated.

The following fact about the \( A_1\)-singularity \( R \) is known well: for each \( i > 0\), the \(i\)-th powers of the ideal sheaf
\begin{align}\label{equation:cIC}
    \cI _{ C }
    =
    \cO _{ \hirzebruchtwo } ( - C )
    \subset
    \cO _{ \hirzebruchtwo }
\end{align}
of $C$ and \(\frakm\) are related to each other as follows.
\begin{align}\label{eq:powers of cO ( - C ) vs frakm}
    \begin{aligned}
        \frakm ^{ i } \cO _{ \hirzebruchtwo }
        =
        \cO _{ \hirzebruchtwo } ( - i C )
        =
        \cI _{ C } ^{ i } \subseteq \cO _{ \hirzebruchtwo }\\
        \left( f \vert _{ \Spec R } \right) _{ \ast } \cI _{ C } ^{ i }
        =
        \frakm ^{ i } \subseteq R
    \end{aligned}
\end{align}

The vanishing $\frakm^n M = 0$ means that $M = f _{ \ast } \cM$ is a sheaf on
$\Spec(R/\frakm^n)$, from which we deduce that $f^*f_*\cM$ and hence its quotient $\cM$ are supported
on $f^{-1}(\Spec(R/\frakm^n)) \stackrel{\eqref{eq:powers of cO ( - C ) vs frakm}}{=} nC$.
This means $\cI_C^n\cM=0$.

On the other hand, note that the image of the canonical morphism
\begin{align}
    H ^{ 0 } \left( \hirzebruchtwo, \cI _{ C } ^{ n - 1 } \right)
    \otimes _{ \bfk }
    H ^{ 0 } ( \hirzebruchtwo, \cM )
    \to
    H ^{ 0 } \left( \hirzebruchtwo, \cI _{ C } ^{ n - 1 } \cM \right),
\end{align}
as a submodule of \( H ^{ 0 } ( \hirzebruchtwo, \cM ) = M \), is \( \frakm ^{ n - 1 } M \) by \eqref{eq:powers of cO ( - C ) vs frakm}.
Thus we see
\(
    H ^{ 0 } \left( \hirzebruchtwo, \cI _{ C } ^{ n - 1 } \cM \right)
    \supset
    \frakm^{n-1} M \ne 0
\),
hence $\cI_C^{n-1} \cM \ne 0$. Therefore we conclude
\[
	n = \min \{ \ell \mid \cI_C ^{ \ell } \cM = 0\}.
\]

Since
\(
    \cI _{ C } ( \cI _{ C } ^{ n - 1 } \cM ) = 0
\),
we can naturally think of \( \cI _{ C } ^{ n - 1 } \cM \) as an object of \( \coh C \).
Since we assumed that \( \cM \) is pure, so is its subsheaf \( \cI _{ C } ^{ n - 1 } \cM \).
Hence \(  \cI _{ C } ^{ n - 1 } \cM \) is a vector bundle on \( C \) of positive rank.

Likewise
\(
    \frac{ \cM }{ (0:\cI_C^{n-1}) _{ \cM } }
\)
can be thought of an object of \( \coh C \), where
\(
    (0:\cI_C^{n-1}) _{ \cM }
\)
is the sheaf version of the annihilator \eqref{eq:annihilator}; i.e., the maximum subsheaf of \( \cM \) whose schematic support is contained in \( ( n - 1 ) C \).
We claim that there is an isomorphism
\begin{align}\label{eq:isom_of_vbs}
    \varphi 
    \colon
    \cO _{ C } ( - ( n - 1 ) C )
    \otimes_{\cO_C}
    \frac{ \cM }{ (0:\cI_C^{n-1}) _{ \cM } }
    \simto
    \cI_C^{n-1}\cM
\end{align}
of vector bundles on $C$.
Indeed \( \varphi \) is induced from the product morphism
\(
    \cI_C^{n-1} \otimes_{\cO _{ \hirzebruchtwo } } \cM \to \cI_C^{n-1}\cM
\),
whose surjectivity implies that of \(\varphi\).
At the generic point $\xi$ of $C$, the stalk $(\cI_C)_\xi$ is the maximal ideal of the
discrete valuation ring $\cO_{ \hirzebruchtwo, \xi}$ and $\cM_\xi$ is a finite length $\cO_{ \hirzebruchtwo, \xi}/(\cI_C^n)_\xi$-module.
Hence one sees that $\varphi$ is an isomorphism at $\xi$ (use the structure theorem for finitely generated modules over a discrete valuation ring).
Therefore it is enough to show that the left hand side of \eqref{eq:isom_of_vbs} is pure; i.e., there is no zero-dimensional subsheaf.

Assume for a contradiction that $\cM/(0:\cI_C^{n-1})_\cM$ is not pure.
Then there is a subsheaf $(0:\cI_C^{n-1})_\cM \subsetneq \cS \subset \cM$
such that $\Supp \left(\cS/(0:\cI_C^{n-1})_\cM\right)$ is zero-dimensional.
Then $\cI_C^{n-1}\cS \subset \cM$ is non-zero and zero-dimensional,
which contradicts the purity of $\cM$.
To see \( \dim \cI _{ C } ^{ n - 1 } \cS = 0 \), note that there is an epimorphism
\begin{align}
    \cI _{ C } ^{ n - 1 } \otimes _{ \cO _{ \hirzebruchtwo } } \cS / ( 0 : \cI _{ C } ^{ n - 1 } ) _{ \cM } \twoheadrightarrow \cI _{ C } ^{ n - 1 } \cS
\end{align}
and that
\(
    \dim \cS / ( 0 : \cI _{ C } ^{ n - 1 } ) _{ \cM } = 0
\).

Now consider the following \(R\)-module.
\[
    V
    \coloneqq
    \frac{ M }{ H ^{ 0 } \left( \hirzebruchtwo, (0:\cI_C^{n-1}) _{ \cM } \right) }
    =
    \frac{ M }{ (0:\frakm ^{ n - 1 } )_{ M } }
\]
To see the equality, note that the denominator of the left hand side, as a submodule of \( M = H ^{ 0 } ( \hirzebruchtwo, \cM ) \), coincides with the following subset.
\[
    \{ s \in M \mid s _{ P } \in ( 0 : \cI _{ C, P } ^{ n - 1 } ) _{ \cM _{ P } } \quad \forall P \in C \}.
\]
Since $ \cI _{ C, P } ^{ n - 1 } = \frakm^{n-1}\cO_{\hirzebruchtwo, P}$, this coincides with the denominator of the right hand side.

If we put
$L \coloneqq \cO_C(-(n-1)C)$ and $E \coloneqq \cM / (0:\cI_C^{n-1})_\cM$,
then \eqref{eq:isom_of_vbs} is rewritten as
\[
    L \otimes_{\cO_C} E \simeq \cI_C^{n-1}\cM.
\]
Noting $H^0( C, L ) \simeq \frakm^{n-1}/\frakm^n$,
we see that $\frakm ^{ n - 1 } M$ is the image of the product map
\[
    \psi \colon H^0( C, L ) \otimes_{ \bfk } V \to H^0( C, \cI_C^{n-1}\cM ).
\]
Then the assertion follows from \pref{lm:auxiliary inequality} below.
\end{proof}

\begin{lemma}\label{lm:auxiliary inequality}
Let $E$ and $L$ be a vector bundle and a line bundle on a smooth projective curve \( C \), respectively.
Suppose $\dim H^0( C, L ) >1$ and let $V \subset H^0( C, E )$ be a non-zero linear subspace.
Then we have a strict inequality
\[
    \dim V < \rank \psi,
\]
where
$
    \psi \colon H^0( C, L ) \otimes_{ \bfk } V \to H^0 ( C, L \otimes_{\cO_C} E )
$
denotes the product map.
\end{lemma}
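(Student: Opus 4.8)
The plan is to prove the strict inequality by contradiction, after first establishing the non-strict one. Fix a nonzero section $s_0 \in H^0(C,L)$; since $h^0(C,L) > 1$ we may also choose $s_1 \in H^0(C,L)$ which is linearly independent from $s_0$. As $C$ is integral, multiplication by $s_0$ is an injection of sheaves $\cO_C \hookrightarrow L$, and tensoring with the locally free sheaf $E$ and then taking global sections (which is left exact) yields an injection $s_0\cdot(-)\colon H^0(C,E) \hookrightarrow H^0(C, L\otimes_{\cO_C} E)$. Restricting to $V$ and noting $s_0\cdot V = \psi(s_0\otimes V) \subseteq \Image\psi$, we get $\rank\psi \ge \dim(s_0\cdot V) = \dim V$.

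Now suppose for contradiction that $\rank\psi \le \dim V$, so that $\rank\psi = \dim V$ and therefore the inclusion $s_0\cdot V \subseteq \Image\psi$ is an equality of vector spaces. Since $s\cdot V = \psi(s\otimes V) \subseteq \Image\psi = s_0\cdot V$ for every $s\in H^0(C,L)$, applying this with $s = s_1$ shows that for each $v\in V$ there is $w\in V$ with $s_1 v = s_0 w$ in $H^0(C,L\otimes E)$. Writing $f \coloneqq s_1/s_0 \in \bfk(C)$ for the corresponding rational function — which is non-constant precisely because $s_0$ and $s_1$ are linearly independent — and dividing the relation $s_1 v = s_0 w$ by $s_0$ in the rational sections of $E$, we obtain $f\cdot v = w$. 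Thus multiplication by $f$ carries $V$ into $V$ inside the space of rational sections of $E$.

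Consequently $v\mapsto f\cdot v$ is a $\bfk$-linear endomorphism of the finite-dimensional space $V$, and since $\bfk$ is algebraically closed it admits an eigenvalue $\lambda\in\bfk$; pick $v_0\in V\setminus\{0\}$ with $(f-\lambda)\cdot v_0 = 0$ as a rational section of $E$. Because $E$ is torsion free on the integral curve $C$ and $v_0\ne 0$, this forces $f = \lambda$, contradicting that $f$ is non-constant. Hence $\rank\psi > \dim V$, which is the assertion.

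I do not anticipate a genuine difficulty in this argument; the step that requires a little care is using the hypothesis $\rank\psi = \dim V$ to upgrade the statement ``$f$ multiplies $V$ into $H^0(C,E)$'' to ``$f$ multiplies $V$ into $V$'', since it is only the latter that makes the eigenvalue argument available. The standing assumption that $\bfk$ is algebraically closed enters exactly once, to produce the eigenvector, and the integrality of $C$ together with local freeness of $E$ is what guarantees injectivity of multiplication by $s_0$ and the vanishing conclusion $f=\lambda$.
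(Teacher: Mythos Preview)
Your proof is correct and takes a genuinely different route from the paper's. Both arguments begin the same way: pick linearly independent $s_0,s_1\in H^0(C,L)$ and note that $s_0\cdot V\subseteq\Image\psi$ already gives $\rank\psi\ge\dim V$; the content is to show $s_1\cdot V\not\subseteq s_0\cdot V$. The paper does this \emph{sheaf-theoretically}: after replacing $E$ by the subsheaf generated by $V$, it observes that $\cO_C\cdot s_1\not\subseteq\cO_C\cdot s_0$ inside $L$ (using $H^0(C,\cO_C)=\bfk$), tensors with the locally free $E$ to get the same non-containment for the generated subsheaves of $L\otimes E$, and reads off the strict inclusion on global sections. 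You instead argue \emph{linear-algebraically}: assuming $s_1\cdot V\subseteq s_0\cdot V$, the rational function $f=s_1/s_0$ acts as a $\bfk$-endomorphism of $V$, and an eigenvector for some $\lambda\in\bfk$ forces $(f-\lambda)v_0=0$ with $f-\lambda\ne0$, contradicting torsion-freeness of $E$. Your approach avoids the preliminary reduction to $E$ generated by $V$ and is pleasantly self-contained; on the other hand it invokes algebraic closedness of $\bfk$ explicitly for the eigenvalue, whereas the paper's argument only needs $H^0(C,\cO_C)=\bfk$. (Your argument could be made to work under the same weaker hypothesis by replacing the eigenvalue step with the observation that the minimal polynomial of the action of $f$ on $V$ would make $f$ algebraic over $\bfk$.)
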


\begin{proof}
Without loss of generality, we may replace \( E \) with the subsheaf generated by \( V \).
Take a pair of linearly independent global sections $s, t \in H^0( C, L )$. It follows that $\cO_C\cdot t \not\subseteq \cO_C \cdot s$ as subsheaves of $L$. Since \( E\) is locally free, this implies that $(\cO_C\cdot t) \otimes_{\cO_C} E \not\subseteq (\cO_C\cdot s) \otimes_{\cO_C} E$ as subsheaves of $L \otimes_{\cO_C} E $.
Since $(\cO_C\cdot t) \otimes_{\cO_C} E$ and $(\cO_C\cdot s) \otimes_{\cO_C} E$
are the subsheaves  of $L \otimes_{\cO_C} E$ generated by  $ \psi( \bfk t \otimes V)$ and $\psi(\bfk s \otimes V)$ respectively, we conclude
\(
    \psi( \bfk t \otimes V) \not\subseteq \psi(\bfk s \otimes V)
\).
Thus we see
\begin{align}
    V
    \simeq
    \psi( \bfk s \otimes V) \subsetneq \psi( \bfk s \otimes V)+\psi( \bfk t \otimes V)
    \subseteq
    \Image \psi.
\end{align}
Taking \( \dim _{ \bfk } \), we obtain the assertion.
\end{proof}

Let \( \module _{ 0 } R \subset \module R \) be the category of Artinian (\( \iff \dim _{ \bfk } < \infty \)) \(R\)-modules.
There is an (anti-)involution of categories
\[
    D \colon \left( \module _{ 0 } R \right) ^{ \op } \simto \module _{ 0 } R
\]
which is defined as follows.
\begin{align}\label{equation:k-dual}
    DM \coloneqq \Hom_{ \bfk }( M, \bfk )
\end{align}

\begin{corollary}\label{cr:dual inequality}
Under the same notation and assumption as in \pref{lm:inequality},
put $W \coloneqq D M$.
Then the following strict inequality holds.
\[
    \dim_{ \bfk } \frac{ W }{ (0:\frakm ^{ n - 1 } ) _{ W } } > \dim_{ \bfk } \frakm ^{ n - 1 } W
\]
\end{corollary}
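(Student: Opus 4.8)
The plan is to deduce \pref{cr:dual inequality} from \pref{lm:inequality} purely formally, by applying the $\bfk$-linear duality $D$ of \eqref{equation:k-dual}. The point is that $D$ is an exact anti-involution of $\module _{ 0 } R$ which preserves $\dim _{ \bfk }$ and interchanges the two operations ``pass to the quotient $N/IN$'' and ``pass to the submodule $(0:I)_N$'' for any ideal $I \subseteq R$.

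First I would record the relevant compatibility. For an ideal $I \subseteq R$ and $N \in \module _{ 0 } R$, a functional $\phi \in DN = \Hom _{ \bfk }(N, \bfk)$ satisfies $I \phi = 0$ if and only if $\phi(IN) = 0$, i.e.\ if and only if $\phi$ factors through the surjection $N \twoheadrightarrow N/IN$; this gives a natural isomorphism $(0:I)_{DN} \simeq D(N/IN)$. Applying $D$ once more and using the canonical identification $D D N \simeq N$, one also gets $(0:I)_N \simeq D\!\left( DN / I\cdot DN \right)$. In particular
\[
    \dim _{ \bfk } (0:I)_{DN} = \dim _{ \bfk } \frac{N}{IN},
    \qquad
    \dim _{ \bfk } \frac{DN}{I \cdot DN} = \dim _{ \bfk } (0:I)_N .
\]

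Now apply this with $N = M$, $I = \frakm ^{ n - 1 }$ and $W = DM$, so that $\dim _{ \bfk } W = \dim _{ \bfk } M$. Combining the two displayed identities with the exact sequences $0 \to \frakm ^{ n - 1 } M \to M \to M/\frakm ^{ n - 1 } M \to 0$ and $0 \to \frakm ^{ n - 1 } W \to W \to W/\frakm ^{ n - 1 } W \to 0$, I would compute
\[
    \dim _{ \bfk } \frac{ W }{ (0:\frakm ^{ n - 1 })_{ W } }
    = \dim _{ \bfk } M - \dim _{ \bfk } \frac{ M }{ \frakm ^{ n - 1 } M }
    = \dim _{ \bfk } \frakm ^{ n - 1 } M ,
\]
\[
    \dim _{ \bfk } \frakm ^{ n - 1 } W
    = \dim _{ \bfk } M - \dim _{ \bfk } \frac{ W }{ \frakm ^{ n - 1 } W }
    = \dim _{ \bfk } M - \dim _{ \bfk } (0:\frakm ^{ n - 1 })_{ M }
    = \dim _{ \bfk } \frac{ M }{ (0:\frakm ^{ n - 1 })_{ M } } .
\]
Hence the asserted strict inequality $\dim _{ \bfk } W/(0:\frakm ^{ n - 1 })_{ W } > \dim _{ \bfk } \frakm ^{ n - 1 } W$ is literally equivalent to $\dim _{ \bfk } \frakm ^{ n - 1 } M > \dim _{ \bfk } M/(0:\frakm ^{ n - 1 })_{ M }$, which is exactly \eqref{eq:the strict inequality} in \pref{lm:inequality}; this proves the corollary.

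\textbf{Expected difficulty.} There is essentially no obstacle here — this is a one-line Matlis-duality argument once \pref{lm:inequality} is in hand. The only things to watch are getting the $R$-module structure on $DN$ right in the duality isomorphism and keeping track of which way each inequality points after dualizing. The hypothesis $n > 1$ is inherited verbatim from \pref{lm:inequality} and plays no separate role in this deduction.
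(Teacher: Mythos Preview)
Your proof is correct and follows essentially the same approach as the paper: the paper also deduces the corollary by applying the duality $D$ to the inequality of \pref{lm:inequality}, invoking the identities $D(\frakm^{\ell} M) \simeq DM/(0:\frakm^{\ell})_{DM}$ and $D(M/(0:\frakm^{\ell})_M) \simeq \frakm^{\ell} DM$ (stated there as \pref{lm:properties of D}), which are exactly the compatibilities you establish inline. Your dimension-complement computation is just an unpacked version of the paper's direct application of $D$ to both sides.
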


\begin{proof}
    To obtain the assertion, apply the anti-involution \( D \) to both sides of the equality \eqref{eq:the strict inequality} (before applying \( \dim _{ \bfk } \)) and then use the following \pref{lm:properties of D}. Note that \( D \) preserves dimensions.
\end{proof}

\begin{lemma}\label{lm:properties of D}
For \( M \in \module _{ 0 } R \) and integer $\ell \ge 0$,
there are isomorphisms as follows.
\begin{alignat}{2}
& D( \frakm ^{ \ell }  M) && \simeq DM/(0: \frakm ^{ \ell } )_{DM}\\
&D\left(M/(0: \frakm ^{ \ell } )_M\right) &&\simeq  \frakm ^{ \ell }  DM
\end{alignat}
\end{lemma}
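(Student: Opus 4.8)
The plan is to derive both isomorphisms from the formal properties of the \( \bfk \)-dualization functor \( D = \Hom _{ \bfk } ( -, \bfk ) \): on \( \module _{ 0 } R \) it is exact and \( R \)-linear, and it is an (anti-)involution, so there is a canonical \( R \)-linear isomorphism \( D ( D M ) \simeq M \) given by the evaluation map, which is bijective since \( M \) is finite dimensional over \( \bfk \). I would first establish the first isomorphism directly, and then obtain the second one formally by applying the first to \( D M \) and dualizing once more; this avoids having to prove any dimension identity by hand.

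For the first isomorphism, I would apply \( D \) to the short exact sequence \( 0 \to \frakm ^{ \ell } M \to M \to M / \frakm ^{ \ell } M \to 0 \). Exactness of \( D \) turns this into a short exact sequence \( 0 \to D ( M / \frakm ^{ \ell } M ) \to D M \to D ( \frakm ^{ \ell } M ) \to 0 \), so the only real task is to identify the image of \( D ( M / \frakm ^{ \ell } M ) \hookrightarrow D M \), which is \( \{ \psi \in D M \mid \psi \vert _{ \frakm ^{ \ell } M } = 0 \} \), with the annihilator submodule \( ( 0 : \frakm ^{ \ell } ) _{ D M } \). This is the one genuinely hands-on point: for \( \psi \in D M \) one has \( \psi \vert _{ \frakm ^{ \ell } M } = 0 \) iff \( \psi ( r m ) = 0 \) for all \( r \in \frakm ^{ \ell } \) and \( m \in M \), iff \( ( r \psi ) ( m ) = 0 \) for all such \( r, m \), iff \( r \psi = 0 \) for all \( r \in \frakm ^{ \ell } \), i.e. \( \psi \in ( 0 : \frakm ^{ \ell } ) _{ D M } \). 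With this identification, \( D ( \frakm ^{ \ell } M ) \simeq D M / ( 0 : \frakm ^{ \ell } ) _{ D M } \), and as a byproduct \( D ( M / \frakm ^{ \ell } M ) \simeq ( 0 : \frakm ^{ \ell } ) _{ D M } \).

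To get the second isomorphism I would run the first one with \( M \) replaced by \( D M \), giving \( D ( \frakm ^{ \ell } D M ) \simeq D ( D M ) / ( 0 : \frakm ^{ \ell } ) _{ D ( D M ) } \). Since the canonical isomorphism \( D ( D M ) \simeq M \) is \( R \)-linear, it carries \( ( 0 : \frakm ^{ \ell } ) _{ D ( D M ) } \) onto \( ( 0 : \frakm ^{ \ell } ) _{ M } \), whence \( D ( \frakm ^{ \ell } D M ) \simeq M / ( 0 : \frakm ^{ \ell } ) _{ M } \). Applying \( D \) once more and using biduality for the module \( \frakm ^{ \ell } D M \) yields \( \frakm ^{ \ell } D M \simeq D \bigl( M / ( 0 : \frakm ^{ \ell } ) _{ M } \bigr) \), as required.

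I do not expect a serious obstacle. The only substantive computation is the perpendicular-submodule-versus-annihilator identification above, and everything else is bookkeeping to ensure that each identification respects the \( R \)-module structure and not merely the underlying \( \bfk \)-vector spaces; in particular one should check that \( D ( D M ) \simeq M \) and the connecting maps of the exact sequences are \( R \)-linear, which is immediate from the definition \( ( r \phi ) ( m ) = \phi ( r m ) \) of the \( R \)-action on a dual module.
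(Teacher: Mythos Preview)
Your proof is correct and follows essentially the same approach as the paper: both arguments hinge on the identification \( (0:\frakm^{\ell})_{DM} = \{ \psi \in DM \mid \psi\vert_{\frakm^{\ell}M} = 0 \} \simeq D(M/\frakm^{\ell}M) \) inside \( DM \), and both deduce the second isomorphism from the first via biduality. Your version is slightly more streamlined in that you dualize the sequence \( 0 \to \frakm^{\ell}M \to M \to M/\frakm^{\ell}M \to 0 \) directly, whereas the paper dualizes \( 0 \to (0:\frakm^{\ell})_{DM} \to DM \to C \to 0 \) and then identifies \( DC \) with \( \frakm^{\ell}M \) via a perpendicular computation; but the substantive content is identical.
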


\begin{proof}
    To obtain the second isomorphism, replace \( M \) with \( D M \) in the first isomorphism and then use \( D ^{ 2 } \simeq \id \). To see the first isomorphism, consider the following short exact sequence. The inclusion \(i\) is the canonical one.
    \begin{align}
        0 \to ( 0 : \frakm ^{ \ell } ) _{ D M } \stackrel{i}{\to} D M \to C \coloneqq \coker i \to 0
    \end{align}
    By applying the anti-involution \( D \) to this, we obtain the following short exact sequence.
    \begin{align}
        0 \to D C \to M \to D ( 0 : \frakm ^{ \ell } ) _{ D M } \to 0
    \end{align}
    Then, as a submodule of \( M \), \( D C \) is computed as follows.
    \begin{align}
        D C
        =
        \{ x \in M \mid y \in ( 0 : \frakm ^{ \ell } ) _{ D M } \Rightarrow i ( y ) ( x ) = 0 \}
        =
        \{ x \in M \mid y \in D M, \frakm ^{ \ell } y = 0 \Rightarrow y ( x ) = 0 \}.\\
    \end{align}
    Note that
    \(
        y \in D M
    \)
    satisfies \(\frakm ^{ \ell } y = 0\) if and only if
    \(
        y ( \frakm ^{ \ell } M ) = 0
    \); i.e.,
    \(
        y \in D ( M / \frakm ^{ \ell } M ) \stackrel{ q ^{ \ast } }{ \hookrightarrow} D M
    \), where \( q \colon M \twoheadrightarrow M / \frakm ^{ \ell } M \) is the quotient map. Hence
    \begin{align}
        D C = \{ x \in M \mid y \in D ( M / \frakm ^{ \ell } M ) \Rightarrow y ( x ) = 0 \}
        =
        D ( M / \frakm ^{ \ell } M ) ^{ \perp}
        =
        \frakm ^{ \ell } M,
    \end{align}
    so that \( C \simeq D ( \frakm ^{ \ell } M ) \).
\end{proof}

Now we are ready to prove the following
\begin{proposition}\label{pr:chohomology sheaves are O_C modules}
For an exceptional object
\(
    \cE \in \derived ( \hirzebruchtwo )
\),
it holds that
\[
    \Supp \tors \cH^{i_0}(\cE)
    =
    \Supp \cH ^{ i } ( \cE ) \ ( i \ne i _{ 0 } )
    =
    C \text{ or } \emptyset.
\]
\end{proposition}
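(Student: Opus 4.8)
The plan is to argue by contradiction, deducing from exceptionality --- via the spectral sequence \eqref{eq:E_2 spectral sequence of Hom}, the $R$-module structures on its $E_2$-page, and the two numerical inequalities \pref{lm:inequality} and \pref{cr:dual inequality} --- that the thickening of $C$ carrying the torsion cohomology of $\cE$ must in fact be $C$ itself. Assume the assertion fails. By \pref{lm:properties of cohomology sheaves} the sheaves $\cH^i(\cE)$ for $i\ne i_0$ and $\cT \coloneqq \tors\cH^{i_0}(\cE)$ are pure of dimension one with reduced support $C$ (when nonzero); being rigid they are $\Aut_{\hirzebruchtwo/\bfk}^0$-invariant by \pref{lm:rigid objects are stable under the aut0 action}, so their schematic supports are thickenings of $C$. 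Let $M$ be the direct sum of the finite-length $R$-modules $H^0(\hirzebruchtwo,\cH^i(\cE))$ ($i\ne i_0$) and $H^0(\hirzebruchtwo,\cT)$, and let $n\ge 2$ be the smallest integer with $\frakm^n M=0$.

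First I would run the argument from the opening of the proof of \pref{lm:properties of cohomology sheaves}: $E_3=E_\infty$ since $\hirzebruchtwo$ is a surface, the spectral sequence is $E_2$-degenerate along $(1,q)$, and \eqref{eq:exceptionality as conditions on spectral sequence} then forces $d_2^{0,q}\colon E_2^{0,q}\to E_2^{2,q-1}$ to be an isomorphism for every $q\ne 0$ (and surjective with one-dimensional kernel for $q=0$). By \pref{lem:R-module} these are isomorphisms of $R$-modules for $q\ne 0$.

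Next I would identify both sides with cohomology of pure sheaves. For $q\ne0$ every summand $\Hom_{\hirzebruchtwo}(\cH^i(\cE),\cH^{i+q}(\cE))$ of $E_2^{0,q}$ has, after replacing the target by $\cT$ when it equals $\cH^{i_0}(\cE)$ (the image of a torsion sheaf is torsion), a target that is pure of dimension one with reduced support in $C$; hence the corresponding $\cHom$-sheaf is pure of dimension one with reduced support in $C$ and $E_2^{0,q}=H^0(\hirzebruchtwo,\cG_q)$ for such a sheaf $\cG_q$. Dually, Serre duality gives $E_2^{2,q-1}\cong D\!\left(H^0(\hirzebruchtwo,\cG'_q)\right)$ for another such sheaf $\cG'_q$; at the corner $q=1$ one uses the $R$-module structure of \pref{df:R-module structure on E _{ 2 } ^{ 2, 0 }} together with the isomorphism $\iota^*$ of \eqref{eq:R-mod_str}, which replaces $\Ext^2(\cH^{i_0}(\cE),\cH^{i_0}(\cE))$ by $\Ext^2(\cT,\cH^{i_0}(\cE))\cong D\!\left(H^0(\hirzebruchtwo,\cHom(\cH^{i_0}(\cE),\cT\otimes\omega_{\hirzebruchtwo}))\right)$. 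Thus $H^0(\hirzebruchtwo,\cG_q)\cong D\!\left(H^0(\hirzebruchtwo,\cG'_q)\right)$ as $R$-modules for all $q\ne0$. Using that the multiplicity of $\cHom(\cA,\cB)$ at the generic point $\gamma$ of $C$ equals the minimum of those of $\cA$ and $\cB$, and that $\cH^{i_0}(\cE)$ has torsion-free part of positive rank, I would then check that for a suitable $q\ne0$ --- typically $q=1$, so that the functor $\cHom(\cH^{i_0}(\cE),-)$ meets the worst sheaf --- both $\cG_q$ and $\cG'_q$ have multiplicity exactly $n\ge2$ at $\gamma$. Writing $N\coloneqq H^0(\hirzebruchtwo,\cG_q)$, \pref{lm:inequality} applied to $\cG_q$ gives $\dim_\bfk N/(0:\frakm^{n-1})_N<\dim_\bfk\frakm^{n-1}N$, whereas $N\cong D\!\left(H^0(\hirzebruchtwo,\cG'_q)\right)$ together with \pref{lm:properties of D} turns this into $\dim_\bfk\frakm^{n-1}H^0(\cG'_q)<\dim_\bfk H^0(\cG'_q)/(0:\frakm^{n-1})_{H^0(\cG'_q)}$ --- the exact opposite of what \pref{lm:inequality} gives for $\cG'_q$. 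This contradiction shows $n\le1$, i.e. $\cI_C$ annihilates all the sheaves in question, which is the assertion. (Alternatively one may phrase the final step through \pref{cr:dual inequality}.)

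The main obstacle is the middle step: carrying out the identifications of $E_2^{0,q}$ and $E_2^{2,q-1}$ with $H^0$ and $D(H^0)$ of explicit pure sheaves while keeping the $R$-module structures straight --- especially at the $(2,0)$ corner, where one is forced through $\Ext^2(\cT,\cH^{i_0}(\cE))$ --- and ensuring that some $q\ne0$ genuinely records the critical multiplicity $n$ at $\gamma$. Because $\Supp\cE=\hirzebruchtwo$, none of this can be seen locally around $C$ as it can for spherical objects in \cite[Lemma~4.8]{Ishii-Uehara_ADC}, which is exactly the difficulty highlighted at the start of this subsection; and the degenerate case in which $\cE$ is a shift of a single sheaf, where the spectral sequence carries no information, must be treated separately (for instance via \cite[Theorem~1.4]{MR3431636}).
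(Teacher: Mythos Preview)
Your overall plan matches the paper's: use the $R$-linear isomorphism $d_2^{0,1}\colon E_2^{0,1}\simto E_2^{2,0}$ from \pref{lem:R-module}, realise the two sides as $H^0(\hirzebruchtwo,\cM)$ and $D\bigl(H^0(\hirzebruchtwo,\cM')\bigr)$ for pure sheaves $\cM,\cM'$ with reduced support $C$, and pit \pref{lm:inequality} against \pref{cr:dual inequality} to force the common annihilator index down to $1$. Carried out carefully (with the paper's explicit choices $\cM=\bigoplus_i\cHom(\cH^i(\cE),\cH^{i+1}(\cE))$ and $\cM'=\bigoplus_{i\ne i_0}\cEnd(\cH^i(\cE))\oplus\cHom(\cH^{i_0}(\cE),\cT)$), this correctly shows that $\cH^i(\cE)$ is an $\cO_C$-module for every $i\ne i_0$.

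The genuine gap is your treatment of $\cT=\tors\cH^{i_0}(\cE)$. You fold $\cT$ into the definition of $n$ and hope the same contradiction bounds its thickness, but the spectral sequence does not see the multiplicity of $\cT$ in the way you need. Concretely, the $\frakm$-annihilator index of $E_2^{0,1}=\bigoplus_i\Hom(\cH^i(\cE),\cH^{i+1}(\cE))$ is governed entirely by the $\cH^i(\cE)$ with $i\ne i_0$: the summand $\Hom(\cH^{i_0-1}(\cE),\cH^{i_0}(\cE))=\Hom(\cH^{i_0-1}(\cE),\cT)$ is killed by $\frakm^{m_{i_0-1}}$, and $\Hom(\cH^{i_0}(\cE),\cH^{i_0+1}(\cE))$ by $\frakm^{m_{i_0+1}}$, where $m_j$ denotes the multiplicity of $\cH^j(\cE)$ along $C$. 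Thus if $\cT$ happens to be strictly thicker than every $\cH^i(\cE)$ with $i\ne i_0$, your claimed ``check'' that $\cG_q$ has multiplicity exactly $n$ at $\gamma$ fails for every $q\ne 0$, and the contradiction yields only that the $\cH^i(\cE)$ for $i\ne i_0$ lie on $C$ --- nothing about $\cT$. Your aside about the sheaf case via \cite[Theorem~1.4]{MR3431636} does not close this gap either, since $\cE$ need not be a sheaf.

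The paper handles $\cT$ by a separate, short step that you are missing: pass to the derived dual. By \pref{lm:dual exceptional object} one has $\cH^{-i_0+1}(\cE^\vee)\simeq\cT^\vee[1]$, and since $-i_0+1\ne i_0(\cE^\vee)=-i_0$, the first half of the argument applied to the exceptional object $\cE^\vee$ forces this cohomology sheaf --- hence $\cT$ --- to be an $\cO_C$-module. Drop $\cT$ from your definition of $n$, append this dual step, and your proof goes through; the separate treatment of the sheaf case then becomes unnecessary as well, since the dual step already covers it.
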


\begin{proof}
We first discuss $\Supp \cH ^{ i } ( \cE )$ for \( i \ne i _{ 0 } \).
We may and will assume $\bigoplus_{i \ne i_0} \cH^i(\cE) \ne 0$, since otherwise there is nothing to prove. Put
\begin{align}
    n \coloneqq \min \bigcap _{ i \ne i _{ 0 } } \left\{ \ell \mid \cI _{ C } ^{ \ell } \cH ^{ i } ( \cE ) = 0 \right\} \ge 1.
\end{align}
In the spectral sequence \eqref{eq:E_2 spectral sequence of Hom} for the exceptional object \( \cE \),
we have the isomorphism
\begin{align}\label{eq:E_2^{0,1}=E_2^{2,0}}
    d _{ 2 } ^{ 0, 1 } \colon
    E_2^{0,1}=\bigoplus_i \Hom _{ \hirzebruchtwo } (\cH^i(\cE), \cH^{i+1}(\cE)) \simto
    E_2^{2,0}=\bigoplus_i \Ext _{ \hirzebruchtwo } ^2(\cH^i(\cE), \cH^i(\cE))
\end{align}
of \emph{$R$-modules} by \pref{lem:R-module}. Note that $\frakm^n E_2^{0,1}=0$,
since for each \( i \) either
\(
    \cI _{ C } ^{ n } \cH ^{ i } ( \cE ) = 0
\)
or
\(
    \cI _{ C } ^{ n } \cH ^{ i + 1 } ( \cE ) = 0
\)
holds.

On the other hand, for each $i \ne i_0$ there is an isomorphism of \( R \)-modules given by the Serre duality:
\[
    \Ext^2 _{ \hirzebruchtwo } (\cH^i(\cE), \cH^i(\cE)) \simeq D\Hom(\cH^i(\cE), \cH^i(\cE))
\]
This implies that $\frakm ^{ n - 1 } E_2^{ 2, 0 } \ne 0$.
Combining this with the isomorphism of \( R \)-modules \eqref{eq:E_2^{0,1}=E_2^{2,0}}, we obtain
\[
    n
    =
    \min \{ \ell \mid  \frakm ^{ \ell }  E_2^{0,1}=0 \}
    =
    \min \{ \ell \mid  \frakm ^{ \ell }  E_2^{2,0}=0 \}.
\]

Now assume for a contradiction that $n>1$.
Let
\begin{align}
    \cM \coloneqq \bigoplus_i \cHom _{ \hirzebruchtwo } (\cH^i(\cE), \cH^{i+1}(\cE)),
\end{align}
so that
\(
    E_2^{0,1} \simeq f _{ \ast } \cM
\).
We can apply \pref{lm:inequality} to \( \cM \), to obtain the strict inequality
\begin{align}\label{eq:<}
    \dim_{ \bfk } \frac{ E _{ 2 } ^{ 0 , 1 } }{ (0 : \frakm ^{ n - 1 } )_{ E _{ 2 } ^{ 0 , 1 } } }
    <
    \dim_{ \bfk } \frakm ^{ n - 1 } E _{ 2 } ^{ 0 , 1 }.
\end{align}

On the other hand, let
\begin{align}
    \cM ' \coloneqq
    \bigoplus _{ i \ne i _{ 0 } } \cEnd _{ \hirzebruchtwo } ( \cH ^{ i } ( \cE ) ) \oplus \cHom _{ \hirzebruchtwo } ( \cH ^{ i _{ 0 } } ( \cE ), \cT ).
\end{align}
By \pref{lem:R-module} and the Serre duality, it follows that there is an isomorphism of \( R \)-modules
\(
    E _{ 2 } ^{ 2, 0 } \simeq D f _{ \ast } \cM '
\).
We can apply \pref{cr:dual inequality} to \( \cM '\), to obtain the strict inequality
\begin{align}\label{eq:>}
    \dim_{ \bfk } \frac{ E _{ 2 } ^{ 2, 0 } }{ (0:\frakm ^{ n - 1 } ) _{ E _{ 2 } ^{ 2, 0 } } }
    >
    \dim_{ \bfk } \frakm ^{ n - 1 } E _{ 2 } ^{ 2, 0 }.
\end{align}

The strict inequalities \eqref{eq:<} and \eqref{eq:>} contradict the isomorphism \eqref{eq:E_2^{0,1}=E_2^{2,0}}. Hence we obtain $n=1$, which means that $\cH^i(\cE)$ is an $\cO_C$-module for any $i \ne i_0$. In fact, by rigidity, \( \cH ^{ i } ( \cE ) \) is a vector bundle on \( C \) for any \( i \ne i _{ 0 } \).

Finally, to investigate $\tors \cH^{i_0}(\cE)$, we consider the derived dual
$
    \cE^\vee
$.
As we show in \pref{lm:dual exceptional object} below, there is an isomorphism as follows.
\[
    \cH^{-i_0+1}(\cE^\vee) \simeq (\tors\cH^{i_0}(\cE))^\vee[1]
\]
Since $-i_0+1 \ne - i _{ 0 } \stackrel{\text{\pref{lm:dual exceptional object}}}{=} i_0(\cE^\vee)$,
by applying what we have just proved to the exceptional object \( \cE ^{ \vee }\), we see that the left hand side, hence the right hand side, is a vector bundle on \( C \). Hence so is \( \tors \cH ^{ i _{ 0 } } ( \cE ) \).
\end{proof}

\begin{remark}
    \pref{pr:chohomology sheaves are O_C modules} will \emph{not} be used in the proof of \pref{lm:dual exceptional object}, so that it is harmless to use \pref{lm:dual exceptional object} in the proof of \pref{pr:chohomology sheaves are O_C modules} (as we did in the last paragraph). Actually, in the proof of \pref{lm:dual exceptional object}, we only use \pref{lm:properties of cohomology sheaves} and some standard facts on homological algebra.
\end{remark}

%
%
\subsection{More on the structure of \(\cohomology ( \cE )\)}
\label{sc:More on the structure}

In this subsection we give a structure theorem for \( \cH ^{ i _{ 0 } } ( \cE ) \) in \pref{lm:exceptional sheaf is a direct summand}. It is then used to give a structure theorem for \( \tors \cohomology ( \cE )\) in \pref{cr:structure of the residual part}.

Below is repeatedly used in this paper.

\begin{lemma}[{\( = \)\cite[Remark 2.3.4]{MR1604186}}]\label{lm:exceptional vector bundle restricted to C}
Let
\(
    \cE \in \derived ( \hirzebruchtwo )
\)
be an exceptional vector bundle of
\(
    \rank \cE = r
\).
Then there is an isomorphism
\begin{align}\label{eq:restriction of exceptional vector bundle to C}
    \cE | _{ C }
    \simeq
    \cO _{ C } ( b ) ^{ \oplus s }
    \oplus
    \cO _{ C } ( b + 1 ) ^{ \oplus r - s }
\end{align}
for some \( b \in \bZ \) and \( s \in \bN \) such that \( 1 \le s \le r \).
\end{lemma}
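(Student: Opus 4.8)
The statement is quoted verbatim from \cite[Remark~2.3.4]{MR1604186}, so strictly the proof is a reference; let me nonetheless sketch an argument. Since \( C \simeq \bP ^{ 1 } \), Grothendieck's theorem gives a splitting \( \cE | _{ C } \simeq \bigoplus _{ j = 1 } ^{ r } \cO _{ C } ( a _{ j } ) \) with \( a _{ 1 } \ge \cdots \ge a _{ r } \). Putting \( b \coloneqq a _{ r } \) and \( s \coloneqq \# \{ j \mid a _{ j } = a _{ r } \} \), the claim \eqref{eq:restriction of exceptional vector bundle to C} is exactly the inequality \( a _{ 1 } - a _{ r } \le 1 \), which in turn is equivalent to \( \Ext _{ C } ^{ 1 } ( \cE | _{ C }, \cE | _{ C } ) = 0 \); indeed a vector bundle on \( \bP ^{ 1 } \) is rigid if and only if it is balanced. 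So the entire task is to prove that \( \cE | _{ C } \) is rigid as a sheaf on \( C \).

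I would deduce this from the exceptionality of \( \cE \) on \( \hirzebruchtwo \). Let \( i \colon C \hookrightarrow \hirzebruchtwo \) and tensor \( 0 \to \cO _{ \hirzebruchtwo } ( - C ) \to \cO _{ \hirzebruchtwo } \to \cO _{ C } \to 0 \) with \( \cE \) to obtain a triangle \( \cE ( - C ) \to \cE \to i _{ \ast } ( \cE | _{ C } ) \xrightarrow{ + 1 } \) in \( \derived ( \hirzebruchtwo ) \). Applying \( \bR \Hom _{ \hirzebruchtwo } ( \cE, - ) \) and using the adjunction \( \bR \Hom _{ \hirzebruchtwo } ( \cE, i _{ \ast } ( \cE | _{ C } ) ) \simeq \bR \Hom _{ C } ( \cE | _{ C }, \cE | _{ C } ) \) (valid because \( \cE \) is locally free, hence \( \bL i ^{ \ast } \cE = \cE | _{ C } \)) together with \( \Ext _{ \hirzebruchtwo } ^{ 1 } ( \cE, \cE ) = \Ext _{ \hirzebruchtwo } ^{ 2 } ( \cE, \cE ) = 0 \), the long exact sequence yields \( \Ext _{ C } ^{ 1 } ( \cE | _{ C }, \cE | _{ C } ) \simeq \Ext _{ \hirzebruchtwo } ^{ 2 } ( \cE, \cE ( - C ) ) \). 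By Serre duality on \( \hirzebruchtwo \) and \( \omega _{ \hirzebruchtwo } = \cO _{ \hirzebruchtwo } ( - 2 C - 4 f ) \), this group is dual to \( \Hom _{ \hirzebruchtwo } ( \cE, \cE \otimes \cO _{ \hirzebruchtwo } ( - C - 4 f ) ) \). Thus it suffices to prove \( \Hom _{ \hirzebruchtwo } ( \cE, \cE \otimes \cO _{ \hirzebruchtwo } ( - C - 4 f ) ) = 0 \).

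For this last vanishing I would invoke slope-semistability. First, \( - C - 4 f \) has negative degree against every ample class: \( ( - C - 4 f ) \cdot ( \alpha C + \beta f ) = - 2 \alpha - \beta \), which is negative whenever \( \alpha, \beta > 0 \). Second, an exceptional vector bundle on \( \hirzebruchtwo \) is \( \mu _{ H } \)-semistable for some ample \( H \) — a key structural fact about exceptional bundles on \( \hirzebruchtwo \), established in \cite{MR1604186} (the analogue of the classical fact that exceptional bundles on del Pezzo surfaces are stable). Granting this, any nonzero map \( \cE \to \cE \otimes \cO _{ \hirzebruchtwo } ( - C - 4 f ) \) would be a nonzero homomorphism between \( \mu _{ H } \)-semistable sheaves of the same rank whose target has strictly smaller slope, which is impossible; so the Hom-group vanishes and we are done.

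The serious point is the semistability input of the third paragraph: none of the formal manipulations above constrain \( \cE | _{ C } \) at all — every identity obtained remains consistent with \( \cE | _{ C } \) being arbitrarily unbalanced (a Riemann--Roch check on \( C \) shows the relevant Euler characteristics match automatically) — so something genuinely geometric about exceptional bundles on \( \hirzebruchtwo \) must enter, which is exactly why the lemma is imported from \cite{MR1604186} rather than reproved here. An alternative route, also due to \cite{MR1604186}, is an induction on \( \rank \cE \) using the generation of exceptional bundles on \( \hirzebruchtwo \) by line bundles, twists, and mutations (line bundles restricting to a single line bundle on \( C \), and twisting by \( \cO _{ \hirzebruchtwo } ( D ) \) replacing \( \cE | _{ C } \) by \( \cE | _{ C } \otimes \cO _{ C } ( D . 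C ) \)); there the delicate step is analyzing the restriction to \( C \) of the triangle defining a mutation.
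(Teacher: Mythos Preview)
The paper gives no proof of this lemma at all; it simply imports the statement from \cite[Remark~2.3.4]{MR1604186}, exactly as you note in your first sentence. So your proposal already matches the paper's ``proof''.

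Your supplementary sketch is sound and honest: the reduction to rigidity of \( \cE|_{C} \), the restriction triangle, and the Serre-dual identification \( \Ext^{1}_{C}(\cE|_{C},\cE|_{C}) \simeq \Hom_{\hirzebruchtwo}(\cE,\cE(-C-4f))^{\vee} \) are all correct. You are also right that the whole content sits in the semistability input, and right to flag it as nontrivial on a \emph{weak} del Pezzo surface---on a genuine del Pezzo one has \( \mu_{-K} \)-stability of exceptional bundles essentially for free, but on \( \hirzebruchtwo \) the anticanonical class is only nef, so the required semistability (or the inductive alternative you mention via mutations from line bundles) is precisely the substance of Kuleshov's argument in \cite{MR1604186}. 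There is no gap in your write-up beyond the one you explicitly acknowledge.
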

Note that the integers \(b\) and \(s\) in \pref{lm:exceptional vector bundle restricted to C} are uniquely determined by \(\cE\).

\begin{lemma}\label{lm:exceptional sheaf is a direct summand}
Let \( \cE \in \derived ( \hirzebruchtwo ) \) be an exceptional object.
Then the unique non-torsion cohomology sheaf $\cH^{i_0}(\cE)$ decomposes as
\[
    \cH^{i_0}(\cE) \simeq E \oplus T,
\]
where $E$ is an exceptional sheaf and $T$ is a vector bundle on \( C \).
Moreover, if $E$ is not locally free, then there is an integer $a \in \bZ$ such that
\begin{itemize}
\item The torsion part $\tors E$ is a direct sum of copies of $\cO_C(a)$.
\item $T$ is a direct sum of copies of $\cO_C(a)$ and $\cO_C(a+1)$.
\end{itemize}
\end{lemma}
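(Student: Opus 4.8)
The plan is to analyze the canonical short exact sequence
\[
    0 \to \cT \to \cH^{i_0}(\cE) \to \cF \to 0,
\]
where $\cT := \tors\cH^{i_0}(\cE)$, $\cF := \cH^{i_0}(\cE)/\cT$, and (for brevity) $\cH := \cH^{i_0}(\cE)$, and to show that after an automorphism of $\cT$ the extension class $e\in\Ext^1_{\hirzebruchtwo}(\cF,\cT)$ becomes concentrated in the $\cO_C(a)$-summands of $\cT$, which splits off a vector bundle on $C$. First I would record the inputs. By \pref{lm:properties of cohomology sheaves} and \pref{pr:chohomology sheaves are O_C modules}, if $\cT\ne 0$ it is a rigid pure sheaf with \emph{schematic} support $C$, hence a rigid vector bundle on $C\simeq\bP^1$; a computation of $\Ext^1$ on $\bP^1$ gives $\cT\simeq\cO_C(a)^{\oplus s}\oplus\cO_C(a+1)^{\oplus t}$ for a unique $a\in\bZ$ and $s,t\ge 0$ (the exponents occupy two consecutive values by rigidity). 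By \pref{lm:properties of cohomology sheaves} \pref{it:more on i0-th cohomology}, $\cF$ is an exceptional vector bundle, so $\Ext^i_{\hirzebruchtwo}(\cF,\cO_C(c))\simeq H^i(C,\cF^{\vee}|_C\otimes\cO_C(c))$ for all $i$, and by \pref{lm:exceptional vector bundle restricted to C}, $\cF|_C\simeq\cO_C(b)^{\oplus s_0}\oplus\cO_C(b+1)^{\oplus r-s_0}$ with $1\le s_0\le r=\rank\cF$. Two consequences I would extract: $\Hom_{\hirzebruchtwo}(\cF,\cO_C(c))=0$ whenever $c\le b-1$, and the multiplication map $H^0(C,\cO_C(1))\otimes\Ext^1_{\hirzebruchtwo}(\cF,\cO_C(a))\to\Ext^1_{\hirzebruchtwo}(\cF,\cO_C(a+1))$ is surjective, being a direct sum of the surjective maps $H^0(\cO_{\bP^1}(1))\otimes H^1(\cO_{\bP^1}(d))\to H^1(\cO_{\bP^1}(d+1))$.

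The heart of the argument is a rigidity computation. Since $\Hom(\cT,\cF)=0$ one has $\Hom(\cH,\cF)\simeq\Hom(\cF,\cF)=\bfk$; combining this with $\Ext^1(\cH,\cH)=0$ in the long exact sequences obtained by applying $\Hom(\cH,-)$ and then $\Hom(-,\cT)$ to the displayed extension yields $\Ext^1_{\hirzebruchtwo}(\cF,\cT)=\End_{\hirzebruchtwo}(\cT)\cdot e$. Because $\Hom(\cO_C(a+1),\cO_C(a))=0$, the algebra $\End_{\hirzebruchtwo}(\cT)$ is block triangular; decomposing $e=(e_A,e_B)$ according to $\cT=\cO_C(a)^{\oplus s}\oplus\cO_C(a+1)^{\oplus t}$, the equality $\Ext^1_{\hirzebruchtwo}(\cF,\cT)=\End_{\hirzebruchtwo}(\cT)\cdot e$ forces the components of $e_A$ to span $\Ext^1_{\hirzebruchtwo}(\cF,\cO_C(a))$; set $N:=\dim\Ext^1_{\hirzebruchtwo}(\cF,\cO_C(a))$, so $N\le s$. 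Using this spanning property together with the surjectivity of the multiplication map above, I would construct an automorphism $\phi$ of $\cT$ — a change of basis among the $\cO_C(a)$-summands, followed by a unipotent off-diagonal transformation absorbing $e_B$ — for which $\phi_*e$ lies in $\Ext^1_{\hirzebruchtwo}(\cF,\cO_C(a)^{\oplus N})$ with linearly independent components. Replacing the extension by the isomorphic one with class $\phi_*e$ gives $\cH\simeq E\oplus T$, where $T:=\cO_C(a)^{\oplus s-N}\oplus\cO_C(a+1)^{\oplus t}$ is a vector bundle on $C$ and $E$ is the extension of $\cF$ by $\cO_C(a)^{\oplus N}$ with spanning class, so that $\tors E\simeq\cO_C(a)^{\oplus N}$, which is nonzero precisely when $E$ is not locally free. (When $N=0$ the extension already splits and one takes $E=\cF$, $T=\cT$; the case $\cT=0$ is trivial.)

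It remains to check that $E$ is exceptional. It is a direct summand of the rigid sheaf $\cH$, so $\Ext^1(E,E)=0$. In the relevant case $N\ge 1$ we have $\Ext^1_{\hirzebruchtwo}(\cF,\cO_C(a))\ne 0$, which forces $a\le b-1$ and hence $\Hom_{\hirzebruchtwo}(\cF,\cO_C(a))=0$ by the first consequence above. Applying $\Hom(-,\tors E)$ and $\Hom(-,\cF)$ to $0\to\tors E\to E\to\cF\to 0$, and using that the $N$ components of the class of $E$ are linearly independent, one gets $\Hom(E,\tors E)=0$ and $\Hom(E,\cF)\simeq\Hom(\cF,\cF)=\bfk$, whence $\Hom(E,E)\hookrightarrow\Hom(E,\cF)=\bfk$ and $\Hom(E,E)=\bfk$. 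Finally $\Ext^2(E,E)=0$ follows by Serre duality from $\Hom(E,E\otimes\omega_{\hirzebruchtwo})=0$, proved by the same long exact sequences using $\omega_{\hirzebruchtwo}|_C\simeq\cO_C$ (so $\tors E\otimes\omega_{\hirzebruchtwo}\simeq\tors E$) and $\Ext^2(\cF,\cF)=0$. The "Moreover" clause is then immediate from $\tors E\simeq\cO_C(a)^{\oplus N}$ and $T=\cO_C(a)^{\oplus s-N}\oplus\cO_C(a+1)^{\oplus t}$, and when $E$ is locally free there is nothing more to prove.

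I expect the main obstacle to be the normalization of the extension class: extracting from $\Ext^1_{\hirzebruchtwo}(\cF,\cT)=\End_{\hirzebruchtwo}(\cT)\cdot e$ both that $e_A$ spans $\Ext^1_{\hirzebruchtwo}(\cF,\cO_C(a))$ and that the desired normal form is realized by an \emph{honest} automorphism (not merely an endomorphism) of $\cT$ — this is where the block-triangular structure of $\End_{\hirzebruchtwo}(\cT)$ and the surjectivity of the multiplication map on $\bP^1$ have to be combined with some care.
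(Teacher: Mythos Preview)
Your argument is correct, but it follows a genuinely different route from the paper's own proof.

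The paper proceeds in two splitting steps. First it quotients \(\cH\) by the subsheaf \(\cO_C(a+1)^{\oplus t}\subset\cT\) to obtain \(\cH'\), shows \(\cH'\) is rigid via Mukai's lemma, and then uses long exact sequences to prove that the extension class of \(\cH'\) corresponds to an \emph{injective} linear map \(\Ext^1(\cF,\cO_C(a))^\vee\hookrightarrow V\simeq\bfk^{\oplus s}\); this is exactly your statement that the components of \(e_A\) span. From this the paper identifies the universal extension \(E\) with \(T'_a(\cF)\), so exceptionality of \(E\) comes for free from the autoequivalence. The final splitting \(\cH\simeq\cH'\oplus\cO_C(a+1)^{\oplus t}\) is obtained by proving \(\Ext^1(\cH',\cO_C(a+1))=0\), and for this the paper invokes \cite[Theorem~1.4(1)]{MR3431636}, which gives \(E|_C\simeq\cO_C(a+1)^{\oplus r}\).

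Your approach is more module-theoretic and self-contained: you extract the single ``orbit equation'' \(\Ext^1(\cF,\cT)=\End(\cT)\cdot e\) directly from the rigidity of \(\cH\) (no Mukai's lemma, no intermediate \(\cH'\)), then normalize \(e\) by an honest automorphism of \(\cT\) using the block-triangular shape of \(\End(\cT)\) together with the elementary surjectivity of the \(\bP^1\) multiplication map \(H^0(\cO(1))\otimes H^1(\cO(d))\twoheadrightarrow H^1(\cO(d+1))\). This replaces the external input from \cite{MR3431636}. The price is that you must verify the exceptionality of \(E\) by hand, which you do correctly; the paper gets this for free via \(E\simeq T'_a(\cF)\), and that identification also feeds into later sections. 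Both arguments yield the same \(E\) and the same \(T=\cO_C(a)^{\oplus s-N}\oplus\cO_C(a+1)^{\oplus t}\) (your \(N\) is the paper's \(d\)).
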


\begin{definition}\label{df:E and cF}
For an exceptional object
\(
    \cE \in \derived ( \hirzebruchtwo )
\),
let
\( 
    E = E ( \cE )
\)
denote the exceptional sheaf \( E \) of \pref{lm:exceptional sheaf is a direct summand}. Also let
\(
    \cF = \cF ( \cE )
\)
denote the torsion free part of the sheaf
\(
    \cH ^{ i _{ 0 } } ( \cE )
\),
which is known to be an exceptional vector bundle by \pref{lm:properties of cohomology sheaves} \pref{it:more on i0-th cohomology}.
\end{definition}

\begin{proof}[Proof of \pref{lm:exceptional sheaf is a direct summand}]
Consider the following standard short exact sequence.
\begin{align}\label{eq:cT cH io cE cF}
    0 \to \cT \coloneqq \tors \cH^{i_0}(\cE) \to \cH^{i_0}(\cE) \to \cF \coloneqq \cH^{i_0}(\cE)/\cT \to 0
\end{align}
\pref{lm:properties of cohomology sheaves} \eqref{it:more on i0-th cohomology} asserts that \( \cF \) is an exceptional vector bundle. Hence we assume that \( \cT \ne 0 \), since otherwise there is nothing to prove. 

$\cT$ is a rigid sheaf again by \pref{lm:properties of cohomology sheaves} \eqref{it:more on i0-th cohomology}. Combined with \pref{pr:chohomology sheaves are O_C modules}, this implies that there are $a \in \bZ, s > 0, t \ge 0 $ such that
\begin{align}\label{eq:decomposition of cT}
    \cT \simeq \cO_C(a)^{\oplus s} \oplus \cO_C(a+1)^{\oplus t}.
\end{align}
By \pref{lm:exceptional vector bundle restricted to C}, $\cF|_C$ is also rigid and hence there are \( b \in \bZ, s ' > 0, t ' \ge 0 \) such that
\[
    \cF|_C \simeq \cO_C(b)^{\oplus s'} \oplus \cO_C(b+1)^{\oplus t'}.
\]
If $\Ext _{ \hirzebruchtwo } ^1(\cF, \cT)=0$, then the short exact sequence \eqref{eq:cT cH io cE cF} splits and we are done.
Therefore we assume $\Ext _{ \hirzebruchtwo } ^1(\cF, \cT)\ne 0$, which implies
\begin{align}\label{eq:a le b - 1 }
    a \le b - 1
\end{align}

In order to make the argument conceptual, fix a \( \bfk \)-vector space \( V \) of dimension \( s \) and replace \( \cO _{ C } ( a ) ^{ \oplus s } \) with \( V \otimes _{ \bfk } \cO _{ C } ( a ) \). Let
\begin{align}\label{eq:definition of cH'}
\cH' \coloneqq \cH^{i_0}(\cE)/\cO_C(a+1)^{\oplus t}
\end{align}
be the quotient by the subsheaf
$\cO_C(a+1)^{\oplus t} \subset \cT \subset \cH^{i_0}(\cE)$, which fits in the following short exact sequence.
\begin{align}\label{eq:cH'-extension}
    0\to V \otimes _{ \bfk } \cO_C(a) \to \cH' \to \cF \to 0
\end{align}
From this we see
\[
    \Hom _{ \hirzebruchtwo } (\cO(a+1)^{\oplus t}, \cH')=0,
\]
which implies that $\cH'$ is rigid by Mukai's lemma \cite[Lemma~2.1.4.~2.(a)]{MR1604186}, (which is obtained from the spectral sequence of the form \eqref{eq:E_1 spectral sequence of Hom}).

Let
\begin{align}
    [ f \colon \Ext _{ \hirzebruchtwo } ^{ 1 } ( \cF, \cO _{ C } ( a ) ) ^{ \vee } \to V ]
    \in
    \Hom _{ \bfk } ( \Ext _{ \hirzebruchtwo } ^{ 1 } ( \cF, \cO _{ C } ( a ) ) ^{ \vee }, V )
    \simeq
    \Ext _{ \hirzebruchtwo } ^{ 1 } ( \cF, V \otimes _{ \bfk } \cO _{ C } ( a ) )
\end{align}
correspond to the extension \eqref{eq:cH'-extension}. We will show that \( f \) is injective. 
In the long exact sequence obtained by applying $\Hom_{ \hirzebruchtwo }(\cH', -)$ to \eqref{eq:cH'-extension},
\begin{itemize}
\item
the map $\Hom_{ \hirzebruchtwo }(\cH', \cH') \to \Hom_{ \hirzebruchtwo }(\cH', \cF)$ is surjective since $\Hom_{ \hirzebruchtwo }(\cH', \cF) \cong \Hom_{ \hirzebruchtwo }(\cF, \cF) = \bfk \cdot \id_{\cF}$ by the exceptionality of $\cF$ and
\item
$\Ext^1(\cH', \cH')=0$ by the rigidity of $\cH'$.
\end{itemize}
Hence we obtain
\[
    \Ext^1_{ \hirzebruchtwo }(\cH', \cO_C(a))=0.
\]
Next we apply $\Hom_{ \hirzebruchtwo }(-, \cO_C(a))$ to \eqref{eq:cH'-extension} to obtain a surjective map
\begin{equation}\label{eq:fvee}
    V ^{ \vee } \cong \Hom_{ \hirzebruchtwo }(V \otimes \cO_C(a), \cO_C(a)) \to \Ext^1_{ \hirzebruchtwo }(\cF, \cO_C(a)).
\end{equation}
For any $\varphi \in V^\vee$, the map \eqref{eq:fvee} sends $\varphi$
to $(\varphi \otimes \id_{\cO_C(a)})\circ e$,
where $e \in \Ext _{ \hirzebruchtwo } ^{ 1 } ( \cF, V \otimes _{ \bfk } \cO _{ C } ( a ) )$
denotes the extension class \eqref{eq:cH'-extension}.
Take a basis $\{v_1, \dots, v_s\}$ of $V$ and
decompose \( e \) as
\(
    \sum _{ i } e _{ i } \otimes v _{ i }
\)
under the isomorphism
\(
    \Ext _{ \hirzebruchtwo } ^{ 1 } ( \cF, V \otimes _{ \bfk } \cO _{ C } ( a ) )
    \simeq
    \Ext _{ \hirzebruchtwo } ^{ 1 } \left( \cF, \cO _{ C } ( a ) \right) \otimes _{ \bfk } V
\).
Then one can confirm that \eqref{eq:fvee} sends \( \varphi \) to
\(
    \sum _{ i } \varphi ( v _{ i } ) e _{ i }
\)
and that \( f \) sends a linear form
\(
    \xi \in \Ext _{ \hirzebruchtwo } ^{ 1 } ( \cF, \cO _{ C } ( a ) ) ^{ \vee }
\)
to
\(
    \sum _{ i } \xi ( e _{ i } ) v _{ i }
\).
Hence the surjectivity of the map \eqref{eq:fvee} implies that \( e _{ 1 }, \dots, e _{ s } \) generates
\(
    \Ext _{ \hirzebruchtwo } ^{ 1 } \left( \cF, \cO _{ C } ( a ) \right)
\), which in turn is equivalent to the injectivity of \( f \).

Now consider the universal extension of \( \cF \) by \( \cO _{ C } ( a ) \).
\begin{align}\label{eq:universal extension}
    0
    \to
    \Ext _{ \hirzebruchtwo } ^{ 1 } ( \cF, \cO _{ C } ( a ) ) ^{ \vee } \otimes _{ \bfk } \cO _{ C } ( a )
    \to
    E
    \to
    \cF
    \to
    0
\end{align}
The inequality \eqref{eq:a le b - 1 } implies
\(
    \RHom _{ \hirzebruchtwo } ( \cF, \cO _{ C } ( a ) )
    \simeq
    \Ext _{ \hirzebruchtwo } ^{ 1 } ( \cF, \cO _{ C } ( a ) ) [ - 1 ]
\), so that the distinguished triangle which \eqref{eq:universal extension} yields is isomorphic to the defining distinguished triangle \eqref{eq:triangle of inverse spherical twist} for the inverse spherical twist \( T ' _{ a } ( \cF ) \). In particular, there is an isomorphism
\(
    E \simeq T ' _{ a } ( \cF )
\)
and hence \( E \) is an exceptional sheaf. Moreover, the injectivity of \( f \) and the basic properties of universal extensions imply that there is an isomorphism
\begin{align}
    \cH ' \simeq E \oplus \left( \coker f \otimes _{ \bfk } \cO _{ C } ( a ) \right).
\end{align}
In what follows let
\(
    d \coloneqq \dim _{ \bfk } \Ext _{ \hirzebruchtwo } ^{ 1 } ( \cF, \cO _{ C } ( a ) )
\),
so that
\[
    \cH' \simeq E \oplus \cO_C(a)^{\oplus s-d}.
\]
If \( d = 0 \), then \( E \simeq \cF \) is a vector bundle and we are done. So, in the rest of the proof, we assume \( d > 0 \); i.e., we assume that \( \tors E \ne 0 \).

Let us prove
\begin{align}\label{eq:Ext(cH', cOC(a+1))=0}
    \Ext _{ \hirzebruchtwo } ^1(\cH', \cO_C(a+1))=0,
\end{align}
which together with \eqref{eq:definition of cH'} implies
\begin{align}\label{eq:desired decomposition}
    \cH^{i_0}(\cE) \simeq \cH' \oplus \cO_C(a+1)^{\oplus t} \simeq E \oplus \cO_C(a)^{\oplus s-d}  \oplus \cO_C(a+1)^{\oplus t}.
\end{align}
Since \( \tors E \simeq \cO _{ C } ( a ) ^{ \oplus d } \), this is the desired conclusion.

\eqref{eq:Ext(cH', cOC(a+1))=0} follows from the local-to-global spectral sequence and the following vanishings.
\begin{alignat}{2}
&H^1( \hirzebruchtwo, \cHom _{ \hirzebruchtwo } (\cH', \cO_C(a+1)))&&=0 \label{eq:vanishing of H^1}\\
&H^0( \hirzebruchtwo, \cExt _{ \hirzebruchtwo } ^1(\cH', \cO_C(a+1)))&&=0 \label{eq:vanishing of H^0}
\end{alignat}

\eqref{eq:vanishing of H^1}, in turn, follows from
$\cH'=E \oplus \cO_C(a)^{\oplus s-d}$
and \cite[Theorem 1.4(1)]{MR3431636},
which says that an exceptional sheaf $E$ whose torsion part is a non-zero direct sum of copies of $\cO_C(a)$ satisfies
\begin{align}\label{eq:E|C = OC(a+1) oplus ell}
    E|_C \simeq \cO_C(a+1)^{\oplus r }
\end{align}
for some \( r \).

Finally, \eqref{eq:vanishing of H^0} follows from the following isomorphisms.
\[
    \cExt _{ \hirzebruchtwo } ^1(\cH', \cO_C(a+1)) \stackrel{\eqref{eq:cH'-extension}}{\simeq}
    \cExt _{ \hirzebruchtwo } ^1( V \otimes _{ \bfk } \cO_C(a), \cO_C(a+1)) \simeq V ^{ \vee } \otimes \cO_C(-1)
\]
\end{proof}

From \pref{lm:exceptional sheaf is a direct summand} we immediately obtain
\begin{corollary}\label{cr:structure of the residual part}
Suppose that \( \cE \) is an exceptional object with
\(
    \tors E \ne 0
\).
Then, with the notation of \pref{lm:exceptional sheaf is a direct summand}, it holds that
\begin{align}
    \tors \cohomology ( \cE )
    =
    \bigoplus _{ i \neq i _{ 0 } } \cH ^{ i } ( \cE )
    \oplus
    T
    \oplus
    \tors E
    \simeq
    \cO _{ C } ( a ) ^{ \oplus s }
    \oplus
    \cO _{ C } ( a + 1 ) ^{ \oplus \ell ( \cE ) - s }
\end{align}
for some
\(
    1 \le s \le \ell ( \cE )
\).
\end{corollary}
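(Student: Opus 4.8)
The plan is to deduce everything from \pref{lm:exceptional sheaf is a direct summand} together with \pref{pr:chohomology sheaves are O_C modules} and the rigidity of \( \cohomology ( \cE ) \). The first equality is pure bookkeeping: for \( i \ne i _{ 0 } \) the sheaf \( \cH ^{ i } ( \cE ) \) is a vector bundle on \( C \) by \pref{pr:chohomology sheaves are O_C modules}, hence entirely torsion on \( \hirzebruchtwo \), while \( \cH ^{ i _{ 0 } } ( \cE ) \simeq E \oplus T \) by \pref{lm:exceptional sheaf is a direct summand} with \( \tors ( E \oplus T ) = \tors E \oplus T \). So the content is the last isomorphism, and since \pref{lm:exceptional sheaf is a direct summand} already gives \( \tors E \simeq \cO _{ C } ( a ) ^{ \oplus d } \) with \( d \ge 1 \) and presents \( T \) as a sum of copies of \( \cO _{ C } ( a ) \) and \( \cO _{ C } ( a + 1 ) \), it is enough to prove that
\[
    G \coloneqq \bigoplus _{ i \ne i _{ 0 } } \cH ^{ i } ( \cE ) \oplus T
\]
is a direct sum of copies of \( \cO _{ C } ( a ) \) and \( \cO _{ C } ( a + 1 ) \). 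Granting this, \( \tors \cohomology ( \cE ) = G \oplus \cO _{ C } ( a ) ^{ \oplus d } \) is a vector bundle on \( C \), its rank is \( \ell ( \cE ) \) by the definition of \( \ell \), and the number of its \( \cO _{ C } ( a ) \)-summands lies between \( d \ge 1 \) and \( \ell ( \cE ) \); this is exactly the assertion. By Grothendieck's splitting theorem \( G \simeq \bigoplus _{ k } \cO _{ C } ( g _{ k } ) \), so the whole task is to show \( g _{ k } \in \{ a, a + 1 \} \) for every \( k \).

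The mechanism I would use is that \( G \) and \( E \) are \emph{complementary} direct summands of \( \cohomology ( \cE ) \): indeed \( \cohomology ( \cE ) = \bigl( \bigoplus _{ i \ne i _{ 0 } } \cH ^{ i } ( \cE ) \bigr) \oplus \cH ^{ i _{ 0 } } ( \cE ) \simeq G \oplus E \) by \pref{lm:exceptional sheaf is a direct summand}, and \( \cohomology ( \cE ) \) is rigid by \pref{lm:properties of cohomology sheaves}, so \( \Ext ^{ 1 } _{ \hirzebruchtwo } ( G, E ) = \Ext ^{ 1 } _{ \hirzebruchtwo } ( E, G ) = 0 \). For the upper bound \( g _{ k } \le a + 1 \): applying \( \Hom _{ \hirzebruchtwo } ( G, - ) \) to \( 0 \to \tors E \to E \to \cF \to 0 \) and using that \( \Hom _{ \hirzebruchtwo } ( G, \cF ) = 0 \) (a morphism from a torsion sheaf to a locally free sheaf on the smooth surface vanishes) together with \( \Ext ^{ 1 } _{ \hirzebruchtwo } ( G, E ) = 0 \), I get \( \Ext ^{ 1 } _{ \hirzebruchtwo } ( G, \tors E ) = 0 \). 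Since \( \tors E \simeq \cO _{ C } ( a ) ^{ \oplus d } \), the standard description of \( \Ext \) between \( \cO _{ C } \)-modules regarded as sheaves on the surface (using that \( C \) is a \( ( - 2 ) \)-curve, as in \cite{Seidel-Thomas}) displays \( \Ext ^{ 1 } _{ C } ( G, \cO _{ C } ( a ) ) ^{ \oplus d } \) as a direct summand of \( \Ext ^{ 1 } _{ \hirzebruchtwo } ( G, \tors E ) \); hence \( H ^{ 1 } ( C, \cO _{ C } ( a - g _{ k } ) ) = 0 \) for all \( k \), i.e.\ \( g _{ k } \le a + 1 \).

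The only place where more than rigidity is genuinely needed — rigidity alone does not exclude an \( \cO _{ C } ( a - 1 ) \)-summand in \( G \), and this is the crux — is the lower bound \( g _{ k } \ge a \). Here I would feed in the restriction formula \( E \vert _{ C } \simeq \cO _{ C } ( a + 1 ) ^{ \oplus r } \), \( r > 0 \), which is \eqref{eq:E|C = OC(a+1) oplus ell} in the proof of \pref{lm:exceptional sheaf is a direct summand} (ultimately \cite{MR3431636}), together with Grothendieck--Serre duality for the divisorial embedding \( i \colon C \hookrightarrow \hirzebruchtwo \): noting \( \omega _{ \hirzebruchtwo } \vert _{ C } \simeq \cO _{ C } \) so that \( \omega _{ C / \hirzebruchtwo } \simeq \cO _{ C } ( - 2 ) \), one has \( \Ext ^{ 1 } _{ \hirzebruchtwo } ( G, E ) \simeq \Hom _{ \derived ( C ) } \bigl( G, \bL i ^{ \ast } E \otimes \omega _{ C / \hirzebruchtwo } \bigr) \), where \( \bL i ^{ \ast } E \) sits in degrees \( [ - 1, 0 ] \) with \( \cH ^{ 0 } ( \bL i ^{ \ast } E ) = E \vert _{ C } \simeq \cO _{ C } ( a + 1 ) ^{ \oplus r } \). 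Truncating the target complex and using \( \Ext ^{ 2 } _{ C } = 0 \) on the curve gives a surjection \( \Ext ^{ 1 } _{ \hirzebruchtwo } ( G, E ) \twoheadrightarrow \Hom _{ C } ( G, \cO _{ C } ( a - 1 ) ) ^{ \oplus r } \), so \( \Ext ^{ 1 } _{ \hirzebruchtwo } ( G, E ) = 0 \) forces \( H ^{ 0 } ( C, \cO _{ C } ( a - 1 - g _{ k } ) ) = 0 \) and hence \( g _{ k } \ge a \). Combining the two bounds yields \( G \simeq \cO _{ C } ( a ) ^{ \oplus p } \oplus \cO _{ C } ( a + 1 ) ^{ \oplus q } \), and then \( \tors \cohomology ( \cE ) \simeq \cO _{ C } ( a ) ^{ \oplus ( p + d ) } \oplus \cO _{ C } ( a + 1 ) ^{ \oplus q } \) with \( ( p + d ) + q = \ell ( \cE ) \) and \( 1 \le p + d \le \ell ( \cE ) \), as required. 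The main technical obstacle to keep an eye on is exactly this last duality computation, in particular the fact that \( E \) is not locally free so that \( \bL i ^{ \ast } E \) is a genuinely two-term complex; the point that makes it go through is that only \( \cH ^{ 0 } ( \bL i ^{ \ast } E ) = E \vert _{ C } \) — which is controlled by \eqref{eq:E|C = OC(a+1) oplus ell} — enters the surjectivity statement above.
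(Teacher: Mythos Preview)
Your argument is correct and follows essentially the same strategy as the paper's: both use the rigidity of \(\cohomology(\cE)\) to obtain \(\Ext^1\)-vanishings between \(E\) and the remaining torsion part, then feed in \(\tors E \simeq \cO_C(a)^{\oplus d}\) for the bound \(g_k \le a+1\) and \(E|_C \simeq \cO_C(a+1)^{\oplus r}\) for the bound \(g_k \ge a\). The only cosmetic difference is in the lower bound: the paper reads off \(H^1(\cHom_{\hirzebruchtwo}(E, \cH^i(\cE)))=0\) from \(\Ext^1_{\hirzebruchtwo}(E,\cH^i(\cE))=0\) via the local-to-global spectral sequence and the adjunction \(i^* \dashv i_*\), whereas you extract the same vanishing from \(\Ext^1_{\hirzebruchtwo}(G,E)=0\) via Grothendieck duality for \(i\colon C \hookrightarrow \hirzebruchtwo\); since \(G \otimes \omega_{\hirzebruchtwo} \simeq G\), these are Serre-dual incarnations of the same constraint.
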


\begin{remark}
This is analogous to \cite[Corollary 4.10]{Ishii-Uehara_ADC} (see also \cite[Section 5]{Ishii-Uehara_ADC})
\end{remark}

\begin{proof}
If $i \ne i _{ 0 }$, then \( \Supp \cH ^{ i } ( \cE ) = C \) and hence
\( \cH ^{ i } ( \cE ) \otimes \omega _{ \hirzebruchtwo } \simeq \cH ^{ i } ( \cE ) \). Thus we see
\[
    \Ext _{ \hirzebruchtwo } ^1(\cH^{i _{ 0 }}(\cE), \cH^i(\cE))
    \stackrel{\text{Serre duality}}{\simeq}
    \Ext _{ \hirzebruchtwo } ^1(\cH^{i}(\cE), \cH^{i _{ 0 }}(\cE)) ^{ \vee }
    \stackrel{\text{\pref{lm:properties of cohomology sheaves} \eqref{it:cohomology(E) is rigid}}}{=}
    0.
\]
Since $E$ is a direct summand of $\cH^{i _{ 0 }}(\cE)$, this implies
\(
    \Ext _{ \hirzebruchtwo } ^1(E, \cH^i(\cE)) = 0, \Ext _{ \hirzebruchtwo } ^1( \cH^i(\cE), E ) = 0
\).
Moreover, since $\dim \Supp \cH^i(\cE)=1$,
the local to global spectral sequence for Ext groups implies
\begin{align}\label{eq:vanishing of two H^1(cHom)}
    \begin{aligned}
    H^1(\hirzebruchtwo, \cHom _{ \hirzebruchtwo } (E, \cH^i(\cE))) = 0,\\
    H^1(\hirzebruchtwo, \cHom _{ \hirzebruchtwo } (\cH^i(\cE), E))=0.
\end{aligned}
\end{align}

On the other hand, by \pref{pr:chohomology sheaves are O_C modules} and the rigidity,
there is a vector bundle \(\cV\) on $C$ such that $\cH^i(\cE) \simeq \iota _{ \ast } \cV $. Hence there are isomorphisms as follows.
\begin{align}
    \begin{aligned}
        \cHom _{ \hirzebruchtwo } (E, \cH^i(\cE)) \simeq \iota _{ \ast } \cHom _{ C } ( E \vert _{ C }, \cV )
        \stackrel{\eqref{eq:E|C = OC(a+1) oplus ell}}{\simeq}
        \iota _{ \ast } \cHom _{ C } ( \cO _{ C } ( a + 1 ), \cV ) ^{ \oplus r }\\
        \cHom _{ \hirzebruchtwo } ( \cH^i(\cE), E ) \stackrel{\eqref{eq:universal extension}}{\simeq} \cHom _{ \hirzebruchtwo } ( \cH ^{ i } ( \cE ), \cO _{ C } ( a ) ) ^{ \oplus d }
        \simeq
        \iota _{ \ast } \cHom _{ C } ( \cV, \cO _{ C } ( a ) ) ^{ \oplus d }
    \end{aligned}
\end{align}
Combining these isomorphisms with \eqref{eq:vanishing of two H^1(cHom)},
we obtain the following vanishings.
\begin{align}
    \begin{aligned}
        H^1(C, \cHom_C(\cO_C(a+1), \cV) = 0\\
        H^1(C, \cHom_C(\cV, \cO_C(a)) = 0
    \end{aligned}
\end{align}
From this we deduce that $\cV$ is of the form
\[
    \cV \simeq \cO_C(a)^{\oplus s_i} \oplus \cO_C(a+1)^{\oplus t_i}
\]
for some $s_i$ and $t_i$, concluding the proof.
\end{proof}

%
%
\subsection{Derived dual of exceptional objects}
\label{sc:Derived dual of exceptional objects}

Let $\cE$ be an exceptional object which is not isomorphic to a shift of a vector bundle, and let \(E = E ( \cE )\) be the exceptional sheaf in \pref{df:E and cF}.
In what follows we will mainly discuss the case where \( \tors E \ne 0 \). If \( \tors E = 0 \) (\(\iff\) \(E\) is a vector bundle), we will replace \( \cE \) with its derived dual \( \cE ^{ \vee } \) and reduce the problem to the main case. What we mean by this will be made precise by \pref{cr:E(cE vee) has non-trivial torsion}.

\begin{lemma}\label{lm:dual exceptional object}
For an exceptional object $\cE$ on $\hirzebruchtwo$, the cohomology sheaves of the derived dual
\(
    \cE ^{ \vee }
\)
are related to those of $\cE$ as follows:
\begin{itemize}
    \item $i_0(\cE^\vee)=-i_0$.
    
    \item If $i \ne -i_0$, then
    $
        \cH^i(\cE^\vee)
        \simeq
        \cExt _{ \hirzebruchtwo } ^1(\cH^{-i+1}(\cE), \cO _{ \hirzebruchtwo } )
    $.
    
    \item For $i=-i_0$, $\cH^{-i_0}(\cE^\vee)$ fits into an exact sequence
    \[
        0
        \to
        \cExt _{ \hirzebruchtwo } ^1(\cH^{i_0+1}(\cE), \cO _{ \hirzebruchtwo } )
        \to
        \cH^{-i_0}(\cE^\vee)
        \to
        \cHom _{ \hirzebruchtwo } (\cH^{i_0}(\cE), \cO _{ \hirzebruchtwo } )
        \to
        0.
    \]
    
    \item For $i=-i_0+1$, the cohomology sheaf can also be written as
    \[
        \cH^{-i_0+1}(\cE^\vee)
        \simeq
        \cExt _{ \hirzebruchtwo } ^1(\tors \cH^{i_0}(\cE), \cO _{ \hirzebruchtwo } )
        \simeq
        \left( \tors \cH ^{ i _{ 0 } } ( \cE ) \right) ^{ \vee } [ 1 ].
    \]
\end{itemize}
\end{lemma}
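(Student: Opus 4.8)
The plan is to feed the description of $\cohomology(\cE)$ from \pref{lm:properties of cohomology sheaves} into the hyperext spectral sequence for the derived dual. For any $\cE \in \derived(\hirzebruchtwo)$ there is a convergent spectral sequence
\[
    E_2^{p,q} = \cExt^p_{\hirzebruchtwo}\!\left( \cH^{-q}(\cE), \cO_{\hirzebruchtwo} \right) \Rightarrow \cH^{p+q}(\cE^\vee),
    \qquad d_r \colon E_r^{p,q} \to E_r^{p+r,\, q-r+1},
\]
with $E_2^{p,q} = 0$ unless $0 \le p \le 2$ since $\hirzebruchtwo$ is a smooth surface.

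First I would compute the $E_2$-page. For $q \ne -i_0$ the sheaf $\cH^{-q}(\cE)$ is, by \pref{lm:properties of cohomology sheaves}\eqref{it:support}, a pure sheaf of dimension $\le 1$; a pure one-dimensional sheaf on a smooth surface is Cohen--Macaulay, whence $\cExt^0(\cH^{-q}(\cE), \cO_{\hirzebruchtwo}) = \cExt^2(\cH^{-q}(\cE), \cO_{\hirzebruchtwo}) = 0$ and only the column $p=1$ is nonzero. For $q = -i_0$, put $\cT = \tors\cH^{i_0}(\cE)$ and $\cF = \cH^{i_0}(\cE)/\cT$; applying $\cHom_{\hirzebruchtwo}(-, \cO_{\hirzebruchtwo})$ to $0 \to \cT \to \cH^{i_0}(\cE) \to \cF \to 0$ and using that $\cF$ is an exceptional \emph{vector bundle} (so $\cExt^{>0}(\cF, \cO_{\hirzebruchtwo}) = 0$) together with the purity of $\cT$, I get $\cExt^0(\cH^{i_0}(\cE), \cO_{\hirzebruchtwo}) \simeq \cF^\vee$, $\cExt^1(\cH^{i_0}(\cE), \cO_{\hirzebruchtwo}) \simeq \cExt^1(\cT, \cO_{\hirzebruchtwo})$, and $\cExt^2(\cH^{i_0}(\cE), \cO_{\hirzebruchtwo}) = 0$. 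In particular $E_2^{2,q} = 0$ for all $q$, so every $d_r$ ($r \ge 2$), whose target lies in column $\ge p+2$, vanishes, and the spectral sequence degenerates at $E_2$.

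Then I would read off the four assertions. For $n \ne -i_0$ the only surviving graded piece of $\cH^n(\cE^\vee)$ is $E_\infty^{1,n-1} = \cExt^1(\cH^{-n+1}(\cE), \cO_{\hirzebruchtwo})$, which is the second bullet; specializing to $n = -i_0+1$ and using $\cExt^1(\cH^{i_0}(\cE),\cO_{\hirzebruchtwo}) \simeq \cExt^1(\cT,\cO_{\hirzebruchtwo}) \simeq \cT^\vee[1]$ (the latter because $\bR\cHom(\cT,\cO_{\hirzebruchtwo}) \simeq \cExt^1(\cT,\cO_{\hirzebruchtwo})[-1]$ by Cohen--Macaulayness of $\cT$) gives the fourth bullet. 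For $n = -i_0$ the surviving pieces are $E_\infty^{0,-i_0} \simeq \cHom_{\hirzebruchtwo}(\cH^{i_0}(\cE),\cO_{\hirzebruchtwo})$ and $E_\infty^{1,-i_0-1} \simeq \cExt^1(\cH^{i_0+1}(\cE),\cO_{\hirzebruchtwo})$, and the two-step filtration furnished by the degenerate spectral sequence, with the higher-$p$ piece as subobject, is precisely the short exact sequence of the third bullet. Finally, $\cF$ has positive rank by \pref{cr:rank is never 0}, so the quotient $\cF^\vee$ of $\cH^{-i_0}(\cE^\vee)$ is nonzero locally free and hence $\Supp\cH^{-i_0}(\cE^\vee) = \hirzebruchtwo$; since $\cE^\vee$ is again exceptional, the uniqueness part of \pref{lm:properties of cohomology sheaves}\eqref{it:support} forces $i_0(\cE^\vee) = -i_0$, the first bullet.

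The only delicate points are bookkeeping ones: pinning down the indexing convention of the spectral sequence, getting the direction of the two-step filtration right in the third bullet, and invoking the fact that a pure one-dimensional sheaf on a smooth surface is Cohen--Macaulay — it is exactly the resulting vanishing $\cExt^2(\cH^{-q}(\cE), \cO_{\hirzebruchtwo}) = 0$ that makes the spectral sequence degenerate. I do not expect any conceptual obstacle beyond these.
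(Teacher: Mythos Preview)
Your proposal is correct and follows essentially the same approach as the paper: both use the hyperext spectral sequence \(E_2^{p,q} = \cExt^p_{\hirzebruchtwo}(\cH^{-q}(\cE), \cO_{\hirzebruchtwo}) \Rightarrow \cH^{p+q}(\cE^\vee)\), establish the vanishing \(E_2^{2,q}=0\) via purity (the paper cites \cite[Theorem~1.1.10]{Huybrechts-Lehn} where you invoke Cohen--Macaulayness), deduce \(E_2\)-degeneration, and read off the four bullets. Your write-up is somewhat more explicit about the filtration direction and about why \(i_0(\cE^\vee)=-i_0\), but there is no substantive difference in strategy.
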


\begin{proof}
Consider the following spectral sequence.
\[
    E_2^{p,q}
    =
    \cExt _{ \hirzebruchtwo } ^p(\cH^{-q}(\cE), \cO _{ \hirzebruchtwo } ) \Rightarrow \cH^{p+q}(\cE^\vee)
\]
Since \( \hirzebruchtwo \) is a smooth projective surface,
\(
    E _{ 2 } ^{ p, q } = 0
\)
if \( p < 0 \) or \( p > 2 \).
Since \( \cH ^{ i } ( \cE ) \) is torsion for \( i \ne i _{ 0 } \) by \pref{lm:properties of cohomology sheaves} \pref{it:support},
\(
    E _{ 2 } ^{ 0, q } = 0
\)
for \( q \ne - i _{ 0 } \).
Moreover, for any \( i \ne i _{ 0 } \), \( \cH ^{ i } ( \cE ) \) is pure by \pref{lm:properties of cohomology sheaves} \pref{it:support}. Furthermore,
\(
    \tors \cH ^{ i _{ 0 } } ( \cE )
\)
is pure again by \pref{lm:properties of cohomology sheaves} \pref{it:support} and 
\(
    \cH ^{ i _{ 0 } } ( \cE ) / \tors \cH ^{ i _{ 0 } } ( \cE )
\)
is locally free by \pref{lm:properties of cohomology sheaves} \pref{it:more on i0-th cohomology}. Hence by \cite[Theorem~1.1.10,~\(1) \Rightarrow 2)\)]{Huybrechts-Lehn},
\(
    E _{ 2 } ^{ 2, q } = 0
\)
for any \( q \in \bZ \).

Summing up, we see that
\(
    E _{ 2 } ^{ p, q } \ne 0
\)
only if
\(
    ( p, q ) = ( 0, - i _{ 0 } )
\) or
\(
    p = 1
\).
In particular, this spectral sequence is \(E _{ 2 } \)-degenerate. All assertions follow from these observations.
\end{proof}

\begin{lemma}\label{lm:a sufficient condition for cE to be a vector bundle}
Let $\cE$ be an exceptional object such that both $E = E ( \cE )$ and \( E ( \cE ^{\vee} ) \) are vector bundles.
Then $\cE \simeq E[-i_0]$.
\end{lemma}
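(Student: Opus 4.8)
The plan is to argue by contradiction. First I would dispose of the easy case: if $\cE$ is already a shift of a vector bundle, then $\cH^i(\cE)=0$ for all $i\neq i_0$, so $\cE\simeq\cH^{i_0}(\cE)[-i_0]$; as $\cH^{i_0}(\cE)$ is then locally free we get $\tors\cH^{i_0}(\cE)=0$ and $E(\cE)=\cH^{i_0}(\cE)$ by \pref{lm:exceptional sheaf is a direct summand}, whence $\cE\simeq E(\cE)[-i_0]$. So from now on assume $\cE$ is \emph{not} a shift of a vector bundle; the goal is a contradiction.

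Write $E=E(\cE)$. Since $E$ is locally free, \pref{lm:exceptional sheaf is a direct summand} (whose proof shows that in this case the sequence \eqref{eq:cT cH io cE cF} splits) gives $\cH^{i_0}(\cE)\simeq E\oplus T$ with $T=\tors\cH^{i_0}(\cE)$ a vector bundle on $C$. On the dual side, \pref{lm:dual exceptional object} gives $i_0(\cE^\vee)=-i_0$ together with the exact sequence
\[
    0\to\cExt^1_{\hirzebruchtwo}(\cH^{i_0+1}(\cE),\cO_{\hirzebruchtwo})\to\cH^{-i_0}(\cE^\vee)\to\cHom_{\hirzebruchtwo}(\cH^{i_0}(\cE),\cO_{\hirzebruchtwo})\to0;
\]
because $\cHom_{\hirzebruchtwo}(E\oplus T,\cO_{\hirzebruchtwo})=E^\vee$ is locally free while the left term is torsion, this identifies $\cF(\cE^\vee)=E^\vee$ and exhibits $\tors\cH^{-i_0}(\cE^\vee)=\cExt^1_{\hirzebruchtwo}(\cH^{i_0+1}(\cE),\cO_{\hirzebruchtwo})$. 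The hypothesis that $E(\cE^\vee)$ is locally free then forces, again by \pref{lm:exceptional sheaf is a direct summand}, the displayed sequence to split.

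The heart of the argument would be to read this splitting back on $\cE$. Under the derived-duality isomorphism $\Ext^1_{\hirzebruchtwo}(E^\vee,\cExt^1_{\hirzebruchtwo}(\cH^{i_0+1}(\cE),\cO_{\hirzebruchtwo}))\simeq\Ext^2_{\hirzebruchtwo}(\cH^{i_0+1}(\cE),E)$, the extension class of the displayed sequence — which by the proof of \pref{lm:dual exceptional object} is read off the spectral sequence $E_2^{p,q}=\cExt^p_{\hirzebruchtwo}(\cH^{-q}(\cE),\cO_{\hirzebruchtwo})\Rightarrow\cH^{p+q}(\cE^\vee)$ — is identified with the image under the projection $\cH^{i_0}(\cE)=E\oplus T\twoheadrightarrow E$ of the canonical class $e^{i_0+1}(\cE)\in\Ext^2_{\hirzebruchtwo}(\cH^{i_0+1}(\cE),\cH^{i_0}(\cE))$ of \pref{lm:E_2 spectral sequence of Hom}. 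Hence the $E$-component of $e^{i_0+1}(\cE)$ vanishes. The $E$-component of $e^{i_0}(\cE)\in\Ext^2_{\hirzebruchtwo}(E\oplus T,\cH^{i_0-1}(\cE))$ lies in $\Ext^2_{\hirzebruchtwo}(E,\cH^{i_0-1}(\cE))=0$ (a vector bundle has no $\Ext^2$ into a sheaf supported on $C$), so it too vanishes. Together with the vanishing of the higher (Massey-type) obstructions — disposed of by an induction on the cohomological amplitude of $\cE$, peeling off the extreme cohomological degree at each stage via \pref{lm:dual exceptional object}, or by a direct analysis of \eqref{eq:E_2 spectral sequence of Hom} — this makes the idempotent $\cH^{i_0}(\cE)\twoheadrightarrow E\hookrightarrow\cH^{i_0}(\cE)$ lift through the Postnikov tower of $\cE$, so $\cE\simeq E[-i_0]\oplus\cE'$. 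Since $\cE$ is not a shift of a vector bundle, $\cE'\neq0$, and then $\Hom_{\hirzebruchtwo}(\cE,\cE)\supseteq\bfk\cdot\id_E\oplus\Hom_{\hirzebruchtwo}(\cE',\cE')$ has dimension at least $2$, contradicting the exceptionality of $\cE$.

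The step I expect to be the main obstacle is precisely the vanishing of the higher obstructions — i.e.\ showing $E[-i_0]$ genuinely splits off of $\cE$ and not merely off its two-step truncations. When $\cE$ has cohomology in at most two adjacent degrees this is transparent: one writes $\cE$ as a cone over a morphism landing in $(E\oplus T)[-i_0]$, and a cone over a morphism whose $E$-component is zero visibly has $E[-i_0]$ as a direct summand. In general one must control all cohomology sheaves simultaneously, and this is the only place where work beyond the structure theory of \pref{sc:First properties}--\pref{sc:More on the structure} and the derived-dual computations of \pref{lm:dual exceptional object} is required.
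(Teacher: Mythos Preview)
Your approach aims in the right direction but leaves a genuine gap precisely where you flag it. You want to split off $E[-i_0]$ by lifting the idempotent $\cH^{i_0}(\cE)\twoheadrightarrow E\hookrightarrow\cH^{i_0}(\cE)$ to an idempotent of $\cE$, and you acknowledge that the required control of higher obstructions is missing. There is also an earlier unjustified step: your identification of the extension class of the short exact sequence from \pref{lm:dual exceptional object} with the $E$-component of $e^{i_0+1}(\cE)$ via the isomorphism $\Ext^1(E^\vee,\cExt^1(\cH^{i_0+1}(\cE),\cO))\simeq\Ext^2(\cH^{i_0+1}(\cE),E)$ is plausible but not argued, and this is exactly what you need to make the retraction $\cE\to E[-i_0]$ exist.

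The paper sidesteps both issues by never attempting to lift an idempotent. Instead it constructs the two halves of the splitting separately, each on the side where the obstruction vanishes for free. Using the spectral sequence $E_2^{p,q}=\Ext^p(E,\cH^q(\cE))\Rightarrow\Hom^{p+q}(E,\cE)$, the only differential out of $E_2^{0,i_0}$ lands in $\Ext^2(E,\cH^{i_0-1}(\cE))$, which vanishes by Serre duality because $E$ is locally free and $\cH^{i_0-1}(\cE)$ is torsion; this gives $\varphi\colon E[-i_0]\to\cE$ with $\cH^{i_0}(\varphi)$ the inclusion. Now the hypothesis on $\cE^\vee$ is used symmetrically: since $E(\cE^\vee)\simeq E^\vee$ is locally free, the \emph{same} argument applied to $\cE^\vee$ produces $\psi\colon E^\vee[i_0]\to\cE^\vee$ with $\cH^{-i_0}(\psi)$ the inclusion. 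One then checks, via \pref{lm:dual exceptional object}, that $\cH^{-i_0}(\varphi^\vee)$ is the projection $\cH^{-i_0}(\cE^\vee)\twoheadrightarrow E^\vee$, so $\varphi^\vee\circ\psi$ acts as the identity on cohomology and is therefore an automorphism of $E^\vee[i_0]$. This exhibits $E^\vee[i_0]$ as a direct summand of $\cE^\vee$, and indecomposability of the exceptional object $\cE^\vee$ finishes the proof. The point is that the retraction $\cE\to E[-i_0]$ you were struggling to build is obtained as $\psi^\vee$, and its obstruction is computed on the dual side where it vanishes for the same trivial reason as for $\varphi$; no Massey products or Postnikov analysis is needed.
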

\begin{proof}
We consider the spectral sequence
\[
    E_2^{p,q} = \Ext _{ \hirzebruchtwo } ^p(E, \cH^q(\cE)) \Rightarrow \Hom^{p+q}(E, \cE).
\]
As \( \hirzebruchtwo \) is a smooth surface, \( E _{ 2 } ^{ p, q } \ne 0 \) only if \( 0 \le p \le 2 \). In particular, it is \(E _{ 3 }\)-degenerate. Since $E_2^{2, i_0-1}\simeq \Hom _{ \hirzebruchtwo } (\cH^{i_0-1}(\cE), E)^\vee=0$
for \(\cH^{i_0-1}(\cE)\) being torsion and $E$ being torsion free, it follows that \( d _{ 2 } ^{ 0, i _{ 0 } } = 0 \) and hence
\begin{align}\label{eq:E2 degeneration at ( 0, i0 )}
    E _{ 2 } ^{ 0, i _{ 0 } }
    =
    \ker d _{ 2 } ^{ 0, i _{ 0 } }
    \simeq
    E _{ 3 } ^{ 0, i _{ 0 } }
    \simeq
    E _{ \infty } ^{ 0, i _{ 0 } }.
\end{align}
Take any
\[
    [\varphi\colon E[-i_0] \to \cE] \in \Ext ^{ i _{ 0 }} ( E, \cE )
\]
whose image under the surjection
\(
    \Ext ^{ i _{ 0 }} ( E, \cE ) \twoheadrightarrow E _{ \infty } ^{ 0, i _{ 0 } }
\)
of the spectral sequence, which in fact is $\cH^{i_0}(\varphi)$, corresponds to the natural inclusion $E \hookrightarrow \cH^{i_0}(\cE)$ under the isomorphisms \eqref{eq:E2 degeneration at ( 0, i0 )}.
Then the derived dual
\[
    \varphi^\vee\colon \cE^\vee \to E^\vee[i_0]
\]
induces the surjection
\[
    \cH^{-i_0}(\cE^\vee) \to \cHom _{ \hirzebruchtwo } ( \cH ^{ i _{ 0 } } ( \cE ), \cO _{ \hirzebruchtwo } ) \simeq E^\vee
\]
in \pref{lm:dual exceptional object}.
As we assumed that $E ( \cE ^{ \vee } ) \simeq E^\vee$ is torsion free, by applying what we have just confirmed to \( \cE ^{  \vee }\), we similarly obtain a morphism
\[
    \psi \colon E^\vee [i_0] \to \cE^\vee
\]
such that \( \cH ^{ - i _{ 0 } } ( \psi ) \) is the inclusion $E^\vee \simeq E ( \cE ^{ \vee } ) \hookrightarrow \cH^{-i_0}(\cE^\vee)$ as a direct summand.
Then it follows that the composite $\varphi^\vee \circ \psi$ is an automorphism of $E^\vee[i_0]$
and therefore $\cE^\vee$ splits as a direct sum of $E^\vee[i_0]$ and $\Cone \psi$.
Since the exceptional object $\cE^\vee$ is indecomposable, this implies $\Cone \psi = 0$ and hence
$\cE^\vee \simeq E^\vee[i_0]$, which is equivalent to $\cE \simeq E[-i_0]$.
\end{proof}

Thus we immediately obtain the following
\begin{corollary}\label{cr:E(cE vee) has non-trivial torsion}
    Let \( \cE \) be an exceptional object which is not isomorphic to a shift of a vector bundle. If \( E = E ( \cE ) \) is torsion free, then
    \( \tors E ( \cE ^{ \vee } ) \neq 0 \).
\end{corollary}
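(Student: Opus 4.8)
The plan is to deduce this statement as a formal contrapositive of \pref{lm:a sufficient condition for cE to be a vector bundle}, so essentially no new argument is required. First I would record the (already implicit) observation that saying the exceptional sheaf $E(\cE^{\vee})$ is torsion free is the same as saying it is a vector bundle: an exceptional torsion-free sheaf on the smooth surface $\hirzebruchtwo$ is locally free, which is exactly the dichotomy already used in \pref{lm:exceptional sheaf is a direct summand} (there phrased as ``$E$ is not locally free'' versus ``$\tors E = 0$''). Thus the hypothesis of the corollary says precisely that $E(\cE)$ is a vector bundle, and the negation of its conclusion says precisely that $E(\cE^{\vee})$ is a vector bundle.

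Next I would run the contradiction. Suppose $\cE$ is an exceptional object which is not isomorphic to a shift of a vector bundle, that $E(\cE)$ is torsion free, and — aiming for a contradiction — that $\tors E(\cE^{\vee}) = 0$. Then both $E(\cE)$ and $E(\cE^{\vee})$ are vector bundles, so \pref{lm:a sufficient condition for cE to be a vector bundle} applies verbatim and yields $\cE \simeq E(\cE)[-i_{0}]$, i.e.\ $\cE$ is isomorphic to a shift of a vector bundle. This contradicts the hypothesis on $\cE$, so we must have $\tors E(\cE^{\vee}) \neq 0$, as claimed.

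Since the substance lives entirely in \pref{lm:a sufficient condition for cE to be a vector bundle} (whose proof uses \pref{lm:dual exceptional object}, the indecomposability of the exceptional object $\cE^{\vee}$, and a spectral sequence comparison of $\cH^{i_{0}}(\cE)$ with $E$), there is no genuine obstacle here beyond checking that the hypotheses of that lemma are met; the only point worth spelling out is the identification ``torsion free exceptional sheaf $=$ exceptional vector bundle'', which is standard on a smooth surface. I would therefore keep the write-up to the two short paragraphs above.
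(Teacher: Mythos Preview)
Your proposal is correct and matches the paper's approach exactly: the paper presents the corollary as an immediate consequence of \pref{lm:a sufficient condition for cE to be a vector bundle} (``Thus we immediately obtain the following'') without giving a separate proof, and your argument is precisely the contrapositive of that lemma. The only extra care you take---noting that a torsion-free exceptional sheaf on \(\hirzebruchtwo\) is locally free---is indeed implicit in the paper's framework (cf.\ the proof of \pref{lm:exceptional sheaf is a direct summand}, where the case \(d=0\) gives \(E \simeq \cF\) a vector bundle).
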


\subsection{Length of the torsion part}
\label{sc:Length of the torsion part}

Now we introduce the notion of length, which measures for an exceptional object the distance from a shift of a vector bundle. The proof of \pref{th:exceptional objects are equivalent to vector bundles} is reduced to the assertion \pref{th:decrease the length by appropriate twist} that one can always reduce the length by an appropriate spherical twist.

\begin{definition}\label{df:length}
Let $\gamma$ be the generic point of $C$ and $\cO_{\hirzebruchtwo, \gamma}$ the local ring of $\hirzebruchtwo$ at $\gamma$.
For a coherent sheaf $\cH$, put
\[
    \ell (\cH)=\length_{\cO_{\hirzebruchtwo, \gamma}} \tors \cH_\gamma
\]
where $\cH_\gamma$ is the stalk of $\cH$ at $\gamma$ and $\length_{\cO_\gamma} \tors\cH_\gamma$
is the length of its torsion part.

For an object $\cE \in D(\hirzebruchtwo)$, we define
\[
    \ell (\cE) \coloneqq  \sum_i \ell (\cH^i(\cE)).
\]
\end{definition}

Let us give a bit more concrete description for \(\ell (\cE)\). Let
\(
    \cE _{ \gamma } \in \derived ( \cO _{ \hirzebruchtwo, \gamma } )
\)
be the pull-back of \( \cE \) by the flat morphism \( \Spec \cO _{ \hirzebruchtwo, \gamma } \to \hirzebruchtwo \).
The length function \(\ell\) is defined for objects in \(\derived ( \cO _{ \hirzebruchtwo, \gamma } )\) as well, and it immediately follows from the exactness of the (underived) pull-back functor that
\(
    \ell ( \cE _{ \gamma } ) = \ell ( \cE )
\).
On the other hand, since \(\cO _{ \hirzebruchtwo, \gamma }\) is a DVR, for \( \cM \in \derived ( \cO _{ \hirzebruchtwo, \gamma }) \) there is an isomorphism
\(
    \cM \simeq \bigoplus _{ i \in \bZ } \cH ^{ i } ( \cM ) [ - i ]
\)
and hence the equality
\(
    \ell ( \cM ) = \sum _{ i \in \bZ } \ell ( \cH ^{ i } ( \cM ) )
\).
By the structure theorem for finitely generated modules over a DVR, \( \cH ^{ i } ( \cM )\) is a direct sum of finite copies of \(\cO _{ \hirzebruchtwo , \gamma } \) and
\(
    \cO _{ \hirzebruchtwo , \gamma } / ( t ^{ p } )
\)
for various \( p \ge 1 \), where \(t\) is a generator of the maximal ideal. Based on this, we obtain the following invariance.
\begin{lemma}\label{lm:length is invariant under dual}
    For any object \( \cE \in \derived ( \hirzebruchtwo ) \), \( \ell ( \cE ^{ \vee } ) = \ell ( \cE ) \).
\end{lemma}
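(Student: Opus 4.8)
The plan is to reduce the statement to a computation over the discrete valuation ring $\cO_{\hirzebruchtwo,\gamma}$, where everything can be made completely explicit. Write $R \coloneqq \cO_{\hirzebruchtwo,\gamma}$ and let $t$ be a uniformizer. The first step is to observe that the derived dual commutes with the flat localization $\Spec R \to \hirzebruchtwo$: since $\cE$ is a bounded complex with coherent cohomology on the regular scheme $\hirzebruchtwo$, it is perfect, and flat base change for $\bR\cHom$ of a perfect complex (together with the fact that the pull-back of $\cO_{\hirzebruchtwo}$ is $\cO_R$) gives $(\cE^\vee)_\gamma \simeq (\cE_\gamma)^\vee$, the latter dual taken in $\derived(R)$. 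Combining this with the equality $\ell(\cE_\gamma) = \ell(\cE)$ recalled just above the statement (applied both to $\cE$ and to $\cE^\vee$), it suffices to prove $\ell(\cM^\vee) = \ell(\cM)$ for an arbitrary object $\cM \in \derived(R)$.

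For the second step I would use that $R$ has global dimension $1$, so every object of $\derived(R)$ is formal, $\cM \simeq \bigoplus_i \cH^i(\cM)[-i]$, and by the structure theorem each $\cH^i(\cM)$ is a finite direct sum of copies of $R$ and of $R/(t^p)$ for various $p \ge 1$; by definition $\ell(\cM)$ is the sum over all $i$ and all summands of the lengths $p$ of the torsion summands $R/(t^p)$. Since the derived dual is additive and commutes with shifts, it is enough to dualize the two indecomposable types. One has $R^\vee \simeq R$ in degree $0$, which is torsion free and contributes nothing to $\ell$. For the torsion type, the free resolution $0 \to R \xrightarrow{t^p} R \to R/(t^p) \to 0$ gives $(R/(t^p))^\vee \simeq \cone(R \xrightarrow{t^p} R) \simeq R/(t^p)[-1]$, so a summand $R/(t^p)$ occurring in cohomological degree $i$ of $\cM$ produces a summand $R/(t^p)$ in cohomological degree $1-i$ of $\cM^\vee$, of the same length $p$. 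Summing over all summands and all degrees, the total torsion length is unchanged, i.e. $\ell(\cM^\vee) = \ell(\cM)$; pulling this back through the first step yields $\ell(\cE^\vee) = \ell(\cE)$.

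I do not expect a genuine obstacle in this argument: it is essentially a bookkeeping computation over a DVR. The only point that deserves a sentence of justification is the compatibility $(\cE^\vee)_\gamma \simeq (\cE_\gamma)^\vee$ of the derived dual with the flat localization at $\gamma$, which is where the regularity of $\hirzebruchtwo$ (hence perfectness of $\cE$) enters; everything after that is an immediate consequence of formality of objects over $R$ and the explicit dual of $R/(t^p)$.
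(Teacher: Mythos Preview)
Your proposal is correct and follows essentially the same approach as the paper's proof: reduce to the DVR $\cO_{\hirzebruchtwo,\gamma}$ via flat localization (using $(\cE^\vee)_\gamma \simeq (\cE_\gamma)^\vee$), invoke formality and the structure theorem to reduce to the indecomposables, and then check directly that $(R/(t^p))^\vee \simeq R/(t^p)[-1]$. The paper's version is slightly terser (it does not spell out the free part or the degree bookkeeping), but the argument is the same.
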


\begin{proof}
    Note first that
    \(
        \ell ( \cE ^{ \vee } )
        =
        \ell ( ( \cE ^{ \vee} ) _{ \gamma } )
        =
        \ell ( ( \cE _{ \gamma } ) ^{ \vee } )
    \).
    Hence it is enough to show
    \(
        \ell ( \cM ^{ \vee } ) = \ell ( \cM )
    \)
    for all \( \cM \in \derived ( \cO _{ \hirzebruchtwo, \gamma } ) \). By the explicit descriptions of \(\cM\) we gave above, it is enough to show this for \( \cM = \cO _{ \hirzebruchtwo, \gamma } / ( t ^{ p } ) \). In this case one can easily confirm \( \cM ^{ \vee } \simeq \cM [ - 1 ] \), so we are done.
\end{proof}

If $\cE$ is an exceptional object with $\Supp(\cH^{i_0})=\hirzebruchtwo$, then \pref{pr:chohomology sheaves are O_C modules} implies
\begin{align}\label{eq:length of an exceptional object}
    \begin{aligned}
        \ell (\cE) & = \sum_i \rank_C \tors(\cH^i(\cE)) \\
	&= \sum_{i\ne i_0} \rank_C \cH^i(\cE)+\rank_C \tors \cH^{i_0}(\cE),
\end{aligned}
\end{align}
where $\rank_C$ denotes the rank of a coherent sheaf on $C$. Note that thus defined \( \ell ( \cE ) \) is the same as \(\ell ( \cE )\) in \pref{cr:structure of the residual part}.
From this we immediately obtain the following characterization of exceptional vector bundles among exceptional objects.

\begin{lemma}\label{lm:cE is a vector bundle iff ell = 0}
    An exceptional object $\cE$ is isomorphic to a shift of a vector bundle if and only if $\ell (\cE)=0$.
\end{lemma}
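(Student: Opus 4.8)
The plan is to deduce both implications directly from the structural results already established in this section --- namely \pref{lm:properties of cohomology sheaves}, \pref{pr:chohomology sheaves are O_C modules}, and the explicit formula \eqref{eq:length of an exceptional object} for $\ell(\cE)$. The key observation is that this formula exhibits $\ell(\cE)$ as a finite sum of non-negative integers, each of which is the rank of a vector bundle on $C$ cut out by a cohomology sheaf of $\cE$; hence the vanishing of $\ell(\cE)$ will force all of these bundles to vanish, which in turn pins down $\cE$.

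For the ``only if'' direction I would argue as follows. If $\cE \simeq V[m]$ for a vector bundle $V$, then $\cH^i(\cE) = 0$ for every $i \neq -m$ while $\cH^{-m}(\cE) \simeq V$ is torsion free, so $\tors \cH^i(\cE)_\gamma = 0$ for all $i$ and therefore $\ell(\cE) = 0$ by \pref{df:length}.

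For the converse I would first recall, from \pref{lm:properties of cohomology sheaves} \pref{it:support} together with \pref{pr:chohomology sheaves are O_C modules}, that for $i \neq i_0$ the sheaf $\cH^i(\cE)$ is, unless zero, a vector bundle of positive rank on $C$, and that $\tors\cH^{i_0}(\cE)$ is likewise, unless zero, a vector bundle of positive rank on $C$. Since a rank-$r$ bundle on $C$ has length $r$ over the discrete valuation ring $\cO_{\hirzebruchtwo,\gamma}$, the formula \eqref{eq:length of an exceptional object} writes $\ell(\cE)$ as a sum of non-negative integers, so $\ell(\cE) = 0$ forces $\cH^i(\cE) = 0$ for all $i \neq i_0$ and $\tors\cH^{i_0}(\cE) = 0$; equivalently, $\cE \simeq \cH^{i_0}(\cE)[-i_0]$ with $\cH^{i_0}(\cE)$ torsion free. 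By \pref{lm:properties of cohomology sheaves} \pref{it:more on i0-th cohomology} this $\cH^{i_0}(\cE)$ is an exceptional vector bundle, and hence $\cE$ is a shift of a vector bundle, completing the proof.

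I do not expect a genuine obstacle: the lemma is essentially a bookkeeping consequence of the work already carried out, whose only substantial input is the identification of the schematic supports with $C$ in \pref{pr:chohomology sheaves are O_C modules}. The sole point requiring a line of care is that forming the torsion subsheaf commutes with the flat localization $\coh\hirzebruchtwo \to \module\,\cO_{\hirzebruchtwo,\gamma}$, which is what lets one pass freely between the vanishing of $\rank_C \tors\cH^{i_0}(\cE)$ (resp.\ $\rank_C \cH^i(\cE)$) and the vanishing of the corresponding sheaf itself, using the support statements of \pref{lm:properties of cohomology sheaves} \pref{it:support}.
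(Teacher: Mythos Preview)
Your proposal is correct and matches the paper's approach: the paper states the lemma as an immediate consequence of \eqref{eq:length of an exceptional object}, and your argument is precisely the unpacking of that immediacy using \pref{lm:properties of cohomology sheaves} and \pref{pr:chohomology sheaves are O_C modules}.
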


The following (sub)additivity of the length function with respect to short exact sequences will be useful later.

\begin{lemma}\label{lm:inequality for l}
For an exact sequence
\[
0 \to \cH_1 \to \cH_2 \to \cH_3 \to 0
\]
in \(\coh \hirzebruchtwo\), an inequality
\[
    \ell (\cH_2) \le \ell (\cH_1) + \ell (\cH_3)
\]
holds. This is an equality if $\cH_1$  is a torsion sheaf.
\end{lemma}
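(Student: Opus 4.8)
The plan is to reduce everything to an elementary statement about finitely generated modules over a discrete valuation ring by localizing at the generic point $\gamma$ of $C$. Since the morphism $\Spec \cO_{\hirzebruchtwo, \gamma} \to \hirzebruchtwo$ is flat, applying $(-)_\gamma$ to the given short exact sequence produces a short exact sequence
\[
    0 \to (\cH_1)_\gamma \to (\cH_2)_\gamma \to (\cH_3)_\gamma \to 0
\]
of finitely generated $\cO_{\hirzebruchtwo, \gamma}$-modules, and by definition $\ell(\cH_j) = \length_{\cO_{\hirzebruchtwo, \gamma}} \tors (\cH_j)_\gamma$. Thus it will be enough to prove: for a short exact sequence $0 \to M_1 \to M_2 \to M_3 \to 0$ of finitely generated modules over a DVR one has $\length \tors M_2 \le \length \tors M_1 + \length \tors M_3$, with equality when $M_1$ is a torsion module. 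I would also record that when $\cH_1$ is a torsion sheaf its generic rank is $0$, so that $(\cH_1)_\gamma$ is a torsion $\cO_{\hirzebruchtwo, \gamma}$-module; this is what makes the equality clause applicable to the last assertion of the lemma.

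For the inequality I would exploit the left exactness of the torsion subfunctor over the DVR. Viewing $M_1$ as a submodule of $M_2$, a torsion element of $M_2$ contained in $M_1$ is precisely a torsion element of $M_1$, so $\tors M_1 = M_1 \cap \tors M_2$; moreover the image in $M_3$ of a torsion element of $M_2$ is again torsion. Hence the restrictions of the structure maps give an exact sequence
\[
    0 \to \tors M_1 \to \tors M_2 \to \tors M_3,
\]
from which $\length \tors M_2 = \length \tors M_1 + \length \Image(\tors M_2 \to \tors M_3) \le \length \tors M_1 + \length \tors M_3$, the image being a submodule of $\tors M_3$.

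For the equality statement, assuming $M_1$ is torsion (so $M_1 = \tors M_1 \subseteq \tors M_2$), I would check that $\tors M_2 \to \tors M_3$ is in fact surjective: given $\bar x \in \tors M_3$, lift it to $x \in M_2$, pick a nonzero $r$ in the DVR with $r\bar x = 0$ so that $rx \in M_1$, and then use that $M_1$ is torsion to find a nonzero $s$ with $srx = 0$; hence $x \in \tors M_2$ and it maps to $\bar x$. Then $0 \to M_1 \to \tors M_2 \to \tors M_3 \to 0$ is exact and the lengths add. As for obstacles: there really is none here — the lemma is purely formal, and the only place calling for a little care is the reduction to the DVR $\cO_{\hirzebruchtwo, \gamma}$ together with the remark that a torsion sheaf restricts to a torsion module at $\gamma$.
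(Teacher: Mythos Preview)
Your proof is correct and follows essentially the same approach as the paper: localize at \(\gamma\), use the left exactness of \(\tors\) on modules over the DVR \(\cO_{\hirzebruchtwo,\gamma}\), and verify surjectivity of \(\tors M_2 \to \tors M_3\) when \(M_1\) is torsion. The paper phrases the key step as ``standard arguments on local cohomology'' (identifying \(\tors\) with \(\Gamma_{\{0\}}\)), whereas you spell out the elementary lifting argument explicitly, but the content is the same.
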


\begin{proof}
    By taking the stalks at \(\gamma\), it is enough to show the similar statements for \( \coh \cO _{ \hirzebruchtwo, \gamma } \).
    It follows from the standard arguments on local cohomology that
    \begin{align}
        0
        \to
        \tors \cH _{ 1, \gamma }
        \to
        \tors \cH _{ 2, \gamma }
        \to
        \tors \cH _{ 3, \gamma }
    \end{align}
    is exact and that the last map is a surjection if \( \cH _{ 1 } \) is itself torsion (note that \( \tors \) is isomorphic to the 0-th local cohomology functor \( \Gamma _{ \{0\} } \) at the closed point \(0 \in \Spec \cO _{ \hirzebruchtwo, \gamma } \)).
    The assertions immediately follow from this and the additivity of the length of modules under short exact sequences.
\end{proof}
%
%
\subsection{Proof of \pref{th:exceptional objects are equivalent to vector bundles}}
\label{sc:Proof of Theorem 3.1}

Let us complete the proof of \pref{th:exceptional objects are equivalent to vector bundles}.
Recall again the decomposition
\[
    \cH^{i_0}(\cE) \simeq E \oplus T
\]
from \pref{lm:exceptional sheaf is a direct summand}, where \(E\) is an exceptional sheaf and \(T\) is a torsion sheaf. There are two possibilities as follows.

\begin{enumerate}
\item\label{it:The case where E is not torsion free}
\( E \) is not torsion free. Then by \pref{lm:exceptional sheaf is a direct summand}, there are
\(
    a \in \bZ, d \in \bZ _{ > 0 }
\)
such that
\begin{align}\label{eq:torsE and E|_C}
    \tors E
    \simeq
    \cO_C(a)^{\oplus d}.
\end{align}

\item\label{it:The case where E is torsion free}
\( E \) is torsion free. Then by \pref{cr:E(cE vee) has non-trivial torsion},
\(\cE ^{ \vee }\) fits into the case \pref{it:The case where E is not torsion free}.
Namely, the exceptional sheaf
\(
    E ' \coloneqq E ( \cE ^{ \vee } )
\)
has a non-trivial torsion and there is a torsion sheaf \( T ' \), \( a' \in \bZ \), and \( d ' > 0 \) such that
\(
    \cH^{-i_0}(\cE^\vee) \simeq E ' \oplus T '
\)
and
\(
    \tors E ' \simeq \cO _{ C } ( a ' ) ^{ \oplus d ' }
\).
\end{enumerate}

\pref{th:twisting exceptional collection to vector bundles} obviously follows from the following theorem, which asserts that it is always possible to decrease the length of \(\cE\) by an appropriate spherical twist.

\begin{theorem}\label{th:decrease the length by appropriate twist}
Under the notation of the previous paragraphs, the following holds.
\begin{itemize}
    \item 
    In the case \pref{it:The case where E is not torsion free}, i.e., if \(E\) is not torsion free, then
    \( \ell ( T _{ a } \cE ) < \ell ( \cE ) \).

    \item
    In the case \pref{it:The case where E is torsion free}, i.e., if \(E\) is torsion free, then
    \( \ell ( T _{ - a ' - 3 } \cE ) < \ell ( \cE ) \).
\end{itemize}
\end{theorem}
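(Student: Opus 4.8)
The plan is to derive the torsion‑free case \pref{it:The case where E is torsion free} from the main case \pref{it:The case where E is not torsion free} by a duality argument, and to prove the main case by dévissage along the Postnikov tower of \( \cE \). For the reduction, note that by \pref{lm:spherical twist and dual} we have \( ( T _{ - a ' - 3 } \cE ) ^{ \vee } \simeq T ' _{ a ' + 1 } ( \cE ^{ \vee } ) \), while combining \eqref{eq:square of inverse twists as square of twists and O(-2C)} with \eqref{eq:exchanging Ta and O(C)} gives \( T ' _{ a ' + 1 } \simeq ( \cO _{ \hirzebruchtwo } ( - C ) \otimes _{ \cO _{ \hirzebruchtwo } } - ) \circ T _{ a ' } \); hence \( ( T _{ - a ' - 3 } \cE ) ^{ \vee } \simeq \cO _{ \hirzebruchtwo } ( - C ) \otimes _{ \cO _{ \hirzebruchtwo } } T _{ a ' } ( \cE ^{ \vee } ) \). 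Since \( \ell \) is invariant under the derived dual by \pref{lm:length is invariant under dual}, and tensoring by an invertible sheaf is an isomorphism on the stalk at the generic point \( \gamma \) of \( C \) and so preserves \( \ell \), we obtain \( \ell ( T _{ - a ' - 3 } \cE ) = \ell ( T _{ a ' } ( \cE ^{ \vee } ) ) \). By \pref{cr:E(cE vee) has non-trivial torsion} the object \( \cE ^{ \vee } \) falls into case \pref{it:The case where E is not torsion free} with the distinguished integer equal to \( a ' \), so the main case applied to \( \cE ^{ \vee } \) gives \( \ell ( T _{ a ' } ( \cE ^{ \vee } ) ) < \ell ( \cE ^{ \vee } ) = \ell ( \cE ) \), again by \pref{lm:length is invariant under dual}.

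For the main case \pref{it:The case where E is not torsion free}, I first record two elementary computations from \eqref{eq:triangle of spherical twist}: \( T _{ a } ( \cO _{ C } ( a ) ) \simeq \cO _{ C } ( a ) [ - 1 ] \) (the twist of a spherical object by itself), and \( T _{ a } ( \cO _{ C } ( a + 1 ) ) \simeq \cO _{ C } ( a - 1 ) [ 1 ] \) (the evaluation map \( \cO _{ C } ( a ) ^{ \oplus 2 } \to \cO _{ C } ( a + 1 ) \) is the Koszul surjection with kernel \( \cO _{ C } ( a - 1 ) \)); in particular each sends a length‑one sheaf to a length‑one object. By \pref{lm:exceptional sheaf is a direct summand} and its proof, \( \cH ^{ i _{ 0 } } ( \cE ) \simeq E \oplus \cO _{ C } ( a ) ^{ \oplus p } \oplus \cO _{ C } ( a + 1 ) ^{ \oplus q } \) with \( E \simeq T ' _{ a } ( \cF ) \) and \( \tors E \simeq \cO _{ C } ( a ) ^{ \oplus d } \), \( d > 0 \); hence \( T _{ a } ( E ) \simeq \cF \) is a vector bundle, so \( \ell ( T _{ a } \cH ^{ i _{ 0 } } ( \cE ) ) = p + q \) whereas \( \ell ( \cH ^{ i _{ 0 } } ( \cE ) ) = d + p + q \). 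Moreover, since \( \tors E \neq 0 \), \pref{cr:structure of the residual part} shows that each \( \cH ^{ i } ( \cE ) \) with \( i \neq i _{ 0 } \) is a direct sum of copies of \( \cO _{ C } ( a ) \) and \( \cO _{ C } ( a + 1 ) \), so \( \ell ( T _{ a } \cH ^{ i } ( \cE ) ) = \ell ( \cH ^{ i } ( \cE ) ) \) for those \( i \) as well. Granting the inequality \( \ell ( T _{ a } \cE ) \le \sum _{ i } \ell ( T _{ a } \cH ^{ i } ( \cE ) ) \), we then conclude \( \ell ( T _{ a } \cE ) \le \sum _{ i \neq i _{ 0 } } \ell ( \cH ^{ i } ( \cE ) ) + ( p + q ) = \ell ( \cE ) - d < \ell ( \cE ) \), as desired.

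It remains to prove that inequality. The key tool is the following triangulated refinement of \pref{lm:inequality for l}: if \( X \to Y \to Z \xrightarrow{ + 1 } \) is a triangle in \( \derived ( \hirzebruchtwo ) \) such that at least one of \( X \), \( Z \) has all of its cohomology sheaves set‑theoretically supported on \( C \), then \( \ell ( Y ) \le \ell ( X ) + \ell ( Z ) \). This is checked by passing to the stalk at \( \gamma \) — where \( \cO _{ \hirzebruchtwo, \gamma } \) is a discrete valuation ring — writing the long exact cohomology sequence, and using that a quotient (resp. a submodule) of a finite‑length module, or of the torsion part of a finitely generated module over a discrete valuation ring, has length at most that of the original, together with the vanishing of any morphism from a sheaf supported on \( C \) into a torsion‑free sheaf. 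Next, \( T _{ a } \) preserves the subcategory of objects supported on \( C \) (immediate from \eqref{eq:triangle of spherical twist}, since \( \cO _{ C } ( a ) \) is supported on \( C \)). Now process the Postnikov tower of \( \cE \) starting from the top cohomological degree: at the triangle attaching the graded piece \( \cH ^{ i } ( \cE ) [ - i ] \), apply \( T _{ a } \) and invoke the refinement. For \( i \neq i _{ 0 } \) the newly attached piece \( T _{ a } ( \cH ^{ i } ( \cE ) ) [ - i ] \) is supported on \( C \); at the single stage \( i = i _{ 0 } \), although \( T _{ a } ( \cH ^{ i _{ 0 } } ( \cE ) ) \simeq \cF \oplus \cO _{ C } ( a ) ^{ \oplus p } [ - 1 ] \oplus \cO _{ C } ( a - 1 ) ^{ \oplus q } [ 1 ] \) carries the nonzero torsion‑free summand \( \cF \), the already‑assembled object \( T _{ a } ( \tau ^{ \geq i _{ 0 } + 1 } \cE ) \) is supported on \( C \), so the refinement still applies with the roles of \( X \) and \( Z \) interchanged. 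Iterating over the finitely many nonzero degrees yields \( \ell ( T _{ a } \cE ) \le \sum _{ i } \ell ( T _{ a } \cH ^{ i } ( \cE ) [ - i ] ) = \sum _{ i } \ell ( T _{ a } \cH ^{ i } ( \cE ) ) \).

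I expect the main obstacle to be exactly this bookkeeping: organizing the Postnikov‑tower induction so that the ``supported on \( C \)'' hypothesis of the refinement of \pref{lm:inequality for l} is available at every step, the only delicate point being the stage \( i = i _{ 0 } \), where one must use the ``\( Z \) supported on \( C \)'' variant because \( T _{ a } ( \cH ^{ i _{ 0 } } ( \cE ) ) \) is not itself supported on \( C \). Everything else reduces to the structure theorems of \pref{sc:First properties}--\pref{sc:More on the structure} and the two spherical‑twist computations recorded above, which are routine.
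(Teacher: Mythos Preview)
Your proof is correct and follows essentially the same approach as the paper. Both arguments perform the same d\'evissage along the cohomology filtration of \(\cE\): the paper packages it as the spectral sequence \(E_2^{p,q}=\cH^p(T_a\cH^q(\cE))\Rightarrow\cH^{p+q}(T_a\cE)\) and tracks lengths through \(E_2\to E_3=E_\infty\), whereas you run the Postnikov tower directly using your triangulated refinement of \pref{lm:inequality for l}; the reduction of the torsion-free case to the main case via duality is identical in both.
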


\begin{proof}
In this proof, to make life easy, we assume \( i _{ 0 } = i _{ 0 } ( \cE ) = 0 \).
The general case is easily reduced to this just by
replacing \(\cE\) with \(\cE[ i _{ 0 } ] \).

Consider first the case \pref{it:The case where E is not torsion free}.
For each $i \ne 0$, by \pref{cr:structure of the residual part} there are $s_i, t _{ i } \ge 0$ such that
\(
    \cH ^{ i } ( \cE ) \simeq \cO_C(a)^{\oplus s_i} \oplus \cO_C(a+1)^{\oplus t_i}
\).
Putting $s_{0} \coloneqq s-d$ and $t_{0} \coloneqq t$ in \eqref{eq:desired decomposition}, we can also write
\(
    \cH^{0}(\cE) \simeq E \oplus \cO_C(a)^{\oplus s_{0}} \oplus \cO_C(a+1)^{\oplus t_{0}}
\).
Consider the following spectral sequence.
\[
    E_2^{p,q}
    =
    \cH^p ( T_a ( \cH^q( \cE ) ) )
    \Rightarrow
    \cH^{p+q} ( T _{ a } \cE )
\]
By direct computations we easily see that \(E _{ 2 } ^{ p, q }\) are as follows.
\begin{align}
    E _{ 2 } ^{ p, q } \simeq
    \begin{cases}
        \cO _{ C } ( a - 1 ) ^{ \oplus t _{ q } } & p = - 1 \\
        \cF ( = E / \tors E ) & ( p, q ) = ( 0 , 0 ) \\
        \cO _{ C } ( a ) ^{ \oplus s _{ q } } & p = 1 \\
        0 & \text{otherwise}
    \end{cases}
\end{align}
Thus we see
\begin{align}\label{eq:inequality for E_2}
    \sum_{p,q} \ell (E_2^{ p, q }) = \ell (\cE) - \ell (\tors E)<\ell (\cE).
\end{align}
Noting that the differential map $d_2^{p,q}$ is non-zero only if $p=-1$, we easily see that
\( E _{ 3 } ^{ p, q } \) are as follows.
\[
    E_3^{p,q}
    \simeq
    \begin{cases}
        \ker d_2^{-1,q} &p=-1 \\
		\coker d_2^{-1,q+1} & p=1\\
		E_2^{p,q} &\text{otherwise}
	\end{cases}
\]
For each \( q \in \bZ \), since \( E _{ 2 } ^{ - 1, q } \) is torsion, it follows from \pref{lm:inequality for l} that
\(
    \ell ( \coker d _{ 2 } ^{ - 1, q + 1 } ) \le \ell ( E _{ 2 } ^{ 1, q - 1 } )
\).
Also, the inclusion \( \ker d _{ 2 } ^{ - 1, q } \hookrightarrow E _{ 2 } ^{ - 1 , q } \) implies \( \ell (\ker d _{ 2 } ^{ - 1, q }) \le  \ell (E _{ 2 } ^{ - 1 , q }) \). Thus we have confirmed the inequality
\begin{align}\label{eq:inequality for E_3}
    \ell (E_3^{p,q}) \le \ell (E_2^{p,q}) \quad \forall ( p, q ).
\end{align}
Finally, the spectral sequence degenerates at $E_3$
and hence \pref{lm:inequality for l} implies
\begin{align}\label{eq:inequality for cE}
    \ell (T_a \cE) \le \sum _{ p, q } \ell ( E _{ \infty } ^{ p, q } )
    =
    \sum_{p,q} \ell (E_3^{p,q}).
\end{align}
Combining \eqref{eq:inequality for E_2}, \eqref{eq:inequality for E_3}, and \eqref{eq:inequality for cE},
we conclude $\ell (T_a\cE)<\ell (\cE)$.

Consider the other case \pref{it:The case where E is torsion free}. By \pref{cr:E(cE vee) has non-trivial torsion}, \(\cE ^{ \vee } \) is then as in the case \pref{it:The case where E is not torsion free}. By applying our conclusion for the case \pref{it:The case where E is not torsion free} to \(\cE ^{ \vee } \), we obtain the inequality
\begin{align}\label{eq: Ta' decreases the length of cEvee}
    \ell ( T_{ a ' } ( \cE^\vee ) )
    <
    \ell ( \cE^\vee ).
\end{align}
On the other hand, the left hand side is computed as follows.
\begin{align}\label{eq:computation of length}
    \begin{aligned}
        \ell ( T_{ a ' } \left( \cE ^{ \vee } \right) )
        =
        \ell ( \cO _{ \hirzebruchtwo } ( - C ) \otimes T_{ a ' } \left( \cE ^{ \vee } \right) )
        \stackrel{\eqref{eq:Ta Ta+1 = O(C)}}{=}
        \ell ( T ' _{ a ' + 1 } \left( \cE ^{ \vee } \right) )\\
        \stackrel{\eqref{eq:dual and spherical twists}}{=}
        \ell \left( \left( T _{ - a ' - 3 } \cE \right) ^{ \vee } \right)
        \stackrel{\text{\pref{lm:length is invariant under dual}}}{=}
        \ell \left( T _{ - a ' - 3 } \cE \right).
    \end{aligned}
\end{align}
Thus we see that
\(
    \ell ( T _{ - a ' - 3 } \cE )
    \stackrel{\eqref{eq:computation of length}}{=}
    \ell ( T_{ a ' } \left( \cE ^{ \vee } \right) )
    \stackrel{\eqref{eq: Ta' decreases the length of cEvee}}{<}
    \ell ( \cE ^{ \vee } )
    \stackrel{\text{\pref{lm:length is invariant under dual}}}{=}
    \ell ( \cE )
\).
\end{proof}

\begin{remark}
    Suppose both \( \tors E ( \cE ) \ne 0 \) and \( \tors E ( \cE ^{ \vee } ) \ne 0 \). From the third item of \pref{lm:dual exceptional object} and \eqref{eq:structure of cH i0 + 1 ( cE )} in the proof of \pref{lm:reduction of length} below, it actually follows that \( a = - a' - 3 \). This does not seem to be a mere coincidence, but the authors do not have a good account of this.
\end{remark}

%
%
\section{Exceptional objects sharing the same class in \( \kgr{ \hirzebruchtwo }\)}
\label{sc:Exceptional objects sharing the same class}

The goal of this section is to prove \pref{cr:exceptional objects in the same numerical class} on the set of exceptional objects sharing the same class in \(\kgr{\hirzebruchtwo}\).

Let \( \cE \in \derived ( \hirzebruchtwo ) \) be an exceptional vector bundle of rank \( r \).
Recall from \eqref{eq:restriction of exceptional vector bundle to C} of \pref{lm:exceptional vector bundle restricted to C} that there is an isomorphism
\begin{align}
    \cE | _{ C }
    \simeq
    \cO _{ C } ( b ) ^{ \oplus s }
    \oplus
    \cO _{ C } ( b + 1 ) ^{ \oplus r - s }
\end{align}
for some \( b \in \bZ \) and \( s \in \bN \) such that \( 1 \le s \le r \).
We freely use this result, especially the symbols \( r, s\), and \( b \), throughout this section.

\begin{lemma}\label{lm:Tb-1 cE is again a vb}
Let \( \cE \in \derived ( \hirzebruchtwo ) \) be an exceptional vector bundle.
Then
\(
    T _{ b - 1} \cE
\)
and
\(
    T ' _{ b } \cE
\)
are exceptional vector bundles.
\end{lemma}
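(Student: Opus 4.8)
The objects $T_{b-1}\cE$ and $T'_b\cE$ are automatically exceptional, since $T_{b-1}$ and $T'_b=T_b^{-1}$ are autoequivalences; so the only thing to prove is that each of them is a vector bundle concentrated in degree $0$. The plan is to treat $T'_b\cE$ directly via the defining triangle \eqref{eq:triangle of inverse spherical twist}, and then to deduce the statement for $T_{b-1}\cE$ from it by applying the derived dual together with \eqref{eq:dual and spherical twists}.

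For $T'_b\cE$: since $\cE$ is locally free, $\bR\cHom_{\hirzebruchtwo}(\cE,\cO_C(b))\simeq(\cE|_C)^{\vee}\otimes_{\cO_C}\cO_C(b)$, which by \pref{lm:exceptional vector bundle restricted to C} is isomorphic to $\cO_C^{\oplus s}\oplus\cO_C(-1)^{\oplus(r-s)}$; taking cohomology over $C\simeq\bP^1$ gives $\RHom_{\hirzebruchtwo}(\cE,\cO_C(b))\simeq\bfk^{\oplus s}$, concentrated in degree $0$. Hence \eqref{eq:triangle of inverse spherical twist} becomes a triangle
\[
    T'_b\cE \to \cE \xrightarrow{\eta} \cO_C(b)^{\oplus s} \to T'_b\cE[1],
\]
with $\eta$ the coevaluation map. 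The key step is to identify $\eta$ explicitly: as its target is an $\cO_C$-module, $\eta$ factors through the restriction $\cE\twoheadrightarrow\cE|_C$, and under the identification $\Hom_C(\cE|_C,\cO_C(b))\simeq\bfk^{\oplus s}$ — in which the summand $\cO_C(b+1)^{\oplus(r-s)}$ of $\cE|_C$ contributes nothing, since $\Hom_C(\cO_C(b+1),\cO_C(b))=0$ — one checks that the induced map $\cE|_C\to\cO_C(b)^{\oplus s}$ is precisely the projection onto the $\cO_C(b)^{\oplus s}$-summand. In particular $\eta$ is surjective, so the long exact cohomology sequence of the triangle gives $\cH^1(T'_b\cE)=\coker\eta=0$ and $T'_b\cE\simeq\ker\eta$, a sheaf in degree $0$. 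Finally $\ker\eta$ is locally free: resolving $\cO_C$ by $0\to\cO_{\hirzebruchtwo}(-C)\to\cO_{\hirzebruchtwo}\to\cO_C\to 0$ shows that $\cO_C(b)^{\oplus s}$ has projective dimension $1$ over $\cO_{\hirzebruchtwo}$, so in the short exact sequence $0\to\ker\eta\to\cE\to\cO_C(b)^{\oplus s}\to 0$ the syzygy $\ker\eta$ is locally free.

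For $T_{b-1}\cE$: by \eqref{eq:dual and spherical twists} one has $(T_{b-1}\cE)^{\vee}\simeq T'_{-b-1}(\cE^{\vee})$, and $\cE^{\vee}$ is again an exceptional vector bundle with $\cE^{\vee}|_C\simeq(\cE|_C)^{\vee}\simeq\cO_C(-b)^{\oplus s}\oplus\cO_C(-b-1)^{\oplus(r-s)}$. If $s<r$, the smallest twist occurring in $\cE^{\vee}|_C$ is $-b-1$, so in the notation of \pref{lm:exceptional vector bundle restricted to C} applied to $\cE^{\vee}$ one has $-b-1=b(\cE^{\vee})$; then $T'_{-b-1}(\cE^{\vee})$ is an exceptional vector bundle by the previous paragraph, hence so is $T_{b-1}\cE=((T_{b-1}\cE)^{\vee})^{\vee}$. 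If $s=r$, then $\cE|_C\simeq\cO_C(b)^{\oplus r}$ and $\bR\cHom_{\hirzebruchtwo}(\cO_C(b-1),\cE)\simeq\cO_C(-1)^{\oplus r}[-1]$ has vanishing hypercohomology, so $\RHom_{\hirzebruchtwo}(\cO_C(b-1),\cE)=0$ and \eqref{eq:triangle of spherical twist} yields $T_{b-1}\cE\simeq\cE$ directly.

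The only non-formal point is the explicit identification of the coevaluation $\eta$ with the projection $\cE|_C\to\cO_C(b)^{\oplus s}$, and in particular its surjectivity; granting that, the remainder — the $\bR\cHom$ computations over $C\simeq\bP^1$, the long exact sequences, the projective-dimension count, the duality reduction, and the degenerate case $s=r$ — is routine homological algebra.
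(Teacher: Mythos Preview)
Your proof is correct, but it takes a different route from the paper's. The paper exploits the identity \(T_{b-1}\simeq(\cO_{\hirzebruchtwo}(C)\otimes-)\circ T'_b\) coming from \eqref{eq:Ta Ta+1 = O(C)} to treat both objects at once: the triangle \eqref{eq:triangle of spherical twist} for \(T_{b-1}\) shows \(T_{b-1}\cE\) is a sheaf (an extension of \(\cO_C(b-1)^{\oplus(r-s)}\) by \(\cE\)), hence so is \(T'_b\cE\simeq T_{b-1}\cE(-C)\); then the triangle \eqref{eq:triangle of inverse spherical twist} exhibits \(T'_b\cE\) as a subsheaf of \(\cE\), hence torsion free, and local freeness follows from rigidity. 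Your approach instead establishes surjectivity of the coevaluation \(\eta\) directly, gets local freeness of \(\ker\eta\) from a projective-dimension (Schanuel-type) argument, and then reduces \(T_{b-1}\cE\) to the \(T'_b\) case via the derived dual \eqref{eq:dual and spherical twists}, at the cost of a case split on \(s<r\) versus \(s=r\). The paper's argument is shorter and avoids the case split; your argument is more self-contained in that it does not invoke the relation \(T_aT_{a+1}\simeq\cO_{\hirzebruchtwo}(C)\otimes-\) and does not rely on the (implicit) fact that a rigid torsion-free sheaf on a smooth surface is locally free.
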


\begin{proof}
From the defining distinguished triangle of spherical twists \eqref{eq:triangle of spherical twist}, it immediately follows that
\(
    T _{ b - 1 } \cE
\)
is an exceptional sheaf. Note that it is isomorphic to
\(\left( T ' _{ b } \cE \right) ( C )\). It then follows from the defining distinguished triangle for the inverse spherical twist \eqref{eq:triangle of inverse spherical twist} that \( T ' _{ b } \cE \) is torsion free. Thus we see that \( T _{ b - 1 } \cE \) and \( T ' _{ b } \cE \) are both exceptional vector bundles.
\end{proof}

\begin{lemma}\label{lm:Tb-1 cE and Tb-2 cE are both vbs}
Let \( \cE \in \derived ( \hirzebruchtwo ) \) be an exceptional vector bundle such that
\(
    s = r
\)
holds in \eqref{eq:restriction of exceptional vector bundle to C}.
Then
\(
    T _{ b - 1 } \cE \simeq \cE
\)
and
\(
    T _{ b - 2 } \cE \simeq \cE ( C )
\).
\end{lemma}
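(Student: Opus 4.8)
The plan is to reduce both claims to a single vanishing, namely $\RHom_{\hirzebruchtwo}(\cO_C(b-1),\cE)=0$. Granting this, the defining triangle \eqref{eq:triangle of spherical twist} of the spherical twist $T_{b-1}=T_{\cO_C(b-1)}$ has vanishing first term, so its connecting arrow is an isomorphism $\cE\simto T_{b-1}\cE$; that is the first assertion. For the second, I would then invoke \eqref{eq:Ta Ta+1 = O(C)} with $a=b-2$: since $T_{b-1}\cE\simeq\cE$, we get $T_{b-2}\cE\simeq T_{b-2}(T_{b-1}\cE)\simeq(T_{b-2}\circ T_{b-1})(\cE)\simeq\cO_{\hirzebruchtwo}(C)\otimes_{\cO_{\hirzebruchtwo}}\cE=\cE(C)$.

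To establish the vanishing I would first compute the sheafy $\cRHom$. Tensoring the Koszul resolution $0\to\cO_{\hirzebruchtwo}(-C)\to\cO_{\hirzebruchtwo}\to\cO_C\to0$ by $\cO_{\hirzebruchtwo}((b-1)f)$, and using that $\cO_{\hirzebruchtwo}((b-1)f)|_C\simeq\cO_C(b-1)$ because $(b-1)f\cdot C=b-1$, produces a two-term locally free resolution of $\cO_C(b-1)$. Applying $\cHom_{\hirzebruchtwo}(-,\cE)$ to it --- which computes $\cRHom_{\hirzebruchtwo}(\cO_C(b-1),\cE)$ since $\cE$ is locally free --- gives the complex $[\cE((1-b)f)\to\cE((1-b)f+C)]$ in degrees $0$ and $1$, whose differential is multiplication by a section of $\cO_{\hirzebruchtwo}(C)$ cutting out $C$. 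That section is a non-zero-divisor on the torsion-free sheaf $\cE((1-b)f)$, so the complex has no cohomology in degree $0$, while its degree-$1$ cohomology is the restriction $\cE((1-b)f+C)|_C$. Using $((1-b)f+C)\cdot C=(1-b)-2=-b-1$ together with the hypothesis $s=r$, i.e.\ $\cE|_C\simeq\cO_C(b)^{\oplus r}$ by \pref{lm:exceptional vector bundle restricted to C}, this restriction is $\cO_C(-1)^{\oplus r}$. Hence $\cRHom_{\hirzebruchtwo}(\cO_C(b-1),\cE)\simeq\cO_C(-1)^{\oplus r}[-1]$.

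It then remains to apply $R\Gamma(\hirzebruchtwo,-)$ and invoke $R\Gamma(\bP^1,\cO(-1))=0$ to conclude $\RHom_{\hirzebruchtwo}(\cO_C(b-1),\cE)=0$. I do not anticipate a serious obstacle: the computation is short, and the only subtle point is that the hypothesis $s=r$ is precisely what forces $\cE|_C$ to be the \emph{balanced} bundle $\cO_C(b)^{\oplus r}$ with no $\cO_C(b+1)$-summand, which is exactly what makes the relevant cohomology vanish --- were such a summand present one would instead find $H^0(\bP^1,\cO)\neq0$, and $T_{b-1}$ would move $\cE$. (One could equivalently phrase the $\cRHom$ computation via Grothendieck--Verdier duality for $\iota\colon C\hookrightarrow\hirzebruchtwo$, using $\omega_{\hirzebruchtwo}|_C\simeq\cO_C$, which follows from $-K_{\hirzebruchtwo}=2C+4f$, $C^2=-2$, $f\cdot C=1$.)
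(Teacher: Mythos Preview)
Your proof is correct and follows essentially the same approach as the paper: both reduce the first assertion to the vanishing $\RHom_{\hirzebruchtwo}(\cO_C(b-1),\cE)=0$, and deduce the second from the identity $T_{b-2}\circ T_{b-1}\simeq\cO_{\hirzebruchtwo}(C)\otimes-$ (the paper phrases this via $T_{b-2}\simeq(\cO_{\hirzebruchtwo}(C)\otimes-)\circ T'_{b-1}$, which is the same identity rearranged). You spell out the vanishing computation via the Koszul resolution, which the paper leaves implicit.
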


\begin{proof}
The first assertion immediately follows from the vanishing
\(
    \RHom _{ \hirzebruchtwo } ( \cO _{ C } ( b - 1 ), \cE ) = 0
\).
The second assertion follows from \( \cE \simeq T ' _{ b - 1 } \cE \), which is nothing but the first assertion, and the isomorphism
\(
    T _{ b - 2 } \cE \simeq \cO _{ \hirzebruchtwo } ( C ) \otimes _{ \cO _{ \hirzebruchtwo } } T ' _{ b - 1 } \cE
\) (which follows from \eqref{eq:square of inverse twists as square of twists and O(-2C)} and \eqref{eq:exchanging Ta and O(C)}).
\end{proof}

\begin{theorem}\label{th:exceptional object is K0-trivial equivalent to vector bundle}
Let
\(
    \cE \in \bfD ( \hirzebruchtwo )
\)
be an exceptional object with \( \rank \cE > 0 \).
Then there exists
\(
    b \in \btriv
\)
and
\(
    m ' \in \bZ 
\)
such that
\(
    b ( \cE ) [ 2 m ' ]
\)
is an exceptional vector bundle.
\end{theorem}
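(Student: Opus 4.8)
The plan is to feed \pref{th:exceptional objects are equivalent to vector bundles} into the normal-form description of \( B \) provided by \pref{cr:btriv is generated by squares}. First I would use \pref{th:exceptional objects are equivalent to vector bundles} to write \( \cE \simeq g ( \cF ) [ m_0 ] \), where \( \cF \) is an exceptional vector bundle, \( m_0 \in \bZ \), and \( g = T_{a_n} \circ \cdots \circ T_{a_1} \) is an honest product of spherical twists, so in particular \( g \in B \). Next, using \pref{lm:exceptional vector bundle restricted to C} I would attach to \( \cF \) the integer \( \beta \) with \( \cF|_C \simeq \cO_C(\beta)^{\oplus s} \oplus \cO_C(\beta+1)^{\oplus \rank \cF - s} \), and record, via \pref{lm:Tb-1 cE is again a vb}, that \( T_{\beta-1}(\cF) \) is again an exceptional vector bundle; this is the only place a genuinely geometric fact enters.

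Then I would apply \pref{cr:btriv is generated by squares} with the \emph{specific} choice \( a_0 = \beta - 1 \). This presents \( g \) either in the \emph{even} form \( \left( \cO_{\hirzebruchtwo}(mC) \otimes_{\cO_{\hirzebruchtwo}} - \right) \circ k \) or in the \emph{odd} form \( \left( \cO_{\hirzebruchtwo}(mC) \otimes_{\cO_{\hirzebruchtwo}} - \right) \circ T_{\beta-1} \circ k \), for some \( m \in \bZ \) and \( k \in \btriv \) (namely, \( k \) is the corresponding product of the \( T_{a_i}^{\pm 2} \)). In either case I would set \( h \) to be, respectively, \( \cO_{\hirzebruchtwo}(mC) \otimes_{\cO_{\hirzebruchtwo}} - \) or \( \left( \cO_{\hirzebruchtwo}(mC) \otimes_{\cO_{\hirzebruchtwo}} - \right) \circ T_{\beta-1} \), so that \( g = h \circ k \). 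Two observations make this work: \( h \in B \), because \( \cO_{\hirzebruchtwo}(C) \otimes_{\cO_{\hirzebruchtwo}} - \simeq T_a \circ T_{a+1} \in B \) by \eqref{eq:Ta Ta+1 = O(C)} and \( T_{\beta-1} \in B \); and \( h(\cF) \) is a line-bundle twist of either \( \cF \) or \( T_{\beta-1}(\cF) \), hence an exceptional vector bundle. Now put \( b \coloneqq h \circ g^{-1} = h \circ k^{-1} \circ h^{-1} \). Since \( \btriv \triangleleft B \), the \( B \)-conjugate \( b \) of \( k^{-1} \in \btriv \) again lies in \( \btriv \), and by construction \( b ( \cE ) \simeq h(\cF) [ m_0 ] \) is a shift of an exceptional vector bundle.

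It then remains to upgrade the shift to an even one, which is where the hypothesis \( \rank \cE > 0 \) is used: since \( b \) acts trivially on \( \kgr{\hirzebruchtwo} \) we have \( \rank \cE = \rank b(\cE) = (-1)^{m_0} \rank h(\cF) \), and as \( \rank h(\cF) > 0 \) this forces \( m_0 \) to be even; taking \( m' = - m_0 / 2 \) gives \( b(\cE)[2m'] \simeq h(\cF) \), an exceptional vector bundle.

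As for where the difficulty lies: essentially all the substance is already encapsulated in the three cited results (\pref{th:exceptional objects are equivalent to vector bundles}, \pref{lm:Tb-1 cE is again a vb}, and \pref{cr:btriv is generated by squares}), so the remaining work is bookkeeping. The subtle points I would watch are (i) choosing \( a_0 = \beta - 1 \) in \pref{cr:btriv is generated by squares} to be precisely the value for which \pref{lm:Tb-1 cE is again a vb} applies, and (ii) verifying \( h \in B \) so that \( b = h \circ g^{-1} \) is not merely an autoequivalence but lies in \( B \), hence in \( \btriv = B \cap \Auteqtriv(\derived(\hirzebruchtwo)) \). A minor point to keep in mind is that \pref{th:exceptional objects are equivalent to vector bundles} is available for \( \hirzebruchtwo \) and produces \( g \) as a product of honest spherical twists, so membership \( g \in B \) is automatic.
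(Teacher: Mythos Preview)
Your argument is correct and follows essentially the same route as the paper's proof: both combine \pref{th:exceptional objects are equivalent to vector bundles} with \pref{lm:Tb-1 cE is again a vb} and the decomposition in \pref{pr:TaTb as product of O (m C ) and squares of twists}/\pref{cr:btriv is generated by squares}, and both handle the parity issue by arranging that one extra twist \(T_{\beta-1}\) (your notation) keeps \(\cF\) a vector bundle. The only cosmetic difference is that the paper inserts \(T_b T'_b = \id\) to make the number of twists even before applying \pref{pr:TaTb as product of O (m C ) and squares of twists}, whereas you invoke the packaged normal form of \pref{cr:btriv is generated by squares} and then use the normality \(\btriv \triangleleft B\) via conjugation; the content is the same.
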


\begin{proof}
Let us choose an isomorphism as in \eqref{eq:exceptional objects are equivalent to vector bundles}.
If \( n \) happens to be an odd number, noting that
\(
    \cF \simeq T _{ b } \left( T ' _{ b } \cF \right)
\)
and
\(
    T ' _{ b } \cF
\)
is an exceptional vector bundle for suitable
\(
    b \in \bZ
\)
by \pref{lm:Tb-1 cE is again a vb}, we may assume without loss of generality that
\(
    n
\)
is even. Now the assertion is an immediate consequence of \pref{pr:TaTb as product of O (m C ) and squares of twists}, since
\(
    T _{ a } ^{ \pm 2 } \in \btriv
\)
for any
\(
    a \in \bZ
\).
\end{proof}

\begin{corollary}\label{cr:exceptional objects in the same numerical class}
Let \(\cE \in \derived ( \hirzebruchtwo )\) be an exceptional object. Then
\begin{enumerate}
\item\label{it:exceptional object is numerically equivalent to a vb}
There exists a unique exceptional vector bundle \( \cF \) on \( \hirzebruchtwo \) such that
\begin{align}
    [ \cE ] = 
    \begin{cases}
        [ \cF ] \in \kgr{ \hirzebruchtwo } & \text{if } \rank \cE > 0\\
        - [ \cF ] \in \kgr{ \hirzebruchtwo } & \text{if } \rank \cE < 0
    \end{cases}
\end{align}
(recall that \(\rank \cE \neq 0\) by \pref{cr:rank is never 0}).

\item\label{it:transitivity in the same numerical class}
The action of the group
\(
    \btriv \times 2 \bZ
\)
on the following set is transitive.
\begin{align}
    \left\{
        \cE ' \in \derived ( \hirzebruchtwo )
        \mid
        \text{exceptional object such that }
        [ \cE '] = [ \cE ] \in \kgr { \hirzebruchtwo }
    \right\}
\end{align}
\end{enumerate}
\end{corollary}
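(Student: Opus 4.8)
The plan is to deduce both assertions formally from \pref{th:exceptional object is K0-trivial equivalent to vector bundle}, together with the reconstruction statement \pref{lm:Lemma 3.5 of [OU]} (equivalently \pref{lm:exceptional vb is determined by K0} for $N=1$), using only two trivialities: elements of $\btriv$ act trivially on $\kgr{\hirzebruchtwo}$ by definition, and even shifts act trivially on $K_0$ since $[\cE[2]]=[\cE]$. So the section merely packages what was already proved.

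For \pref{it:exceptional object is numerically equivalent to a vb}, first suppose $\rank \cE > 0$. By \pref{th:exceptional object is K0-trivial equivalent to vector bundle} there are $b \in \btriv$ and $m' \in \bZ$ such that $\cF \coloneqq b(\cE)[2m']$ is an exceptional vector bundle, and since $b$ is $K_0$-trivial and the shift is by $2m'$, we get $[\cF] = [\cE]$. If instead $\rank \cE < 0$, apply this to the exceptional object $\cE[1]$, which has positive rank, obtaining an exceptional vector bundle $\cF$ with $[\cF] = [\cE[1]] = -[\cE]$, i.e.\ $[\cE] = -[\cF]$. In either case uniqueness of $\cF$ is immediate from \pref{lm:Lemma 3.5 of [OU]}: two exceptional vector bundles with the same class in $\kgr{\hirzebruchtwo}$ are isomorphic.

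For \pref{it:transitivity in the same numerical class}, write $S$ for the set in question; since $\cE\in S$ it suffices to show $S$ is a single orbit under $\btriv\times 2\bZ$ (acting by $(b,2m)\cdot\cE' = b(\cE')[2m]$). As the rank is a $K_0$-invariant and is nonzero by \pref{cr:rank is never 0}, every $\cE'\in S$ has the same rank $\rho$ as $\cE$. Assume first $\rho>0$. For each $\cE'\in S$, \pref{th:exceptional object is K0-trivial equivalent to vector bundle} gives $b'\in\btriv$ and $m'\in\bZ$ with $b'(\cE')[2m']$ an exceptional vector bundle whose class equals $[\cE]$, hence isomorphic to the single exceptional vector bundle $\cF$ of part \pref{it:exceptional object is numerically equivalent to a vb} by \pref{lm:Lemma 3.5 of [OU]}. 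Therefore $\cE' \simeq (b')^{-1}(\cF)[-2m']$ lies in the $\btriv\times 2\bZ$-orbit of $\cF$, so $S = (\btriv\times 2\bZ)\cdot\cF$. If $\rho<0$, the shift functor $[1]$ restricts to a bijection from $S$ onto the analogous set attached to $\cE[1]$ (note $[\cE'[1]] = -[\cE'] = [\cE[1]]$), and this bijection is equivariant for the $\btriv\times 2\bZ$-action because shifts commute with every exact autoequivalence and with one another; transitivity in the $\rho>0$ case then transfers back to $S$.

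I do not expect any genuine obstacle here: all the work is already contained in \pref{th:exceptional objects are equivalent to vector bundles} and \pref{th:exceptional object is K0-trivial equivalent to vector bundle}. The only points requiring care are the bookkeeping of the sign of $\rank\cE$ (handled uniformly by the shift $[1]$, which exchanges the two sign regimes) and the appeal to \pref{lm:Lemma 3.5 of [OU]} to collapse the a priori different target vector bundles to a single $\cF$.
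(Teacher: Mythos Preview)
Your proof is correct and follows essentially the same approach as the paper: both parts are deduced directly from \pref{th:exceptional object is K0-trivial equivalent to vector bundle} together with the uniqueness statement \pref{lm:Lemma 3.5 of [OU]}, handling the negative-rank case by passing to $\cE[1]$. You spell out the details more explicitly than the paper (which simply notes that any $\cE'$ lies in the orbit of $\cF$ or $\cF[1]$ according to the sign of $\rank \cE$), but the argument is the same.
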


\begin{proof}
The existence of \( \cF \) as in \eqref{it:exceptional object is numerically equivalent to a vb} is a direct consequence of \pref{th:exceptional object is K0-trivial equivalent to vector bundle}. The uniqueness of such \( \cF \) is \pref{lm:Lemma 3.5 of [OU]}.
One can prove \eqref{it:transitivity in the same numerical class} again by \pref{th:exceptional object is K0-trivial equivalent to vector bundle}, by showing that any \( \cE ' \) is in the same orbit of \( \cF \) or \( \cF [ 1 ] \), depending on
\(
    \rank \cE > 0
\)
or
\(
    \rank \cE < 0
\).
\end{proof}

%
%
\section{Constructibility of exceptional collections}
\label{sc:Constructibility of exceptional collections}

The aim of this section is \pref{cr:constructibility}, which asserts that any exceptional collection on \( \hirzebruchtwo \) is extendable to a full exceptional collection.

We first show that any exceptional collection on \(\hirzebruchtwo\) is sent to an exceptional collection consisting of (shifts of) vector bundles by a sequence of spherical twists.

\begin{lemma}\label{lm:reduction of length}
Let
\( 
    ( \cB, \cE ) \in \ec _{ 2 } ( \hirzebruchtwo )
\)
be an exceptional pair such that
\(
    \cB
\)
is a vector bundle and
$
\cE
$
is not isomorphic to a shift of a sheaf.
Suppose also that \( E = E ( \cE ) \) defined in \pref{df:E and cF} is not torsion free, so that
\(
    \tors E
\) 
is a direct sum of copies of \( \cO _{ C } ( a ) \) for some \( a \in \bZ \). Then $T_a \cB$ is isomorphic to either
\(
    \cB
\)
or
\(
    \cB ( C )
\); in particular, it remains to be a vector bundle.
\end{lemma}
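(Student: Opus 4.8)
The plan is to study $T_a\cB$ directly via the defining triangle \eqref{eq:triangle of spherical twist}, which turns the assertion into a computation of $\RHom_{\hirzebruchtwo}(\cO_C(a),\cB)$ together with the determination of the position of $a$ relative to the integer $b$ attached to $\cB|_C$ by \pref{lm:exceptional vector bundle restricted to C}; write $\cB|_C\simeq\cO_C(b)^{\oplus s}\oplus\cO_C(b+1)^{\oplus(r-s)}$ with $r=\rank\cB$ and $1\le s\le r$. The first step is the local computation: from the resolution $0\to\cO_{\hirzebruchtwo}(-C)\to\cO_{\hirzebruchtwo}\to\cO_C\to0$ and $\cO_{\hirzebruchtwo}(C)|_C\simeq\cO_C(-2)$ one gets $\cExt^0_{\hirzebruchtwo}(\cO_C(a),\cB)=0$, $\cExt^i_{\hirzebruchtwo}(\cO_C(a),\cB)=0$ for $i\ge 2$, and $\cExt^1_{\hirzebruchtwo}(\cO_C(a),\cB)\simeq\cB|_C(-a-2)\simeq\cO_C(b-a-2)^{\oplus s}\oplus\cO_C(b-a-1)^{\oplus(r-s)}$, so that $\RHom_{\hirzebruchtwo}(\cO_C(a),\cB)\simeq\bR\Gamma\bigl(C,\cB|_C(-a-2)\bigr)[-1]$, a complex concentrated in cohomological degrees $1$ and $2$.

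Next I would extract constraints from the exceptionality of the pair, i.e. from $\RHom_{\hirzebruchtwo}(\cE,\cB)=0$. Feeding the standard $t$-structure into the hypercohomology spectral sequence $E_2^{p,q}=\bigoplus_j\Ext^p_{\hirzebruchtwo}(\cH^j(\cE),\cH^{j+q}(\cB))\Rightarrow\Ext^{p+q}(\cE,\cB)$, using that $\cB$ is a sheaf (so $E_2^{p,q}=\Ext^p(\cH^{-q}(\cE),\cB)$) and that $\hirzebruchtwo$ is a surface (so $E_2^{p,q}=0$ for $p\notin\{0,1,2\}$), one has $E_\infty^{1,q}=E_2^{1,q}$, $E_\infty^{0,q}=\ker d_2^{0,q}$ and $E_\infty^{2,q}=\coker d_2^{0,q+1}$. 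Comparing with the vanishing of every $\Ext^\bullet(\cE,\cB)$ yields $\Ext^1_{\hirzebruchtwo}(\cH^i(\cE),\cB)=0$ for all $i$, and isomorphisms $d_2\colon\Hom(\cH^i(\cE),\cB)\xrightarrow{\sim}\Ext^2(\cH^{i-1}(\cE),\cB)$ for all $i$; in particular, for the top nonzero cohomology sheaf $\cH^{i_1}(\cE)$ one gets $\Hom(\cH^{i_1}(\cE),\cB)=0$ and $\Ext^2(\cH^{i_1}(\cE),\cB)=0$.

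The third step combines this with the structure of $\cE$: every $\cH^i(\cE)$ with $i\neq i_0$ is a direct sum of copies of $\cO_C(a),\cO_C(a+1)$ (\pref{lm:properties of cohomology sheaves}, \pref{pr:chohomology sheaves are O_C modules}), while $\cH^{i_0}(\cE)=E\oplus T$ with $T$ such a direct sum, $\cF=E/\tors E$ an exceptional vector bundle (\pref{df:E and cF}, \pref{lm:exceptional sheaf is a direct summand}), $\tors E\simeq\cO_C(a)^{\oplus d}$, and $E|_C$ a direct sum of copies of $\cO_C(a+1)$ by \eqref{eq:E|C = OC(a+1) oplus ell}. On $C\simeq\bP^1$ one has $\Hom_{\hirzebruchtwo}(\cO_C(m),\cB)\neq0$ exactly when $m\le b$ (or $m=b+1$ and $s<r$), while by Serre duality and $\omega_{\hirzebruchtwo}|_C\simeq\cO_C$ one has $\Ext^2_{\hirzebruchtwo}(\cO_C(m),\cB)\simeq\Hom(\cB,\cO_C(m))^\vee\neq0$ exactly when $m\ge b$. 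Feeding these into Step~2 forces, successively: that $i_0$ is the top degree $i_1$ (otherwise $\cH^{i_1}(\cE)$ is a nonzero torsion sheaf for which the simultaneous vanishing of $\Hom(-,\cB)$ and $\Ext^2(-,\cB)$ imposes contradictory inequalities on $a$); then $\Ext^2(E,\cB)=0$, whence, via the exact sequence defining $E$ and the vanishing of $\Ext^i_{\hirzebruchtwo}(-,-)$ for $i\ge3$, $\Ext^2(\cO_C(a),\cB)=0$, i.e. $a\le b-1$; and finally — this is the delicate bookkeeping — $a\in\{b-1,b-2\}$ and $s=r$, i.e. $\cB|_C\simeq\cO_C(b)^{\oplus r}$.

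With $s=r$ and $a\in\{b-1,b-2\}$ in hand, the conclusion is immediate from \pref{lm:Tb-1 cE and Tb-2 cE are both vbs}: $T_{b-1}\cB\simeq\cB$ and $T_{b-2}\cB\simeq\cB(C)$, so $T_a\cB$ is one of $\cB,\cB(C)$, in particular a vector bundle. I expect the main obstacle to be the last assertion of Step~3: squeezing out both $a\in\{b-1,b-2\}$ and the balancedness $s=r$. This requires a careful case analysis tracking which of $\cO_C(a),\cO_C(a+1)$ occur as genuine direct summands of the cohomology sheaves of $\cE$ (as opposed to only appearing inside the indecomposable summand $E$ as $\tors E$), together with control of $\Ext^2(\cF,\cB)=\Hom(\cB,\cF\otimes\omega_{\hirzebruchtwo})^\vee$, into which $\Ext^1(\tors E,\cB)=\Ext^1(\cO_C(a),\cB)^{\oplus d}$ injects by the $\Ext^1$-vanishing of Step~2; it is precisely here that the shifted dual case $\tors E(\cE)=0$ (handled in the main text by passing to $\cE^\vee$) is excluded.
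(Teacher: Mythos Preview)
Your spectral sequence setup and the extraction of \(\Ext^1(\cH^i(\cE),\cB)=0\) together with the isomorphisms \(d_2\colon\Hom(\cH^i(\cE),\cB)\xrightarrow{\sim}\Ext^2(\cH^{i-1}(\cE),\cB)\) are correct and match the paper. The argument breaks down in Step~3. First, your computation of \(\Hom_{\hirzebruchtwo}(\cO_C(m),\cB)\) is wrong: since \(\cO_C(m)\) is torsion and \(\cB\) is locally free, \(\Hom_{\hirzebruchtwo}(\cO_C(m),\cB)=0\) for every \(m\) (you yourself noted \(\cExt^0=0\) in Step~1). Hence the vanishing \(\Hom(\cH^{i_1}(\cE),\cB)=0\) is automatic and yields no inequality, so there is no ``contradiction'' forcing \(i_0=i_1\). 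In fact \(i_0=i_1\) is \emph{false} in general: in the paper's case \(s<r\) one has exactly \(\cH^{i_0}(\cE)\) and \(\cH^{i_0+1}(\cE)\) nonzero, so the top degree is \(i_0+1\). For the same reason your conclusion \(s=r\) cannot hold in all cases.

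What the paper does instead is a case split on whether \(\cB|_C\) is balanced. Using that \(E_2^{0,q}=0\) for \(q\neq -i_0\) (torsion into torsion-free), one gets \(E_2^{2,q}=0\) for all \(q\neq -i_0-1\); combined with \(E_2^{1,q}=0\), this already kills \(\cH^i(\cE)\) for \(i\neq i_0,i_0+1\) when \(s<r\), and in the balanced case \(s=r\) it only forces \(\cH^i(\cE)\) to be copies of \(\cO_C(b-1)\). In the unbalanced case the paper then invokes a \emph{second} spectral sequence, namely \eqref{eq:E_2 spectral sequence of Hom} for \(\cE\) itself, to obtain \(\Hom(\cH^{i_0+1}(\cE),\tors E)=0\), hence \(\cH^{i_0+1}(\cE)\simeq\cO_C(a+1)^{\oplus t}\); feeding this into \(\Ext^1(\cH^{i_0+1}(\cE),\cB)=0\) gives \(a+1\ge b\), so \(a=b-1\), and one concludes via \pref{lm:Tb-1 cE is again a vb} (not via \pref{lm:Tb-1 cE and Tb-2 cE are both vbs}, which requires \(s=r\)). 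In the balanced case, comparing the two descriptions of \(\cH^i(\cE)\) for \(i\neq i_0\) (copies of \(\cO_C(b-1)\) on one hand, copies of \(\cO_C(a),\cO_C(a+1)\) on the other) gives \(a\in\{b-1,b-2\}\), and then \pref{lm:Tb-1 cE and Tb-2 cE are both vbs} applies. Your outline is missing both the case distinction and the crucial appeal to the self-\(\Ext\) spectral sequence of \(\cE\).
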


\begin{proof}
Recall the decomposition
\(
    \cH ^{ i _{ 0 } } ( \cE )
    =
    E
    \oplus
    T
\)
from \pref{lm:exceptional sheaf is a direct summand}.
Set $\tors E \simeq \cO _{ C } ( a ) ^{ \oplus d}$, where \( d > 0 \) by the assumption.
By \pref{cr:structure of the residual part}, there is an isomorphism
\begin{align}
    \tors \cohomology ( \cE )
    \simeq
    \bigoplus _{ i \neq i _{ 0 } } \cH ^{ i } ( \cE )
    \oplus
    T
    \oplus
    \tors E
    \simeq
    \cO _{ C } ( a ) ^{ \oplus s }
    \oplus
    \cO _{ C } ( a + 1 ) ^{ \oplus \ell ( \cE ) - s }
\end{align}
for some
\(
    1 \le s \le \ell ( \cE )
\).
Also, again by \pref{lm:exceptional vector bundle restricted to C}, there is an isomorphism
$$
    \cB | _{ C }
    \simeq
    \cO _{ C } ( b ) ^{ \oplus s ' }
    \oplus
    \cO _{ C } ( b + 1  ) ^{ \oplus r ' - s ' }
$$
for some \( b \in \bZ \) and
\(
    1 \le s ' \le r '
\).

Consider the following spectral sequence.
\begin{align}\label{eq:E_2 spectral sequence for (cB, cE)}
    E_2^{p,q}
    =
    \Ext _{ \hirzebruchtwo } ^{ p } ( \cH ^{ - q } ( \cE ), \cB )
    \Rightarrow
    \Ext _{ \hirzebruchtwo } ^{ p + q } ( \cE, \cB )
\end{align}
The assumption
\(
    \RHom _{ \hirzebruchtwo } ( \cE, \cB )
    =
    0
\)
implies the vanishing of the limit
\(
    \Ext _{ \hirzebruchtwo } ^{ n } ( \cE, \cB ) = 0
\)
for all \( n \in \bZ \). Also, since \( \hirzebruchtwo \) is a smooth projective surface,
\(
    E _{ 2 } ^{ p, q } \ne 0
\)
only if
\(
    0 \le p \le 2
\)
and hence \eqref{eq:E_2 spectral sequence for (cB, cE)} is \( E _{ 3 } \)-degenerate everywhere and \( E _{ 2 } \)-degenerate at \( p = 1 \).
These imply that
\begin{itemize}
    \item \( E _{ 2 } ^{ 0, q } = 0 \) for all \( q \ne - i _{ 0 } \), for \( \cH ^{ - q } ( \cE ) \) being torsion and \( \cB \) being torsion free. Hence \eqref{eq:E_2 spectral sequence for (cB, cE)} is \( E _{ 2 } \)-degenerate at \( ( 2, q ) \) for each \( q \ne - i _{ 0 } - 1 \), so that \( E _{ 2 } ^{ 2, q } \simeq E _{ \infty } ^{ 2, q } = 0 \).
    \item \( E _{ 2 } ^{ 1, q } = 0 \) for all \( q \).
\end{itemize}
Thus we have confirmed that
\begin{itemize}
    \item \( E _{ 2 } ^{ p, q } \neq 0 \) only if \( ( p, q ) = ( 0, - i _{ 0 } ) \) or \( ( 2, - i _{ 0 } - 1 ) \).
    Moreover,
    \item \( d _{ 2 } ^{ 0, - i _{ 0 } } \colon E _{ 2 } ^{ 0, - i _{ 0 } } \to E _{ 2 } ^{ 2, - i _{ 0 } - 1 } \)
    is an isomorphism by the \( E _{ 3 } \)-degeneracy of \eqref{eq:E_2 spectral sequence for (cB, cE)} and the vanishing of the limit.
\end{itemize}

Note that
\(  
    0 = E_2^{2, - i _{ 0 } } \simeq \Ext _{ \hirzebruchtwo } ^{ 2} ( E \oplus T, \cB )
\)
and the Serre duality imply $\Hom _{ \hirzebruchtwo }  (\cB,\tors E) = 0$. Thus we see
\begin{align}
    a \le b - 1.
\end{align}

Take any
\(
    q \neq - i _{ 0 }, - i _{ 0 } - 1
\).
We know that
\(
    \cH ^{ - q } ( \cE )
\)
is a direct sum of invertible sheaves on \( C \simeq \bP ^{ 1 } \), so the vanishings
\(
    E _{ 2 } ^{ 1, q }
    =
    0
    =
    E _{ 2 } ^{ 2, q }
\)
imply
\(
    \cH ^{ - q } ( \cE ) = 0
\)
if \( s ' < r ' \), and that
\(
    \cH ^{ - q } ( \cE )
\)
is a direct sum of (possibly \(0\)) copies of
\(
    \cO _{ C } ( b - 1 )
\)
if \( s ' = r ' \).

Let us first settle the case \( s ' < r ' \). As we mentioned in the previous paragraph, in this case \( \cH ^{ i } ( \cE ) \ne 0 \) only if \( i = i _{ 0 } \) or \( i _{ 0 } + 1\).
Note in fact that \( \cH ^{ i _{ 0 } + 1 } ( \cE ) \ne 0 \), since it is assumed in the statement that \( \cE \) is not isomorphic to a shift of a sheaf.

It follows from the vanishing 
\(
    0 =
    E _{ 2 } ^{ 0, - 1 }
       =
    \Hom _{ \hirzebruchtwo } ( \cH ^{ i _{ 0 } + 1 } ( \cE ), \cH ^{ i _{ 0 } } (\cE) )
\)
in the spectral sequence \eqref{eq:E_2 spectral sequence of Hom}
that
$0=  \Hom _{ \hirzebruchtwo } ( \cH ^{ i _{ 0 } + 1 } ( \cE ), \tors E)$ and thus
\begin{align}\label{eq:structure of cH  i0 + 1 ( cE )}
    \cH ^{ i _{ 0 } + 1 } ( \cE )
    \simeq
    \cO _{ C } ( a + 1 ) ^{ \oplus t }
\end{align} for some $t > 0$.
On the other hand, we know that
$$
    0
    =
    E _{ 2 } ^{ 1, - i _{ 0 } - 1 }
    =
    \Ext _{ \hirzebruchtwo } ^{ 1 } ( \cH ^{ i _{ 0 } + 1 } ( \cE ), \cB ),
$$
so that
\(
    a + 1 \ge b
\)
(recall \( r ' - s ' > 0\)).
Thus we see that
\(
    a = b - 1
\).
Hence
\(
    T _{ a } (\cB) \simeq \cB
\)
by \pref{lm:Tb-1 cE is again a vb}.

Next consider the case $s'=r'$. Note that at least one of the sheaves $\cO_C(a)$ or $\cO_C(a+1)$ appears as a direct summand of
$\bigoplus_{i\ne i_0}\cH^i (\cE)$, since $\cE$ is not a shift of a sheaf. Then the vanishing 
$0=E_2^{1,q}=\Ext^1(\cH^{-q}(\cE),\cB)$
for all
\(
    q \neq -i_0
\)
in \eqref{eq:E_2 spectral sequence for (cB, cE)}
implies that either $a= b - 1 $ or $ b - 2 $. Then by \pref{lm:Tb-1 cE and Tb-2 cE are both vbs}, $T_a(\cB)$ is isomorphic to
\(
    \cB
\)
and
\(
    \cB ( C )
\),
respectively.
\end{proof}

\begin{lemma}\label{lm:reduction of length for sheaves}
Let $(\cB, \cE) \in \ec_{2}(\hirzebruchtwo)$ be an exceptional pair. Suppose that $\cB$ is a vector bundle and $\cE$ is a sheaf such that
\[
    \cT \coloneqq \tors \cE = \tors E \simeq \cO_C(a)^{\oplus d} \ne 0.
\]
Then $T_a \cB$ is isomorphic to either
\(
    \cB
\)
or
\(
    \cB ( C )
\)
and it holds that $(T_a\cB, T_a\cE) \in \ecvb_{2}(\hirzebruchtwo)$.
\end{lemma}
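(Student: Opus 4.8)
The plan is to reduce the statement to an easy consequence of \pref{lm:exceptional sheaf is a direct summand} together with a variant of the argument proving \pref{lm:reduction of length}.

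\emph{Step 1: $\cE$ is an inverse spherical twist of a vector bundle.} Since $\cE$ is a sheaf we have $i _{ 0 } ( \cE ) = 0$ and $\cH ^{ 0 } ( \cE ) = \cE$, and the hypothesis $\tors \cE = \tors E$ forces the summand $T$ and, in the notation of the proof of \pref{lm:exceptional sheaf is a direct summand}, the summand $\cO _{ C } ( a + 1 ) ^{ \oplus t }$ to vanish. Tracing that proof, the universal extension \eqref{eq:universal extension} then reads $0 \to \tors \cE \to \cE \to \cF \to 0$ with $\cF = \cE / \tors \cE$, and it is isomorphic to the defining triangle \eqref{eq:triangle of inverse spherical twist} of $T ' _{ a }$ applied to $\cF$; hence $\cE \simeq T ' _{ a } ( \cF )$, and consequently $T _{ a } \cE \simeq T _{ a } T ' _{ a } ( \cF ) \simeq \cF$ is an exceptional vector bundle. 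Since $T _{ a }$ is an autoequivalence and $( \cB, \cE )$ is an exceptional pair, $( T _{ a } \cB, T _{ a } \cE ) = ( T _{ a } \cB, \cF )$ is automatically an exceptional pair, so it remains only to prove that $T _{ a } \cB$ is a vector bundle, and more precisely that it is isomorphic to $\cB$ or to $\cB ( C )$.

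\emph{Step 2: controlling $T _{ a } \cB$.} Here I would imitate the proof of \pref{lm:reduction of length} in this simpler situation. Because $\cE$ is a sheaf, the spectral sequence $E _{ 2 } ^{ p, q } = \Ext _{ \hirzebruchtwo } ^{ p } ( \cH ^{ - q } ( \cE ), \cB ) \Rightarrow \Ext _{ \hirzebruchtwo } ^{ p + q } ( \cE, \cB )$ is concentrated in the column $q = 0$, so $\RHom _{ \hirzebruchtwo } ( \cE, \cB ) = 0$ already gives $\Ext _{ \hirzebruchtwo } ^{ i } ( \cE, \cB ) = 0$ for $i = 0, 1, 2$. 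From $\Ext _{ \hirzebruchtwo } ^{ 2 } ( \cE, \cB ) = 0$, Serre duality and the isomorphism $\omega _{ \hirzebruchtwo } \vert _{ C } \simeq \cO _{ C }$ one extracts $\Hom _{ \hirzebruchtwo } ( \cB, \tors \cE ) = 0$, hence $a \le b - 1$, where $\cB \vert _{ C } \simeq \cO _{ C } ( b ) ^{ \oplus s ' } \oplus \cO _{ C } ( b + 1 ) ^{ \oplus r ' - s ' }$ as in \pref{lm:exceptional vector bundle restricted to C}. For the reverse estimate I would feed $\Ext _{ \hirzebruchtwo } ^{ 1 } ( \cE, \cB ) = 0$, the long exact sequence of $0 \to \tors \cE \to \cE \to \cF \to 0$, and the description $\cE \vert _{ C } \simeq \cO _{ C } ( a + 1 ) ^{ \oplus m }$ of \cite[Theorem~1.4~(1)]{MR3431636} (recalled in \eqref{eq:E|C = OC(a+1) oplus ell}) into the analysis, so as to conclude that either $a = b - 1$, or $a = b - 2$ with $\cB \vert _{ C }$ balanced (i.e.\ $s ' = r '$). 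In the former case \pref{lm:Tb-1 cE is again a vb} shows $T _{ a } \cB = T _{ b - 1 } \cB$ is an exceptional vector bundle; in the latter \pref{lm:Tb-1 cE and Tb-2 cE are both vbs} gives $T _{ a } \cB \simeq \cB ( C )$. Finally, reading the spherical-twist triangle \eqref{eq:triangle of spherical twist} for $\cO _{ C } ( a )$ and $\cB$ — where $\Hom _{ \hirzebruchtwo } ( \cO _{ C } ( a ), \cB ) = 0$ and, by $a \le b - 1$, also $\Ext _{ \hirzebruchtwo } ^{ 2 } ( \cO _{ C } ( a ), \cB ) = 0$ — exhibits $T _{ a } \cB$ as the universal extension $0 \to \cB \to T _{ a } \cB \to \Ext _{ \hirzebruchtwo } ^{ 1 } ( \cO _{ C } ( a ), \cB ) \otimes _{ \bfk } \cO _{ C } ( a ) \to 0$, which makes the identification with $\cB$ (when $a = b - 1$, forcing $s ' = r '$) or with $\cB ( C )$ immediate.

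The main obstacle is the reverse inequality on $a$ in Step 2. In contrast with \pref{lm:reduction of length}, here $\cH ^{ i _{ 0 } + 1 } ( \cE ) = 0$, so the bound cannot be read off from $\Ext _{ \hirzebruchtwo } ^{ 1 } ( \cH ^{ i _{ 0 } + 1 } ( \cE ), \cB ) = 0$; instead one has to play the identification $\Ext _{ \hirzebruchtwo } ^{ 1 } ( \tors \cE, \cB ) \simeq \Ext _{ \hirzebruchtwo } ^{ 2 } ( \cF, \cB )$ coming from $\Ext _{ \hirzebruchtwo } ^{ 1 } ( \cE, \cB ) = 0$ against the rigidity of $\cE \vert _{ C }$. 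Equivalently, one must show directly that the displayed universal extension has torsion-free total space, i.e.\ that the twist $T _{ a }$ does not create torsion out of $\cB$ — and this is precisely the point at which the hypothesis that $( \cB, \cE )$ (equivalently, that $( T _{ a } \cB, \cF )$) is an exceptional pair has to be used in an essential way.
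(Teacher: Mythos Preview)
Your outline is the paper's proof: Step~1 agrees verbatim (the paper cites \cite[Theorem~1.4~(1)]{MR3431636} directly for $T_a\cE\simeq\cF$), and the bound $a\le b-1$ in Step~2 is obtained identically. You are also right that the reverse bound on $a$ is where all the work lies, and you have assembled the correct ingredients --- the identification $\Ext^1(\cT,\cB)\simeq\Ext^2(\cF,\cB)$ from the long exact sequence, together with $\cE|_C\simeq\cO_C(a+1)^{\oplus(d+r)}$ --- but you have not said how to combine them. That step is not routine, so let me spell out what the paper does.

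After disposing of the case $\Ext^1(\cT,\cB)=0$ (where $T_a\cB\simeq\cB$ trivially), Serre-dualize the identification to $\Hom(\cB,\cF\otimes K_{\hirzebruchtwo})\simeq\Ext^1(\cB,\cT\otimes K_{\hirzebruchtwo})$ and restrict both sides to $C$, using $K_{\hirzebruchtwo}|_C\simeq\cO_C$ and the fact that $\cT$ already lives on $C$. The restriction maps fit into a commutative square whose right vertical arrow is an isomorphism, so the bottom connecting map $\Hom_C(\cB|_C,\cF|_C)\to\Ext_C^1(\cB|_C,\cT)$ is surjective. Feeding this into the long exact sequence of $0\to\cT\to\cE|_C\to\cF|_C\to 0$ on $C$ gives the key equality
\[
\ext_C^1(\cB|_C,\cE|_C)\;=\;\ext_C^1(\cB|_C,\cF|_C).
\]
Now compare the two sides numerically. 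The left is explicit from $\cE|_C\simeq\cO_C(a+1)^{\oplus(d+r)}$; for the right, write $\cF|_C\simeq\cO_C(f)^{\oplus s}\oplus\cO_C(f+1)^{\oplus r-s}$ and note $f\ge b-1$ since $\Hom_C(\cB|_C,\cF|_C)\ne 0$. If the common value were positive one would need $f=b-1$, and then the right side equals $s(r'-s')$, strictly smaller than the left since $s\le r<d+r$ --- a contradiction. Hence both sides vanish, and the explicit formula for the left forces $a=b-1$, or $a=b-2$ with $s'=r'$. It is this restriction-to-$C$ manoeuvre that converts the abstract isomorphism you wrote down into a usable numerical constraint; without it I do not see how to close the gap you flagged.

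One small correction: your parenthetical ``when $a=b-1$, forcing $s'=r'$'' is not supported by the argument --- nothing rules out $a=b-1$ with $s'<r'$. In that case \pref{lm:Tb-1 cE is again a vb} only yields that $T_{b-1}\cB$ is an exceptional vector bundle (the paper's own citation at this point is to that lemma), which is in any case all that is needed downstream in \pref{cr:decrease ell while preserving the vector bundle} and \pref{th:twisting exceptional collection to vector bundles}.
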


\begin{proof}
Consider the short exact sequence as follows.
\begin{align}\label{eq:TEF}
    0 \to \cT \to \cE \to \cF \to 0
\end{align}
By \cite[Theorem~1.4~(1)]{MR3431636}, we know that
\(
    d = \hom _{ \hirzebruchtwo } ( \cO _{ C } ( a ), \cE )
\)
and
\( T _{ a } \cE \simeq \cF \) is an exceptional vector bundle.
Hence all we have to show is that \( T _{ a } \cB \) is a vector bundle.

The vanishing $\RHom(\cE, \cB)=0$ implies
\(
    \Ext _{ \hirzebruchtwo } ^i(\cT, \cB) \simeq \Ext _{ \hirzebruchtwo } ^{i+1}(\cF, \cB)
\).
Especially one has
\begin{align}
\Ext _{ \hirzebruchtwo } ^1(\cT, \cB) &\simeq \Ext _{ \hirzebruchtwo } ^2(\cF, \cB), \label{eq:Ext^1=Ext^2}\\
\Ext _{ \hirzebruchtwo } ^2(\cT, \cB)&=0.\label{eq:Ext^2=0}
\end{align}
If $\Ext _{ \hirzebruchtwo } ^1(\cT, \cB)=0$, then it follows that $\RHom(\cO_C(a), \cB)=0$
and hence $T_a \cB = \cB$.
Therefore we may assume $\Ext _{ \hirzebruchtwo } ^1(\cT, \cB) \ne 0$, or by taking the dual,
\begin{align}\label{eq:non-vanishing}
 \Ext _{ \hirzebruchtwo } ^1(\cB, \cT \otimes K _{ \hirzebruchtwo } ) \ne 0.
\end{align}
The Serre dual of \eqref{eq:Ext^1=Ext^2} and its restriction to $C$ yield the commutative square as follows.
\begin{equation}\label{eq:restrict to C}
\begin{tikzcd}
    \Hom _{ \hirzebruchtwo } (\cB, \cF \otimes K _{ \hirzebruchtwo } ) \arrow[r, "\simeq"] \arrow[d] & \Ext _{ \hirzebruchtwo } ^1(\cB, \cT \otimes K _{ \hirzebruchtwo } )
    \arrow[d, "\simeq"] \\
    \Hom_C(\cB|_C, \cF|_C) \arrow[r, twoheadrightarrow] & \Ext_C^1(\cB|_C, \cT)
\end{tikzcd}
\end{equation}
In the second row of the diagram \eqref{eq:restrict to C}, we omit $\otimes K _{ \hirzebruchtwo } $ by fixing an isomorphism $K _{ \hirzebruchtwo }  \otimes \cO_C \simeq \cO_C$ and regard \( \cT \) as a sheaf on \( C \).
By restricting the locally split short exact sequence \eqref{eq:TEF} to \( C \), we obtain the following short exact sequence.
\begin{align}\label{eq:restriction of cE}
0 \to \cT \to \cE|_C \to \cF|_C \to 0
\end{align}
From \eqref{eq:restriction of cE} and the surjectivity of the second row in \eqref{eq:restrict to C}, we obtain the following isomorphism.
\begin{align}\label{eq:isom of Ext^1}
    \Ext_C^1(\cB|_C, \cE|_C) \simeq \Ext_C^1(\cB|_C, \cF|_C)
\end{align}
As in the proof of \pref{lm:reduction of length}, put
\[
    \cB | _{ C }
    \simeq
    \cO _{ C } ( b ) ^{ \oplus s ' }
    \oplus
    \cO _{ C } ( b + 1 ) ^{ \oplus r ' - s ' }
\]
for
\(
    b \in \bZ
\)
and
\(
    1 \le s ' \le r '
\)
where $r'=\rank \cB$.
Then \eqref{eq:Ext^2=0} implies
\[
    a \le  b - 1.
\]
To determine the value of $a$, let us compute the dimensions of the both
sides of \eqref{eq:isom of Ext^1}.

In order to compute the dimension of the left hand side, recall that
\(
    \cE|_C \simeq \cO_C(a+1) ^{ \oplus e }
\)
by \cite[Theorem 1.4(1)]{MR3431636}. We know moreover that $ e =d+r$ by \eqref{eq:restriction of cE}, where $r=\rank \cF$.
Thus we obtain the following descriptions.
\begin{align}\label{eq:ext_C^1(cB|_C, cE|_C)}
\ext_C^1(\cB|_C, \cE|_C)=
	\begin{cases}
		0 & \text{if } a = b - 1 \\
		e \left((r'-s')( b - a - 1 )+s'( b - a - 2 )\right) & \text{if } a \le b - 2
	\end{cases}
\end{align}

Let us compute the dimension of the right hand side of \eqref{eq:isom of Ext^1}.
Since \( \cF \) is an exceptional vector bundle, there are \( f \in \bZ \) and
\(
    1 \le s \le r
\)
such that
\[
   \cF | _{ C }
    \simeq
    \cO _{ C } ( f ) ^{ \oplus s }
    \oplus
    \cO _{ C }( f + 1 ) ^{ \oplus r - s }.
\]
Note that by \eqref{eq:non-vanishing} and \eqref{eq:restrict to C} we have
\[
    \Hom_C(\cB|_C, \cF|_C) \ne 0,
\]
which implies $f \ge b - 1$.

Suppose for a contradiction that $\ext_C^1(\cB|_C, \cF|_C) \ne 0$.
Then we obtain $f = b - 1 $ and
\begin{align}\label{eq:ext^1(cB|_C, cG|_C)}
    0 \ne \ext_C^1(\cB|_C, \cF|_C)=s(r'-s').
\end{align}
Using
\(
    s \le r < r + d = e
\),
we immediately see that \eqref{eq:ext^1(cB|_C, cG|_C)} is strictly smaller than the second line of the right hand side of \eqref{eq:ext_C^1(cB|_C, cE|_C)}.
This contradicts the isomorphism \eqref{eq:isom of Ext^1}. Thus we have confirmed
\[
    \ext_C^1(\cB|_C, \cE|_C)
    \stackrel{\eqref{eq:isom of Ext^1}}{=}
    \ext_C ^{ 1 } (\cB|_C, \cF|_C) = 0.
\]
Then \eqref{eq:ext_C^1(cB|_C, cE|_C)} implies either $a = b - 1 $ or $(a, r'-s')=( b - 2, 0 )$.
Thus we see that $T_a \cB$ is isomorphic to \( \cB \) or \( \cB ( C ) \), respectively, by \pref{lm:Tb-1 cE is again a vb} and \pref{lm:Tb-1 cE and Tb-2 cE are both vbs}. Thus we conclude the proof.
\end{proof}

\begin{corollary}\label{cr:decrease ell while preserving the vector bundle}
    Let \( ( \cB, \cE ) \in \ec _{ 2 } ( \hirzebruchtwo ) \) be an exceptional pair such that \( \cB \) is a vector bundle and \( \ell ( \cE ) > 0 \).
    \begin{enumerate}
        \item Suppose that \( \tors E ( \cE ) \) is non-zero and is a direct sum of copies of \( \cO _{ C } ( a ) \). Then \( \ell ( T _{ a } ( \cE ) ) < \ell ( \cE ) \) and \( T _{ a } ( \cB ) \) is isomorphic to either \( \cB \) or \( \cB ( C ) \); in particular, it is a vector bundle.

        \item Suppose that \( \tors E ( \cE ) = 0 \), so that \( \tors E ( \cE ^{ \vee } ) \) is non-zero by \pref{lm:a sufficient condition for cE to be a vector bundle}. Suppose that it is a direct sum of copies of \( \cO _{ C } ( a ' ) \). Then \( \ell ( T _{ - a ' - 3 } ( \cE ) ) < \ell ( \cE ) \) and \( T _{ - a ' - 3 } ( \cB ) \) is isomorphic to either \( \cB \) or \( \cB ( C ) \); in particular, it is a vector bundle.
    \end{enumerate}
    In particular, there is \( c = c ( \cE ) \in \bZ \) which depends only on \( \cE \), independent of \( \cB \) in particular, such that
    \(
        \ell ( T _{ c } \cE ) < \ell ( \cE )
    \)
    and
    \(
        T _{ c } \cB
    \)
    is isomorphic to either \( \cB \) or \( \cB ( C ) \).
\end{corollary}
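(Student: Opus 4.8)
The plan is to deduce the corollary by combining \pref{th:decrease the length by appropriate twist} with the one-step reduction results \pref{lm:reduction of length} and \pref{lm:reduction of length for sheaves}, reducing the case \( \tors E ( \cE ) = 0 \) to the case \( \tors E ( \cE ) \neq 0 \) by means of the derived dual. The length inequalities themselves require no new argument: if \( \tors E ( \cE ) \simeq \cO _{ C } ( a ) ^{ \oplus d } \) with \( d > 0 \), then \( a \) is precisely the twist index of the first bullet of \pref{th:decrease the length by appropriate twist}, whence \( \ell ( T _{ a } \cE ) < \ell ( \cE ) \); and if \( \tors E ( \cE ) = 0 \), then since \( \ell ( \cE ) > 0 \) the object \( \cE \) is not a shift of a vector bundle by \pref{lm:cE is a vector bundle iff ell = 0}, so \( \tors E ( \cE ^{ \vee } ) \simeq \cO _{ C } ( a ' ) ^{ \oplus d ' } \neq 0 \) for \( a ' = a ( \cE ^{ \vee } ) \) by \pref{cr:E(cE vee) has non-trivial torsion}, and the second bullet of \pref{th:decrease the length by appropriate twist} gives \( \ell ( T _{ - a ' - 3 } \cE ) < \ell ( \cE ) \). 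Thus the only substantial point is that the corresponding twist carries the vector bundle \( \cB \) to a vector bundle isomorphic to \( \cB \) or \( \cB ( C ) \).

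Consider first the case \( \tors E ( \cE ) \neq 0 \). If \( \cE \) is isomorphic to a shift of a sheaf, then \( \cE [ i _{ 0 } ] \) (with \( i _{ 0 } = i _{ 0 } ( \cE ) \)) is an exceptional \emph{sheaf} and \( ( \cB, \cE [ i _{ 0 } ] ) \) is again an exceptional pair; being indecomposable, \( \cE [ i _{ 0 } ] \) has vanishing ``torsion part'' \( T \) in the decomposition \( \cH ^{ 0 } ( \cE [ i _{ 0 } ] ) \simeq E \oplus T \) of \pref{lm:exceptional sheaf is a direct summand}, so \( \tors ( \cE [ i _{ 0 } ] ) = \tors E \simeq \cO _{ C } ( a ) ^{ \oplus d } \), and \pref{lm:reduction of length for sheaves} yields \( T _{ a } \cB \simeq \cB \) or \( \cB ( C ) \) (an assertion unaffected by the shift). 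If instead \( \cE \) is not a shift of a sheaf, then \( E ( \cE ) \) is not torsion free, so \pref{lm:reduction of length} applies directly and again \( T _{ a } \cB \simeq \cB \) or \( \cB ( C ) \).

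For the case \( \tors E ( \cE ) = 0 \) I would pass to the derived dual. Because \( {} ^{ \vee } \) reverses the order of an exceptional pair and preserves the vanishing \( \RHom _{ \hirzebruchtwo } ( \cE, \cB ) = 0 \), the pair \( ( \cE ^{ \vee }, \cB ^{ \vee } ) \) is exceptional, with \( \cB ^{ \vee } \) a vector bundle now appearing as the \emph{second} object and \( \cE ^{ \vee } \) in the situation of the first case, \( \tors E ( \cE ^{ \vee } ) \simeq \cO _{ C } ( a ' ) ^{ \oplus d ' } \). The plan is to first establish the ``mirror'' analogues of \pref{lm:reduction of length} and \pref{lm:reduction of length for sheaves} for an exceptional pair \( ( \cF, \cV ) \) whose vector bundle \( \cV \) is the second object — proved by the same spectral-sequence computations with the two entries of \( \RHom \) interchanged, Serre duality being used to rewrite \( \Ext _{ \hirzebruchtwo } ^{ p } ( \cV, \cH ^{ q } ( \cF ) ) \) as \( \Ext _{ \hirzebruchtwo } ^{ 2 - p } ( \cH ^{ q } ( \cF ), \cV \otimes \omega _{ \hirzebruchtwo } ) ^{ \vee } \) — whose conclusion is that \( T ' _{ a ' + 1 } ( \cB ^{ \vee } ) \) is a vector bundle isomorphic to \( \cB ^{ \vee } \) or \( \cB ^{ \vee } ( - C ) \). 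Taking \( {} ^{ \vee } \) and using \( ( T _{ - a ' - 3 } ( - ) ) ^{ \vee } \simeq T ' _{ a ' + 1 } ( ( - ) ^{ \vee } ) \) from \pref{lm:spherical twist and dual} \eqref{eq:dual and spherical twists} together with \eqref{eq:double dual}, one obtains \( T _{ - a ' - 3 } \cB \simeq ( T ' _{ a ' + 1 } \cB ^{ \vee } ) ^{ \vee } \simeq \cB \) or \( \cB ( C ) \).

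Finally, for the last assertion, put \( c ( \cE ) \coloneqq a ( \cE ) \) if \( \tors E ( \cE ) \neq 0 \) and \( c ( \cE ) \coloneqq - a ( \cE ^{ \vee } ) - 3 \) otherwise (the latter being well defined by \pref{cr:E(cE vee) has non-trivial torsion}, and the two recipes agreeing where both apply by the remark after \pref{th:decrease the length by appropriate twist}); this depends only on \( \cE \), and the two displayed conclusions are exactly the two cases above. The step I expect to be the main obstacle is the careful formulation and proof of the mirror versions of \pref{lm:reduction of length} and \pref{lm:reduction of length for sheaves} — that is, checking that the restriction-to-\( C \) bookkeeping in their proofs really does dualize — and, relatedly, pinning down the twist index under the derived dual so that the output is \( \cB \) or \( \cB ( C ) \) and not \( \cB ( - C ) \).
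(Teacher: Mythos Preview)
Your treatment of case~(1) and the length inequalities matches the paper's proof exactly. The divergence is in case~(2). You propose to dualize to the pair \((\cE^{\vee},\cB^{\vee})\), where the vector bundle sits in the \emph{second} slot, and then prove ``mirror'' analogues of \pref{lm:reduction of length} and \pref{lm:reduction of length for sheaves} for that configuration; you correctly identify this as the main obstacle. The paper avoids this obstacle entirely: instead of \((\cE^{\vee},\cB^{\vee})\) it uses the pair
\[
    \bigl(\cB^{\vee}(K_{\hirzebruchtwo}),\ \cE^{\vee}\bigr),
\]
which is again exceptional (Serre duality turns \(\RHom(\cE,\cB)=0\) into \(\RHom(\cE^{\vee},\cB^{\vee}(K_{\hirzebruchtwo}))=0\)) and has the vector bundle back in the \emph{first} slot. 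One then applies case~(1) verbatim to conclude that \(T_{a'}(\cB^{\vee}(K_{\hirzebruchtwo}))\) is \(\cB^{\vee}(K_{\hirzebruchtwo})\) or \(\cB^{\vee}(K_{\hirzebruchtwo}+C)\), and a short chain of identities (\pref{lm:conjugation of spherical twist}, \eqref{eq:dual and spherical twists}, \eqref{eq:square of inverse twists as square of twists and O(-2C)}) unwinds this to \(T_{-a'-3}(\cB)\simeq\cB\) or \(\cB(C)\). This buys you case~(2) for free from case~(1), with no new spectral-sequence analysis; your route would work in principle but requires redoing the bookkeeping of both lemmas in the mirrored setting, and your asserted output \(T'_{a'+1}(\cB^{\vee})\simeq\cB^{\vee}\) or \(\cB^{\vee}(-C)\) is at present reverse-engineered rather than derived.
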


\begin{proof}
    The assertions on the lengths are already proven in \pref{th:decrease the length by appropriate twist}.

    Let us first assume \( \tors E ( \cE ) \ne 0 \). The case where \( \cE \) is not isomorphic to a shift of a sheaf is settled in \pref{lm:reduction of length}. The case \( \cE \) is isomorphic to a shift of a sheaf is settled in \pref{lm:reduction of length for sheaves} (we may assume \( i _{ 0 } = 0 \), without loss of generality).
    
If
\(
    \tors E ( \cE )
    =
    0
\),
we reduce the proof to the first case.
In fact, it follows from \pref{lm:a sufficient condition for cE to be a vector bundle} that
\(
    \tors E ( \cE ^{ \vee }) \ne 0
\).
Note that
\begin{align}\label{eq:dual pair}
    \left( \cB ^{ \vee } ( K _{ \hirzebruchtwo } ), \cE ^{ \vee } \right)
\end{align}
is also an exceptional pair such that the first component is a vector bundle and
\(
    \ell ( \cE ^{ \vee } ) \stackrel{\text{\pref{lm:length is invariant under dual}}}{=} \ell ( \cE ) > 0
\).
Suppose that
\(
    \tors E ( \cE ^{ \vee } )
\)
is a direct sum of copies of \( \cO _{ C } ( a ' ) \).
Then, by applying the conclusion of the previous paragraph to the pair \eqref{eq:dual pair}, it follows that
\(
    T _{ a '} ( \cB ^{ \vee} ( K _{ \hirzebruchtwo } ) )
\)
is isomorphic to either \(\cB ^{ \vee} ( K _{ \hirzebruchtwo } )\) or \(\cB ^{ \vee} ( K _{ \hirzebruchtwo } + C ) \). Then from the following computation we see that \(T _{ - 3 - a '} ( \cB )\) is isomorphic to either \( \cB \) or \( \cB ( C ) \).
\begin{align}
    T _{ a '} ( \cB ^{ \vee} ( K _{ \hirzebruchtwo } ) )
    \stackrel{\text{\pref{lm:conjugation of spherical twist}}}{\simeq}
    \left( T _{ a ' } ( \cB ^{ \vee} ) \right) ( K _{ \hirzebruchtwo } )
    \stackrel{\text{\pref{lm:spherical twist and dual} \eqref{eq:dual and spherical twists}}}{\simeq}
    \left( T ' _{ - 2 - a ' } ( \cB ) \right) ^{ \vee } ( K _{ \hirzebruchtwo } )\\
    \stackrel{\eqref{eq:square of inverse twists as square of twists and O(-2C)}}{\simeq}
    \left(T _{ - 1 - a '} \left( \cB ( - C ) \right)\right) ^{ \vee } ( K _{ \hirzebruchtwo } )
    \stackrel{\text{\pref{lm:conjugation of spherical twist}}}{\simeq}
    \left(T _{ - 3 - a '} ( \cB )\right) ^{ \vee } ( K _{ \hirzebruchtwo } + C )
\end{align}
\end{proof}

\begin{theorem}\label{th:twisting exceptional collection to vector bundles}
For any
\(
    N \in \bZ
\)
with
\(
    1 \le N \le 4
\)
and any exceptional collection
\(
    \cEbar
    =
    ( \cE _{ 1 }, \cE _{ 2 }, \dots, \cE _{ N } )
    \in
    \ec _{ N } ( \hirzebruchtwo )
\),
there exists a product of spherical twists of the form
\(
    T _{ a }
\)
for some
\(
    a \in \bZ
\),
denoted by
\( b \)
such that
\(
    b ( \cEbar )
    \in
    \bZ ^{ N } \cdot \ecvb _{ N } ( \hirzebruchtwo )
\).
\end{theorem}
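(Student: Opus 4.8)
The plan is to reduce the collection to one consisting of vector bundles up to shift by processing its objects $\cE_1,\dots,\cE_N$ one at a time, from left to right, at each stage applying an autoequivalence of the form $T_a$ to the \emph{entire} collection. By the action \eqref{eq:auteq(f) acting on fec(f)} such a twist keeps the collection exceptional, and since every $T_a$ commutes with the $\bZ^N$-action of shifts, it is harmless at any point to replace a member that is already a shift of a vector bundle by an honest vector bundle.

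\textbf{The induction.} For the leftmost object I would iterate the twist supplied by \pref{th:decrease the length by appropriate twist}: writing $c(\cE)\in\bZ$ for the integer $a(\cE)$ (resp. $-a(\cE^\vee)-3$) used there when $\tors E(\cE)\ne 0$ (resp. $\tors E(\cE)=0$), we have $\ell(T_{c(\cE)}\cE)<\ell(\cE)$ whenever $\ell(\cE)>0$; as $\ell$ takes values in $\bZ_{\ge 0}$, finitely many applications of twists of this form to the collection bring $\cE_1$ to length $0$, hence to a shift of an exceptional vector bundle by \pref{lm:cE is a vector bundle iff ell = 0}. Suppose inductively that $\cE_1,\dots,\cE_{j-1}$ are shifts of vector bundles; after normalizing, assume they are honest vector bundles. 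If $\ell(\cE_j)=0$ there is nothing to do; otherwise, for every $i<j$ the pair $(\cE_i,\cE_j)$ is an exceptional pair whose first term is a vector bundle, so \pref{cr:decrease ell while preserving the vector bundle} applies to it. The crucial point is that the integer $c=c(\cE_j)$ produced there depends only on $\cE_j$; hence the \emph{single} autoequivalence $T_c$ simultaneously strictly decreases $\ell(\cE_j)$ and carries each of $\cE_1,\dots,\cE_{j-1}$ to either itself or its twist by $\cO_{\hirzebruchtwo}(C)$, in either case still a vector bundle. Iterating $T_c$, with $c$ recomputed from the current $j$-th object at each step, drives $\ell(\cE_j)$ to $0$ in finitely many steps while preserving the vector bundles in positions $1,\dots,j-1$; thus $\cE_j$ too becomes a shift of a vector bundle. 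Carrying this out for $j=1,\dots,N$ produces a product $b$ of twists $T_a$ with $b(\cEbar)\in\bZ^N\cdot\ecvb_N(\hirzebruchtwo)$, as desired. (One could equally handle the first object by the argument behind \pref{th:exceptional objects are equivalent to vector bundles}.)

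\textbf{Where the difficulty lies.} The substantive input — that one twist can lower the torsion length of an object while respecting the vector bundle immediately to its left — is precisely \pref{th:decrease the length by appropriate twist} together with \pref{cr:decrease ell while preserving the vector bundle}, both already established; the argument above is essentially bookkeeping. The points requiring care are (i) that the twisting parameter $c(\cE_j)$ is genuinely independent of the bundle against which it is tested, so that a single $T_{c(\cE_j)}$ does not destroy any of the bundles $\cE_1,\dots,\cE_{j-1}$ already produced, and (ii) that each $T_a$ commutes with the shift action, so the repeated normalizations to honest vector bundles are legitimate and the final $b$ remains a product of twists of the prescribed form. Note that the hypothesis $N\le 4$ plays no role in the argument.
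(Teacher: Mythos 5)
Your proposal is correct and follows essentially the same route as the paper: the paper runs an induction on $N$ (first making $\cE_1,\dots,\cE_{N-1}$ into vector bundles, the base case being \pref{th:exceptional objects are equivalent to vector bundles}, then iterating $T_{c(\cE_N)}$ via \pref{cr:decrease ell while preserving the vector bundle}), which is the same left-to-right processing you describe, resting on the same key point that $c(\cE_j)$ depends only on $\cE_j$ and so a single twist preserves all the bundles already produced.
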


\begin{proof}
Without loss of generality, we may and will assume
\(
    i _{ 0 } ( \cE _{ i } ) = 0
\)
for all
\(
    i = 1, \dots, N
\).
We prove the assertion by an induction on \( N \).

The case when \( N = 1 \) is nothing but \pref{th:exceptional objects are equivalent to vector bundles}.
Consider the case when \( N > 1 \). By applying the induction hypothesis to the subcollection
\(
    \left( \cE _{ 1 }, \dots, \cE _{ N - 1 } \right)
\),
we may and will assume that these are already vector bundles, say,
\(
    \left( F _{ 1 }, \dots, F _{ N - 1 } \right)
\).
Suppose \( \cE _{ N } \) is not a vector bundle; i.e., \( \ell ( \cE _{ N } ) > 0 \). Otherwise there is nothing to show.
In this case, since \( ( F _{ i }, \cE _{ N } ) \in \ec _{ 2 } ( \hirzebruchtwo ) \) for each \( 1 \le i \le N - 1 \), if we take \( c = c ( \cE _{ N } )\) as in \pref{cr:decrease ell while preserving the vector bundle}, then 
\(
    T _{ c } F _{ i }
\)
remains to be a vector bundle for all
\(
    i = 1, \dots, N - 1 
\)
and it holds that
\(
    \ell ( T _{ c } \cE _{ N } )
    <
    \ell ( \cE _{ N } )
\).
By repeating this process until \( \ell ( \cE _{ N } ) \) reaches \( 0 \), we achieve our goal.
\end{proof}

The constructibility for exceptional collections consisting of vector bundles is shown in \cite[Theorem 3.1.8.2]{MR1604186} for a class of weak del Pezzo surfaces. Though \( \hirzebruchtwo\) is not contained in the class, we can deduce the same assertion for \(\hirzebruchtwo\) from it:

\begin{theorem}\label{th:Constructibility for bundle collections}
Any exceptional collection on \( \hirzebruchtwo \) consisting of vector bundles can be extended to a full exceptional collection.
\end{theorem}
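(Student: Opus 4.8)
The plan is to deduce \pref{th:Constructibility for bundle collections} from the constructibility of exceptional collections of vector bundles on the del Pezzo surface \quadric --- which is a special case of Kuleshov's \cite[Theorem~3.1.8.2]{MR1604186}, since \quadric \emph{does} belong to his class --- by transporting the problem along the degeneration \( f \colon \cX \to \Spec R \) of \eqref{eq:formal degenerating family}, whose central fibre is \hirzebruchtwo and whose generic fibre \( \cX _{ \xi } \) is \quadric. Concretely: given an exceptional collection of vector bundles \( \cEbar = ( \cE _{ 1 }, \dots, \cE _{ N } ) \) on \hirzebruchtwo with \( N \le 4 \), I would first deform it, by \pref{lm:exceptional collection deforms}, to an \( f \)-exceptional collection of vector bundles \( \cEbar _{ R } \) on \( \cX \), and restrict it to \( \cX _{ \xi } \) to obtain the exceptional collection of vector bundles \( \gen ( \cEbar ) = ( \gen ( \cE _{ 1 } ), \dots, \gen ( \cE _{ N } ) ) \) on \( \cX _{ \xi } \simeq \quadric \) (\pref{cr:base change of exceptional collections}).

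Next, by \pref{th:properties of exceptional collections of Sigma0} \eqref{it:Constructibility for Sigma0} and \eqref{it:fec=fecvb}, one extends \( \gen ( \cEbar ) \) to a full exceptional collection of vector bundles \( ( \gen ( \cE _{ 1 } ), \dots, \gen ( \cE _{ N } ), \cG _{ N + 1 }, \dots, \cG _{ 4 } ) \in \fecvb ( \cX _{ \xi } ) \). Phrased through semiorthogonal decompositions: the \( \Spec R \)-linear admissible subcategory \( \cA \subseteq \Perf \cX \) generated by \( \cEbar _{ R } \) (see \pref{lm:f-exceptional object and semiorthogonal decomposition}) has an \( \Spec R \)-linear admissible orthogonal complement \( \cB \coloneqq \cA ^{ \perp } \) whose generic fibre \( \cB _{ \xi } \) is generated by an exceptional collection of length \( 4 - N \) (base change of \( \Spec R \)-linear semiorthogonal decompositions, in the style of \pref{lm:base change}). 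The goal then becomes to produce, in the \emph{central} fibre \( \cB _{ 0 } = \langle \cEbar \rangle ^{ \perp } \subseteq \derived ( \hirzebruchtwo ) \), an exceptional collection \( ( \cH _{ N + 1 }, \dots, \cH _{ 4 } ) \) of length \( 4 - N \): indeed, since base change of \( \Perf \cX = \langle \cA, \cB \rangle \) to the central fibre gives \( \derived ( \hirzebruchtwo ) = \langle \langle \cEbar \rangle, \cB _{ 0 } \rangle \), the collection \( ( \cE _{ 1 }, \dots, \cE _{ N }, \cH _{ N + 1 }, \dots, \cH _{ 4 } ) \) would then automatically be a full exceptional collection extending \( \cEbar \).

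The main obstacle is precisely this last descent, because objects --- and semiorthogonal decompositions --- defined on the generic fibre of a family over a discrete valuation ring do not in general extend over the central fibre; this is exactly the non-separatedness of the moduli of semiorthogonal decompositions of \cite{2020arXiv200203303B} that is exploited elsewhere in the paper, so one cannot simply ``pull back'' the bundles \( \cG _{ j } \) from \quadric. What rescues the argument is twofold: first, \( f \) is isotrivial away from the origin, so its restriction over \( \Spec R \setminus \{ 0 \} \) is the constant family, and since an exceptional collection of vector bundles on a del Pezzo surface is determined over the field by its class in \( K _{ 0 } \) (Kuleshov--Orlov \cite{MR1286839}), both \( \gen ( \cEbar ) \) and the complement \( \cB _{ \xi } \) already descend to \quadric over \( \bfk \), where constructibility applies directly; second, exceptional objects are unobstructed (\( \Ext ^{ 1 } = \Ext ^{ 2 } = 0 \)), which --- via Nakayama's lemma over \( R \), used exactly as in the proofs of \pref{lm:exceptional collection deforms} and \pref{rm:deformation of full collection is full} --- lets one spread the generators \( \cG _{ N + 1 }, \dots, \cG _{ 4 } \) out to \( f \)-exceptional objects of \( \cB \) whose restrictions to the central fibre furnish the desired \( \cH _{ j } \), and promotes the generically valid vanishings of the relevant \( \bR f _{ \ast } \bR \cHom \)-sheaves to honest vanishings over all of \( \Spec R \) (so that \( f \)-exceptionality and semiorthogonality hold over \( R \), not merely generically). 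As a more hands-on alternative, one may instead invoke the surjectivity of \( \gen \) in \pref{cr:gen is surjective} to lift the full collection on \quadric to some \( \cFbar \in \ec _{ 4 } ( \hirzebruchtwo ) \) whose first \( N \) terms share the \( K _{ 0 } \)-classes of \( \cE _{ 1 }, \dots, \cE _{ N } \), then use \pref{cr:exceptional objects in the same numerical class} to put \( \cE _{ 1 }, \dots, \cE _{ N } \) in place of those first \( N \) terms; the delicate point there is that one must verify the resulting sequence is still semiorthogonal, i.e.\ that vanishing of the pertinent Euler characteristics upgrades to vanishing of \( \RHom \), which again reduces to the rigidity of the objects in play.
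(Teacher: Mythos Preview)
Your approach is genuinely different from the paper's, and unfortunately both of the strategies you sketch contain real gaps.

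\textbf{What the paper does.} The paper does not use the degeneration to \(\quadric\) at all here. Instead it blows up \(\hirzebruchtwo\) at a point away from \(C\) to obtain a weak del Pezzo surface \(Y\) which \emph{does} satisfy the hypotheses of \cite[Theorem~3.1.8.2]{MR1604186} (it is a two-point blowup of \(\bP^{2}\) with \(|-K_{Y}|\) base-point free and \(K_{Y}^{2}=7\)). The given collection \(\cEbar\) is pulled back, the object \(\cO_{E}(-1)\) is adjoined, and after a single mutation one has a collection of vector bundles on \(Y\). Kuleshov's theorem extends it to a full collection on \(Y\); mutating \(\cO_{E}(-1)\) back to the front forces the extra objects to lie in \({}^{\perp}\langle \cO_{E}(-1)\rangle \simeq \derived(\hirzebruchtwo)\), and they descend. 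The argument is short and never leaves the realm of honest sheaves.

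\textbf{Why your first approach fails.} You want to extend the objects \(\cG_{N+1},\dots,\cG_{4}\) from the generic fibre \(\cX_{\xi}\) across the central fibre. But the deformation theory you invoke (unobstructedness, \pref{lm:exceptional collection deforms}) runs in the opposite direction: it lifts objects \emph{from the closed point} through infinitesimal thickenings. There is no analogous mechanism producing extensions from an open subscheme, and the non-separatedness of the moduli of semiorthogonal decompositions you cite is precisely the statement that such extensions, when they exist, are not unique --- it says nothing about existence. Knowing that the \(\cG_{j}\) descend to \(\quadric\) over \(\bfk\) (via isotriviality) gives you bundles on every fibre over \(\bG_{m}\); it does not give you anything on the central fibre \(\hirzebruchtwo\), which is a different surface.

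\textbf{Why your second approach fails.} Having lifted the full collection on \(\quadric\) to some \(\cFbar=(\cF_{1},\dots,\cF_{4})\in\ec_{4}(\hirzebruchtwo)\) with \([\cF_{i}]=[\cE_{i}]\) for \(i\le N\), you propose to replace \(\cF_{1},\dots,\cF_{N}\) by \(\cE_{1},\dots,\cE_{N}\). But equality in \(K_{0}\) only gives \(\chi(\cF_{j},\cE_{i})=0\), not \(\RHom(\cF_{j},\cE_{i})=0\); rigidity of the individual objects does not upgrade this. The paper's own \pref{eg:the counter example} exhibits exactly this failure: an exceptional pair whose \(K_{0}\)-classes match those of a pair of line bundles which is \emph{not} semiorthogonal. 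Alternatively, if you try to apply a single \(b\in\btriv\) to \(\cFbar\) to make \(b(\cF_{i})\simeq\cE_{i}\) for all \(i\le N\) simultaneously, you run into the fact that \pref{cr:exceptional objects in the same numerical class} supplies a separate \(b_{i}\) for each \(i\), with no reason for them to agree; indeed the paragraph before \pref{eg:the counter example} explains that one cannot in general take \(b\in\btriv\) in \pref{th:twisting exceptional collection to vector bundles}.
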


\begin{proof}
Let
\begin{align}
    \pi \colon Y \to \hirzebruchtwo
\end{align}
be the blowup of \( \hirzebruchtwo \) in a point outside of the curve \( C \).
Let
\( E \subset Y \)
be the exceptional curve.

Take
\(
    \cEbar
    \coloneqq
    \left( \cE _{ 1 }, \dots, \cE _{ N } \right) \in \ecvb _{ N } ( \hirzebruchtwo )
\),
so that
\(
    \left( \cO _{ E } ( - 1 ), \pi ^{ \ast } \cEbar\right) \in \ec _{ N + 1 } ( Y ).
\)
Write
\(
    F \coloneqq \pi ^{ \ast } \cE _{ 1 }
\), and consider the right mutation
\(
    R _{ F } \cO _{ E } ( - 1 )
\).
We claim that this is an exceptional vector bundle.

To see this, note first that the semiorthogonality
\(
    \RHom _{ Y } ( F, \cO _{ E } ( - 1 ) ) = 0
\)
implies
\(
    F | _{ E }
    \simeq
    \cO _{ E } ^{ \oplus r }
\),
where
\(
    r = \rank F
\).
Thus we obtain the following short exact sequence.

\begin{align}
    0
    \to
    F ^{ \oplus r }
    \to
    R _{ F } \cO _{ E } ( - 1 )
    \to
    \cO _{ E } ( - 1 )
    \to
    0.
\end{align}
Suppose for a contradiction that the torsion part of the exceptional sheaf
\(
    R _{ F } \cO _{ E } ( - 1 )
\)
is nontrivial. Then it should map injectively to \( \cO _{ E } ( - 1 ) \). It then implies that the canonical morphism from the locally free sheaf
\(
    F ^{ \oplus r }
\)
to the torsion free part of
\(
    R _{ F } \cO _{ E } ( - 1 )
\)
is both injective and surjective in codimension 1, which hence is an isomorphism. This contradicts the indecomposability of
\(
    R _{ F } \cO _{ E } ( - 1 )
\).

Thus we have obtained
\(
    \left( F, R _{ F } \cO _{ E } ( - 1 ),
    \pi ^{ \ast } \cE _{ 2 },
    \dots,
    \pi ^{ \ast } \cE _{ N }\right)
    \in
    \ecvb _{ N + 1 } ( Y )
\).
By \cite[Theorem~3.1.8.2]{MR1604186}, it extends to a full exceptional collection on \( Y \) (note that \(Y\) is a weak del Pezzo surface obtained by blowing up \( \bP ^{ 2 } \) first in a point and then in a point on the \((-1)\)-curve, (hence) that \( | - K_{Y} | \) is base point free and \( K _{ Y } ^{ 2 } = 7 > 1 \)). By applying the left mutation again, we obtain a full exceptional collection
\(
    \left(
    \cO _{ E } ( - 1 ),
    \pi ^{ \ast } \cE _{ 1 },
    \dots,
    \pi ^{ \ast } \cE _{ N },
    \cE ' _{ N + 1 },
    \dots,
    \cE ' _{ 4 }
    \right)
    \in
    \fec ( Y )
\).
Since
\(
    \cE ' _{ N + 1 },
    \dots,
    \cE ' _{ 4 } \in {} ^{\perp} \langle \cO _{ E } ( - 1 ) \rangle
\),
there are some objects
\(
    \cE _{ N + 1 }, \dots, \cE _{ 4 }
    \in
    \derived ( \hirzebruchtwo )
\)
such that
\(
    \cE ' _{ N + 1 } \simeq \bL \pi ^{ \ast } \cE _{ N + 1 },
    \dots,
    \cE ' _{ 4 } \simeq \bL \pi ^{\ast } \cE _{ 4 }
\).
Since
\( \bL \pi ^{ \ast } \colon \derived ( \hirzebruchtwo ) \to
{} ^{\perp} \langle \cO _{ E } ( - 1 ) \rangle
\)
is an equivalence, we see that
\(
    \cE _{1}, \dots, \cE _{ N }, \cE _{ N + 1 }, \dots, \cE _{ 4 }
\)
is a full exceptional collection of \(\derived ( \hirzebruchtwo  )\).
\end{proof}

We finally obtain the following constructibility theorem for \( \hirzebruchtwo \).
\begin{corollary}\label{cr:constructibility}
Any exceptional collection on \( \hirzebruchtwo \) can be extended to a full exceptional collection.
\end{corollary}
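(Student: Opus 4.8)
The plan is to bootstrap \pref{cr:constructibility} from the two substantive results of this section, namely \pref{th:twisting exceptional collection to vector bundles}, which moves an arbitrary exceptional collection to one consisting of shifts of vector bundles by a product of spherical twists, and \pref{th:Constructibility for bundle collections}, which settles the constructibility for collections of vector bundles. The point is that the property of being extendable to a full exceptional collection is invariant under (i) applying an autoequivalence of \(\derived(\hirzebruchtwo)\), since an autoequivalence carries exceptional collections to exceptional collections and sends a generating set to a generating set, and (ii) shifting the members of a collection individually, since \(\RHom(X[a], Y[b]) \simeq \RHom(X, Y)[b - a]\) shows this preserves exceptionality of each object and the semiorthogonality relations, while leaving the generated triangulated subcategory unchanged. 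Granting these two invariances, the two cited theorems combine to give the general statement.

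Concretely, I would argue as follows. Let \(\cEbar = (\cE_1, \dots, \cE_N)\) be an exceptional collection on \(\hirzebruchtwo\). The classes of its members form an upper-triangular unipotent Gram matrix with respect to the Euler pairing, hence are linearly independent in \(\kgr{\hirzebruchtwo}\), so \(N \le \rank \kgr{\hirzebruchtwo} = 4\) and \pref{th:twisting exceptional collection to vector bundles} applies: there is \(b \in B\), a product of spherical twists, with \(b(\cEbar) \in \bZ^N \cdot \ecvb_N(\hirzebruchtwo)\), i.e.\ \(b(\cEbar) = (F_1[a_1], \dots, F_N[a_N])\) for exceptional vector bundles \(F_i\) with \((F_1, \dots, F_N) \in \ecvb_N(\hirzebruchtwo)\) and integers \(a_i\). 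By \pref{th:Constructibility for bundle collections}, the bundle collection \((F_1, \dots, F_N)\) extends to a full exceptional collection \((F_1, \dots, F_N, F_{N+1}, \dots, F_4)\) of \(\derived(\hirzebruchtwo)\). Re-applying the shifts to the first \(N\) members, \((F_1[a_1], \dots, F_N[a_N], F_{N+1}, \dots, F_4)\) is still a full exceptional collection and it extends \(b(\cEbar)\). Finally, applying the autoequivalence \(b^{-1}\) produces the full exceptional collection \((\cEbar, b^{-1}F_{N+1}, \dots, b^{-1}F_4)\), which extends \(\cEbar\), as required.

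Essentially all of the real work has already been done, so no genuinely new obstacle remains for \pref{cr:constructibility}; the deduction above is purely formal. For the record, the two nontrivial inputs are \pref{th:twisting exceptional collection to vector bundles}, whose proof rests on the a priori surprising fact \pref{cr:decrease ell while preserving the vector bundle} that the length-reducing twist \(T_{c(\cE)}\) keeps a vector-bundle partner a vector bundle, and \pref{th:Constructibility for bundle collections}, which transports Kuleshov's constructibility result for vector-bundle collections on suitable weak del Pezzo surfaces \cite[Theorem~3.1.8.2]{MR1604186} to \(\hirzebruchtwo\) by passing to a one-point blowup outside \(C\). The only mild care needed in the present argument is the invariance of extendability under individual shifts and under autoequivalences, which I would record explicitly but which is immediate.
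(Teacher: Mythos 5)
Your proposal is correct and follows essentially the same route as the paper: twist to a (shifted) vector-bundle collection via \pref{th:twisting exceptional collection to vector bundles}, extend via \pref{th:Constructibility for bundle collections}, and transport back with \(b^{-1}\). The extra remarks about invariance under shifts and autoequivalences, and the bound \(N\le 4\), are harmless elaborations of what the paper leaves implicit.
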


\begin{proof}
For any exceptional collection
\(
    \cEbar \in \ec _{ N } ( \hirzebruchtwo )
\),
by \pref{th:twisting exceptional collection to vector bundles}, there exists
\(
    b \in B
\)
such that
\(
    b \left( \cEbar \right) \in \bZ ^{ N } \cdot \ecvb _{ N } ( \hirzebruchtwo )
\).
Then by \pref{th:Constructibility for bundle collections},
\(
    b \left( \cEbar \right)
\)
can be extended to a full exceptional collection on \( \hirzebruchtwo \).
Applying \( b ^{ - 1 }\) to the extended collection, one obtains the desired full exceptional collection which extends
\(
    \cEbar
\).
\end{proof}

\begin{remark}
Contrary to \pref{th:exceptional object is K0-trivial equivalent to vector bundle}, an exceptional collection (consisting of  objects of \(\rank > 0\)) of length at least \(2\) is not necessarily numerically equivalent to an exceptional collection of vector bundles. Namely, for each \( N = 2, 3, 4, \) there is an exceptional collection
\(
    ( \cE _{ 1 }, \dots, \cE _{ N } ) \in \ec _{ N } ( \hirzebruchtwo )
\)
of length \(N\) and \( \rank \cE _{ i } > 0 \) for \( i = 1, \dots, N \) for which there is no exceptional collection of vector bundles
\(
    ( F _{ 1 }, \dots, F _{ N } ) \in \ecvb _{ N } ( \hirzebruchtwo )
\)
such that
\(
    [ \cE _{ i } ] = [ F _{ i } ] \in \kgr{ \hirzebruchtwo }
\)
for \( i = 1, \dots, N \).
\pref{eg:the counter example} below is such an example for \( N = 2 \). Examples for \( N = 3, 4 \) are obtained by extending examples of length \(2\) by \pref{cr:constructibility}.
For these exceptional collections, in particular, in \pref{th:twisting exceptional collection to vector bundles} one can not take \( b \) from \( \btriv \). This is in contrast to \pref{th:from numerically standard to standard}.
\end{remark}

\begin{example}\label{eg:the counter example}
Consider the following exceptional pair.
$$
    \cEbar
    \coloneqq
    ( \cE _{ 1 }, \cE _{ 2 } )
    \coloneqq
    T_{-1} \left( \cO _{ \hirzebruchtwo }, \cO _{ \hirzebruchtwo } ( C + 4 f ) \right)
    \in \ec_2(\hirzebruchtwo).
$$
Then there is no exceptional pair of vector bundles
\(
    ( F _{ 1 }, F _{ 2 } ) \in \ecvb _{ 2 } ( \hirzebruchtwo )
\)
such that
\(
    [ \cE _{ i } ] = [ F _{ i }] \in \kgr{ \hirzebruchtwo }
\)
for
\(
    i = 1, 2
\)
for the following reason.

Note first that \( \cE _{ 1 } \simeq \cO _{ \hirzebruchtwo } \) and \( \cE _{ 2 } \) is a sheaf with $\tors \cE _{ 2 }=\cO_C(-2)$ by \cite[Theorem 1.4]{MR3431636}. Suppose that there is a pair \( ( F _{ 1 }, F _{ 2 } ) \) as above.
Then $ F _{ 1 } \simeq \cO _{ \hirzebruchtwo } $ by \pref{lm:Lemma 3.5 of [OU]}, and since
$$
    [ F _{ 2 } ] = [\cE _{ 2 }]=[\cO _{ \hirzebruchtwo } (C+4f)]-\chi (\cO _{ \hirzebruchtwo } (C+4f),\cO_C(-1)) \left[ \cO_C(-1) \right] = [ \cO _{ \hirzebruchtwo } (3C+4f) ],
$$
it follows that \( F _{ 2 } \simeq \cO _{ \hirzebruchtwo } ( 3 C + 4 f ) \) again by \pref{lm:Lemma 3.5 of [OU]}.
This, however, leads to the following contradiction.
\begin{align}
    0
    =
    \Ext _{ \hirzebruchtwo } ^{ 2 } ( F _{ 2 }, F _{ 1 } )
    =
    \Ext^2 _{ \hirzebruchtwo }  ( \cO _{ \hirzebruchtwo } (3C+4f),\cO _{ \hirzebruchtwo } )
    \neq
    0
\end{align}
\end{example}

\begin{remark}
Given an exceptional collection
\(
    \cEbar
    =
    \left( \cE _{ 1 }, \dots, \cE _{ N } \right) \in \ec _{ N } ( \hirzebruchtwo )
\)
such that \( \rank \cE _{ i } > 0 \) for all \( i \), by \pref{cr:exceptional objects in the same numerical class} \eqref{it:exceptional object is numerically equivalent to a vb} there is a unique sequence of exceptional vector bundles
\(
    F _{ 1 }, \dots, F _{ N }
\)
such that
\(
    [ \cE _{i } ]
    =
    [ F _{ i } ]
    \in \kgr{ \hirzebruchtwo }
\)
for all
\(
    i = 1, \dots, N
\).
\pref{eg:the counter example} implies that \( ( F _{ 1 }, \dots, F _{ N } ) \) is \emph{not} necessarily an exceptional collection.

It also implies that the map
\( \gen | _{ \ecvb _{ N } ( \hirzebruchtwo ) } \colon \ecvb _{ N } ( \hirzebruchtwo ) \to \ec _{ N } ( \quadric) \) (the case \( N = 4 \) appears in \pref{fg:comparison of ec4 and ecvb4}) is \emph{not} surjective for \( N = 2, 3, 4\), though it is for \( N = 1 \) by \cite[Lemma~4.6]{MR3431636} and \pref{pr:characterization of exceptional objects in the same class}.
In fact, let
\(
    \cEbar = ( \cE _{ 1 }, \dots, \cE _{ N } ) \in \ec _{ N } ( \hirzebruchtwo )
\)
be an exceptional collection (consisting of objects of \(\rank > 0 \)) which is not numerically equivalent to an exceptional collection of vector bundles. Then
\(
    \gen ( \cEbar  )
\)
is not in the image of \(\gen | _{ \ecvb _{ N } ( \hirzebruchtwo ) }\). In fact, an exceptional collection of vector bundles
\(
    \cFbar = ( F _{ 1 }, \dots, F _{ N } ) \in \ecvb _{ N } ( \hirzebruchtwo )
\)
such that
\(
    \gen ( \cFbar ) = \gen ( \cEbar ) \in \ec _{ N } ( \quadric )
\)
must satisfy
\(
    [ \cE _{ i } ] = [ F _{ i } ] \in \kgr{\hirzebruchtwo}
\)
for
\(
    i = 1, \dots, N
\),
which contradicts the choice of \( \cEbar \).
\end{remark}

Despite \pref{eg:the counter example}, by \pref{cr:btriv is generated by squares}, one can always bring an arbitrary exceptional collection (consisting of objects of \(\rank > 0\)) to an exceptional collection of vector bundles by an element of \( \btriv \) \emph{up to a twist by \( T _{ 0 }\)}.

\begin{theorem}\label{th:normal form of exceptional collections}
Let \( \cEbar = ( \cE _{ 1 }, \dots, \cE _{ N } ) \in \ec _{ N } ( \hirzebruchtwo ) \) be an exceptional collection with \( 2 \le N \le 4 \) and \( \rank \cE _{ i } > 0 \) for all \( i = 1, \dots, N \).
Then there exists
\(
    b \in \btriv
\)
such that
\(
    b ( \cEbar )
    \in
    ( 2 \bZ ) ^{ N }
    \cdot
    \left(
    \ecvb _{ N } ( \hirzebruchtwo )
    \cup
    T _{ 0 } \left( \ecvb _{ N } ( \hirzebruchtwo ) \right)
    \right)
\).
\end{theorem}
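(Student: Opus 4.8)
The plan is to feed the conclusion of \pref{th:twisting exceptional collection to vector bundles} into the normal form of \pref{cr:btriv is generated by squares} and then to control the parities of the shifts by means of the rank function. Concretely, \pref{th:twisting exceptional collection to vector bundles} supplies an element \( b _{ \ast } \in B \) with \( b _{ \ast } ( \cEbar ) \in \bZ ^{ N } \cdot \ecvb _{ N } ( \hirzebruchtwo ) \). Applying \pref{cr:btriv is generated by squares} with \( a _{ 0 } = 0 \), I would write \( b _{ \ast } \) in one of the two shapes \( \left( \cO _{ \hirzebruchtwo } ( m C ) \otimes _{ \cO _{ \hirzebruchtwo } } - \right) \circ \beta \) or \( \left( \cO _{ \hirzebruchtwo } ( m C ) \otimes _{ \cO _{ \hirzebruchtwo } } - \right) \circ T _{ 0 } \circ \beta \), where \( \beta \in \btriv \) (a product of squares \( T _{ a } ^{ \pm 2 } \)) and \( m \in \bZ \). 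Setting \( \gamma \coloneqq \left( \cO _{ \hirzebruchtwo } ( - m C ) \otimes _{ \cO _{ \hirzebruchtwo } } - \right) \circ b _{ \ast } \), so that \( \gamma = \beta \) in the first case and \( \gamma = T _{ 0 } \circ \beta \) in the second, I observe that \( \gamma ( \cEbar ) = \left( \cO _{ \hirzebruchtwo } ( - m C ) \otimes _{ \cO _{ \hirzebruchtwo } } - \right) ( b _{ \ast } ( \cEbar ) ) \) is still a \( \bZ ^{ N } \)-shift of a collection of exceptional vector bundles, because tensoring by a line bundle preserves the property of being a shifted vector bundle and commutes with shifts; I write \( \gamma ( \cE _{ i } ) \simeq \cF _{ i } [ m _{ i } ] \) with \( ( \cF _{ 1 }, \dots, \cF _{ N } ) \in \ecvb _{ N } ( \hirzebruchtwo ) \) and \( m _{ i } \in \bZ \).

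The crucial point I would establish next is that every \( m _{ i } \) is even. Here I would use that the rank function \( \rank \colon \kgr{ \hirzebruchtwo } \to \bZ \) is invariant under \( \gamma \): it is fixed by every element of \( \btriv \) by definition, and the reflection on \( \kgr{ \hirzebruchtwo } \) induced by \( T _{ 0 } \) changes a class only by a \( \bZ \)-multiple of \( [ \cO _{ C } ] \), which has rank \( 0 \) (recall \( [ T _{ 0 } E ] = [ E ] - \chi ( \cO _{ C }, E ) [ \cO _{ C } ] \) from \eqref{eq:triangle of spherical twist}). Hence \( \rank \gamma ( \cE _{ i } ) = \rank \cE _{ i } > 0 \) by the hypothesis \( \rank \cE _{ i } > 0 \), and since \( \rank \cF _{ i } \ge 1 \) the identity \( \rank \gamma ( \cE _{ i } ) = ( - 1 ) ^{ m _{ i } } \rank \cF _{ i } \) forces \( m _{ i } \) to be even. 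Thus \( ( m _{ 1 }, \dots, m _{ N } ) \in ( 2 \bZ ) ^{ N } \).

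To conclude, in the first shape I would simply take \( b \coloneqq \gamma = \beta \in \btriv \), which gives \( b ( \cEbar ) \in ( 2 \bZ ) ^{ N } \cdot \ecvb _{ N } ( \hirzebruchtwo ) \). In the second shape, where \( \gamma = T _{ 0 } \circ \beta \), I would take \( b \coloneqq T _{ 0 } ^{ 2 } \circ \beta \), which lies in \( \btriv \) since \( T _{ 0 } ^{ 2 } \in \btriv \) by \pref{lm:square of spherical twists act trivially on K0}; then from \( \beta ( \cEbar ) = T _{ 0 } ^{ - 1 } ( \gamma ( \cEbar ) ) \) one gets \( b ( \cEbar ) = T _{ 0 } ^{ 2 } ( \beta ( \cEbar ) ) = T _{ 0 } ( \gamma ( \cEbar ) ) = ( m _{ 1 }, \dots, m _{ N } ) \cdot T _{ 0 } \bigl( ( \cF _{ 1 }, \dots, \cF _{ N } ) \bigr) \in ( 2 \bZ ) ^{ N } \cdot T _{ 0 } \bigl( \ecvb _{ N } ( \hirzebruchtwo ) \bigr) \). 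The only genuinely delicate point—though it is short—is exactly this parity bookkeeping: one must insert the extra factor \( T _{ 0 } ^{ 2 } \in \btriv \) in the odd case so that the output is an even shift of \( T _{ 0 } \) applied to a bundle collection, as the statement demands, rather than of \( T _{ 0 } ^{ - 1 } \) applied to one, and one must run the rank argument to guarantee that the residual shifts land in \( 2 \bZ \). Everything else is formal manipulation inside \( B \) together with the already-established \pref{th:twisting exceptional collection to vector bundles} and \pref{cr:btriv is generated by squares}.
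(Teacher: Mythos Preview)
Your proof is correct and follows essentially the same route as the paper's: invoke \pref{th:twisting exceptional collection to vector bundles}, decompose the resulting element of \(B\) via \pref{cr:btriv is generated by squares} with \(a_0=0\), strip the line bundle twist, and in the odd case absorb an extra \(T_0^2\) into the \(\btriv\)-part (this is exactly the paper's parenthetical ``\(T_0 = T'_0(T_0^2)\)''). The only cosmetic difference is that the paper first normalizes \(i_0(\cE_i)=0\), which (via \pref{cr:rank is never 0} and the hypothesis \(\rank\cE_i>0\)) is precisely your parity-of-rank argument done upfront so that the output of \pref{th:twisting exceptional collection to vector bundles} lands in \(\ecvb_N\) with no residual shifts to track.
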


\begin{proof}
We may assume without loss of generality that
\(
    i _{ 0 } = 0
\)
for any member of the collection \( \cEbar \).
By \pref{th:twisting exceptional collection to vector bundles}, there is \( b \in B \) such that
\(
    b ( \cEbar ) \in \ecvb _{ N } ( \hirzebruchtwo )
\).
Now the assertion immediately follows from the general description of elements of \( B \) given in \eqref{eq:even} and \eqref{eq:odd} (note that \( T _{ 0 } = T ' _{ 0 } ( T _{ 0 } ^{ 2 } ) \)).
\end{proof}

\begin{remark}\label{rm:stability conditions}
    Here we give some speculations on the spaces of Bridgeland stability conditions and a resulting question.

    To start with, it is conceivable that there is a local homeomorphism \( \varphi \) as in the following commutative diagram whose restriction to the subspaces of algebraic stability conditions is compatible with the generalization map
    \begin{align}
        \gen \colon \fsec ( \hirzebruchtwo ) \to \fsec ( \quadric ),
    \end{align}
    where
    \( \fsec ( \bullet )\) denote the set of isomorphism classes of full strong exceptional collections on \( \bullet \) (recall that a full strong exceptional collection yields a chamber of algebraic stability conditions in \( \Stab ( \bullet )\)). The fact that \(\gen\) restricts to the sets of \emph{strong} exceptional collections follows from \pref{rm:deformation of full collection is full} and \pref{cr:base change of exceptional collections}.

    \begin{equation}
        \begin{tikzcd}
            \Stab ( \hirzebruchtwo ) \arrow[r, dashed, "\varphi"] \arrow[d,"p"] & \Stab ( \quadric ) \arrow[d,"q"]\\
            \Hom ( \kgr{ \hirzebruchtwo }, \bC ) \arrow[r, "\sim", "( \kgr{\gen} ^{ - 1 })  ^{ \ast }"'] & \Hom ( \kgr{ \quadric }, \bC )
        \end{tikzcd}
    \end{equation}
    
    Conjecturally, the Galois group (\( = \) the group of fiber-preserving automorphisms of \( \Stab ( \hirzebruchtwo )\)) of \( p \) coincides with \( \btriv \times 2 \bZ \). As \( \btriv \) do not deform to \( \quadric \), it is conceivable that \( \btriv \) coincides with the Galois group of \( \varphi \).
    Therefore it seems reasonable to ask the following question, which is an analogue of \pref{cr:exceptional objects in the same numerical class} \eqref{it:transitivity in the same numerical class}. Unfortunately, \pref{th:normal form of exceptional collections} is not strong enough to answer it in the affirmative.
\end{remark}

\begin{question}
    For each \( \cEbar \in \fsec ( \hirzebruchtwo )\), the action of the group
    \(
        \btriv \times 2 \bZ
    \)
    on the following set is transitive.
    \begin{align}
        \left\{
            \cEbar ' \in \fsec ( \hirzebruchtwo )
            \mid
            [ \cEbar '] = [ \cEbar ] \in \kgr { \hirzebruchtwo } ^{ 4 }
        \right\}
    \end{align}    
\end{question}

%
%
\section{Braid group acts transitively on the set of full exceptional collections}
\label{sc:Braid group acts transitively on the set of full exceptional collections}

This section is devoted to the proof of the following theorem.
\begin{theorem}\label{th:transitivity}
    The action
\(
    G _{ 4 } \curvearrowright \ec _{ 4 } ( \hirzebruchtwo )
\)
is transitive. Namely, \pref{cj:Bondal Polishchuk} holds true for \( \hirzebruchtwo \).
\end{theorem}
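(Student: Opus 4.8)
The plan is to prove \pref{th:transitivity} by transporting the problem, along the degeneration \eqref{eq:formal degenerating family}, to the del Pezzo surface \(\quadric\), where the analogous statement is Kuleshov--Orlov's theorem (\pref{th:properties of exceptional collections of Sigma0} \eqref{it:transitivity for Sigma0}), and then climbing back from the resulting \emph{numerical} information to the actual objects on \(\hirzebruchtwo\) using the structure results of Sections~\ref{sc:Twisting exceptional objects down to exceptional vector bundles} and~\ref{sc:Exceptional objects sharing the same class}. Fix \(\cEbar = (\cE_1,\dots,\cE_4)\in\ec_4(\hirzebruchtwo)\); writing \(S \coloneqq G_4\cdot\cEstd\subseteq\ec_4(\hirzebruchtwo)\) for the orbit of the standard collection, the goal is \(\cEbar\in S\).

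\textbf{Step 1 (reduction to a numerically standard collection).} By \pref{cr:transitivity at the numerical level} there is \(\sigma\in G_4\) with \(\kgr{\sigma(\cEbar)}=\kgr{\cEstd}\in\numfec(\hirzebruchtwo)\); since \(S\) is a \(G_4\)-orbit and \(\sigma\in G_4\), proving \(\sigma(\cEbar)\in S\) suffices. So I may and will assume from now on that \(\cEbar\) is numerically equal to \(\cEstd=(\cO_{\hirzebruchtwo},\cO_{\hirzebruchtwo}(f),\cO_{\hirzebruchtwo}(C+2f),\cO_{\hirzebruchtwo}(C+3f))\). This is exactly the step which consumes the transitivity on \(\quadric\): one uses that \(\gen\) is surjective and intertwines the \(G_4\)-actions (\pref{lm:mutation commutes with base change}, \pref{lm:actions of Br and Auteq(f) commute}), together with \pref{pr:characterization of exceptional objects in the same class}, so that a sequence of mutations trivializing \(\gen(\cEbar)\) lifts to one making \(\cEbar\) numerically standard.

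\textbf{Step 2 (from numerically standard to standard, via \(B\)).} Here I would produce \(b\in B\) with \(\cEbar=b(\cEstd)\). Each \(\cE_i\) has positive rank (it shares the class of the line bundle \(\cEstd_i\)), so by \pref{cr:exceptional objects in the same numerical class} it lies in the \(\btriv\times 2\bZ\)-orbit of the unique exceptional vector bundle in its class, which is \(\cEstd_i\) itself; absorbing the even shifts into \(\sigma\), I may assume \(\cE_i\) is obtained from \(\cEstd_i\) by an element of \(\btriv\). The issue is to choose one \(b\) working for all \(i\) simultaneously, which I would do one term at a time: supposing \(\cE_1=\cEstd_1,\dots,\cE_{k-1}=\cEstd_{k-1}\), I find \(b_k\) in the subgroup of \(B\) fixing \(\cEstd_1,\dots,\cEstd_{k-1}\) with \(b_k(\cE_k)=\cEstd_k\) and not disturbing the later terms, then replace \(\cEbar\) by \(b_k(\cEbar)\). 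The mechanism is that, by \pref{cr:btriv is generated by squares}, \(\btriv\) is generated by the squares \(T_a^2\), and by \pref{lm:conjugation of spherical twist} a twist \(T_a\) acts as the identity on the line bundle \(\cEstd_i\) for all but one value of \(a\); so the condition of fixing an initial segment of line bundles still leaves a subgroup large enough to realize, on the next term, the \(\btriv\)-ambiguity prescribed by \pref{cr:exceptional objects in the same numerical class} \eqref{it:transitivity in the same numerical class} (this is the same one-by-one strategy already used for \pref{th:twisting exceptional collection to vector bundles} and \pref{th:normal form of exceptional collections}).

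\textbf{Step 3 (replacing \(b\) by mutations).} It remains to show \(b(\cEstd)\in S\). By \pref{th:generator of the group B}, \(b\) is a word in \(T_0^{\pm1}\) and \(T_{-1}^{\pm1}\). Since mutations commute with autoequivalences (\pref{lm:actions of Br and Auteq(f) commute}), the orbit \(S\) is preserved by an autoequivalence \(g\) as soon as \(g(\cEstd)\in S\); hence, inducting on the length of the word defining \(b\), it is enough to verify \(T_0(\cEstd)\in S\) and \(T_{-1}(\cEstd)\in S\) (the inverse twists then follow by applying \(T_0^{-1}\), resp.\ \(T_{-1}^{-1}\), to the relation). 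This final check is concrete: one computes the four objects \(T_0(\cEstd_i)\) (resp.\ \(T_{-1}(\cEstd_i)\)) from the triangle \eqref{eq:triangle of spherical twist} and exhibits by hand a sequence of mutations of \(\cEstd\) landing on the resulting collection --- a small computation because \(\cEstd\) is a full strong collection of line bundles. Combining the three steps, \(\cEbar=\sigma^{-1}b(\cEstd)\in\sigma^{-1}S=S\), which is the assertion.

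\textbf{Main obstacle.} Steps~1 and~3 are essentially bookkeeping plus one explicit low-genus computation; the genuine difficulty is Step~2, where one must upgrade the purely numerical coincidence handed down from the del Pezzo surface into an actual equality of collections on \(\hirzebruchtwo\). This is precisely the phenomenon flagged in the introduction --- infinitely many exceptional objects on \(\hirzebruchtwo\) specialize to the same object on \(\quadric\) --- and resolving it requires the full force of the \(K_0\)-trivial reconstruction results (\pref{cr:exceptional objects in the same numerical class}) together with the delicate point that a twist fixing an already-normalized initial segment still acts with enough freedom on the next term.
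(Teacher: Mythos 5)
Your three-step architecture coincides with the paper's (Steps \ref{st:reduction to numerically standard collection}--\ref{st:replace b with mutations}), and your Steps~1 and~3 are essentially the paper's arguments: Step~1 is exactly \pref{cr:transitivity at the numerical level}, and Step~3 is the same reduction, via \pref{lm:actions of Br and Auteq(f) commute} and \pref{th:generator of the group B}, to the two explicit checks for \(T_0\) and \(T_{-1}\). The problem is Step~2, which you rightly identify as the crux but for which your proposed mechanism does not work.

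The specific claim ``a twist \(T_a\) acts as the identity on the line bundle \(\cEstd_i\) for all but one value of \(a\)'' is false, and \pref{lm:conjugation of spherical twist} does not say anything of the sort. For a line bundle \(L\) one has \(T_a(L)\simeq L\) if and only if \(\RHom_{\hirzebruchtwo}(\cO_C(a),L)=0\), which (computing \(\cExt^1(\cO_C(a),L)\simeq\cO_C(L\cdot C-a-2)\)) happens for \emph{exactly one} value of \(a\), namely \(a=L\cdot C-1\); this is the content of \pref{lm:Tb-1 cE and Tb-2 cE are both vbs} specialized to rank one. For instance \(T_0(\cO_{\hirzebruchtwo})\) is the cone of \(\cO_C[-2]\to\cO_{\hirzebruchtwo}\) (\pref{lm:T0(OX) as mutations}), not \(\cO_{\hirzebruchtwo}\). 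Consequently the subgroup of \(\btriv\) fixing an initial segment of line bundles is not ``large'', and the assertion that it still realizes the full \(\btriv\)-ambiguity on the next term is precisely the statement you need to prove, not a consequence of \pref{cr:btriv is generated by squares}. The set of \(b\in\btriv\) with \(b(\cE_k)\simeq\cEstd_k\) is a coset of a stabilizer, and there is no soft reason it should meet the stabilizer of \(\cEstd_1,\dots,\cEstd_{k-1}\).

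What actually closes this gap in the paper (\pref{th:from numerically standard to standard}) is different in both halves. For the pair \((\cE_1,\cE_2)=(\cO_{\hirzebruchtwo},\cE_2)\), one does not choose a twist fixing \(\cO_{\hirzebruchtwo}\) and then act on \(\cE_2\); rather, the twist \(T_{c(\cE_2)}\) is \emph{dictated by the torsion structure of} \(\cE_2\), and the nontrivial input (\pref{cr:decrease ell while preserving the vector bundle}, resting on \pref{lm:reduction of length} and \pref{lm:reduction of length for sheaves}) is that this twist automatically sends the vector bundle \(\cO_{\hirzebruchtwo}\) to either \(\cO_{\hirzebruchtwo}\) or \(\cO_{\hirzebruchtwo}(C)\). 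One then uses the normal form \eqref{eq:even}/\eqref{eq:odd} of elements of \(B\) together with \pref{lm:Lemma 3.5 of [OU]} to extract a single \(b_0\in\btriv\) normalizing both entries simultaneously. For \((\cE_3,\cE_4)\) no twist is used at all: the pair lies in \({}^{\perp}\langle\cO_{\hirzebruchtwo},\cO_{\hirzebruchtwo}(f)\rangle\simeq\derived(\bP^1)\), where exceptional pairs are determined up to shift by their classes in \(K_0\). Your proposal contains neither of these arguments, so Step~2 as written has a genuine gap.
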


The proof is divided into 3 steps. Let
\(
    \cEbar \in \ec _{ 4 } ( \hirzebruchtwo )
\)
be the given exceptional collection of length \( 4 \).
\begin{step}\label{st:reduction to numerically standard collection}
By \pref{cr:transitivity at the numerical level}, there exists
\(
    \sigma \in G _{ 4 }
\)
such that
\begin{align}\label{eq:numerically standard up to twist}
    [ \sigma ( \cEbar ) ] = [ \cEstd ] \in \numfec ( \hirzebruchtwo ),
\end{align}
where
\(
    \cEstd
\)
is the standard full exceptional collection defined in \eqref{eq:standard collection}.
Recall that there might be a difference between \( \sigma ( \cEbar ) \) and \( \cEstd \) which is invisible on the numerical level. The rest of the proof is devoted to killing this (possible) difference.
\end{step}

\begin{step}\label{st:from numerically standard to standard}
Next, we show the following theorem.
\begin{theorem}\label{th:from numerically standard to standard}
For any
\(
    \cEbar \in \ec _{ 4 } ( \hirzebruchtwo )
\)
satisfying
\begin{align}\label{eq:numerically standard}
    [ \cEbar ] = [ \cEstd ] \in \numfec ( \hirzebruchtwo ),
\end{align}
there exists
\(
    b \in \btriv
\)
such that
\(
    b ( \cEbar ) \simeq \cEstd
\).
\end{theorem}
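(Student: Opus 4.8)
\textbf{The plan} is to first reduce the assertion to the purely existential claim that \emph{some} element of \( B \) carries \( \cEbar \) to \( \cEstd \), and then to produce such an element by twisting \( \cEbar \) down to a collection of line bundles and recognising the result.

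\textbf{Reduction.} It suffices to find \( b \in B \) and \emph{even} integers \( m _{ 1 }, \dots, m _{ 4 } \) with \( b ( \cE _{ i } ) \simeq \cEstd _{ i } [ m _{ i } ] \) for all \( i \). Indeed, \( B \) lies in the subgroup of \( \ah \) preserving the rank function (this holds for the generating spherical twists \( T _{ a } \), since \( T _{ a } \) acts on \( \kgr{ \hirzebruchtwo } \) by \( v \mapsto v - \chi ( [ \cO _{ C } ( a ) ], v ) [ \cO _{ C } ( a ) ] \) and \( \rank [ \cO _{ C } ( a ) ] = 0 \), and for \( \cO _{ \hirzebruchtwo } ( C ) \otimes _{ \cO _{ \hirzebruchtwo } } - \)); so once the \( m _{ i } \) are even the classes agree, \( b _{ \ast } [ \cEstd _{ i } ] = b _{ \ast } [ \cE _{ i } ] = [ b ( \cE _{ i } ) ] = [ \cEstd _{ i } ] \) for every \( i \). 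Since \( [ \cEbar ] = [ \cEstd ] \in \numfec ( \hirzebruchtwo ) \), the classes \( [ \cEstd _{ 1 } ], \dots, [ \cEstd _{ 4 } ] \) form a \( \bZ \)-basis of \( \kgr{ \hirzebruchtwo } \), hence \( b _{ \ast } = \id \), so \( b \in \Auteqtriv ( \derived ( \hirzebruchtwo ) ) \) and therefore \( b \in B \cap \Auteqtriv ( \derived ( \hirzebruchtwo ) ) = \btriv \); then \( b ^{ - 1 } \) is the required element. (After replacing \( \cEbar \) by a reshift by even amounts — which changes neither its class in \( \numfec ( \hirzebruchtwo ) \) nor the conclusion — we may and do assume \( i _{ 0 } ( \cE _{ i } ) = 0 \) for all \( i \); this is legitimate as \( i _{ 0 } ( \cE _{ i } ) \) is even by \pref{cr:rank is never 0}, because \( \rank \cE _{ i } = \rank \cEstd _{ i } = 1 > 0 \).)

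\textbf{Twisting down to line bundles, and recognising the result.} By \pref{th:twisting exceptional collection to vector bundles} there is \( b _{ 0 } \in B \) with \( b _{ 0 } ( \cEbar ) \in \bZ ^{ 4 } \cdot \ecvb _{ 4 } ( \hirzebruchtwo ) \). Since \( B \) preserves rank and \( \rank \cE _{ i } = 1 \), every entry is a shift of a rank-one exceptional vector bundle, i.e. of a line bundle, and the shifts are even (rank sign); absorbing them gives \( \cFbar = ( \cO _{ \hirzebruchtwo } ( L _{ 1 } ), \dots, \cO _{ \hirzebruchtwo } ( L _{ 4 } ) ) \in \ecvb _{ 4 } ( \hirzebruchtwo ) \). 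Because each \( T _{ a } \) changes \( c _{ 1 } \) only by multiples of \( [ C ] \) (the \( c _{ 1 } \) of \( [ \cO _{ C } ( a ) ] \)) and \( \cO _{ \hirzebruchtwo } ( C ) \otimes - \) does so too, we get \( L _{ i } \equiv c _{ 1 } ( \cEstd _{ i } ) \pmod{ \bZ C } \). Now I would carry out a direct computation of \( H ^{ \bullet } ( \hirzebruchtwo, \cO _{ \hirzebruchtwo } ( L _{ i } - L _{ j } ) ) \): the semiorthogonality \( \RHom ( \cO _{ \hirzebruchtwo } ( L _{ j } ), \cO _{ \hirzebruchtwo } ( L _{ i } ) ) = 0 \) for \( i < j \) together with these congruences pins \( ( L _{ 1 }, \dots, L _{ 4 } ) \) down to one of two one-parameter families: either \( \cFbar = \cO _{ \hirzebruchtwo } ( k C ) \otimes \cEstd \) for some \( k \in \bZ \), or the ``twisted'' family obtained from this one by replacing the \( 2 \)nd and \( 4 \)th entries by their \( \cO _{ \hirzebruchtwo } ( C ) \)-twists. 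In the first case \( \cFbar = ( \cO _{ \hirzebruchtwo } ( C ) \otimes - ) ^{ k } ( \cEstd ) \) with \( \cO _{ \hirzebruchtwo } ( C ) \otimes - = T _{ a } T _{ a + 1 } \in B \) by \eqref{eq:Ta Ta+1 = O(C)}. In the second case one checks slot by slot, using \pref{lm:Tb-1 cE is again a vb}, \pref{lm:Tb-1 cE and Tb-2 cE are both vbs} and the analogous computations for the inverse twist, that a suitable inverse twist \( T ' _{ a } \) fixes the \( 1 \)st and \( 3 \)rd entries and sends the \( 2 \)nd and \( 4 \)th back to the standard ones — in the base case \( k = 0 \) one has \( T ' _{ - 1 } ( \cO _{ \hirzebruchtwo } ) = \cO _{ \hirzebruchtwo } \), \( T ' _{ - 1 } ( \cO _{ \hirzebruchtwo } ( C + f ) ) = \cO _{ \hirzebruchtwo } ( f ) \), \( T ' _{ - 1 } ( \cO _{ \hirzebruchtwo } ( C + 2 f ) ) = \cO _{ \hirzebruchtwo } ( C + 2 f ) \), \( T ' _{ - 1 } ( \cO _{ \hirzebruchtwo } ( 2 C + 3 f ) ) = \cO _{ \hirzebruchtwo } ( C + 3 f ) \), the general \( k \) following by conjugation with \( \cO _{ \hirzebruchtwo } ( k C ) \otimes - \) via \eqref{eq:exchanging Ta and O(C)}. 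Either way this yields \( b _{ 1 } \in B \) with \( b _{ 1 } ( \cFbar ) = \cEstd \); then \( b \coloneqq ( b _{ 1 } b _{ 0 } ) ^{ - 1 } \in B \) carries \( \cEstd \) to \( \cEbar \) up to the even reshift absorbed above, and by the Reduction step \( b ^{ - 1 } = b _{ 1 } b _{ 0 } \in \btriv \), as wanted.

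\textbf{The main obstacle} is the last part: one must work out the cohomology computation classifying the line-bundle exceptional collections on \( \hirzebruchtwo \) with prescribed \( c _{ 1 } \) modulo \( C \), and then verify the explicit effect of the twists \( T _{ a }, T ' _{ a } \) and of \( \cO _{ \hirzebruchtwo } ( C ) \otimes - \) on each of the relevant line bundles, keeping careful track of the (even) shifts throughout. By contrast, the Reduction step — which is exactly what makes the \( K _{ 0 } \)-triviality of \( b \) automatic rather than something to be arranged by hand — and the appeal to \pref{th:twisting exceptional collection to vector bundles} are formal. Note also that the del Pezzo model \( \quadric \) gives no shortcut here, precisely because the autoequivalences in \( \btriv \) do not deform to \( \quadric \).
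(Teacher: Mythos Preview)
Your proof is correct but takes a different route from the paper's. The paper argues entry by entry: it first uses \pref{cr:exceptional objects in the same numerical class} to normalise \(\cE_1\) to \(\cO_{\hirzebruchtwo}\), then \pref{cr:decrease ell while preserving the vector bundle} together with the even/odd dichotomy of \pref{cr:btriv is generated by squares} to normalise \(\cE_2\) to \(\cO_{\hirzebruchtwo}(f)\), and finally observes that the remaining pair \((\cE_3, \cE_4)\) lives in \({}^{\perp}\langle \cEstd_1, \cEstd_2 \rangle \simeq \derived(\bP^1)\), where exceptional pairs are determined by their class --- this sidesteps any explicit classification of line-bundle collections. You instead twist the whole collection to line bundles at once via \pref{th:twisting exceptional collection to vector bundles} and then classify the resulting quadruples directly. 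Your Reduction step (any \(b \in B\) achieving \(b(\cEbar)\simeq\cEstd\) up to even shifts lies automatically in \(\btriv\), since \([\cEstd]\) is a \(\kgr{\hirzebruchtwo}\)-basis) is a clean observation the paper does not isolate, and it spares you from tracking \(K_0\)-triviality step by step; the price is the cohomology computation pinning down the two one-parameter families, which you sketch but do not write out --- it does go through, yielding exactly the two cases you name. (A minor slip: in the Reduction paragraph the required element is \(b\), not \(b^{-1}\); you get this right at the very end.)
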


\begin{proof}[Proof of \pref{th:from numerically standard to standard}]
Write
\(
    \cEbar = \left( \cE _{ 1 }, \cE _{ 2 }, \cE _{ 3 }, \cE _{ 4 } \right)
\).
We may and will assume that
\(
    i _{ 0 } = 0
\)
for all of the objects in the collection.
By \pref{cr:exceptional objects in the same numerical class} \pref{it:transitivity in the same numerical class}, there exists
\(
    b \in \btriv
\)
such that
\(
    b ( \cE _{ 1 } ) \simeq \cO _{ \hirzebruchtwo }
\). Hence by replacing \( \cEbar \) with \( b ( \cEbar ) \), we may and will assume that
\(
    \cE _{ 1 } = \cO _{ \hirzebruchtwo }
\).

Next, by \pref{cr:decrease ell while preserving the vector bundle}, there is
\(
    b \in B
\)
such that
\(
    b ( \cO _{ \hirzebruchtwo } )
\)
and \( b ( \cE _{ 2 } ) \) are both vector bundles. Recall that \( b \) is like either \eqref{eq:even} or \eqref{eq:odd}. If \( b \) is like \eqref{eq:even}, then it follows that both
    \(
        b _{ 0 } ( \cO _{ \hirzebruchtwo } )
    \)
    and
    \(
        b _{ 0 } ( \cE _{ 2 } )
    \)
    are line bundles for some \( b _{ 0 } \in \btriv \). Since
    \(
        [ b _{ 0 } ( \cE _{ 2 } ) ] = [ \cE _{ 2 } ] = [ \cO _{ \hirzebruchtwo } ( f ) ]
    \)
    and
    \(
        [ b _{ 0 } ( \cO _{ \hirzebruchtwo } ) ]
        =
        [ \cO _{ \hirzebruchtwo } ]
    \),
    it follows from \pref{lm:Lemma 3.5 of [OU]} that
    \(
        b _{ 0 } ( \cE _{ 2 } ) \simeq \cO _{ \hirzebruchtwo } ( f )
    \)
    and
    \( b _{ 0 } ( \cO _{ \hirzebruchtwo } ) \simeq \cO _{ \hirzebruchtwo } \).

    If \( b \) is like \eqref{eq:odd} for $a_0=-1$, then both
    \(
        T _{ - 1 } b _{ 0 } ( \cO _{ \hirzebruchtwo } )
    \)
    and
    \(
        T _{ -1 } b _{ 0 } ( \cE _{ 2 } )
    \)
    are line bundles for some \( b _{ 0 } \in \btriv \). Then it follows from the following computations and \pref{lm:Lemma 3.5 of [OU]} that
    \(
        T _{ - 1 } b _{ 0 } ( \cO _{ \hirzebruchtwo }, \cE _{ 2 } )
        =
        ( \cO _{ \hirzebruchtwo}, \cO _{ \hirzebruchtwo } ( C + f ) )
    \).
    \begin{align}
        [ T _{ - 1 } b _{ 0 } ( \cO _{ \hirzebruchtwo } ) ]
        =
        [ T _{ - 1 } ( \cO _{ \hirzebruchtwo } ) ]
        =
        [ \cO _{ \hirzebruchtwo } ],\\
        [ T _{ -1 } b _{ 0 } ( \cE _{ 2 } ) ]
        =
        [ T _{ -1 } ( \cE _{ 2 } ) ]
        =
        [ T _{ -1 } ( \cO _{ \hirzebruchtwo } ( f ) ) ]
        =
        [ \cO _{ \hirzebruchtwo } ( C + f ) ],
    \end{align}
    This immediately implies
    \(
        b _{ 0 } ( \cO _{ \hirzebruchtwo }, \cE _{ 2 } )
        =
        ( \cO _{ \hirzebruchtwo}, \cO _{ \hirzebruchtwo } ( f ) )
    \). Hence we may and will assume \( \cE _{ i } = \cEstd _{i } \) for \( i = 1, 2 \).

At this point, in fact, we are done. To see this, note that both
\(
    ( \cE _{ 3 }, \cE _{ 4 } )
\)
and
\(
    ( \cEstd _{ 3 }, \cEstd _{ 4 } )
\)
are exceptional pairs of the triangulated subcategory
\(
    {} ^{ \perp }\langle \cEstd _{ 1 }, \cEstd _{ 2 } \rangle
    \subset
    \derived ( \hirzebruchtwo )
\),
which is equivalent to \( \derived ( \bP ^{ 1 } ) \),
satisfying
\(
    [ \cE _{ i } ]
    =
    [ \cEstd _{ i } ]
    \in
    K _{ 0 } \left( {} ^{ \perp }\langle \cEstd _{ 1 }, \cEstd _{ 2 } \rangle \right)
    \hookrightarrow
    \kgr{ \derived ( \hirzebruchtwo ) }
\)
for
\( i = 3, 4 \).
It is well known that any exceptional pair of \( \derived ( \bP ^{ 1 } ) \) is (up to shifts) of the form
\(
    \left( \cO _{ \bP ^{ 1 } } ( a ), \cO _{ \bP ^{ 1 } } ( a + 1 ) \right)
\),
hence is uniquely determined (up to shifts) by the class in the Grothendieck group.
This immediately implies that
\(
    \cE _{ i } \simeq \cEstd _{ i }
\)
for
\(
    i = 3, 4
\), hence the conclusion.
\end{proof}
\end{step}

\begin{step}\label{st:replace b with mutations}
In the previous step, we killed the possible difference between \( \cEbar \) and \( \cEstd \) by spherical twists; more precisely, we found \( b \in \btriv \) such that
\begin{align}\label{eq:cEbar = b ( cEstd )}
    \cEbar = b ( \cEstd )
\end{align}
(here we put \(b\) on the right hand side intentionally).
Recall that we wanted to kill the difference by a sequence of mutations and shifts, rather than spherical twists. In this last step, we confirm that
\(
    b \in B
\)
in \eqref{eq:cEbar = b ( cEstd )} can be replaced by a sequence of mutations.
We begin with a lemma.

\begin{lemma}\label{lm:T0(OX) as mutations}
The following isomorphisms hold.
\begin{align}
    R _{ \cO _{ \hirzebruchtwo } } ( \cO _{ \hirzebruchtwo } ( - C ) )
    \simeq
    T _{0} \cO _{ \hirzebruchtwo }
    \simeq
    L _{ \cO _{ \hirzebruchtwo } ( f ) } \cO _{ \hirzebruchtwo } ( C + 2 f ).
\end{align}
\end{lemma}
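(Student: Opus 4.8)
The plan is to compute each of the three objects from its defining distinguished triangle, read off its cohomology sheaves, and recognize each as a (nonsplit) extension of $\cO_C[-1]$ by $\cO_{\hirzebruchtwo}$. Since the governing $\Ext$-group will turn out to be one-dimensional, such an extension is unique up to isomorphism once it is known to be nonsplit, and that forces the isomorphisms.

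First I would assemble the cohomology inputs. Because $K_{\hirzebruchtwo}\cdot C=0$ we have $\omega_{\hirzebruchtwo}|_C\simeq\cO_C$, hence by Serre duality $\RHom_{\hirzebruchtwo}(\cO_C,\cO_{\hirzebruchtwo})\simeq H^\bullet(\hirzebruchtwo,\cO_C)^{\vee}[-2]\simeq\bfk[-2]$; in particular $\Ext^2_{\hirzebruchtwo}(\cO_C,\cO_{\hirzebruchtwo})\simeq\bfk$. From the structure sequence $0\to\cO_{\hirzebruchtwo}\to\cO_{\hirzebruchtwo}(C)\to\cO_C(C)\to0$ together with $\cO_C(C)\simeq\cO_{\bP^1}(-2)$ one gets $\RHom_{\hirzebruchtwo}(\cO_{\hirzebruchtwo}(-C),\cO_{\hirzebruchtwo})\simeq H^\bullet(\cO_{\hirzebruchtwo}(C))\simeq\bfk\oplus\bfk[-1]$, and similarly (using $\cO_C(C+f)\simeq\cO_{\bP^1}(-1)$) $\RHom_{\hirzebruchtwo}(\cO_{\hirzebruchtwo}(f),\cO_{\hirzebruchtwo}(C+2f))\simeq H^\bullet(\cO_{\hirzebruchtwo}(C+f))$, concentrated in degree $0$. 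The same Riemann--Roch bookkeeping shows $H^\bullet(\cO_{\hirzebruchtwo}(-C))=0=H^\bullet(\cO_{\hirzebruchtwo}(-C-f))$, so $(\cO_{\hirzebruchtwo}(-C),\cO_{\hirzebruchtwo})$ and $(\cO_{\hirzebruchtwo}(f),\cO_{\hirzebruchtwo}(C+2f))$ are exceptional pairs and the two mutations are defined.

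Next I would feed these into \eqref{eq:triangle of spherical twist} and the defining triangles of the mutations. For the twist, the triangle becomes $\cO_C[-2]\to\cO_{\hirzebruchtwo}\to T_0\cO_{\hirzebruchtwo}\to\cO_C[-1]$, so taking cohomology sheaves gives $\cH^0(T_0\cO_{\hirzebruchtwo})\simeq\cO_{\hirzebruchtwo}$ and $\cH^1(T_0\cO_{\hirzebruchtwo})\simeq\cO_C$ (and nothing else), and truncation exhibits $T_0\cO_{\hirzebruchtwo}$ inside a triangle $\cO_{\hirzebruchtwo}\to T_0\cO_{\hirzebruchtwo}\to\cO_C[-1]\xrightarrow{\delta}\cO_{\hirzebruchtwo}[1]$ with $\delta\in\Ext^2_{\hirzebruchtwo}(\cO_C,\cO_{\hirzebruchtwo})$. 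For the right mutation, the defining triangle reads $R_{\cO_{\hirzebruchtwo}}(\cO_{\hirzebruchtwo}(-C))\to\cO_{\hirzebruchtwo}(-C)\xrightarrow{\eta}\cO_{\hirzebruchtwo}\oplus\cO_{\hirzebruchtwo}[1]$, and one identifies the degree-$0$ component of $\eta$ with (a nonzero multiple of) the tautological inclusion $\cO_{\hirzebruchtwo}(-C)=\cI_C\hookrightarrow\cO_{\hirzebruchtwo}$; a parallel cohomology-sheaf count then gives the same cohomology sheaves. For the left mutation, $L_{\cO_{\hirzebruchtwo}(f)}\cO_{\hirzebruchtwo}(C+2f)$ is the cone of the evaluation $\cO_{\hirzebruchtwo}(f)^{\oplus2}\to\cO_{\hirzebruchtwo}(C+2f)$; since every global section of $\cO_{\hirzebruchtwo}(C+f)$ is divisible by the equation of $C$, this evaluation factors as the surjection $\cO_{\hirzebruchtwo}(f)^{\oplus2}\twoheadrightarrow\cO_{\hirzebruchtwo}(2f)$ (Euler sequence, kernel $\cO_{\hirzebruchtwo}$) followed by the inclusion $\cO_{\hirzebruchtwo}(2f)\hookrightarrow\cO_{\hirzebruchtwo}(C+2f)$ (cokernel $\cO_{\hirzebruchtwo}(C+2f)|_C\simeq\cO_C$), so the resulting four-term exact sequence again pins down the cohomology sheaves. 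In each case the object is indecomposable --- $T_0\cO_{\hirzebruchtwo}$ because $\cO_{\hirzebruchtwo}$ is exceptional, the two mutations because $(\cF,R_{\cF}\cE)$ and $(L_{\cE}\cF,\cE)$ stay exceptional pairs --- so none of them is the split extension $\cO_{\hirzebruchtwo}\oplus\cO_C[-1]$, and each is therefore determined up to isomorphism by the one-dimensional group $\Ext^2_{\hirzebruchtwo}(\cO_C,\cO_{\hirzebruchtwo})$; hence all three coincide (with the shift dictated by the mutation convention).

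The only genuine content is the identification of the maps occurring in the defining triangles --- the inclusion $\cI_C\hookrightarrow\cO_{\hirzebruchtwo}$ on the one side, the factorization of the evaluation through multiplication by the equation of $C$ on the other --- together with keeping the shifts straight; once those are in hand, everything reduces to a mechanical cohomology-sheaf computation plus uniqueness of the nonsplit extension. I expect the only point needing care to be the precise degree in which $\RHom_{\hirzebruchtwo}(\cO_{\hirzebruchtwo}(f),\cO_{\hirzebruchtwo}(C+2f))$ is supported, since this is exactly what controls the shift that enters the left-mutation computation.
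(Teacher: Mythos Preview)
Your approach is essentially the paper's own: identify $T_0\cO_{\hirzebruchtwo}$ as the cone of the (essentially) unique nontrivial morphism $\cO_C[-2]\to\cO_{\hirzebruchtwo}$, equivalently as the unique nonsplit object with cohomology $\cO_{\hirzebruchtwo}$ and $\cO_C$ in adjacent degrees, and then verify that each mutation realizes the same object. The paper's proof is a one-line sketch of exactly this argument, and you have correctly supplied the details, including the honest caveat about the shift entering the left-mutation computation.
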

\begin{proof}
By a direct computation, one can check that
\(
    T _{ 0 } \cO _{ \hirzebruchtwo }
\)
is the cone of the (essentially) unique non-trivial morphism
\begin{align}
    \cO _{ C } [ - 2 ]
    \to
    \cO _{ \hirzebruchtwo }.
\end{align}
The assertion immediately follows from this observation.
\end{proof}

\begin{theorem}
For any
\(
    b \in B
\),
there exists
\(
    \sigma \in \Br _{ 4 }
\)
such that
\(
    b ( \cEstd ) = \sigma ( \cEstd )
\).
\end{theorem}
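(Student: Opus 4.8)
The plan is to reduce the statement to the two generators $T_{0}$ and $T_{-1}$ of $B$ and then verify it for each by a direct computation with mutations. Recall from \pref{th:generator of the group B} (applied with $a=-1$) that $B=\langle T_{0},T_{-1}\rangle$. Set
\[
    B_{0}\coloneqq\left\{\,b\in B \ \middle|\ b(\cEstd)\in\Br_{4}\cdot\cEstd\,\right\}\subseteq B.
\]
Using that the $\Br_{4}$-action on $\ec_{4}(\hirzebruchtwo)$ commutes with the $B$-action (\pref{lm:actions of Br and Auteq(f) commute}), $B_{0}$ is a subgroup: if $\sigma_{1}(\cEstd)=b_{1}(\cEstd)$ and $\sigma_{2}(\cEstd)=b_{2}(\cEstd)$ with $\sigma_{i}\in\Br_{4}$, then
\[
    (b_{1}b_{2})(\cEstd)=b_{1}\bigl(\sigma_{2}(\cEstd)\bigr)=\sigma_{2}\bigl(b_{1}(\cEstd)\bigr)=\sigma_{2}\bigl(\sigma_{1}(\cEstd)\bigr)\in\Br_{4}\cdot\cEstd,
\]
and $b(\cEstd)=\sigma(\cEstd)$ implies $b^{-1}(\cEstd)=\sigma^{-1}(\cEstd)$. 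So it remains to show $T_{0},T_{-1}\in B_{0}$.

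For $b=T_{0}$, note first that by the same commutation it suffices to produce $\tau\in\Br_{4}$ with $\tau(\cEstd)=T_{0}^{-1}(\cEstd)=T_{0}'(\cEstd)$, for then $\tau^{-1}(\cEstd)=T_{0}(\cEstd)$. I would compute $T_{0}'(\cEstd)$ term by term from the triangle~\eqref{eq:triangle of inverse spherical twist} for $T_{0}'=T_{\cO_{C}}^{-1}$, using $\cO_{\hirzebruchtwo}(aC+bf)|_{C}\simeq\cO_{C}(b-2a)$ and $\cO_{\hirzebruchtwo}(-C)|_{C}\simeq\cO_{C}(2)$: one gets $T_{0}'\cO_{\hirzebruchtwo}(f)\simeq\cO_{\hirzebruchtwo}(f)$ and $T_{0}'\cO_{\hirzebruchtwo}(C+3f)\simeq\cO_{\hirzebruchtwo}(C+3f)$ (because $\RHom_{\hirzebruchtwo}(\cO_{\hirzebruchtwo}(f),\cO_{C})=\RHom_{\hirzebruchtwo}(\cO_{\hirzebruchtwo}(C+3f),\cO_{C})=0$), $T_{0}'\cO_{\hirzebruchtwo}(C+2f)\simeq\cO_{\hirzebruchtwo}(2f)$, and $T_{0}'\cO_{\hirzebruchtwo}\simeq\cO_{\hirzebruchtwo}(-C)$ (the latter being the inverse form of \pref{lm:T0(OX) as mutations}), so that
\[
    T_{0}'(\cEstd)=\bigl(\cO_{\hirzebruchtwo}(-C),\ \cO_{\hirzebruchtwo}(f),\ \cO_{\hirzebruchtwo}(2f),\ \cO_{\hirzebruchtwo}(C+3f)\bigr).
\]
To realize this collection by a braid, \pref{lm:T0(OX) as mutations} is the bridge between twists and mutations: applying $\sigma_{2}^{-1}$ to $\cEstd$ turns $\cO_{\hirzebruchtwo}(C+2f)$ into $L_{\cO_{\hirzebruchtwo}(f)}\cO_{\hirzebruchtwo}(C+2f)\simeq T_{0}\cO_{\hirzebruchtwo}$, then $\sigma_{1}^{-1}$ turns $(\cO_{\hirzebruchtwo},T_{0}\cO_{\hirzebruchtwo})$ into $(L_{\cO_{\hirzebruchtwo}}T_{0}\cO_{\hirzebruchtwo},\cO_{\hirzebruchtwo})=(\cO_{\hirzebruchtwo}(-C),\cO_{\hirzebruchtwo})$ (since $L_{\cO_{\hirzebruchtwo}}R_{\cO_{\hirzebruchtwo}}=\id$ undoes the other clause of \pref{lm:T0(OX) as mutations}), and a few further braid moves bring $\cO_{\hirzebruchtwo}(f)$ and $\cO_{\hirzebruchtwo}(2f)$ into places $2$ and $3$.

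For $b=T_{-1}$ I would argue in the same way — either computing $T_{-1}'(\cEstd)$ term by term (now $T_{-1}'\cO_{\hirzebruchtwo}\simeq\cO_{\hirzebruchtwo}$ and $T_{-1}'\cO_{\hirzebruchtwo}(C+2f)\simeq\cO_{\hirzebruchtwo}(C+2f)$, while the remaining two entries are the universal extensions furnished by~\eqref{eq:triangle of inverse spherical twist}) and exhibiting a braid word for it, or reducing to the $T_{0}$ case via the relation $T_{-1}T_{0}\simeq\cO_{\hirzebruchtwo}(C)\otimes_{\cO_{\hirzebruchtwo}}-$ from~\eqref{eq:Ta Ta+1 = O(C)}, after checking separately that $\cO_{\hirzebruchtwo}(C)\otimes_{\cO_{\hirzebruchtwo}}-$ lies in $B_{0}$ — which is again a direct mutation computation rooted in \pref{lm:T0(OX) as mutations}.

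The main obstacle is this last point: pinning down the explicit braid words and keeping exact track of cohomological shifts. Mutating two line bundles on $\hirzebruchtwo$ that differ by $f$ produces shifts (for instance $R_{\cO_{\hirzebruchtwo}(f)}\cO_{\hirzebruchtwo}\simeq\cO_{\hirzebruchtwo}(2f)[-1]$), and since we need $\sigma\in\Br_{4}$ rather than merely $\sigma\in G_{4}$, the braid word must be chosen so that all such shifts cancel and the outcome equals the shift-free collection $T_{0}'(\cEstd)$ (respectively $\cO_{\hirzebruchtwo}(C)\otimes_{\cO_{\hirzebruchtwo}}\cEstd$) on the nose. Once the correct words are found, checking them against the term-by-term computations above is finite and mechanical, as the section outline indicates (``the problem is concrete enough to be settled by hand'').
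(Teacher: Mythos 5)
Your reduction to the generators \(T_{0},T_{-1}\) of \(B\) (via \pref{th:generator of the group B} and the commutation \pref{lm:actions of Br and Auteq(f) commute}, packaged as the subgroup \(B_{0}\)) is exactly the paper's first move, and your term-by-term computations of \(T_{0}'(\cEstd)\) are correct. But the proof has a genuine gap precisely where the theorem has its content: you never exhibit braid words in \(\Br_{4}\) realizing \(T_{0}(\cEstd)\) and \(T_{-1}(\cEstd)\), and the partial word you do write down fails. After \(\sigma_{1}^{-1}\sigma_{2}^{-1}(\cEstd)=\bigl(\cO_{\hirzebruchtwo}(-C),\cO_{\hirzebruchtwo},\cO_{\hirzebruchtwo}(f),\cO_{\hirzebruchtwo}(C+3f)\bigr)\), the only way to move \(\cO_{\hirzebruchtwo}(f)\) into slot \(2\) by a single braid generator is \(\sigma_{2}\), which by your own observation produces \(R_{\cO_{\hirzebruchtwo}(f)}\cO_{\hirzebruchtwo}\simeq\cO_{\hirzebruchtwo}(2f)[-1]\) in slot \(3\); so \(\sigma_{2}\sigma_{1}^{-1}\sigma_{2}^{-1}(\cEstd)\ne T_{0}'(\cEstd)\), and ``a few further braid moves'' does not repair this. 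Deferring the issue with ``once the correct words are found'' is circular: whether such shift-free words exist in \(\Br_{4}\) (rather than \(G_{4}\)) is exactly what is to be proved, and it is the whole point of distinguishing \pref{cj:Bondal Polishchuk} from its weaker \(G_{r}\)-free variant.

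The paper resolves this by writing down explicit \emph{conjugate} words and verifying them directly: \(\bigl(\sigma_{1}^{-1}\circ\sigma_{2}\circ\sigma_{1}\bigr)(\cEstd)=T_{0}(\cEstd)\) and \(\bigl(\sigma_{3}\circ\sigma_{2}\circ\sigma_{3}^{-1}\bigr)(\cEstd)=T_{-1}(\cEstd)\). The conjugate shape \(\sigma^{\mp1}\tau\sigma^{\pm1}\) is what makes the shifts produced by the inner mutations cancel against those of the outer ones (e.g.\ the \([-1]\) from \(R_{\cO_{\hirzebruchtwo}(f)}\cO_{\hirzebruchtwo}\simeq\cO_{\hirzebruchtwo}(2f)[-1]\) is undone by the subsequent \(\sigma_{2}\) and \(\sigma_{1}^{-1}\)); note also that the paper works with \(T_{0}(\cEstd)=\bigl(T_{0}\cO_{\hirzebruchtwo},\,\cO_{\hirzebruchtwo}(f),\,\cO_{\hirzebruchtwo}(C+2f)\otimes T_{0}\cO_{\hirzebruchtwo},\,\cO_{\hirzebruchtwo}(C+3f)\bigr)\) directly rather than with \(T_{0}'(\cEstd)\), using both clauses of \pref{lm:T0(OX) as mutations} to identify the mutated objects. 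To complete your argument you would need to supply and check such words (your \(\tau\) for \(T_{0}'\) would be \(\sigma_{1}^{-1}\sigma_{2}^{-1}\sigma_{1}\), the inverse of the paper's word); as written, the proof stops short of the theorem.
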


\begin{proof}
By \pref{th:generator of the group B}, \(b\) is a product of copies of \(T _{ - 1 }, T _{ 0 }\) and their quasi-inverses. On the other hand, since autoequivalences and the action of the braid group commutes by \pref{lm:actions of Br and Auteq(f) commute}, it is enough to show the assertion only for the two cases
\(
    b = T _{ - 1 }, T _{ 0 }
\).
We treat each case separately.

For
\(
    T _{ 0 }
\),
we have
\begin{align}
    T _{0} ( \cEstd )
    =
    \left(
        T _{ 0 } \cO _{ \hirzebruchtwo },
        \cO _{ \hirzebruchtwo } ( f ),
        \cO _{ \hirzebruchtwo } ( C + 2 f )
        \otimes
        T _{ 0 } \cO _{ \hirzebruchtwo },
        \cO _{ \hirzebruchtwo } ( C + 3 f )
    \right).
\end{align}
By \pref{lm:T0(OX) as mutations}, one can easily verify that
\begin{align}
    \left(
        \sigma _{ 1 } ^{ - 1 } \circ \sigma _{ 2 } \circ \sigma _{ 1 }
    \right)
    ( \cEstd )
    =
    T _{0} ( \cEstd ).
\end{align}
For
\(
    T _{ - 1 }
\),
we have
\begin{align}
    T _{ - 1 } ( \cEstd )
    =
    \left(
        \cO _{ \hirzebruchtwo },
        \cO _{ \hirzebruchtwo } ( C + f ),
        \cO _{ \hirzebruchtwo } ( C + 2 f ),
        \cO _{ \hirzebruchtwo } ( 2 C + 3 f )
    \right).
\end{align}
By a direct computation, one can verify the following assertion.
\begin{align}
    \left(
            \sigma _{ 3 } \circ \sigma _{ 2 } \circ \sigma _{ 3 } ^{ - 1 }
    \right)
    ( \cEstd )
    =
    \left(
            \left( \sigma _{ 3 } \circ \sigma _{ 2 } \circ \sigma _{ 1 } \right)\circ \sigma _{ 1 } ^{ - 1 } \circ \sigma _{ 3 } ^{ - 1 }
    \right)
    ( \cEstd )
    =
    T _{ - 1 } ( \cEstd ).
\end{align}
\end{proof}
\end{step}

Below is an important consequence of \pref{th:transitivity}.
\begin{corollary}
\(
    \ec _{ 4 } ( \hirzebruchtwo )
    =
    \fec ( \hirzebruchtwo )
\).
\end{corollary}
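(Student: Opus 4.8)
The plan is to derive this corollary as a formal consequence of the transitivity theorem \pref{th:transitivity}. The key observation is that the triangulated subcategory of \( \derived ( \hirzebruchtwo ) \) generated by an exceptional collection is invariant under the whole \( G _{ 4 } \)-action: shifting the members of a collection evidently does not change the subcategory they generate, and, as already noted in the introduction, the mutations \( L _{ \cE } \cF \) and \( R _{ \cF } \cE \) lie in \( \langle \cE, \cF \rangle \) by their very definition, so the \( \Br _{ 4 } \)-part of the action preserves the generated subcategory as well. Hence for every \( \sigma \in G _{ 4 } \) and every \( \cEbar \in \ec _{ 4 } ( \hirzebruchtwo ) \), the collection \( \cEbar \) is full if and only if \( \sigma ( \cEbar ) \) is full.

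With this in hand I would combine two facts. First, the standard collection \( \cEstd \) of \pref{df:the standard exceptional collection on Sigma2} belongs to \( \fecvb ( \hirzebruchtwo ) \subseteq \fec ( \hirzebruchtwo ) \), so it is full by construction. Second, \pref{th:transitivity} asserts that \( G _{ 4 } \) acts transitively on \( \ec _{ 4 } ( \hirzebruchtwo ) \), so every length-\(4\) exceptional collection \( \cEbar \) is of the form \( \sigma ( \cEstd ) \) for some \( \sigma \in G _{ 4 } \). By the previous paragraph \( \cEbar \) is then full, which is exactly the assertion \( \ec _{ 4 } ( \hirzebruchtwo ) = \fec ( \hirzebruchtwo ) \) (the inclusion \( \fec ( \hirzebruchtwo ) \subseteq \ec _{ 4 } ( \hirzebruchtwo ) \) being automatic since \( \rank \kgr{ \hirzebruchtwo } = 4 \)).

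I do not expect any genuine obstacle at this stage: all of the substance is already packaged into \pref{th:transitivity}, and the remaining argument only uses the soft fact that fullness is a \( G _{ 4 } \)-invariant property of a collection. The one point to keep in mind when writing it out carefully is to verify this invariance for the \( \bZ ^{ 4 } \)-factor (shifts) on the same footing as for the braid-group factor, but both verifications are immediate from the definitions.
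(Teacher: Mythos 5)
Your argument is correct and is exactly the paper's proof: the paper likewise deduces the statement from \pref{th:transitivity}, the fullness of \( \cEstd \), and the observation (already made in the introduction) that fullness is preserved by the \( G _{ 4 } \)-action. Your additional care about checking invariance under the \( \bZ ^{ 4 } \)-factor is harmless but the paper treats it as immediate.
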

\begin{proof}
Since
\(
    \cEstd
\)
is full and the fullness is preserved under the action of the group
\(
    G _{ 4 }
\),
this immediately follows from \pref{th:transitivity}.
\end{proof}

%
%

\bibliographystyle{alpha}
\bibliography{mainbibs}
\end{document}